\def\Dj{\rlap{--}D}
\newtheorem{thm}{Theorem}[section]
\newtheorem{conj}[thm]{Conjecture}
\newtheorem{cor}[thm]{Corollary}
\newtheorem{lem}[thm]{Lemma}
\newtheorem{defn}[thm]{Definition}
\newtheorem{rem}[thm]{\bf{Remark}}
\numberwithin{equation}{section}
\def\pn{\par\noindent}
\newcommand{\ds}{\displaystyle}
\newcommand{\bc}{\begin{center}}
\newcommand{\ec}{\end{center}}
\DeclareRobustCommand*\cal{\@fontswitch\relax\mathcal}
\begin{document}

\title{A spanning union of cycles  in  rectangular grid graphs,
 thick grid cylinders  and   Moebius strips }
\author{Jelena \Dj oki\' c, Olga Bodro\v{z}a-Panti\'{c}$^*$ and Ksenija Doroslova\v cki}

\thanks{{\scriptsize
\hskip -0.4 true cm MSC(2010): Primary: 05C38; Secondary: 05C50, 05A15, 05C30, 05C85.
\newline Keywords: Hamiltonian cycles,  generating functions,
 Transfer matrix method, 2-factor.\\
$*$Corresponding author}}
\maketitle

\begin{abstract} Motivated to find the  answers to some of the questions that have occurred in  recent papers dealing with Hamiltonian cycles (abbreviated HCs) in some special classes of grid graphs we started the  investigation of  spanning unions of cycles, the  so-called 2-factors,  in these graphs (as a generalizations of HCs).
For all the  three types of  graphs from the title and  for any integer $m \geq 2$ we  propose an algorithm for obtaining a specially designed (transfer)  digraph
${\cal D}^*_m$. The problem of enumeration  of 2-factors is reduced to the problem of enumerating oriented walks in this digraph.
Computational results we gathered for $m \leq 17$ reveal some interesting properties both for the digraphs ${\cal D}^*_m$ and for the sequences of  numbers of  2-factors.
We prove some of them  for arbitrary $m \geq 2$.
\end{abstract}

\vskip 0.2 true cm


\pagestyle{myheadings}
\markboth{\rightline {\sl \hskip 8.5 cm J. \Dj oki\' c,  O.  Bodro\v{z}a-Panti\'{c} and K. Doroslova\v cki }}
         {\leftline{\sl \hskip 8.5 cm  J. \Dj oki\' c,  O.  Bodro\v{z}a-Panti\'{c} and K. Doroslova\v cki}}

\bigskip
\bigskip


\section{\bf Introduction}
\vskip 0.4 true cm
\label{sec:intro}

We consider the following (labeled) graphs:
Rectangular grid graph $RG_m(n) = \ P_{m} \times P_{n}$,
   Thick grid cylinder  $TkC_m(n) = P_{m} \times  C_n$ and   Moebius strip (of fixed width)  $MS_m(n)$ (see Figures~ \ref{Pravougaonimrezni}- \ref{Mebijusovatraka}) where
 $P_n$ and $C_n$ denote the path and  cycle with $n$ vertices, respectively.
 {\em    Thick grid cylinder} $TkC_m(n)$  ({\em Moebius strip} $MS_m(n)$) can be obtained from the
 rectangular grid graph $RG_m(n+1)$ by contraction of vertices $A\equiv B_1, B_2, \ldots , B_{m-1}, B_m \equiv B$
  with vertices   $D \equiv D_1, \ldots , D_{m-1}, D_m \equiv C $  ($C \equiv D_m, D_{m-1}, \ldots ,D_2 , D_1 \equiv D $), respectively,
which does not produce  multiple  overlapping vertical edges.
Note that all observed graphs have  $m \cdot n$ vertices.

\begin{figure}[htb]
\begin{center}
\includegraphics[width=4in]{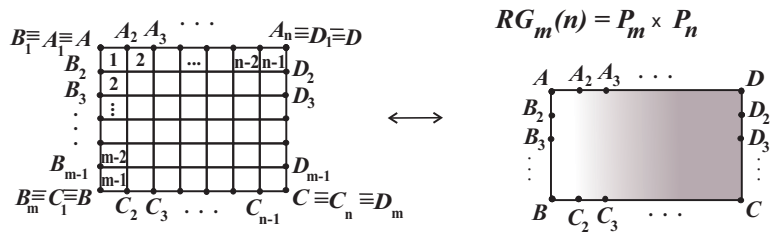}
\\ \ \vspace*{-18pt}
\end{center}
\caption{Rectangular Grid Graph  $P_{m} \times P_{n}$.}
\label{Pravougaonimrezni}
\end{figure}

\begin{figure}[htb]
\begin{center}
\includegraphics[width=4in]{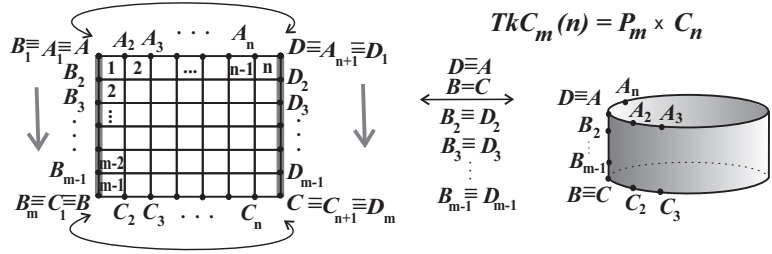}
\\ \ \vspace*{-18pt}
\end{center}
\caption{Identification of vertices $B_i$ with vertices $D_i$  in constructing the tick grid cylinder $TkC_m(n) = P_{m} \times C_{n}$.}
\label{Sirokicilindar}
\end{figure}

\begin{figure}[htb]
\begin{center}
\includegraphics[width=4in]{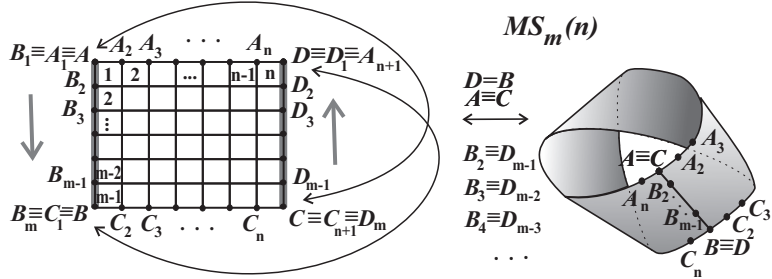}
\\ \ \vspace*{-18pt}
\end{center}
\caption{Identification of vertices $B_i$ with vertices $D_{m-i+1}$  in constructing the Moebius strip $MS_m(n) $.}
\label{Mebijusovatraka}
\end{figure}

A spanning r-regular subgraph of a graph is called  an {\em $r$-factor}.  For  $r=2$, it represents a spanning union of (disjoint) cycles.
Hence, Hamiltonian cycles are   connected 2-factors. All possible 2-factors of
 $RG_{4}(3) = P_4 \times P_3$ and  $TkC_{2}(2) = P_2 \times C_2$ are shown in  Figure~\ref{maliCilindar}.
  With the exception of  the last case, for both of the aforementioned graphs, all 2-factors  are Hamiltonian cycles.

\begin{figure}[htb]
\begin{center}
\includegraphics[width=4in]{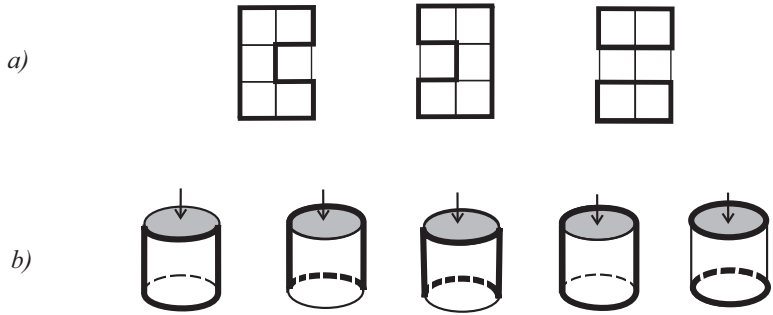}
\\ \ \vspace*{-18pt}
\end{center}
\caption{2-factors in a) \  $P_4 \times P_3$ b) \ $P_2 \times C_2$.}
\label{maliCilindar}
\end{figure}

The problems of enumerating and generating Hamiltonian paths in different classes of graphs arise in chemistry, biophysics (polymer melting  and protein folding), theoretical physics (study of magnetic systems with O(n) symmetry)\cite{J},  engineering (path planning problems for robots and machine tools)\cite{NW}
 and bioinformatics (security and intellectual property  protection by using the microelectrode dot array (MEDA) biochips) \cite{LCK},
  as well as in the theory of algorithms \cite{M}.
They  might be useful for the development of statistical algorithms that provide unbiased sampling of such paths \cite{KLO}.

A brief overview of the chronology of research on counting Hamiltonian cycles in different graph families can be found in \cite{VZB}.
 The enumeration of HCs on   specific  grid graphs has been studied extensively in
\cite{BKDP1}-\cite{BT94}, \cite{Kar}, \cite{KaP} and \cite{P}.
 The intrinsic properties of these grids naturally impose the transfer matrix approach on the problem of enumeration of HCs  and related topics\cite{EJ,KJ1}.
From the computational data obtained from some  recent papers a few  interesting phenomena (concerning  the rectangular grid graphs,
thin and  thick grid cylinders and their triangular variants) have arisen and were formulated as conjectures.
More precisely, the numbers of the  so-called contractible and non-contractible HCs (see Figure~\ref{primerRC}a-b) for thin cylinder graph ($C_{m} \times P_{n}$)  are  asymptotically equal (when $n \rightarrow \infty$) \cite{BKP} and the same is valid  for its triangular variant \cite{BKDP2}.

For the thick grid cylinder   $P_{m} \times C_{n}$ the  contractible  HCs are more numerous than the  non-contractible ones iff  $m$ is even, i.e. the total number of HCs
 \[ h_m(n)
  \sim \left\{\begin{array}{ll}
  a_{m,c} n \theta_{m,c}^n, & \mbox{if $m$ is even}, \\
   a_{m,nc}\theta_{m,nc}^n, & \mbox{if $m$ is odd },
  \end{array}\right. \]
where $\theta_{m,c}$, $\theta_{m,nc}$,  $a_{m,c}$ and $a_{m,nc}$ are the positive dominant
characteristic roots and their coefficients  for two types of HCs, respectively \cite{BKDjDP}.
 Additionally, the coefficient $a_{m,nc} $ for non-contractible HCs is equal to $1$ (computational data for $m \leq 10$) \cite{BKDP1}.
Also, positive dominant characteristic root  $\theta_{m,c}$ for contractible HCs in a thick grid cylinder is equal to the same one associated with rectangular grid graph $P_{m} \times P_{n}$  (computational data for $m \leq 10$) \cite{BKDP1,BPPB}.

The aim of this paper is to find the generating functions for the number of 2-factors in    the considered  graphs.  We were  wondering
if the same or similar properties   related to HCs   would  remain valid for 2-factors or not. We wanted to see if  some conclusions for 2-factors could
help proving the mentioned conjectures  for HCs.
 Additionally, we expand our research to the new class of grid  graphs -  Moebius strips  $MS_m(n)$.

We distinguish two types of cycles of a 2-factor   on the cylindrical
surface of $TkC_m(n)$ (viewed as closed Jordan curves): the \emph{ contractible} (abbr. \emph{c-type}) and the \emph{non-contractible} (abbr. \emph{nc-type}) ones  (see
Figure~\ref{primerRC} a-b).
The first ones divide the surface into one finite (called the \emph{ interior})
and one infinite region (called the \emph{ exterior}). One could imagine them being pasted onto the
cylindrical surface of $TkC_m(n)$.
The latter ones  divide the cylindrical surface into two infinite regions  resembling  a bracelet around an arm.
In Figure~\ref{primerRC})d), the  shown  2-factor consists of two contractible cycles and one  non-contractible cycle.

\begin{figure}[htb]
\begin{center}
\includegraphics[width=5in]{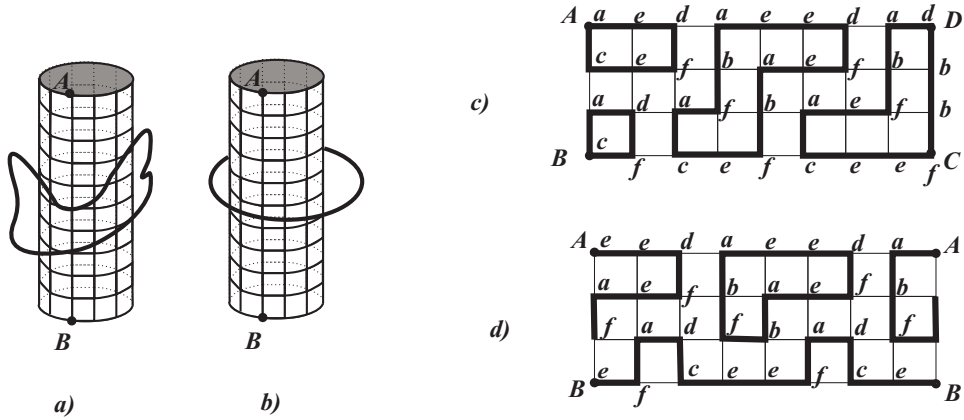}
\\ \ \vspace*{-18pt}
\end{center}
\caption{Types  of cycles in  $TkC_{m}(n)$ : a) contractible; b) non-contractible;
c) four (contractible) cycles in $RG_{4}(9)$; d)  one non-contractible and two contractible cycles in  $TkC_{4}(8)$}
\label{primerRC}
\end{figure}

For graphs $TkC_m(n)$ and  $MS_m(n)$ it is useful to observe the so-called  \emph{Rolling imprints} (RI) (introduced in \cite{BKDP1}):
Imagine that we at first  ``cut'' the surface of the  observed graph (with a given 2-factor) along the line AB (producing on this way the vertices $D_i$, i=1,2,  \ldots ,$m$ on the right side, again). Afterwards, we unroll (unwind) and flatten it.
Then, we produce infinitely many  copies $R_k \equiv A^{(k)}B^{(k)}C^{(k)}D^{(k)}(k \in Z)$   of this rectangle picture (with adding the superscript ``(k)'' on all vertex labels), and line them up to the left and to the right of the first one
 ($ R_0  \equiv A^{{0}}B^{{0}}C^{{0}}D^{{0}}) $ using translation and  glide-reflection, taking care that corresponding vertices ($B^{(k)}_i$ and $D^{(k-1)}_i$ for $TkC_m(n)$, or $B^{(k)}_i$ and $D^{(k-1)}_{m-i+1}$ for $MS_m(n)$) of adjacent copies are  contracted (see  Figure~\ref{primerTkC+MS}c)-d)).

\begin{figure}[htb]
\begin{center}
\includegraphics[width=4in]{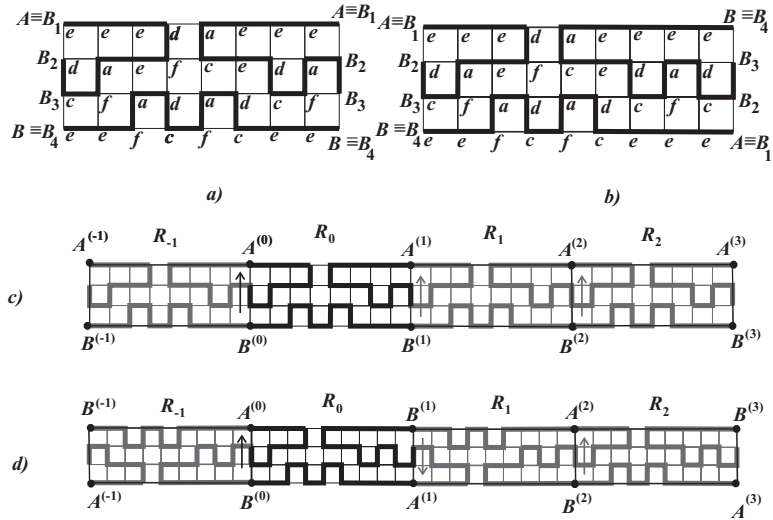}
\\ \ \vspace*{-18pt}
\end{center}
\caption{a) The spanning union of one c-type and one nc-type cycle in  $TkC_4(8)$; \  b) Hamiltonian  (short) cycle in   $ MS_4(9)$;
\ c) Rolling imprints for the example a) related to  $TkC_4(8)$;
\ d)  Rolling imprints for the example b) related to $ MS_4(9)$}
\label{primerTkC+MS}
\end{figure}

Note that, for each contractible cycle for  any type of observed  graphs,  there exist an  infinite number of  congruent cycles (polygons) in the infinite grid graph (RI).
For a non-contractible cycle in $TkC_m(n)$ there exists a  unique  infinite path which crosses lines $A^{(k)}B^{(k)}$ an  odd number of times.

In case of  $MS_m(n)$, due to topological reasons, there are three possible types of cycles: c-type and two nc-types shown in
Figure~\ref{seckanje}a) and b). The first of these  nc-cycles is called the  \emph{ long } nc-cycle. Its image in RI is the union of two infinite paths that cross the  line $A^{(k)}B^{(k)}$ an even number of times.
 The second one is called the  \emph{  short non-contractible} cycle (abbr. \emph{ short nc-type}).  Its image in Rolling imprints is the unique infinite path which crosses line $A^{(k)}B^{(k)}$ an odd number of times. A 2-factor of $MS_m(n)$  can have  at most one short nc-type cycle
(due to topological reasons, too). Additionally, the long  nc-cycle divides the surface of  $MS_m(n)$ into two parts, while the short one cannot divide that surface.
For example, the 2-factor shown in  Figure~\ref{seckanje}d) is the union of one c-cycle, two long nc-cycles and one short nc-cycle.

\begin{figure}[htb]
\begin{center}
\includegraphics[width=4in]{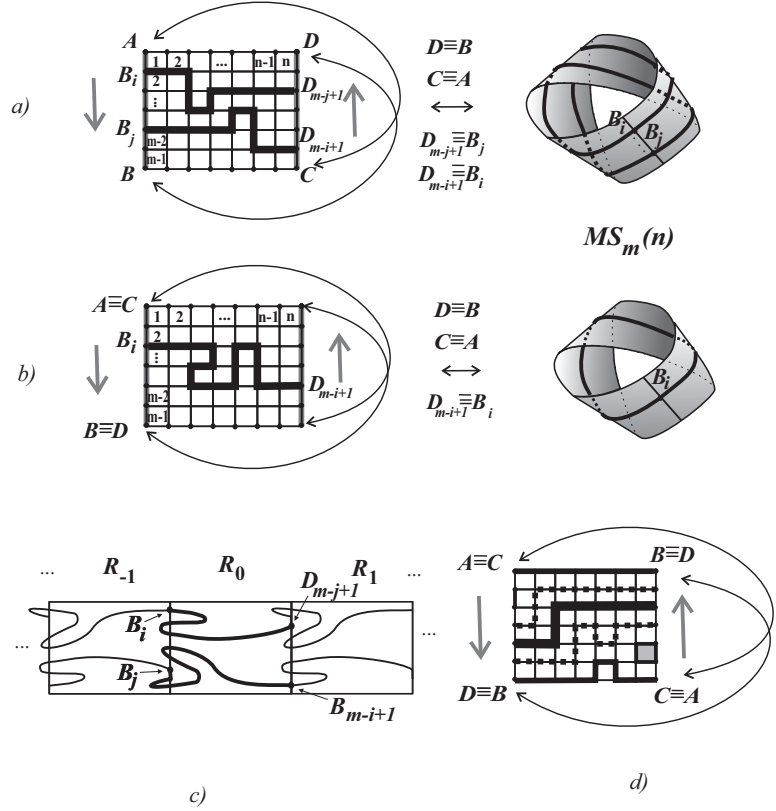}
\\ \ \vspace*{-18pt}
\end{center}
\caption{Type of cycles in   Moebius strip $MS_{m}(n)$: a) long nc-type cycle; b) short nc-type cycle; c) a long nc-cycle can cross the vertical side [AB]  more than 2 times;
d) an example of 2-factor with one short and two long nc-type cycles and one contractible cycle.}
\label{seckanje}
\end{figure}

\begin{defn} \hspace*{1cm}
\begin{itemize}
\item
We orient a (contractible) cycle $C$ in $ RG_m(n)$  clockwise and call it the  {\bf base figure}.
\item
If  a contractible cycle $C$ in  $TkC_m(n)$ or $MS_m(n)$ is disjoint with segment $[AB]$, then we orient
the  corresponding  cycle in RI that   lies entirely in the rectangle $R_0  \backslash [A^{(1)}B^{(1)}]$ clockwise and call it  the  {\bf base figure}.
Otherwise, let $B_i$ ($1 \leq i \leq m$) be the vertex from  the intersection $C \cap [AB]$ with minimal index $i$.  The  {\bf base figure} is the  corresponding  cycle in rolling imprints that contains the vertex $B^{(0)}_i$ oriented   clockwise, too.
\item
For  a non-contractible cycle $C$ in  $ TkC_m(n)$  (or short nc-cycle $C$ in $MS_m(n)$), let $B_i$ ($1 \leq i \leq m$) be the vertex from the  intersection $C \cap [AB]$ with minimal index $i$ ($1 \leq i \leq m$). We orient the part  of  image of $C$ in RI  from $B^{(0)}_i$  to $D^{(0)}_i \equiv B^{(1)}_{i} $ ($D^{(0)}_{m-i+1} \equiv B^{(1)}_i $) in this direction and call it the  {\bf base figure}.
\item
 Finally,  let $B_i$  ($1 \leq i \leq m$) denote  the vertex of segment  $[AB] $  with minimal index $i$ ($1 \leq i \leq m$) which belong  a long  nc-cycle $C$ in   $MS_m(n)$. The  {\bf base figure} is the part of the  infinite path (in RI) containing the vertex $B^{(0)}_i$ which is
   determined by vertices $B^{(0)}_i$ and  $D^{(1)}_{m-i+1} \equiv B^{(2)}_i$ and oriented from $B^{(0)}_i$ to  $D^{(1)}_{m-i+1}$.
\end{itemize}
\end{defn}

In this way, we establish a bijection between the  set of  all edges of the union of base figures for the considered 2-factor and  the set of all of  its edges.
The edges of   base figures, considered as oriented segments in rolling imprints,
 can be treated as unit vectors of 4 possible directions ($\uparrow$, $\downarrow$, $\rightarrow$ and $\leftarrow$).
Let  $\sharp_C(\uparrow)$,  $\sharp_C(\downarrow)$, $\sharp_C(\rightarrow)$ and  $\sharp_C(\leftarrow)$ denote the number of edges of corresponding direction which we pass  when walking through  the base figure of a cycle $C$.

 \begin{rem} \label{Primedba1}
  If $C$ is a c-cycle  (for all three graphs  $RG_m(n)$, $TkC_m(n)$ and $MS_m(n)$),  then
  \begin{eqnarray}  \label{r1} \sharp_C(\uparrow)= \sharp_C(\downarrow) \mbox{ and }\sharp_C(\rightarrow)= \sharp_C(\leftarrow). \end{eqnarray}
      If $C$ is an nc - cycle  for  $TkC_m(n)$, then
  \begin{eqnarray}  \label{r2} \sharp_C(\uparrow)= \sharp_C(\downarrow) \mbox{ and }  \sharp_C(\rightarrow)- \sharp_C(\leftarrow) = n.   \end{eqnarray}   \noindent
   If $C$ is a short nc - cycle for    $MS_m(n)$, then
  \begin{eqnarray}  \label{r3} \sharp_C(\downarrow)  -  \sharp_C(\uparrow) =  m+1 -2i  \mbox{ and } \sharp_C(\rightarrow)- \sharp_C(\leftarrow) = n.   \end{eqnarray}
    If $C$ is a long nc - cycle  for  $MS_m(n)$, then
         \begin{eqnarray}  \label{r4} \sharp_C(\uparrow)= \sharp_C(\downarrow) \mbox{ and } \sharp_C(\rightarrow)- \sharp_C(\leftarrow)= 2n. \end{eqnarray}  \end{rem}
Let us  denote the total numbers of 2-factors in $RG_m(n)$, $TkC_m(n)$ and  $MS_m(n)$ by $f^{RG}_m(n)$, $f^{TkC}_m(n)$ and $f^{MS}_m(n)$, respectively.
The total number of 2-factors in $TkC_m(n)$  which contain odd (even) number of non-contractible cycles is labeled by $f^{TkC}_{1,m}(n)$ ($f^{TkC}_{0,m}(n)$).
Similarly, label $f^{MS}_{1,m}(n)$ ($f^{MS}_{0,m}(n)$) represents  the total numbers of 2-factors in $MS_m(n)$ which contain (do not contain) a short cycle.
Thus, we have
 $$f^{TkC}_m(n) = f^{TkC}_{1,m}(n) + f^{TkC}_{0,m}(n) \mbox{ \ \   and  \ \ } f^{MS}_m(n) = f^{MS}_{1,m}(n) + f^{MS}_{0,m}(n).$$
\begin{thm}  \label{thm:prva} \hspace*{1cm}
\\  \hspace*{0.5cm}
 a) \  $ f^{RG}_m(n)=0  $ \ \ if  and only if \ \ both  $m$    and   $ n$ ($m, n \geq 2$) are odd; \\
\hspace*{0.5cm} b) \ $f^{TkC}_{0,m}(n)=0 $ \ \ if  and only if \ \ both  $m$    and   $ n$  ($m, n \geq 1$) are odd; \\
\hspace*{0.5cm} c) \  $  f^{TkC}_{1,m}(n)=0 $  \ \ if  and only if \ \  $ m$ is even   and $ n$ is  odd ($m, n \geq 1$); \\
\hspace*{0.5cm} d) \  $ f^{MS}_{0,m}(n)=0 $  \ \ if  and only if \ \ both  $m$    and   $ n$ ($m, n \geq 1$) are odd; \\
\hspace*{0.5cm} e) \  $ f^{MS}_{1,m}(n)=0 $  \ \ if  and only if \ \ both  $m$    and   $ n$ ($m, n \geq 1$) are even.
\end{thm}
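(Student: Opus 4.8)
The plan is to handle all five parts through one parity principle together with a short list of explicit constructions. The principle is elementary: in any 2-factor the sum of the lengths of its constituent cycles equals the number of vertices $mn$, so the number of its \emph{odd-length} cycles is $\equiv mn \pmod 2$. To exploit this I would first read off the length-parity of each cycle type from Remark~\ref{Primedba1}. Since the length of a cycle $C$ is $\sharp_C(\uparrow)+\sharp_C(\downarrow)+\sharp_C(\rightarrow)+\sharp_C(\leftarrow)$ and since $a+b\equiv a-b\pmod 2$, relations \eqref{r1}--\eqref{r4} give immediately that every c-cycle and every long nc-cycle has even length, that an nc-cycle of $TkC_m(n)$ has length $\equiv n$, and that a short nc-cycle of $MS_m(n)$ has length $\equiv m+n+1\pmod 2$. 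Thus the only cycles that can be of odd length are the nc-cycles (in $TkC$) and the short nc-cycles (in $MS$), and even these are odd only in the stated parity regimes.

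From this the ``$=0$'' (forcing) directions follow at once. For $RG_m(n)$ there are no nc-cycles, so all cycles are even and any 2-factor forces $mn$ even; hence both $m,n$ odd gives $f^{RG}_m(n)=0$, proving the forcing half of a). For $TkC_m(n)$ with $n$ odd the odd-length cycles are exactly the nc-cycles, so their number is $\equiv mn\equiv m\pmod 2$; therefore $m$ even makes the nc-number even and $f^{TkC}_{1,m}(n)=0$ (half of c)), while $m$ odd makes it odd and $f^{TkC}_{0,m}(n)=0$ (half of b)). For $MS_m(n)$ the only possible odd-length cycle is the short nc-cycle, of which there is at most one by the topological remark in the text, and it is odd precisely when $m+n$ is even; so for $m+n$ even the number of short cycles is $\equiv mn\pmod 2$, yielding $f^{MS}_{0,m}(n)=0$ when both $m,n$ are odd (half of d)) and $f^{MS}_{1,m}(n)=0$ when both are even (half of e)).

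For the converse ``$\neq 0$'' directions I would exhibit explicit 2-factors (assuming the easy range $m,n\ge 2$; the finitely many degenerate small cases are checked by hand) built from three blocks: a single row read as its horizontal cycle (an nc-cycle in $TkC$; a long nc-cycle fused with its mirror row in $MS$, or a short nc-cycle for the central self-paired row when $m$ is odd); a horizontal ladder on two adjacent rows, $(i,1)\to\cdots\to(i,n)\to(i+1,n)\to\cdots\to(i+1,1)\to(i,1)$, a contractible cycle avoiding the seam and valid in all three graphs; and, for $n$ even, a vertical ladder on an interior column pair, again contractible. Pairing all rows into horizontal ladders (when $m$ is even) or all columns into vertical ladders (when $n$ is even) yields a 2-factor with no nc/short cycle, covering every case where an even count is required (parts b) and d)); taking all $m$ row-cycles yields $m$ nc-cycles, an odd count when $m$ is odd, covering the odd-count cases with $m$ odd (parts c) and e)). Two constructions are genuinely delicate and constitute the main obstacle: for $TkC_m(n)$ with $m,n$ both even I would put a zig-zag Hamiltonian cycle on the prism $P_2\times C_n$ on rows $1,2$ (it closes into a \emph{single} nc-cycle precisely because $n$ is even) and pair the remaining rows into ladders, producing exactly one nc-cycle; and for $MS_m(n)$ with $m$ even and $n$ odd I would run a vertical boustrophedon path from $(1,1)$ to $(m,n)$ and close it through the seam edge $(m,n)\to(1,1)$ into a single Hamiltonian short nc-cycle (which closes exactly because $n$ is odd). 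Verifying that these two closures produce one connected cycle is the crux; once done, matching each parity case to the required (non)vanishing completes all five parts.
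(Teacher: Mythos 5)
Your proof is correct and follows essentially the same route as the paper: the vanishing directions come from the parity relations of Remark~\ref{Primedba1} (your reformulation via the count of odd-length cycles being $\equiv mn \pmod 2$ is the same computation the paper does on edge counts), and the non-vanishing directions come from explicit 2-factor constructions. Your ladder/row-cycle/zig-zag/boustrophedon constructions simply spell out in full the part the paper illustrates only for case e) and otherwise leaves as an exercise.
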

\begin{proof}

 We derive sufficiency of the corresponding  condition (relative to parity of  $m$ and $n$)  by contraposition. \\
a) If   there exists  a 2-factor in $RG_m(n)$, then by using \eqref{r1} we conclude that the number of its  edges must be even. Consequently, it is not possible for both  $m$    and   $ n$ to be  odd. \\
b)  If   there exists  a 2-factor in $TkC_m(n)$ with an even number of nc-cycles, then by using  \eqref{r1} and \eqref{r2} we obtain the same conclusion  as in case a). \\
c)  If   there exists  a 2-factor in $TkC_m(n)$ with an odd number of nc-cycles, then by using  \eqref{r1} and \eqref{r2} we conclude that the number of its  edges $m \cdot n$ must be of the same parity as $n$. It further implies that   $ m$ is odd or   $ n$ is even, which is the negation of the  given condition. \\
d)  Suppose that there exists a 2-factor in  $MS_m(n)$ without a  short nc-cycle. By using \eqref{r1} and \eqref{r4} we arrive at the same conclusion as in cases a) and b).\\
e)   If   there exists  a 2-factor in $MS_m(n)$ with a  short nc-cycle, then from \eqref{r1}, \eqref{r3} and \eqref{r4} the number of edges ($m \cdot n$) must be of the same parity as $m+n+1$. This implies that $m$ and $n$ can only not  both  be even.

The proofs of necessity of these conditions go by construction of 2-factors for each of the  three remaining  combinations of $m$ and $n$ for each item separately.
For example, one of the  possible 2-factors for each of the three admitted  combinations  in  case e) (2-factors with a short nc-cycle  on $MS_m(n)$) are show in  Figures~\ref{egzistencija}. The rest of the proof   is left  to readers as an exercise.

\begin{figure}[htb]
\begin{center}
\includegraphics{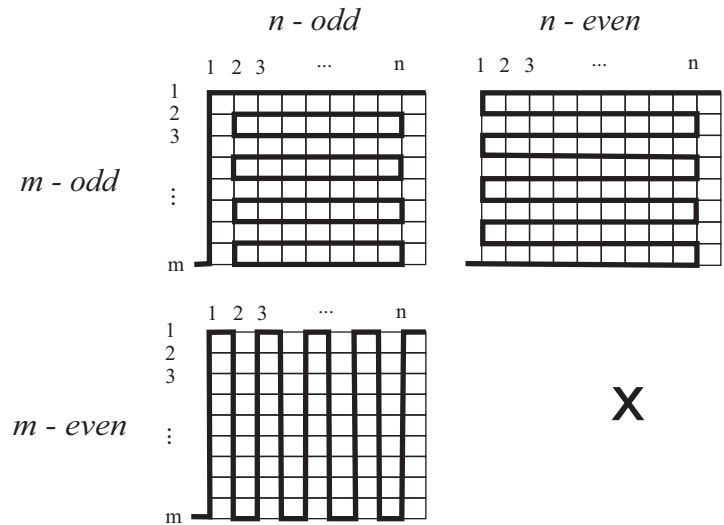}
\\ \ \vspace*{-18pt}
\end{center}
\caption{The existence of 2-factors in $MS_m(n)$  with a short nc-cycle.}
\label{egzistencija}
\end{figure}
\end{proof}

In Section~2,   we present a characterization of a 2-factor for each of the  considered graphs $G = G_m(n) \in   {\cal G} \stackrel{\rm def}{=} \{  RG_{m}(n),  TkC_m(n),  MS_{m}(n)\}$ obtained by the  vertex-coding approach.
In Section~3, using this  we propose applying the  transfer matrix method in order  to obtain  the numbers of 2-factors, labeled by $f^{G}_m(n)$,
 for the considered graph $G_m(n) \in   {\cal G} $. Actually, this enumeration problem  is reduced to the problem of enumerating oriented walks in
  a specially designed   digraph  ${\cal D}^*_m$ (so-called \emph{transfer digraph}).
Computational results we gathered for $m \leq 17$ (partially given in Section 4, the rest of them in Appendix) reveal some interesting properties of  the digraphs ${\cal D}^*_m$. We prove some of them  for arbitrary $m \geq 2$ in Section 3.
The properties referring to the asymptotic behaviour  of the  numbers of 2-factors $f^{G}_m(n)$  were observed to be similar to the ones that  appeared  while studying  Hamiltonian cycles.
In Section~4, we propose a few conjectures. The obtained generating functions for the sequences $f^{G}_m(n)$, for $m \leq 10$ are given in Appendix.

\section{\bf \ \ Code matrix}
\label{sec:ACV}

Let us  consider   all graphs: $RG_{m}(n)$, $TkC_m(n)$ and   $MS_{m}(n)$ simultaneously.
 First, ``cut and develop  in the plane" the  surfaces of
given graphs  as shown in  Figures~\ref{Pravougaonimrezni} - \ref{Mebijusovatraka}. Observe the corresponding rectangular grid graphs, whereby the described
identifications of vertices  have been performed.
  In this way, we can use the words: \emph{left, right, upper } and
  \emph{lower}    to mark the positions of  adjacent vertices in the  considered graph $G$, with respect to each other.

We label  each vertex of $G \equiv G_{m}(n)$ by an ordered pair \ $(i,j) \in  \{1,2, \ldots, m \} \times \{1,2, \ldots, n \}$ \  where \ $i$ \ represents
the ordinal number of the row viewed from  up to down, while \ $j$
\ represents the ordinal number of the column, viewed from left to
right. The vertices labeled by $A_1, A_2, \ldots A_n$ in  Figures~\ref{Pravougaonimrezni} - \ref{Mebijusovatraka} belong to the first row, and the vertices labeled by $B_1, B_2, \ldots B_m$ belong to the first column.

Let us observe an arbitrary 2-factor of $G$.
 One of six possible labels shown in Figure~\ref{cvornikod1} is
assigned to  each vertex \ $(i,j) \in   \{1,2, \ldots, m \} \times \{1,2, \ldots, n \}$.
 We call   this label an
  \emph{alpha-letter} of the
 vertex and denote it  by $\alpha_{i,j}$.
By reading alpha-letters  for vertices from the   same column in considered grid graph, from up to down,
we obtain an \emph{alpha word}.
For instance,  in the first 2-factor shown in  Figures~\ref{egzistencija}, for the first three  and for the last ($n$th) column the  corresponding alpha words are
$ab^{m-2}f$, $\ds e (ac)^{(m-1)/2}$, $e^m$ and $e (df)^{(m-1)/2}$, respectively.

For each alpha-letter $\alpha_{i,j}$, denote by $\overline{\alpha}_{i,j}$  ($\alpha'_{i,j}$) the adequate label shown in Figure~\ref{cvornikod1}  obtained
 by applying reflection  symmetry with the horizontal (vertical) axis as its line of symmetry onto  $\alpha_{i,j}$. Thus, $\overline{a} = c, \overline{b} = b, \overline{c} = a$,
 $ \overline{d} = f, \overline{e} = e$ and  $\overline{f} = d$ and $a' = d, b' = b, c' =f, d ' = a, e ' = e$ and $f ' = c$.

\begin{figure}[htb]
\begin{center}
\includegraphics{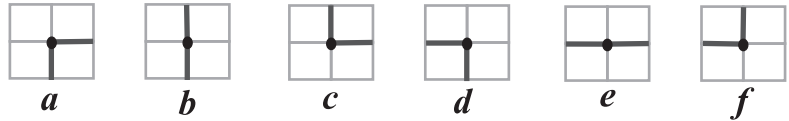}
\\ \ \vspace*{-18pt}
\end{center}
\caption{Six possible
situations for given 2-factor in any vertex}
\label{cvornikod1}
\end{figure}

\begin{defn}
For any alpha-word $\alpha \equiv \alpha_{1}\alpha_{2} \ldots \alpha_{k-1}\alpha_{k}$ $(k \in N)$,  the alpha word $\overline{\alpha} \equiv \overline{\alpha}_{k}\overline{\alpha}_{k-1} \ldots \overline{\alpha}_{2}\overline{\alpha}_{1}$ (obtained by reflection over horizontal axis) is called  {\bf horizontal conversion} of the word $\alpha$. \\
The alpha word $\alpha' \equiv \alpha'_{1}\alpha'_{2} \ldots\alpha'_{k-1}\alpha'_{k}$ (obtained by reflection over vertical axis) is called  {\bf vertical conversion} of the word $\alpha$.
\end{defn}

If we know the alpha-letter of a vertex \ $(i,j)$ in $G_m(n)$,  then
 the alpha-letter of
 its adjacent vertex can not be just about  any letter from the  set \ $\{ a,b,c,d,e,f
 \}$. Instead, it is  determined by  the
 digraphs \ ${\cal D}_{lr}$ \ and \ ${\cal D}_{ud}$, \ as  shown in  Figure~\ref{cvornikod2}, \
 depending on the mutual position of the two adjacent vertices.

 \begin{figure}[htb]
\begin{center}
\includegraphics[width=3.5in]{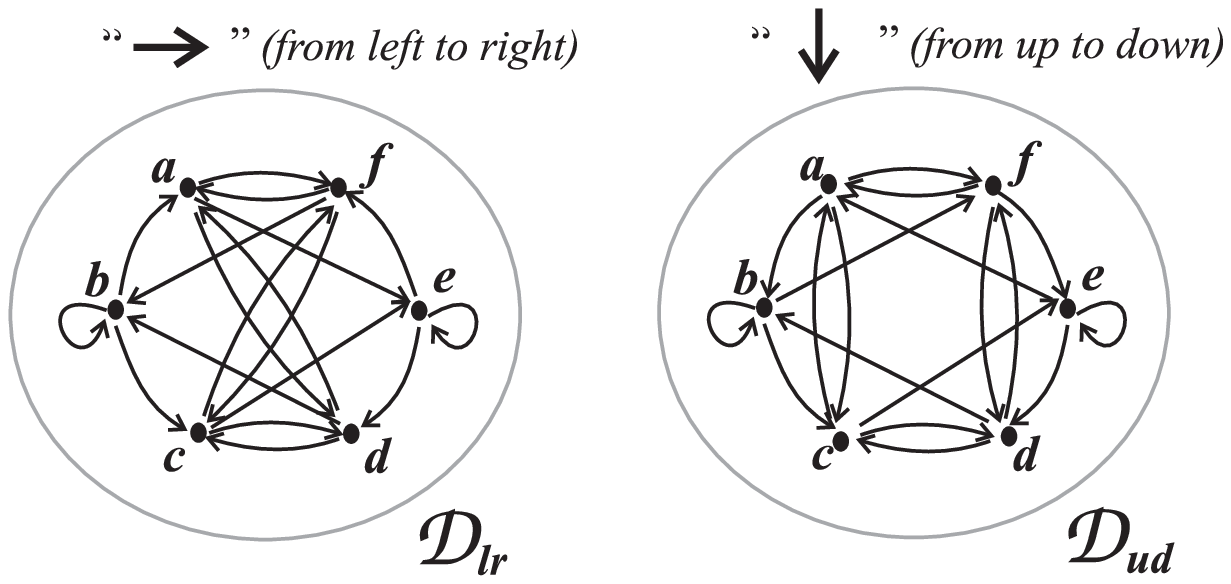}
\\ \ \vspace*{-18pt}
\end{center}
\caption{Digraphs ${\cal D}_{ud}$ and  ${\cal D}_{lr}$ and }
\label{cvornikod2}
\end{figure}

For example,  for $1 \leq j \leq n-1$ and $1 \leq i \leq m-1$ , if \ $\alpha_{i,j} = a$, \ then \ $\alpha_{i,j+1}
\in \{ d, e, f \}$ \ and \ $\alpha_{i+1,j} \in \{ b, c, f \}$. But, if $j=n$ and  $G=MS_m(n)$, then   $\alpha_{i,n} = a$ implies  $\alpha_{m-i+1,1} \in \{ d, e, f \}$ because  of
$D_{m-i+1} \equiv B_i$ and $\overline{\alpha}_{m-i+1,n} = c$.  Note that for graph \  $RG_{m}(n)$ \ alpha-letters for the  corner  vertices (A,B,C and D) have to
be \  $\alpha_{1,1} = a$, \  $\alpha_{m,1} = c$, \  $\alpha_{1,n}
= d$ \ and \ $\alpha_{m,n} = f$, respectively, which is not valid  for the other graphs.

Now, with each  2-factor of the observed graph $G$ we associate the
\emph{ code   matrix } \ $ {\cal C}(G) =  {\cal C}(G_m(n)) = [ \alpha_{i,j} ]_{m\times n }$ \
   where  $\alpha_{i,j}$ is the alpha-letter of the vertex \ $(i,j)$ \
($ 1 \leq i \leq m $, \ $ 1 \leq j \leq n $).

\begin{lem}  \label{lem:1} (Characterisation of 2-factors)
The code matrix $ {\cal C}(G) = [ \alpha_{i,j} ]_{m\times n }$ for any grid graph $G \in  {\cal G}$ satisfies  the following properties:
\begin{enumerate}
\item \textbf{Column conditions:} \ For every fixed \ $j$  \ ($1 \leq j \leq n$),

\begin{enumerate}
 \item \ the ordered pairs \ $ (\alpha_{i,j}, \alpha_{i+1,j})$, \ where \ $1 \leq i
\leq m-1$, \ must be arcs in the digraph \ ${\cal D}_{ud}$.

\item  $ \alpha_{1,j} \in \{ a, d, e \}$ \ and
\ $\alpha_{m,j} \in \{ c, e, f \}$.
             \end{enumerate}

\item \textbf{Adjacency of column condition:} \  For every  fixed  $j$, where  \ $1 \leq j \leq n-1 $,
 the ordered pairs \ $ (\alpha_{i,j}, \alpha_{i,j+1})$, \ where \ $1
\leq i \leq m$, \ must be arcs in the digraph \ ${\cal D}_{lr}$.

\item \textbf{First and Last Column conditions:}
\begin{enumerate}
 \item
If $G= RG_{m}(n)$, then
the alpha-word of the first  column consists of the letters from the
set \ $\{ a, b, c \}$
 and of  the last column of the letters
  from the set \ $\{  b, d, f \}$.
   \item
If $G= TkC_{m}(n)$,  then
 the ordered pairs \ $ (\alpha_{i,n}, \alpha_{i,1})$, \ where \ $1
\leq i \leq m$, \ must be arcs in the digraph \ ${\cal D}_{lr}$.
\item
If $G= MS_{m}(n)$, then
 the ordered pairs \ $ (\overline{\alpha}_{i,n}, \alpha_{m -i+1,1})$, \ where \ $1
\leq i \leq m $, \ must be arcs in the digraph \ ${\cal D}_{lr}$.
 \end{enumerate}
\end{enumerate}
The converse is also true.
Every matrix ${\cal C}(G) = [\alpha_{i,j}]_{m \times n }$ with entries from $\{a,b,c,d,e,f\}$
that satisfies conditions 1--3 determines a  unique 2-factor  on the grid graph $G$.
\end{lem}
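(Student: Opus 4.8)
The plan is to read each alpha-letter as the unordered pair of incident edge-directions that the 2-factor uses at a vertex, and then to recognise conditions 1--3 as nothing more than the local ``shared edge'' compatibility constraints for such a direction-assignment, handled graph by graph. First I would fix, once and for all, the dictionary furnished by Figure~\ref{cvornikod1}. Since a 2-factor is $2$-regular, at every vertex exactly two of the (at most four) incident edges --- call their directions \emph{up}, \emph{down}, \emph{left}, \emph{right} --- are used, and the six letters encode the six unordered pairs: $a=\{\text{right},\text{down}\}$, $b=\{\text{up},\text{down}\}$, $c=\{\text{right},\text{up}\}$, $d=\{\text{left},\text{down}\}$, $e=\{\text{left},\text{right}\}$, $f=\{\text{left},\text{up}\}$. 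I would check this against the reflections $\overline{a}=c,\,a'=d,\dots$ recorded before Figure~\ref{cvornikod1} and against the forced corner letters $\alpha_{1,1}=a,\ \alpha_{m,1}=c,\ \alpha_{1,n}=d,\ \alpha_{m,n}=f$ for $RG_m(n)$. From the dictionary I would extract the four membership sets that do all the work: the letters using \emph{down} are $\{a,b,d\}$ and those using \emph{up} are $\{b,c,f\}$; the letters using \emph{right} are $\{a,c,e\}$ and those using \emph{left} are $\{d,e,f\}$. Inspecting Figure~\ref{cvornikod2} then shows the decisive fact: an ordered pair is an arc of ${\cal D}_{ud}$ exactly when ``the upper letter uses \emph{down}'' is equivalent to ``the lower letter uses \emph{up}'', and it is an arc of ${\cal D}_{lr}$ exactly when ``the left letter uses \emph{right}'' is equivalent to ``the right letter uses \emph{left}'' --- that is, precisely when the two endpoints of the edge they share agree on whether that edge lies in the 2-factor.

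For the forward implication, given a 2-factor $F$ I would assign to $(i,j)$ its dictionary letter; this is well defined because $F$ is $2$-regular and the six letters exhaust all direction-pairs. A top-row vertex has no edge going \emph{up}, so its letter omits \emph{up} and lies in $\{a,d,e\}$; symmetrically a bottom-row vertex lies in $\{c,e,f\}$, which is condition~1(b). For vertically adjacent $(i,j)$ and $(i{+}1,j)$ the common vertical edge is either in $F$ or not, and by the equivalence above this is exactly arc-membership in ${\cal D}_{ud}$ (condition~1(a)); the identical argument on the horizontal edge between $(i,j)$ and $(i,j{+}1)$ gives condition~2. Condition~3 I would split by graph: in $RG_m(n)$ the first (last) column has no \emph{left} (\emph{right}) edge, forcing its letters into $\{a,b,c\}$ (resp. $\{b,d,f\}$), which is 3(a); in $TkC_m(n)$ the identification $B_i\equiv D_i$ makes $(i,n)$ and $(i,1)$ genuinely horizontally adjacent, so the ${\cal D}_{lr}$ equivalence transfers verbatim to give 3(b); in $MS_m(n)$ the identification $B_i\equiv D_{m-i+1}$ glues column $n$ to column $1$ through a top-to-bottom reflection of rows ($i\mapsto m{-}i{+}1$), which interchanges the roles of \emph{up} and \emph{down} across the seam. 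Reading $\alpha_{i,n}$ in the reflected frame of column~$1$ is exactly the horizontal conversion $\overline{\alpha}_{i,n}$, so the edge-agreement condition becomes ``$(\overline{\alpha}_{i,n},\alpha_{m-i+1,1})$ is a ${\cal D}_{lr}$ arc'', which is 3(c).

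For the converse and uniqueness, given any matrix over $\{a,\dots,f\}$ satisfying 1--3 I would define $F$ to consist of those edges of $G$ whose two endpoints each name, via their letters, the direction toward the other endpoint. Conditions~1(b) and 3(a) ensure that no letter ever selects a direction in which $G$ has no edge, so each selected direction is a genuine edge; conditions 1(a), 2 and 3(b)--(c) ensure that along every edge of $G$ the two endpoints agree on membership. Hence each vertex is incident in $F$ to precisely the two edges named by its letter, so $F$ is $2$-regular, i.e. a 2-factor. Since recovering the letters from $F$ returns the original matrix and the construction of $F$ from the matrix is single-valued, the two assignments are mutually inverse; this yields the bijection and, in particular, the asserted uniqueness.

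The routine heart of the argument is the pair of ``shared-edge agreement'' equivalences, which are immediate once the dictionary is pinned down. The one genuinely delicate point I expect is the Moebius seam: I must verify carefully that the gluing $B_i\equiv D_{m-i+1}$ reverses the \emph{up}/\emph{down} orientation, so that it is the \emph{horizontal} conversion $\overline{\alpha}_{i,n}$ --- not $\alpha_{i,n}$ itself, nor the vertical conversion $\alpha'_{i,n}$ --- that turns the seam edges into ordinary ${\cal D}_{lr}$ adjacencies. I would pin this down on the explicit Hamiltonian cycle of $MS_4(9)$ in Figure~\ref{primerTkC+MS}b) before trusting the general statement.
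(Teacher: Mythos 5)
Your proof is correct and takes essentially the same approach as the paper, whose own proof merely asserts that the properties ``can easily be proved directly, by checking all the possible edge arrangements (their compatibility) for adjacent vertices'' and that, conversely, the contact conditions encoded by ${\cal D}_{ud}$ and ${\cal D}_{lr}$ force the selected subgraph to be spanning and $2$-regular. You simply carry out that local shared-edge compatibility check in full detail (including the horizontal-conversion bookkeeping at the Moebius seam), so nothing needs to change.
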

\begin{proof}

The properties above can easily be  proved directly, by  checking  all the possible  edge arrangements (their  compatibility) for adjacent vertices of $G$ and constraints imposed by the structure of the considered graph $G$.
Vice versa,  alpha-letters and the possibility of their contact, expressed by digraphs  ${\cal D}_{ud}$ and  ${\cal D}_{lr}$, make sure that the subgraph of the graph $G$ determined by the matrix (marked with bold lines)  is a spanning 2-regular graph,
i.e. a  union of cycles (a 2-factor).
\end{proof}

\section{\bf \ \ Enumeration of 2-factors }
\label{sec:EF}

 Now, we can create  for each integer
  \ $ m $ \ $ (m \in N) $  a  digraph  \ $ {\cal D}_{m}
\stackrel{\rm def}{=} (V({\cal D}_{m}), E({\cal D}_{m}))$  \ (common  for all graphs from  ${\cal G}$), in the following way:
  \ the set of vertices  \ $ V({\cal D}_{m}) $  \ consists of all possible words $\alpha_{1,j}\alpha_{2,j} \ldots \alpha_{m,j}$ over  alphabet $ \{ a,b,c,d,e,f \}$ (called \emph{ alpha-words}) which fulfill   Condition 1 ({\em Column conditions})  from Lemma~\ref{lem:1};  an arc joins the vertex
   \ $ v = \alpha_{1,j}\alpha_{2,j} \ldots \alpha_{m,j}$ \ and the vertex  \ $ u= \alpha_{1,j+1}\alpha_{2,j+1} \ldots \alpha_{m,j+1}$, i.e. $(v,u) \in E({\cal D}_{m})$, \
or \ $ v \rightarrow u $ \ \  iff  for the  vertex  \  $ v$ \
and  $ u $   Condition 2 ({\em Adjacency of column condition}) from Lemma~\ref{lem:1} is satisfied
(vertex $v$  might be the previous column for vertex $u$ in  matrix  ${\cal C}(G)$).

The subsets of $V({\cal D}_{m})$ which consist of all the  possible first (last) columns in the matrix ${\cal C}(RG_{m}(n))$
(Condition 3a) is denoted by ${\cal F}_m$  (${\cal L}_m$).
From Condition 1b, the first column of ${\cal C}(RG_{m}(n))=[\alpha_{i,j}]_{m \times n}$ is
an alpha-word  from  \ $\{ a, b, c \}^m$, \ with
$\alpha_{1,1}= a$  \ and  \ $\alpha_{m,1}= c$. Similarly, the  last  column of ${\cal C}(RG_{m}(n))$  is an alpha-word from  \ $\{  b, d, f \}^m$, \ with $\alpha_{1,n}= d$  \  and  \ $\alpha_{m,n}= f$.
\begin{lem} \label{lem:2}
 The cardinality of sets $ {\cal F}_{m}  $ and ${\cal L}_{m} $ $(m \in N)$  are equal to the $(m-1)$th  member of the  Fibonacci sequence $F_{m-1}$.
\end{lem}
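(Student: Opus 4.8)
The plan is to count $\mathcal{F}_m$ directly as words over the three-letter sub-alphabet forced by Condition 3a, and then obtain $\mathcal{L}_m$ from it by a symmetry bijection. By the definitions preceding the lemma, a member of $\mathcal{F}_m$ is an alpha-word $\alpha_1\alpha_2\cdots\alpha_m\in\{a,b,c\}^m$ with $\alpha_1=a$ and $\alpha_m=c$ (Condition 3a together with Condition 1b, as already noted after the lemma) such that each consecutive pair $(\alpha_i,\alpha_{i+1})$ is an arc of $\mathcal{D}_{ud}$ (Condition 1a). So the first step is to restrict the digraph $\mathcal{D}_{ud}$ to the vertex set $\{a,b,c\}$; reading off the arcs incident to these three letters in Figure~\ref{cvornikod2} yields exactly the five admissible transitions $a\to b$, $a\to c$, $b\to b$, $b\to c$ and $c\to a$. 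Thus $|\mathcal{F}_m|$ equals the number of walks of length $m-1$ in this restricted digraph that start at $a$ and end at $c$.

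The second step is to evaluate this count by the transfer-matrix method. I would let $x_k,y_k,z_k$ denote the number of admissible length-$k$ prefixes that start with $a$ and terminate in $a,b,c$ respectively, so that $|\mathcal{F}_m|=z_m$. Reading the incoming arcs gives the recurrences
\begin{equation}
x_{k+1}=z_k,\qquad y_{k+1}=x_k+y_k,\qquad z_{k+1}=x_k+y_k,
\end{equation}
with initial data $x_1=1$ and $y_1=z_1=0$. From the last two relations one gets $y_k=z_k$ for all $k\ge 2$, and substituting $x_k=z_{k-1}$ then reduces everything to the Fibonacci recurrence $z_{k+1}=z_k+z_{k-1}$ for $k\ge 2$, with $z_1=0$ and $z_2=1$. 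Hence $z_m=F_{m-1}$, which is the claim for $\mathcal{F}_m$.

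For $\mathcal{L}_m$ the plan is to avoid repeating the computation and instead exhibit a bijection $\mathcal{F}_m\to\mathcal{L}_m$ via the letterwise vertical conversion $\alpha\mapsto\alpha'$, which sends $a\mapsto d$, $b\mapsto b$, $c\mapsto f$ and hence maps $\{a,b,c\}^m$ onto $\{b,d,f\}^m$. I would check that this map carries admissible first-column words to admissible last-column words: the endpoints go to the required $\alpha_{1,n}=d$ and $\alpha_{m,n}=f$, and since the $\mathcal{D}_{ud}$-adjacency of a pair $(\beta,\gamma)$ depends only on whether $\beta$ has a downward edge and $\gamma$ an upward edge (it is an arc iff these two incidences agree), and the left--right reflection $'$ preserves upward/downward incidences, the arc structure is carried across. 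Being a letterwise relabelling applied along the column, the map is a bijection, so $|\mathcal{L}_m|=|\mathcal{F}_m|=F_{m-1}$.

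The main obstacle is not the recurrence, which is routine, but the two book-keeping steps that feed it: correctly reading the restriction of $\mathcal{D}_{ud}$ to $\{a,b,c\}$ off Figure~\ref{cvornikod2} so as to obtain precisely the five admissible transitions, and verifying that the vertical-conversion reflection genuinely preserves $\mathcal{D}_{ud}$-adjacency, which is exactly what makes the bijection $\mathcal{F}_m\to\mathcal{L}_m$ legitimate.
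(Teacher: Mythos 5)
Your proof is correct and follows essentially the same route as the paper: both reduce $|\mathcal{F}_m|$ to counting length-$(m-1)$ walks from $a$ to $c$ in the subdigraph of $\mathcal{D}_{ud}$ induced by $\{a,b,c\}$ and extract the Fibonacci recurrence from its transfer matrix (the paper via the characteristic polynomial $\lambda(\lambda^2-\lambda-1)$, you via explicit elimination of the walk counts, which agree). Your vertical-conversion bijection $\mathcal{F}_m\to\mathcal{L}_m$ is a clean, explicit version of the paper's ``the proof for $\mathcal{L}_m$ can be carried out analogously,'' and is consistent with the paper's Remark on vertical conversion.
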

\begin{proof}

The cardinal number of the set ${\cal F}_{m}$ is equal to the number of all oriented walks  of length $m-1$
starting with vertex $a$ and ending with  vertex $c$
in the subdigraph of ${\cal D}_{ud}$ induced by the set $\{ a,b,c \}$.
Note that  characteristic polynomial of the adjacency matrix of this digraph is $\lambda (\lambda^2 - \lambda -1)$, i.e.
the required sequence $ \mid {\cal F}_{m} \mid  $ $(m \in N)$ obeys the same recurrence relation as the Fibonacci sequence.
Since   $ \mid {\cal F}_{1} \mid  =0= F_0$ and  $ \mid {\cal F}_{2} \mid  =1= F_1$ ($ \mid {\cal F}_{3} \mid  =1= F_2$; $ {\cal F}_{2} = \{ ac\}$, $ {\cal F}_{3} = \{ abc\}$), by induction, we conclude our assertion. The proof for the set  ${\cal L}_{m} $  can be carried out  analogously.
\end{proof}

Let ${\cal P}_m = [p_{ij}]$ be the square binary  matrix of order $ \mid V({\cal D}_{m}) \mid $ for which $p_{i,j} =1$ iff the $i$-th and $j$-th vertex of the digraph
 ${\cal D}_{m}$ can be obtained  from each other by horizontal conversion; otherwise $p_{i,j} =0$. Note that  matrix ${\cal P}_m$ is a symmetric  one.
The following theorem is essential for the enumeration of 2-factors.

\begin{lem}  \label{lem:3}
If  $f_m^G(n)$  ($ m \geq 2$) denotes the number of 2-factors of $G \in  {\cal G}$, then
$$f_m^G(n)= \left \{  \; \;
 \begin{array}{ll}
 \ds \sum_{v_i \in {\cal F}_m} \sum_{v_j \in {\cal L}_m }  a_{i,j}^{(n-1)} =
   \ds \sum_{v_i \in {\cal F}_m } a_{i,i}^{(n)} =a_{1,2}^{(n+1)} ,  &   \mbox{ if  } \; \; G = RG, \\ \\
 \ds  tr({\cal T}_m^n) = \sum_{ v_i \in   V({\cal D}_m)} a_{i,i}^{(n)}, &                    \mbox{ if  } \; \; G = TkC, \\ \\
   \ds  tr({\cal P}_m  \cdot   {\cal T}_m^n) ,     &             \mbox{ if  } \; \; G =  MS,      \end{array}  \right. $$
   where  ${\cal T}_m = [a_{ij}]$ is  the  adjacency matrix of the digraph  \ $ {\cal D}_{m} $ (transfer matrix) and
      vertices $v_1, v_2 \in V({\cal D}_{m})$ are words   $db^{m-2}f$ and $ab^{m-2}c$, respectively.
\end{lem}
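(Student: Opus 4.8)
The unifying first step is to invoke the characterisation in Lemma~\ref{lem:1} to read every 2-factor of $G_m(n)$ as a sequence of columns $C_1,C_2,\dots,C_n$, each of which is a vertex of ${\cal D}_m$ by Condition~1. Condition~2 says exactly that $C_j\to C_{j+1}$ is an arc for $1\le j\le n-1$, so the $n$ columns form an oriented walk of length $n-1$ in ${\cal D}_m$; conversely, every such walk whose boundary columns obey Condition~3 yields a unique 2-factor. Hence in all three cases it suffices to count the appropriate oriented walks, and I would organise the entire proof around a single structural observation about ${\cal D}_m$.

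That observation is that the arc relation of ${\cal D}_m$ factors through the left/right edge profiles of a column. For a vertex $w=\alpha_1\cdots\alpha_m$ let $R(w)\in\{0,1\}^m$ record the rows carrying a right-edge (the letters $a,c,e$) and $L(w)$ the rows carrying a left-edge (the letters $d,e,f$). Reading the contacts off Figure~\ref{cvornikod2}, one checks that $w\to w'$ in ${\cal D}_m$ holds iff $R(w)=L(w')$. From this I would deduce that ${\cal F}_m$ (resp. ${\cal L}_m$) is precisely the set of vertices with $L(w)=\mathbf 0$ (resp. $R(w)=\mathbf 0$) once the forced corner letters are imposed, and therefore that the in-neighbourhood of any $v\in{\cal F}_m$ is exactly ${\cal L}_m$, while the out-neighbourhood of $v_1=db^{m-2}f$ and the in-neighbourhood of $v_2=ab^{m-2}c$ are exactly ${\cal F}_m$ and ${\cal L}_m$ (since $R(v_1)=L(v_2)=\mathbf 0$).

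For $G=RG$ the first equality is then immediate from the walk correspondence with $C_1\in{\cal F}_m$, $C_n\in{\cal L}_m$. The second follows by expanding $a_{i,i}^{(n)}=\sum_w a_{i,w}^{(n-1)}a_{w,i}^{(1)}$ and using that the only in-neighbours of $v_i\in{\cal F}_m$ are the members of ${\cal L}_m$; the third follows by peeling the first and last arcs off a walk of length $n+1$ from $v_1$ to $v_2$ and invoking the neighbourhood descriptions above. For $G=TkC$ the only new ingredient is Condition~3b, which supplies the wrap-around arc $C_n\to C_1$, so that 2-factors correspond bijectively to closed walks of length $n$ in ${\cal D}_m$ and $f^{TkC}_m(n)=\operatorname{tr}({\cal T}_m^{\,n})=\sum_{v_i} a_{i,i}^{(n)}$.

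The $MS$ case is where the real work sits, and I expect the index bookkeeping there to be the main obstacle. The plan is to show that Condition~3c, namely $(\overline\alpha_{i,n},\alpha_{m-i+1,1})$ being an arc of ${\cal D}_{lr}$ for all $i$, is equivalent to the single statement that $\overline{C_n}\to C_1$ is an arc of ${\cal D}_m$, where $\overline{C_n}$ is the horizontal conversion of the word $C_n$. This requires observing that the reversal built into horizontal conversion cancels the index reversal $i\mapsto m-i+1$, and separately that $\overline{C_n}$ is again a legitimate vertex of ${\cal D}_m$ (the reflection preserves the column condition and carries the admissible top/bottom letters to admissible ones), so that horizontal conversion is an involution of $V({\cal D}_m)$ represented precisely by ${\cal P}_m$. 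Granting this, a 2-factor corresponds to a walk $C_1\to\cdots\to C_n$ of length $n-1$ together with the arc $\overline{C_n}\to C_1$, and expanding $\operatorname{tr}({\cal P}_m{\cal T}_m^{\,n})=\sum_i a^{(n)}_{\bar i,i}=\sum_i\sum_k a^{(1)}_{\bar i,k}a^{(n-1)}_{k,i}$ reproduces exactly this count, giving $f^{MS}_m(n)=\operatorname{tr}({\cal P}_m{\cal T}_m^{\,n})$.
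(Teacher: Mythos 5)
Your proposal is correct and follows essentially the same route as the paper: reduce via Lemma~\ref{lem:1} to counting oriented walks in ${\cal D}_m$, observe that the arc relation is governed by the outlet/inlet profiles so that ${\cal L}_m$ and ${\cal F}_m$ are exactly the in- and out-neighbourhoods of $ab^{m-2}c$ and $db^{m-2}f$ (and of each other), and then read off the three formulas as matrix-power entries, a trace, and a ${\cal P}_m$-twisted trace. If anything, your treatment of the $MS$ case (the cancellation of the index reversal $i\mapsto m-i+1$ against the reversal in horizontal conversion, and the closure of $V({\cal D}_m)$ under that conversion) is more explicit than the paper's, which leaves those checks implicit.
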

\begin{proof}

Using Lemma~\ref{lem:1} the problem of enumeration of 2-factors   in \ $ G  \in {\cal G}$  (with $m \times n$ vertices)  reduces to the enumeration of all the possible code matrices ${\cal C}(G_m(n))$, i.e. all oriented walks of the length \ $ n-1 $ in the digraph  \ $ {\cal D}_{m} $ for which the initial and  final vertices satisfy The First and  Last conditions from Lemma~\ref{lem:1}.
Note that the set  ${\cal F}_m$ consists of all the direct successors of all the  vertices from ${\cal L}_m$ (including  vertex $db^{m-2}f $), and
the set  ${\cal L}_m$ consists of all the  direct  predecessors of all the  vertices from ${\cal F}_m$ (including  vertex  $ab^{m-2}c $.
 So, in this way, for $ G = RG_m(n)$ our problem of enumeration of all oriented walks of  length
\ $ n -1$ \ in the digraph \ $ {\cal D}_{m} $ \ with  initial vertices in  ${\cal F}_m$ and the  final vertices in  ${\cal L}_m$  can be reduced to enumeration of
 all the  closed  oriented walks of  length \ $ n $ \ in the digraph   \ $ {\cal D}_{m} $ \ with   initial vertex  from the  set ${\cal F}_m$
 or the number of all oriented walks of  length \ $ n +1$ \ \ with  initial vertex $db^{m-2}f $ and the  final vertex $ab^{m-2}c $.
Recall that  the $(i,j)$-entry  $ a_{i,j}^{(k)}$ of $k$-th degree of the adjacency matrix ${\cal T}_m $ of the digraph  \ $ {\cal D}_{m} $
represents the number of all oriented walks of  length \ $ k $ ($k \in N$) \ which start with  vertex $v_i$ and end with vertex $v_j$ ($v_i, v_j \in V({\cal D}_{m})$)\cite{KK}.
This implies the   assertion of Lemma~\ref{lem:3}.
\end{proof}
 \begin{rem} \label{Primedba2}
For vertical conversion  the following applies: \\
a) $ v  \rightarrow v' $,  where $v \in V({\cal D}_m)$, \\
b)  If $ v \rightarrow w$, then $ w' \rightarrow v' $, where  $v,w \in V({\cal D}_m)$.
  \end{rem}
Consequently, for any vertex $v \in V({\cal D}_m) \; \;  (v')' = v$.
For example,  vertical conversion pairs for ${\cal D}_3$ are: $abf$ and $dbc$, $edf$ and $eac$, $dfe$ and $ace$, $abc$ and $dbf$, $afe$ and $dce$, $edc$ and $eaf$;
while the vertical conversion of $eee \in V({\cal D}_3)$ is  this vertex itself (see  Figure~\ref{SirokiP3Cvorovi}).

\begin{lem}  \label{lem:4}
  The number of vertices in  ${\cal D}_m$  is $\mid V({\cal D}_m) \mid = \ds \frac{1}{2}(3^{m} + (-1)^{m}).$
\end{lem}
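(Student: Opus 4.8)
The plan is to count directly the words $w_{1}w_{2}\cdots w_{m}\in\{a,b,c,d,e,f\}^{m}$ that constitute $V({\cal D}_m)$, i.e.\ those satisfying Condition~1 of Lemma~\ref{lem:1}: every consecutive pair $(w_{i},w_{i+1})$ is an arc of ${\cal D}_{ud}$, together with the boundary requirements $w_{1}\in\{a,d,e\}$ and $w_{m}\in\{c,e,f\}$. The key observation, read off directly from the digraph ${\cal D}_{ud}$ in Figure~\ref{cvornikod2}, is that the out-neighbourhood of a letter depends only on whether that letter uses its \emph{lower} (downward) edge: the out-neighbourhood is $\{b,c,f\}$ for every letter in $\{a,b,d\}$ (those meeting the downward edge) and $\{a,d,e\}$ for every letter in $\{c,e,f\}$ (those not meeting it). Dually, the letters meeting the \emph{upper} edge are exactly $\{b,c,f\}$, while $\{a,d,e\}$ are those that do not; so the start condition $w_{1}\in\{a,d,e\}$ is simply ``$w_{1}$ uses no upward edge'' and the end condition $w_{m}\in\{c,e,f\}$ is ``$w_{m}$ uses no downward edge''. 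This turns the constraint ``$(w_{i},w_{i+1})$ is an arc'' into the local matching rule that the downward incidence of $w_{i}$ equals the upward incidence of $w_{i+1}$.

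With this dictionary in hand, I would set up a two-state recurrence. Let $A_{k}$ (respectively $B_{k}$) be the number of legal prefixes $w_{1}\cdots w_{k}$ (with $w_{1}$ using no upward edge and every internal pair an arc) whose last letter $w_{k}$ does \emph{not} (respectively \emph{does}) use its downward edge. Extending a prefix ending in the ``no downward edge'' state forces $w_{k+1}\in\{a,d,e\}$, of which $e$ keeps the state and $a,d$ switch it; extending one ending in the ``downward edge'' state forces $w_{k+1}\in\{b,c,f\}$, of which $c,f$ switch and $b$ keeps. Hence
\[
A_{k+1}=A_{k}+2B_{k},\qquad B_{k+1}=2A_{k}+B_{k},
\]
with initial values $A_{1}=1$ (the letter $e$) and $B_{1}=2$ (the letters $a,d$). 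Since the admissible words are exactly those ending with no downward edge, $|V({\cal D}_m)|=A_{m}$.

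The recurrence is then solved in the usual way by tracking the sum and the difference: adding and subtracting the two relations gives $A_{k+1}+B_{k+1}=3(A_{k}+B_{k})$ and $A_{k+1}-B_{k+1}=-(A_{k}-B_{k})$, so from $A_{1}+B_{1}=3$ and $A_{1}-B_{1}=-1$ one obtains
\[
A_{k}+B_{k}=3^{k},\qquad A_{k}-B_{k}=(-1)^{k},
\]
and therefore $|V({\cal D}_m)|=A_{m}=\tfrac12\bigl(3^{m}+(-1)^{m}\bigr)$, as claimed. (Equivalently, this is the $(0,0)$-entry of the $m$-th power of the transfer matrix $\bigl(\begin{smallmatrix}1&2\\2&1\end{smallmatrix}\bigr)$, whose eigenvalues are $3$ and $-1$.) I do not expect a genuine obstacle here: the entire difficulty is the bookkeeping of the first paragraph, namely correctly reading the two out-neighbourhood classes of ${\cal D}_{ud}$ and confirming that the start/end letter sets of Condition~1b coincide with the ``no upward edge''/``no downward edge'' classes; once that identification is fixed, the count reduces to the elementary two-term recurrence above. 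As a sanity check one verifies $A_{2}=5$, $A_{3}=13$, $A_{4}=41$, matching $\tfrac12(3^{m}+(-1)^{m})$ and the thirteen vertices of ${\cal D}_3$ listed before the statement.
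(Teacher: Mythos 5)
Your proof is correct, and it follows the same basic strategy as the paper: both count the words satisfying Condition~1 as oriented walks of length $m-1$ in ${\cal D}_{ud}$ that start in $\{a,d,e\}$ and end in $\{c,e,f\}$. The difference is in how the linear algebra is carried out. The paper works with the full $6\times 6$ adjacency matrix of ${\cal D}_{ud}$, reads off its characteristic polynomial $\lambda^4(1+\lambda)(\lambda-3)$, deduces the second-order recurrence $c_m=2c_{m-1}+3c_{m-2}$, and checks $c_1=1$, $c_2=5$. You instead observe that the out-neighbourhood of a letter in ${\cal D}_{ud}$ depends only on whether that letter carries a downward edge, which lumps the six letters into two states and reduces everything to the $2\times 2$ system $A_{k+1}=A_k+2B_k$, $B_{k+1}=2A_k+B_k$ with eigenvalues $3$ and $-1$; this is cleaner and makes the appearance of $3^m$ and $(-1)^m$ transparent. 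Your identification of the two classes ($\{a,b,d\}$ carrying a downward edge, $\{b,c,f\}$ an upward one, and the boundary sets $\{a,d,e\}$, $\{c,e,f\}$ being exactly the complements) is consistent with the conventions fixed by the paper's First/Last Column and first/last row conditions, and your initial values and sanity checks are right. One small divergence: the paper's proof ends with an extra step (via Remark~\ref{Primedba2}a) showing that every word satisfying the Column conditions actually occurs as a column of the code matrix of some 2-factor of $TkC_m(2)$, i.e.\ that none of these formally admissible words is spurious. Under the literal definition of $V({\cal D}_m)$ as \emph{all} words satisfying Condition~1 this step is not needed and your count already gives equality, but it is worth being aware that the authors include it to guarantee the vertices they count are all realizable.
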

\begin{proof}

The number of all the  words  of length $m$ over
alphabet $\{a,b,c,d,e,f \}$ that satisfy Column conditions of  Lemma~\ref{lem:1}) is  equal to the number of all the
oriented walks  of length $m-1$ in ${\cal D}_{ud}$
starting with a vertex from  $ \{ a,d,e \}$ and finishing with a vertex from  $\{ c,e,f \}$.
Characteristic polynomial of the adjacency matrix of this digraph is $P(\lambda) =\lambda^4(1+\lambda)(\lambda -3)$.
Thus, the required sequence, labeled with $c_m$ $(m \in N)$,  obeys the  recurrence relation
 $c_m = 2c_{m-1}+3c_{m-2}$, with initial conditions   $c_1=1$ (the word $e$), $c_2 = 5$ (the words $ac,bd,ee,dc$ and $af$).
  Using standard procedure for solving recurrence relations we obtain  $\ds c_m    = \frac{(-1)^{m}+3^{m}}{2}$,
 which implies  $ \mid V({\cal D}_{m}) \mid \leq \ds \frac{(-1)^{m}+3^{m}}{2}$.
 In order to prove strict equality note that for any  word $w$ (with  properties described above), from Remark~\ref{Primedba2}a) we have  $w \rightarrow w'$ and $w' \rightarrow w$.
This implies  that the word $w$ appears as a column  in the    code matrix ${\cal C}(TkC_m(2))$ of some  2-factor of the graph $TkC_m(2)$.
\end{proof}

\begin{figure}[htb]
\begin{center}
\includegraphics[width=4in]{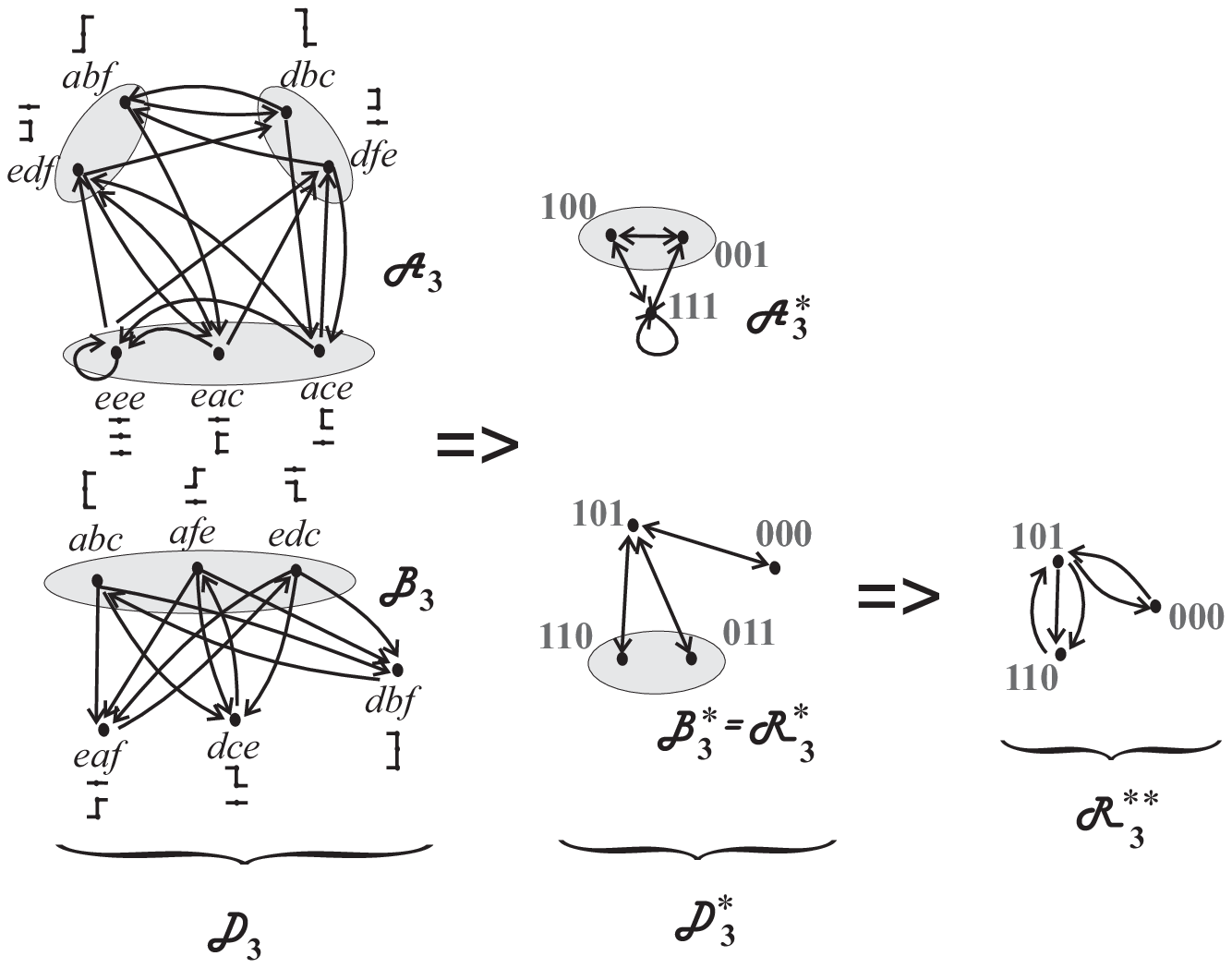}
\\ \ \vspace*{-18pt}
\end{center}
\caption{Digraphs ${\cal D}_3$, ${\cal D}^*_3$ and ${\cal R}^{**}_3$.}
\label{SirokiP3Cvorovi}
\end{figure}

For each vertex $\alpha_1 \alpha_2 \ldots \alpha_m \in V({\cal D}_{m})$ we introduce a binary word called an \emph{ outlet (inlet)
word } depending on whether the situations shown in Figure~\ref{cvornikod1} matched  to  it's letters has an edge ``on the right''  (``on the left'') or not.

\begin{defn}
The  \emph{ outlet
word } (\emph{inlet
word}) of the word  $\alpha \equiv \alpha_1 \alpha_2 \ldots\alpha_m \in V({\cal D}_{m})$ is the binary
  word \ $o(\alpha) \equiv o_1o_2 \ldots o_m$  ($i(\alpha) \equiv i_1i_2 \ldots i_m$) \ where \

  $$ \ds o_j
\stackrel{\rm def}{=} \left \{
\begin{array}{cc}{}
0, & \; \; if \; \; \alpha_j \in \{ b, d, f \} \\
1, & \; \; if \; \; \alpha_j \in \{ a, c, e \}
\end{array}
 \right.  \; \;  and  \; \;  i_j
\stackrel{\rm def}{=} \left \{
\begin{array}{cc}{}
0, & \; \; if \; \; \alpha_j \in \{a,b,c \} \\
1, & \; \; if \; \; \alpha_j \in \{ d,e,f \}
\end{array}
 \right. ,  \; \; \; 1 \leq j \leq m
$$
\end{defn}

\begin{lem}  \label{lem:5}
Digraph ${\cal D}_{m}$ for $m \geq 2$ is disconnected. Each of its components is a strongly connected digraph.
\end{lem}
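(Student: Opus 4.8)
The plan is to prove the two assertions separately: disconnectedness by exhibiting a quantity that is constant on every (weakly) connected component but takes at least two values, and strong connectivity by showing that every arc of ${\cal D}_m$ lies on a directed cycle, so that weak and strong connectivity coincide on each component.

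For the first part, assign to each vertex $v=\alpha_1\alpha_2\ldots\alpha_m\in V({\cal D}_m)$ the number $R(v)$ of letters $\alpha_j\in\{a,c,e\}$ (equivalently, the number of $1$'s in the outlet word $o(v)$, i.e.\ the number of edges leaving the column ``on the right''), and the number $L(v)$ of letters $\alpha_j\in\{d,e,f\}$ (the number of $1$'s in the inlet word $i(v)$). Two observations drive the argument. First, inside a single column each of the $m$ vertices has degree $2$, so there are $2m$ incident edge-ends; the vertical ones number $2E$, where $E$ is the number of vertical edges of the column (each vertical edge is the ``down'' half at its upper vertex and the ``up'' half at its lower vertex), whence the number of horizontal edge-ends $L(v)+R(v)=2m-2E$ is even and $L(v)\equiv R(v)\pmod 2$. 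Second, an arc $v\to u$ forces, row by row via ${\cal D}_{lr}$, that $\alpha_j^{v}$ carries a right-edge iff $\alpha_j^{u}$ carries a left-edge, that is $o(v)=i(u)$ and hence $R(v)=L(u)$. Combining the two yields $R(v)\equiv R(u)\pmod 2$ for every arc, so the parity of $R$ is constant on each weakly connected component. Finally both parities are realised for every $m\ge 2$: the words $a\,c\,e^{m-2}$ and $d\,c\,e^{m-2}$ are valid vertices (one checks the Column conditions and the ${\cal D}_{ud}$-arcs directly) with $R$ equal to $m$ and $m-1$, of opposite parity. Hence ${\cal D}_m$ has at least two components and is disconnected.

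For the second part I use vertical conversion and Remark~\ref{Primedba2}. By part a) every vertex $x$ satisfies $x\to x'$; applying this to $v'$ and using $(v')'=v$ gives $v'\to v$. Given an arc $v\to u$, part b) yields $u'\to v'$. Chaining these,
\[
u\ \to\ u'\ \to\ v'\ \to\ v ,
\]
so there is a directed path from $u$ back to $v$. Thus every arc of ${\cal D}_m$ lies on a directed cycle. Consequently, the two endpoints of any (undirected) edge inside a component are mutually reachable; since mutual reachability is an equivalence relation, this property propagates along undirected paths, and each weakly connected component is therefore strongly connected.

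The strong-connectivity half is essentially immediate once Remark~\ref{Primedba2} is available, so the only genuine work lies in the first half, and the creative step I expect to be the main obstacle is isolating the correct invariant. One must recognise that it is the parity of the number of right-going edges, and \emph{not} the full vertical-edge pattern $(e_1,\ldots,e_{m-1})$ (which is not preserved along arcs, as already $ee\to df$ for $m=2$ shows), that separates the components; one then verifies both its conservation and the realisability of both values for all $m\ge 2$. The conservation ultimately rests on the elementary identity $L(v)+R(v)=2m-2E$, and the realisability on the explicit family $a\,c\,e^{m-2}$, $d\,c\,e^{m-2}$. (This invariant in fact suggests that ${\cal D}_m$ has exactly two components, but the lemma requires only that it has at least two.)
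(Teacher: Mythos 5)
Your proof is correct and follows essentially the same route as the paper's: the invariant separating components is the parity of the number of $1$'s in the outlet word, conserved via the same edge-end count ($2m$ ends per column, vertical edges counted twice, and $o(v)=i(u)$ along an arc), and strong connectivity of each component comes from reversing arcs with vertical conversion exactly as in Remark~\ref{Primedba2}. The only cosmetic differences are your choice of witnesses $ace^{m-2}$, $dce^{m-2}$ in place of the paper's $ab^{m-2}c$, $ab^{m-2}f$, and reversing a single arc rather than a whole walk.
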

\begin{proof}

Let $v \rightarrow w$, where $v,w \in {\cal D}_{m}$   and $w \equiv \alpha_1 \alpha_2 \ldots  \alpha_m$. Notice  that $o(v)\equiv i(w)$ and the  total number of   bold  edges in  all situations  shown in Figure~\ref{cvornikod1}  for  all $\alpha_1, \alpha_2, \ldots , \alpha_m$ is equal to $2m$.
Considering that each  vertical edge is calculated twice in that sum, we conclude that the numbers of  $1$s in $o(v)$ and
 in  $o(w)$ have the same  parity.  Consequently, any two vertices  of ${\cal D}_{m}$ having  different parity of the   number of 1's in their  outlet words (for example, the words $ab^{m-2}c$ and $ab^{m-2}f$)   can not belong to the same component of  ${\cal D}_{m}$, i.e. ${\cal D}_{m}$ is disconnected.

To prove the second assertion of the theorem, observe  an arbitrary oriented  walk  $w_0 w_1 \ldots  w_{k-1} w_k$ (of length  $k \in N$).
Then using    Remark~\ref{Primedba2}  conclude that  there exists an oriented  walk  $w_k w'_k  w'_{k-1} \ldots$   $  w'_1 w'_0 w_0 $ which starts and finishes with $w_k$ and $w_0$, respectively.
Consequently, all components are strongly connected digraphs.
 \end{proof}

Let ${\cal D}_m = {\cal A}_m \cup {\cal B}_m$, $m \geq 2$
where ${\cal A}_m$ is the component of the  digraph ${\cal D}_{m}$  which contains the vertex  $e^{m}$ ($m \geq 2$), i.e. with the outlet word  $11 \ldots 1$  - the unique vertex  with loop.
The fact that  the word $db^{m-2}f$ ($m \geq 2$) (with the outlet word $00 \ldots 0$) belongs to   ${\cal A}_m$ depends on the parity of  $m$.
Now, let  ${\cal R}_m$ denote the component of   ${\cal D}_{m}$ which contains the vertex $db^{m-2}f$ ($m \geq 2$). Its vertices are possible columns of   matrices $ {\cal C}(RG_m(n)) $ for $n \in N$.

\begin{lem}  \label{lem:6}
 ${\cal A}_m \equiv {\cal R}_{m}$ iff   $m $ is even.
 \end{lem}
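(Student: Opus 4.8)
The plan is to prove the two implications separately, the key tool in both being the parity of the number of $1$'s in the outlet word, which by Lemma~\ref{lem:5} is constant on each connected component of ${\cal D}_m$. Recall that ${\cal A}_m$ is by definition the component containing the loop vertex $e^m$, and ${\cal R}_m$ the component containing $db^{m-2}f$; so proving the equivalence amounts to deciding when these two distinguished vertices lie in the same component.

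For necessity (the ``only if'' direction) I would simply read off the two outlet words. The vertex $e^m$ has $o(e^m)=1^m$, containing exactly $m$ ones, while $db^{m-2}f$ has outlet word $0^m$, containing none. Since the proof of Lemma~\ref{lem:5} shows that the parity of the number of $1$'s in the outlet word is a component invariant, the equality ${\cal A}_m\equiv{\cal R}_m$ would force $m$ and $0$ to have the same parity, i.e. $m$ even. This direction is immediate.

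For sufficiency (the ``if'' direction) I would, assuming $m$ even, exhibit a single directed walk joining the two vertices; because each component is strongly connected (Lemma~\ref{lem:5}), a walk in either direction is enough. The natural bridge is the alternating column $v_0=(ac)^{m/2}$. First I would check that $v_0$ is a genuine vertex of ${\cal D}_m$: its first letter $a$ and last letter $c$ meet Condition~1b of Lemma~\ref{lem:1}, and the alternation $a\to c\to a\to\cdots$ respects ${\cal D}_{ud}$; this is exactly the point where the hypothesis ``$m$ even'' enters, since only then does the pattern start with $a$ and terminate with $c$. Then I would verify the two arcs $db^{m-2}f\to v_0$ and $v_0\to e^m$ from the adjacency-of-column condition: the outlet word of $db^{m-2}f$ equals the inlet word of $v_0$ (both $0^m$), giving the first arc, and the outlet word of $v_0$ equals the inlet word of $e^m$ (both $1^m$), giving the second. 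This produces the length-two walk $db^{m-2}f\to(ac)^{m/2}\to e^m$, placing $e^m$ in ${\cal R}_m$ and hence ${\cal A}_m\equiv{\cal R}_m$.

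The verifications are all routine once the bridge is chosen, so the only real difficulty is spotting that the alternating column $(ac)^{m/2}$, available precisely when $m$ is even, simultaneously absorbs the all-zero outlet of $db^{m-2}f$ and feeds an all-one outlet into the loop vertex $e^m$. As a sanity check one may note that this walk is nothing but the code matrix of the 2-factor of $RG_m(n)$ consisting of $m/2$ stacked horizontal rectangular cycles (first column $(ac)^{m/2}$, interior columns $e^m$, last column $(df)^{m/2}$), which exists exactly for even $m$ and thereby realizes the claimed connection geometrically.
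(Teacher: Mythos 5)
Your proof is correct and follows essentially the same strategy as the paper: for odd $m$ it invokes the parity of the number of $1$'s in the outlet word (the component invariant from Lemma~\ref{lem:5}) to separate $e^m$ from $db^{m-2}f$, and for even $m$ it exhibits an explicit short walk joining them. The only cosmetic difference is the bridge: you use the length-two walk $db^{m-2}f\to(ac)^{m/2}\to e^m$, whereas the paper uses the length-three walk $e^m\to(df)^{m/2}\to ab^{m-2}c\to db^{m-2}f$; both are valid.
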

\begin{proof}
For $m = 2k$ ($k \in N$)  $e^{m}     \rightarrow (df)^k  \rightarrow   ab^{m-2}c \rightarrow   db^{m-2}f $
which implies ${\cal A}_m \equiv {\cal R}_{m}$.   For $m$ odd, the  outlet words of   vertices  in $V({\cal A}_m)$ have odd, while
outlet words of   vertices  in $V({\cal R}_m)$ have even numbers of $1$s.
 Consequently,  ${\cal A}_m \not\equiv {\cal R}_{m}$.
 \end{proof}

In order to reduce the  transfer matrix ${\cal T}_m$, note that two vertices  from \ ${\cal D}_m$ \ with
the same outlet word have the same set of direct successors.
For  all vertices from  \ $ V({\cal D}_{m})$ \  \ having the same corresponding outlet word
 we replace  them with just one vertex,   labeled by their common  outlet word.
 Arcs from  $ E({\cal D}_{m})$ starting from these contracted vertices and finishing
with the   same vertex are substituted with only one arc.
 This way, we obtain the   digraph \ $ {\cal D}^*_{m}$, with  adjacency (transfer) matrix  ${\cal T}^*_m$.
 For example, in  Figure~\ref{SirokiP3Cvorovi} vertices $abc, afe, edc \in  V({\cal D}_{3})$ with the common outlet word $101$ are contracted at  $101 \in  V({\cal D}^*_{3})$. (Here the double-headed arrow represents two different edges, one for each direction.)

Note that two different  vertices from  \ $ V({\cal D}_{m})$ with the same outlet word can not have the same direct predecessor.
 This implies that there exist no multiple edges in
${\cal D}^*_m$, i.e. entries of ${\cal T}^*_m$ are from the set $\{ 0,1 \}$.

 \begin{thm}  \label{thm:TkC3}
Adjacency matrix  ${\cal T}^*_m$ of the digraph ${\cal D}^*_m$ is a symmetric binary matrix, i.e. ${\cal T}^*_m = ({\cal T}^*_m)^{T} $.
\end{thm}
\begin{proof}

Let us prove that      $v \rightarrow w$  iff  $ w \rightarrow v$, for any two vertices $v, w \in V({\cal D}^*_m)$.

Suppose that $v$ is a direct predecessor of  $ w$, i.e. $v \rightarrow w $. This  implies that  there exist vertices  $x, y \in V({\cal D}_m)$  such that
$x \rightarrow y$, where $ o(x) = v$ and $ o(y) = w$.  Using reflection symmetry (with  the vertical axis) we have $ v = o(x) =i(y) =  o(y')$ and $ y \rightarrow y' $. Consequently, $w \rightarrow v$.  \end{proof}

Since the vertices from $V({\cal D}_m)$ which are contracted belong to the same component,
Lemma~\ref{lem:5} implies

\begin{thm}  \label{thm:SC}
Digraph ${\cal D}^*_{m}$ for $m \geq 2$ is disconnected. Each of its components is a strongly connected digraph.
\end{thm}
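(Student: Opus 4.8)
The plan is to exploit the fact that ${\cal D}^*_m$ is the image of ${\cal D}_m$ under the contraction map $\phi$ that sends every vertex to its outlet word and every arc to the corresponding arc, and then to transport across this map the structural information about ${\cal D}_m$ already secured in Lemma~\ref{lem:5}. The whole argument rests on one preparatory claim: \emph{the contraction identifies only vertices that already lie in the same component of ${\cal D}_m$}. Once this is in hand, both the disconnectedness and the component-wise strong connectivity of ${\cal D}^*_m$ follow by a routine transfer, and no fresh combinatorial analysis of outlet words is needed.

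First I would establish the preparatory claim. Let $x, y \in V({\cal D}_m)$ satisfy $o(x) = o(y)$. As noted just before Theorem~\ref{thm:TkC3}, two vertices with the same outlet word have the same set of direct successors. By Remark~\ref{Primedba2}a) the vertical conversion $x'$ is a vertex of ${\cal D}_m$ and $x \rightarrow x'$, so $x$ has at least one successor $z := x'$; since $o(x)=o(y)$, this same $z$ is also a successor of $y$, i.e. $x \rightarrow z$ and $y \rightarrow z$. Each of the arcs $x\to z$ and $y \to z$ forces its two endpoints into one weakly connected component, and by Lemma~\ref{lem:5} every component is strongly connected; hence $x$, $y$ and $z$ all lie in the same component of ${\cal D}_m$. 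This proves the claim.

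Next I would run the transfer. Write ${\cal D}_m = K_1 \cup \cdots \cup K_t$ as the disjoint union of its components, with $t \geq 2$ by Lemma~\ref{lem:5}. Because a directed walk maps under $\phi$ to a directed walk, the restriction $\phi|_{K_i}$ carries the strongly connected $K_i$ onto a strongly connected subdigraph $\phi(K_i)$ of ${\cal D}^*_m$. The preparatory claim shows that the sets $\phi(K_i)$ are pairwise vertex-disjoint: a vertex shared by $\phi(K_i)$ and $\phi(K_j)$ would be an outlet word realized by vertices in two different components, contradicting the claim. Moreover every arc of ${\cal D}^*_m$ is, by the definition of the contraction, the image of an arc of ${\cal D}_m$, and every arc of ${\cal D}_m$ lies inside a single $K_i$; therefore no arc of ${\cal D}^*_m$ joins $\phi(K_i)$ to $\phi(K_j)$ for $i \neq j$. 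Consequently the $\phi(K_i)$ are exactly the connected components of ${\cal D}^*_m$: each is (strongly) connected, they partition the vertex set, and there are no arcs between them. Since $t \geq 2$, the digraph ${\cal D}^*_m$ is disconnected and each of its components $\phi(K_i)$ is strongly connected, which is the assertion.

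I expect the only genuine obstacle to be the preparatory claim, and within it the need to produce, for two vertices sharing an outlet word, a common successor witnessing that they lie in the same component of ${\cal D}_m$. The combination of the "same outlet word $\Rightarrow$ same successor set" fact with the guaranteed successor $x'$ from Remark~\ref{Primedba2}a) supplies exactly this witness; everything afterward is the standard observation that a surjective arc-preserving map neither creates adjacencies between distinct components nor destroys strong connectivity.
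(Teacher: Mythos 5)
Your proof is correct and follows essentially the same route as the paper, which simply observes that contracted vertices belong to the same component of ${\cal D}_m$ and then invokes Lemma~\ref{lem:5}. The only difference is that you explicitly justify that observation (via the common successor $x'$ supplied by Remark~\ref{Primedba2}a) and the equality of successor sets), a step the paper leaves implicit.
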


The component which contains the vertex $0^m \equiv 00 \ldots 0$ is denoted by ${\cal R}^*_m$. The component containing the unique loop ($1^m\rightarrow 1^m$) is labeled by ${\cal A}^*_m$ and the union of the remaining components by ${\cal B}^*_m$.
Digraphs ${\cal D}^{*}_m$ for  $m=4$ and  $m=5$ are shown in  Figure~\ref{SirokiP4Cvorovi} and \ref{SirokiP5Cvorovi}, respectively.

\begin{figure}[htb]
\begin{center}
\includegraphics[width=3.5in]{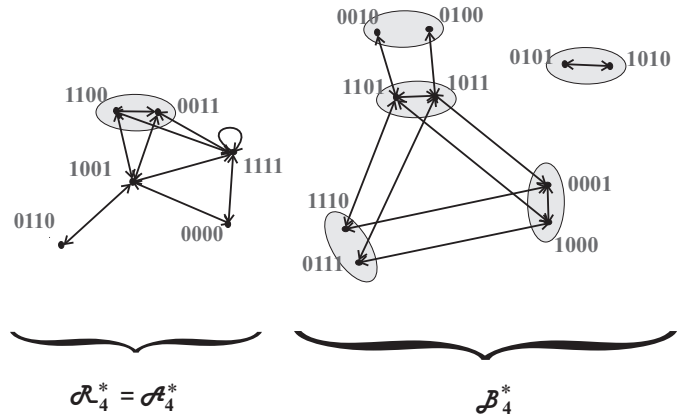}
\\ \ \vspace*{-18pt}
\end{center}
\caption{Digraph ${\cal D}^*_4$ for  $P_4 \times P_n$ }
\label{SirokiP4Cvorovi}
\end{figure}

\begin{figure}[htb]
\begin{center}
\includegraphics[width=4.3in]{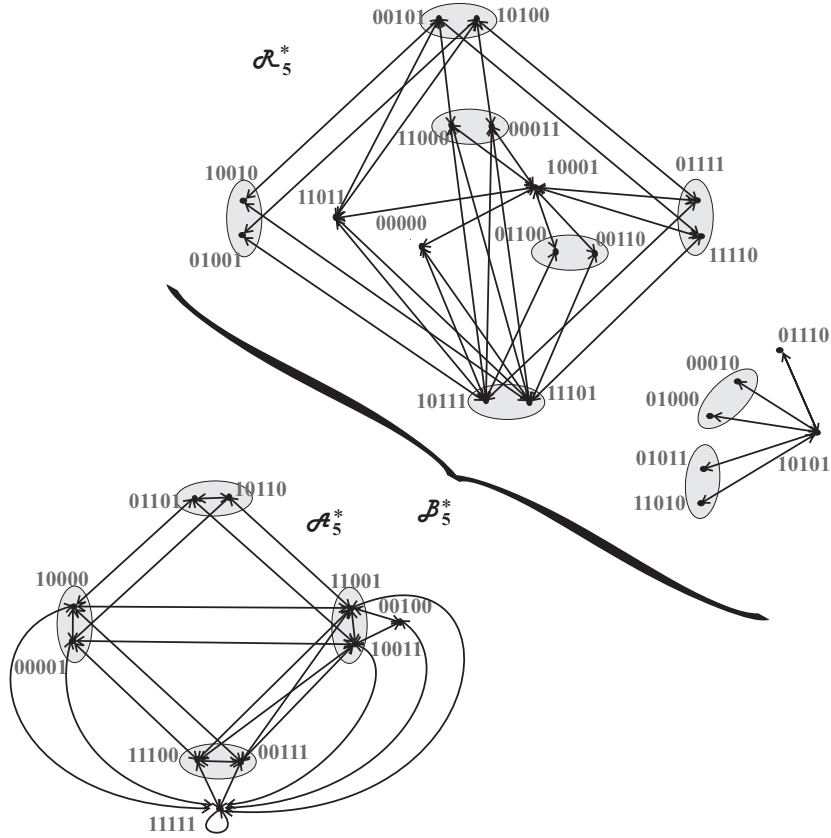}
\\ \ \vspace*{-18pt}
\end{center}
\caption{Digraph ${\cal D}_5^*$ for  $P_5 \times P_n$ }
\label{SirokiP5Cvorovi}
\end{figure}

\begin{thm}  \label{thm:TkC5}
For the number of vertices in digraph ${\cal D}^*_m$ we have
 \begin{eqnarray*}  \mid V({\cal D}^*_m) \mid = \left\{  \begin{array}{rl}
2^{m}, & \mbox{ if $m$ is even}  \\  2^{m}-1, & \mbox{ if $m$ is odd}  \end{array} \right. \end{eqnarray*}   \end{thm}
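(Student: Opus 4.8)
The plan is to compute $|V(\mathcal{D}^*_m)|$ as the number of \emph{distinct outlet words} that actually occur among the vertices of $\mathcal{D}_m$, since by the construction of $\mathcal{D}^*_m$ the vertices are exactly the common outlet words of the contracted classes. The first step is to read off from Figure~\ref{cvornikod1} that each of the six alpha-letters selects exactly two of the four edges incident to a vertex (top, bottom, left, right), and that all $\binom{4}{2}=6$ choices occur: $a=\{\text{right},\text{bottom}\}$, $b=\{\text{top},\text{bottom}\}$, $c=\{\text{right},\text{top}\}$, $d=\{\text{left},\text{bottom}\}$, $e=\{\text{left},\text{right}\}$, $f=\{\text{left},\text{top}\}$. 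The outlet bit satisfies $o_j=1$ precisely for the letters carrying a right edge, namely $a,c,e$, while the column conditions of Lemma~\ref{lem:1} constrain only the top/bottom (vertical) edges.

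Next I would encode a vertex of $\mathcal{D}_m$ by its vertical edges. Condition~1 of Lemma~\ref{lem:1} says exactly that the bottom edge at row $j$ agrees with the top edge at row $j+1$, that row $1$ has no top edge and row $m$ has no bottom edge. Hence a valid column is determined, up to the free horizontal choices, by a vector $v=(v_1,\dots,v_{m-1})\in\{0,1\}^{m-1}$ of used vertical edges; putting $v_0=v_m=0$, the two vertical edges meeting row $j$ are $v_{j-1}$ and $v_j$, and conversely every such $v$ yields valid columns since all six edge-pairs are legal letters. Because each letter uses two edges, the number of horizontal edges at row $j$ is $2-v_{j-1}-v_j$, from which the trichotomy for the outlet bit follows: $o_j$ is forced to $1$ when $v_{j-1}=v_j=0$ (the letter is $e$), forced to $0$ when $v_{j-1}=v_j=1$ (the letter is $b$), and is a free choice in $\{0,1\}$ when $v_{j-1}\neq v_j$.

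Consequently an outlet word $o\in\{0,1\}^m$ is realized by some vertex of $\mathcal{D}_m$ if and only if there exists $v$ with $v_0=v_m=0$ such that, for each $j$, the bit $o_j=1$ forbids $v_{j-1}=v_j=1$ while $o_j=0$ forbids $v_{j-1}=v_j=0$. I would resolve this feasibility question by a two-state forward reachability argument. Let $R_j\subseteq\{0,1\}$ be the set of attainable values of $v_j$ starting from $v_0=0$ and respecting the constraints of $o_1,\dots,o_j$, so $R_0=\{0\}$, and $o$ is realizable iff $0\in R_m$. A direct inspection of the four transition rules shows that $R_j$ is never empty and that once $R_j=\{0,1\}$ one has $R_{j'}=\{0,1\}$ for all $j'\ge j$, so then $0\in R_m$ and $o$ is realizable. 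Thus non-realizability forces every $R_j$ to stay a singleton; the rules then make each successive bit mandatory, pinning down the unique alternating candidate $o$ with $o_j\equiv j+1\pmod 2$ and giving $R_j=\{0\}$ for even $j$ and $R_j=\{1\}$ for odd $j$.

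Finally I would read off the parity: this single alternating word yields $R_m=\{1\}$, hence $0\notin R_m$ and non-realizability, exactly when $m$ is odd, whereas for $m$ even it yields $R_m=\{0\}$ and is realizable like every other word. Therefore there is exactly one non-realizable outlet word when $m$ is odd and none when $m$ is even, so $|V(\mathcal{D}^*_m)|=2^m$ for $m$ even and $2^m-1$ for $m$ odd. I expect the main obstacle to be the careful bookkeeping in passing from the pictorial letters of Figure~\ref{cvornikod1} to the clean ``two-of-four-edges'' description and in verifying the forced/free trichotomy for $o_j$; once that reduction is secured, the reachability argument is short and the even/odd split follows at once.
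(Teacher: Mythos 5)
Your proof is correct, and it takes a genuinely different route from the paper's. The paper proves the result constructively: it first shows the alternating word $0101\ldots010$ is unrealizable for odd $m$ by a forced prefix/suffix argument (an outlet prefix $(01)^k$ forces the alpha-prefix $(dc)^k$, a suffix $(10)^s$ forces $(af)^s$, and a length-$(m-1)$ such suffix forces the first letter to be $e$, whose outlet bit is $1$, not $0$); it then exhibits, for every other binary word $v$, an explicit alpha-word $x$ with $o(x)=v$ by decomposing $v$ as $(01)^k w (10)^s$ and handling the maximal zero-blocks of $w$ case by case with the building blocks $dc$, $af$, $db^{q-2}f$ and $e$. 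You instead encode a column by its vertical-edge indicator vector $v\in\{0,1\}^{m-1}$ with $v_0=v_m=0$, observe the forced/free trichotomy for the outlet bit ($e$ forces $1$, $b$ forces $0$, a single vertical edge leaves the bit free), and decide realizability of an arbitrary outlet word by a two-state reachability recursion; both the realizability of all non-exceptional words and the uniqueness and identity of the exceptional alternating word then drop out of the same analysis (a non-realizable word must keep every reachable set a singleton, which pins down $o_j\equiv j+1 \bmod 2$ and fails only when $m$ is odd). I checked the edge-pair identifications ($a,c,d,f$ from the corner constraints, $e$ and $b$ from the inlet/outlet definitions) and the four transition rules, and they are all consistent with the paper's Figure and Lemma 2.1. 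Your approach is shorter and more uniform --- it derives, rather than asserts, the forced-prefix facts the paper uses, and it handles existence and the unique exception symmetrically --- while the paper's construction has the advantage of producing the witness alpha-word explicitly, a technique it reuses later (e.g., in the proof of Theorem 3.11).
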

\begin{proof}
At first, note that any  word $x \in V({\cal D}_m)$ whose outlet word has the  prefix $(01)^k$ ($1 \leq k \leq \ds \lfloor  \frac{m}{2} \rfloor)$ must have the  prefix $(dc)^k$.
 Similarly, any word $x \in V({\cal D}_m)$  whose outlet word has the suffix $(10)^s$ ($1 \leq s \leq \ds \lfloor  \frac{m}{2} \rfloor)$ must have the suffix $(af)^s$.
 If this suffix  is of length  $m-1$, then the first letter must be $e$, i.e. the word $010101...010 \not\in V({\cal D}^*_m)$.

To prove that there exists
 at least one  vertex  $x \in V({\cal D}_m)$ with $o(x) = v$,
 where   $v$ is a  binary word of length  $m$ different from  $010101...010$ (in case  $m$-odd),
 we start from  the  word   $v$ and demonstrate the construction of $x$.

Let $k$ and $s$ be the maximum  non-negative integers for which  $(01)^k$ is the prefix and $(10)^s$ is the  suffix  of $v$.
 Then $v  \equiv (01)^k w (10)^s$, where the subword $w$ is different from  $0$ and it has neither prefix $01$ nor  suffix $10$.
We replace the prefix $(01)^k$  with $(dc)^k$, and the suffix  $(10)^s$ with $(af)^s$.
If the  word $w \equiv 0^{m-2(k+s)}$ ($m-2(k+s) > 1$), then the word $db^{m-2(k+s+1)}f $ can be inserted instead of $w$.
Another case is that the word $w$ has at least one letter $1$.
Let $t$ and $p$ be the maximum  non-negative integers for which $0^t$ is a  prefix and $0^p$ is a suffix  of $w$.
 Clearly, $t \neq 1$, $p \neq 1$ and $w  \equiv 0^t \alpha   0^p$ where the subword $\alpha$ has the first and the last letter $1$.
  In  case $t \geq 2$ ($p \geq 2$), then the subword  $0^t$ ($0^p$) is  substituted with $db^{t-2}f$ ($db^{p-2}f$).
   Consider now  all the maximal zero-subwords of $\alpha$.
Every one  of them,  of length  $q \geq 2$, we  replace by word $db^{m-2}f$.
  If $q=1$, then the  subword $01$, obtained by extending  this letter $0$ with the first letter $1$ below, is replaced by word  $dc$.
 Finally, replacing the  remaining letters $1$  with letters $e$ we obtain the vertex $x \in {\cal D}_m$.
 \end{proof}

\begin{thm}  \label{thm:TkC6}
  The number of edges in  ${\cal D}^{*}_m$  is $\mid E({\cal D}^*_m) \mid = \ds \frac{1}{2}(3^{m} + (-1)^{m}).$
\end{thm}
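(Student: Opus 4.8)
The plan is to exhibit an explicit bijection between the arc set $E({\cal D}^*_m)$ and the vertex set $V({\cal D}_m)$, after which the claim follows at once from Lemma~\ref{lem:4}, since $|V({\cal D}_m)|=\frac12(3^m+(-1)^m)$. The starting point is the observation that, by the very definition of the inlet and outlet words, an arc $x\to y$ exists in ${\cal D}_m$ if and only if $o(x)=i(y)$: the digraph ${\cal D}_{lr}$ pairs a letter on the left with a letter on the right exactly when the left letter carries an edge ``on the right'' and the right letter carries an edge ``on the left'', position by position (this is already used implicitly in the proof of Lemma~\ref{lem:5}, where $o(v)\equiv i(w)$). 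Consequently, after contracting vertices sharing a common outlet word, an arc of ${\cal D}^*_m$ from the outlet word $v$ to the outlet word $w$ exists if and only if there is a vertex $y\in V({\cal D}_m)$ with $i(y)=v$ and $o(y)=w$.

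I would then define $\Phi:V({\cal D}_m)\to E({\cal D}^*_m)$ by $\Phi(y)=\bigl(i(y),o(y)\bigr)$ and verify it is well defined and onto. Surjectivity is immediate from the previous paragraph. For well-definedness one must check that both $i(y)$ and $o(y)$ are genuine vertices of ${\cal D}^*_m$, i.e.\ outlet words of some vertex of ${\cal D}_m$. Here Remark~\ref{Primedba2} is the key: the vertical conversion $y'$ satisfies $y\to y'$, so $y'\in V({\cal D}_m)$, and a direct check on the six letters (using $a'=d,\ b'=b,\ c'=f,\ d'=a,\ e'=e,\ f'=c$) gives $o(\alpha')=i(\alpha)$ for each letter $\alpha$, whence $o(y')=i(y)$. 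Thus $i(y)$ is the outlet word of $y'\in V({\cal D}_m)$, and the arc $y'\to y$ itself witnesses that $\bigl(i(y),o(y)\bigr)$ is an arc of ${\cal D}^*_m$.

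The heart of the argument, and the step I expect to be the main obstacle, is injectivity: I must recover a vertex $y=\alpha_1\alpha_2\cdots\alpha_m\in V({\cal D}_m)$ uniquely from the pair $(i(y),o(y))$. The pair of bits $(i_j,o_j)$ pins down each letter $\alpha_j$ uniquely \emph{except} that $(0,1)$ is common to $a$ and $c$ and $(1,0)$ is common to $d$ and $f$; in each ambiguous case the two letters differ only in whether the remaining (non-horizontal) edge points up or down. To resolve this I would reconstruct the vertical edges directly. Writing $e_j\in\{0,1\}$ for the presence of the vertical edge between rows $j$ and $j+1$, with boundary values $e_0=e_m=0$ forced by Condition~1b of Lemma~\ref{lem:1} (since $\alpha_1\in\{a,d,e\}$ has no up-edge and $\alpha_m\in\{c,e,f\}$ has no down-edge), the degree-two condition at vertex $j$ reads $e_{j-1}+e_j=2-i_j-o_j$. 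Starting from $e_0=0$, the recurrence $e_j=(2-i_j-o_j)-e_{j-1}$ determines $e_1,\dots,e_{m-1}$ \emph{uniquely} as a function of $(i(y),o(y))$ alone. Hence two vertices with the same inlet and outlet words share the same up/down edges at every position, and combined with their common left/right edges this fixes all four incident edges at each vertex, so the letters, and therefore the vertices, coincide. Thus $\Phi$ is a bijection and $|E({\cal D}^*_m)|=|V({\cal D}_m)|=\frac12(3^m+(-1)^m)$.
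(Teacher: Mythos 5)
Your proposal is correct and uses essentially the same bijection as the paper: the paper also sends each $y\in V({\cal D}_m)$ to the arc from $i(y)$ to $o(y)$ in ${\cal D}^*_m$ (phrased as ``the vertices with a common outlet word have distinct inlet words, which are exactly the direct predecessors of that word''), and then invokes Lemma~\ref{lem:4}. The only difference is that you explicitly justify the injectivity via the vertical-edge recurrence $e_j=(2-i_j-o_j)-e_{j-1}$, a step the paper asserts without proof.
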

\begin{proof}
Consider all the vertices from  $V({\cal D}_m)$ with the same outlet word  $v \in V({\cal D}^*_m)$. Observe that they have different inlet words, which
represent all possible direct predecessors for $v$  in   ${\cal D}^{*}_m$.
In this way, the  bijection between the set $E({\cal D}^*_m)$ and $V({\cal D}_m) $ is established.
  Lemma~\ref{lem:4} implies our assertion.
 \end{proof}

For the  binary word $v \equiv b_1b_2 \ldots b_{m-1}b_{m} \in \{ 0,1\}^m$, we introduce the label $\overline{v} \stackrel{\rm def}{=}  b_mb_{m-1} \ldots b_{2}b_{1}$.
 \begin{rem} \label{Primedba3}  \hspace*{1cm } \\
 For  arbitrary  $ x, y \in V({\cal D}_{m})$,   $ \  o(\overline{x}) = \overline {o(x) }. $
 Consequently,    if $o(x) = o(y),$  then $ o(\overline{x}) = o(\overline{y}).$
\end{rem}

Let ${\cal P}_m^* = [p_{ij}]$ be the square binary  matrix of order $ \mid V({\cal D}^*_{m}) \mid $ whose entry $p_{i,j} =1$ iff the $i$-th and $j$-th vertices of the digraph
 ${\cal D}^*_{m}$ satisfy $v_i = \overline{v_j}$ (and $v_j = \overline{v_i}$); otherwise $p_{i,j} =0$. Note that the matrix ${\cal P}_m^*$ is a symmetric  one.
We can improve the  process of enumeration of 2-factors using the  new  transfer-matrix  ${\cal T}_m^*$.
\begin{thm}  \label{thm:f2}
$$f_m^G(n)=\left \{ \; \;
 \begin{array}{ll}  a_{1,1}^{(n)}, & \mbox{ if  }\; \; G = RG , \\ \\
 tr(({\cal T}_m^*)^n) = \ds \sum_{v_i \in   V({\cal D}^*) } a_{i,i}^{(n)},  &                   \mbox{ if  }\; \; G= TkC,\\ \\
    tr({\cal P}_m^{*}  \cdot   ({\cal T}_m^*)^n) ,           &          \mbox{ if  }\; \; G = MS, \end{array}  \right. $$
 where  ${\cal T}^*_m = [a_{ij}]$ is   the adjacency matrix of the digraph  \ $ {\cal D}^{*}_{m} $ and       $v_1 \equiv 00 \ldots 0$.
\end{thm}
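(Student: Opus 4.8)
The plan is to exhibit one rank-type factorisation of the full transfer matrix ${\cal T}_m$ from which all three identities drop out via the elementary trace identity $tr((AB)^k)=tr((BA)^k)$. The cornerstone is that the left--right compatibility encoded by ${\cal D}_{lr}$ is purely bitwise: reading the table of ${\cal D}_{lr}$ one checks that, for single letters, $\alpha\to\beta$ holds exactly when the outlet bit of $\alpha$ equals the inlet bit of $\beta$ (e.g.\ $a$ has outlet bit $1$ and goes precisely to the inlet-bit-$1$ letters $\{d,e,f\}$). Applying this coordinate by coordinate to Condition~2 of Lemma~\ref{lem:1} yields the key fact
\[ v\to w \text{ in } {\cal D}_m \iff o(v)=i(w),\qquad v,w\in V({\cal D}_m), \]
that an arc exists precisely when the outlet word of the tail equals the inlet word of the head (the forward implication is already noted in the proof of Lemma~\ref{lem:5}). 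Note also $({\cal T}_m)_{x,y}\in\{0,1\}$ since ${\cal D}_m$ has at most one arc per ordered pair.

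Next I would introduce two $\lvert V({\cal D}_m)\rvert\times\lvert V({\cal D}^*_m)\rvert$ matrices over $\{0,1\}$, namely $P_{x,u}=[\,o(x)=u\,]$ and $Q_{x,u}=[\,i(x)=u\,]$, where $u$ ranges over $V({\cal D}^*_m)$; here one uses that the realised outlet words and the realised inlet words coincide (both equal $V({\cal D}^*_m)$), which follows from $x\to x'$ together with the key fact, since then $o(x)=i(x')$. Each row of $P$ and of $Q$ has exactly one $1$. The key fact now reads $({\cal T}_m)_{x,y}=[\,o(x)=i(y)\,]=\sum_u P_{x,u}Q_{y,u}$, i.e.\
\[ {\cal T}_m=P\,Q^{T}. \]
On the other hand $(Q^{T}P)_{u,u'}=\#\{x:\,i(x)=u,\ o(x)=u'\}$, and since vertices sharing an outlet word have distinct inlet words (used already in the proof of Theorem~\ref{thm:TkC6}) this count is $0$ or $1$ and matches the construction of ${\cal D}^*_m$, so that
\[ {\cal T}^*_m=Q^{T}P. \]
Thus ${\cal T}_m$ and ${\cal T}^*_m$ are exactly the two products $PQ^{T}$ and $Q^{T}P$ of one pair of rectangular $0/1$ matrices.

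From here each case is a short computation. For $G=TkC$, Lemma~\ref{lem:3} gives $f^{TkC}_m(n)=tr({\cal T}_m^{\,n})=tr((PQ^{T})^n)$, and cyclicity $tr((PQ^{T})^n)=tr((Q^{T}P)^n)=tr(({\cal T}^*_m)^n)$ finishes it. For $G=RG$, I would start from the entry formula $f^{RG}_m(n)=({\cal T}_m^{\,n+1})_{v_1,v_2}$ of Lemma~\ref{lem:3} with $v_1=db^{m-2}f$, $v_2=ab^{m-2}c$, and use the regrouping $(PQ^{T})^{n+1}=P\,(Q^{T}P)^{n}\,Q^{T}=P\,({\cal T}^*_m)^{n}\,Q^{T}$, which (as each row of $P,Q$ has a single $1$) gives $({\cal T}_m^{\,n+1})_{x,y}=(({\cal T}^*_m)^{n})_{o(x),\,i(y)}$; reading off $o(db^{m-2}f)=0^{m}$ and $i(ab^{m-2}c)=0^{m}$ yields precisely $f^{RG}_m(n)=(({\cal T}^*_m)^{n})_{0^{m},0^{m}}=a_{1,1}^{(n)}$. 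For $G=MS$, I would compute the conjugated middle factor $Q^{T}{\cal P}_mP$: using $({\cal P}_m)_{x,y}=[\,y=\overline{x}\,]$ together with Remark~\ref{Primedba3} ($o(\overline{x})=\overline{o(x)}$) one gets $(Q^{T}{\cal P}_mP)_{u,u'}=\#\{x:\,i(x)=u,\ o(x)=\overline{u'}\}=({\cal T}^*_m)_{u,\overline{u'}}$, i.e.\ $Q^{T}{\cal P}_mP={\cal T}^*_m\,{\cal P}^*_m$. Substituting into $tr({\cal P}_m{\cal T}_m^{\,n})=tr({\cal P}_mP({\cal T}^*_m)^{n-1}Q^{T})=tr(({\cal T}^*_m)^{n-1}Q^{T}{\cal P}_mP)$ collapses it to $tr(({\cal T}^*_m)^{n}{\cal P}^*_m)=tr({\cal P}^*_m({\cal T}^*_m)^{n})$, as required.

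The only genuine content is the factorisation step: showing that arcs of ${\cal D}_m$ are governed solely by the equality $o(v)=i(w)$, and that the induced count $Q^{T}P$ has $0/1$ entries agreeing with ${\cal D}^*_m$. I expect the bitwise reading of ${\cal D}_{lr}$ and the ``distinct inlet words within one outlet class'' property to be the crux; both are essentially recorded in the earlier sections, so the remaining effort is bookkeeping with the involutions $x\mapsto x'$ (vertical conversion, matching the two index sets) and $x\mapsto\overline{x}$ (horizontal conversion, identifying $Q^{T}{\cal P}_mP$). Once ${\cal T}_m=PQ^{T}$ and ${\cal T}^*_m=Q^{T}P$ are in hand, all three formulas are formal consequences of $tr((AB)^k)=tr((BA)^k)$ and the regrouping $(PQ^{T})^{k}=P(Q^{T}P)^{k-1}Q^{T}$.
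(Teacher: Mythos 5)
Your proof is correct, but it takes a genuinely different route from the paper's. The paper argues at the level of walk counts: it introduces ${\cal W}^{y}_{x}(n)$, observes that vertices of ${\cal D}_m$ with the same outlet word have identical sets of walk counts (its equation (3.2)), and then sums over the fibres of the contraction by hand in each of the three cases, including a separate summation identity for the Moebius case using $o(\overline{x})=\overline{o(x)}$. You instead package the same underlying observation as a rank factorisation: the key fact $v\rightarrow w\iff o(v)=i(w)$ (whose forward half the paper records in Lemma 3.4 and whose converse follows from the bitwise description of ${\cal D}_{lr}$) gives ${\cal T}_m=PQ^{T}$, the ``distinct inlet words within one outlet class'' property from Theorem 3.11 gives ${\cal T}^*_m=Q^{T}P$, and then all three formulas are formal consequences of $tr((AB)^k)=tr((BA)^k)$ and $(PQ^{T})^{k}=P(Q^{T}P)^{k-1}Q^{T}$; the Moebius case reduces to the one-line conjugation $Q^{T}{\cal P}_mP={\cal T}^*_m{\cal P}^*_m$. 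What your version buys is uniformity and transparency: the contraction ${\cal D}_m\leadsto{\cal D}^*_m$ is exhibited once and for all as a quotient, and the three cases require no further combinatorial reasoning; it also makes clear exactly which two structural facts carry the whole proof. What the paper's version buys is that it never needs the converse implication $o(v)=i(w)\Rightarrow v\rightarrow w$ nor the identification of realised inlet words with realised outlet words (which you correctly supply via $x\rightarrow x'$), staying entirely within elementary walk-counting. Both are complete; yours is the cleaner bookkeeping.
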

\begin{proof}
Let  ${\cal W}^{y}_{x}(n)$ denote the number of oriented walks of length  $n$ in the observed digraph (${\cal D}_{m}$ or ${\cal D}^*_{m}$)  which start with  vertex $x$  and finish with vertex $y$.
The $(i,j)$-entry  $ a_{i,j}^{(n)}$ of $n$-th degree of ${\cal T}_m^* $
represents the number of all the  oriented walks of the length \ $ n $ \  in   ${\cal D}^*_{m}$ which start with $v_i \in V({\cal D}^{*}_{m})$
and finish with $v_j \in V({\cal D}^{*}_{m})$, i.e. ${\cal W}^{v_j}_{v_i}(n)$.
Note that for arbitrary $ x_1, x_2, y \in V({\cal D}_{m})$,

\begin{equation}\label{o1}
\mbox{ if } o(x_1) = o(x_2), \mbox{ then } {\cal W}^{y}_{x_1}(n) = {\cal W}^{y}_{x_2}(n). \end{equation}

\noindent
Consequently, the number ${\cal W}^{v_j}_{v_i}(n)$ is equal to the number of all the oriented walks of the length \ $ n $ \  in   ${\cal D}_{m}$ which start with
a  vertex $x \in V({\cal D}_{m})$
 which is assigned to the vertex $v_i \in V({\cal D}^{*}_{m})$ ($o(x) =v_i$)  and finish with vertices $y \in V({\cal D}_{m})$ which are assigned to $v_j \in V({\cal D}^{*}_{m})$ ($o(y) =v_j$), i.e.

\begin{equation}\label{o2}
 {\cal W}^{v_j}_{v_i}(n) = \ds \sum_{\ds  \begin{array}{c}  y \in  V({\cal D}_{m}) \\ o(y) =v_j \end{array}} {\cal W}^{y}_{x} (n), \mbox{ where } x \in V({\cal D}_{m}) \mbox{ and } o(x) =v_i. \end{equation}

  Now, from  Lemma~\ref{lem:3},  having in mind that the set of direct predecessors  of $ab^{m-2}c$ is   $ {\cal L}_m $, and using \eqref{o2} we have
\\  \\ $\ds  f_m^{RG}(n)
=  {\cal W}^{ab^{m-2}c}_{db^{m-2}f} (n+1) = \ds \sum_{\ds y \in   {\cal L}_m }  {\cal W}^{y}_{db^{m-2}f} (n) = {\cal W}^{0^m}_{0^m} (n)=a_{1,1}^{(n)}.$

By using  \eqref{o2},\eqref{o1} and Remark~\ref{Primedba3} we have

\begin{equation}\label{o3}
 {\cal W}^{v_i}_{v_i}(n) = \ds \sum_{\ds  \begin{array}{c}  x \in  V({\cal D}_{m}) \\ o(x) = o(x_1) =  v_i \end{array}} {\cal W}^{x}_{x_1} (n) = \ds \sum_{\ds  \begin{array}{c}  x \in  V({\cal D}_{m}) \\ o(x) = v_i \end{array}} {\cal W}^{x}_{x} (n).
\end{equation}

 and

\begin{equation}\label{o4}
 {\cal W}^{\overline{v}_i}_{v_i}(n) = \ds \sum_{\ds  \begin{array}{c}  x \in  V({\cal D}_{m}) \\ o(x) = o(x_1) =  v_i \end{array}} {\cal W}^{\overline{x}}_{x_1} (n) = \ds \sum_{\ds  \begin{array}{c}  x \in  V({\cal D}_{m}) \\ o(x) = v_i \end{array}} {\cal W}^{\overline{x}}_{x} (n).
\end{equation}

 Applying  Lemma~\ref{lem:3} again and \eqref{o3}  to  $TkC_m(n)$ we obtain

$$ f_m^{TkC}(n) = \ds \sum_{\ds x \in   V({\cal D}_m) } {\cal W}^{x}_{x} (n) =    \sum_{\ds v_i \in   V({\cal D}_m^{*}) } \sum_{\ds \begin{array}{c}  x \in  V({\cal D}_{m}) \\ o(x) =v_i \end{array}}   {\cal W}^{x}_{x} (n) =  \ds \sum_{\ds v_i \in   V({\cal D}_m^*) } a_{i,i}^{(n)} =  tr(({\cal T}_m^*)^n). $$

For  $MS_m(n)$ Lemma~\ref{lem:3} and \eqref{o4} yield that

$$ f_m^{MS}(n) = \ds \sum_{\ds x \in   V({\cal D}_m) } {\cal W}^{\overline{x}}_{x} (n) =    \sum_{ \ds v_i \in   V({\cal D}_m^{*}) } \sum_{ \begin{array}{c}  x \in  V({\cal D}_{m}) \\ o(x) =v_i \end{array}}  {\cal W}^{\overline{x}}_{x} (n) =   \sum_{ \ds v_i \in   V({\cal D}_m^{*}) }  {\cal W}^{\overline{v}_i}_{v_i} (n),  \mbox{ \ i.e.}$$
  $ f_m^{MS}(n) =  tr({\cal P}_m^*  \cdot   ({\cal T}_m^*)^n).$
\end{proof}

\begin{thm}  \label{thm:TkC2c}
The subdigraph of   ${\cal D}^*_{m}$ induced by the set of vertices having odd numbers of  0's is a bipartite digraph.
\end{thm}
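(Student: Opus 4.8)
The plan is to prove bipartiteness by forbidding odd closed walks, and to read such a walk as a $2$-factor of a thick cylinder, where a parity argument becomes available. Throughout write $z(v)$ for the number of $0$'s in a binary word $v$, so the subdigraph in question, call it $\Gamma$, is the one induced by the vertices with $z(v)$ odd, and such a $v$ has exactly $m-z(v)$ ones. First I would reduce to a statement about walk lengths. By Theorem~\ref{thm:TkC3} the matrix ${\cal T}^*_m$ is symmetric, so $\Gamma$ may be regarded as an undirected graph; it is moreover loop-free, since the unique loop of ${\cal D}^*_m$ is at $1^m$, whose outlet word has $z(1^m)=0$ (even), whence $1^m\notin\Gamma$. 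A finite graph is bipartite exactly when it has no closed walk of odd length, and by symmetry the directed closed walks of $\Gamma$ and the closed walks of its underlying graph realize the same set of lengths. Thus it suffices to prove that every directed closed walk in $\Gamma$ has even length.

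Next I would turn a closed walk into a cylindrical $2$-factor. Given a walk $v_0\to v_1\to\cdots\to v_{k-1}\to v_k=v_0$ in $\Gamma$, I choose for each arc $v_{j-1}\to v_j$ a word $y_j\in V({\cal D}_m)$ with $i(y_j)=v_{j-1}$ and $o(y_j)=v_j$; such a word exists by the very definition of the arcs of ${\cal D}^*_m$ (indeed by the bijection of Theorem~\ref{thm:TkC6}). Consecutive columns then satisfy $o(y_j)=v_j=i(y_{j+1})$ together with the wrap-around $o(y_k)=v_0=i(y_1)$, so by Lemma~\ref{lem:1} the matrix with columns $y_1,\dots,y_k$ is the code matrix of a $2$-factor $F$ of the thick cylinder $TkC_m(k)=P_m\times C_k$; this is exactly the correspondence behind $f^{TkC}_m(k)=tr(({\cal T}^*_m)^k)$ in Theorem~\ref{thm:f2}. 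Because the walk lies in $\Gamma$, each $v_j$ has $z(v_j)$ odd, so the number of $1$'s in $o(y_j)$ equals $m-z(v_j)\equiv m-1 \pmod 2$ for every $j$.

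Finally I would count the vertical edges of $F$ in two ways to pin down the parity of $k$. The horizontal edges between columns $j$ and $j+1$ are precisely the rows in which $o(y_j)$ equals $1$, so their number is $m-z(v_j)$; summing over the $k$ seams of the cylinder, the total number of horizontal edges is $\equiv k(m-1)\pmod 2$, and hence the number $V$ of vertical edges satisfies $V=mk-\bigl(\text{horizontal edges}\bigr)\equiv k\pmod 2$. On the other hand, take $W$ to be the set of vertices lying in odd-indexed rows: every vertical edge joins two consecutive rows and so crosses the cut $(W,W^{c})$, while every horizontal edge stays within a single row. Since $F$ is $2$-regular, $\sum_{v\in W}\deg_F(v)=2|W|$ is even, forcing the cut, i.e.\ $V$, to be even. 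Comparing the two evaluations gives $k\equiv V\equiv 0\pmod 2$, so $k$ is even and $\Gamma$ has no odd closed walk, which is the assertion.

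The step I expect to be most delicate is the middle one: verifying that a closed walk confined to the odd-$z$ vertices genuinely reincarnates as a $2$-factor of $TkC_m(k)$ carrying the asserted seam data, which requires invoking the construction of ${\cal D}^*_m$, Lemma~\ref{lem:1}, and the identity $(\text{number of }1\text{'s in }o(y_j))=m-z(v_j)$ in the correct order. Once this translation is secured, the conclusion rests only on the elementary observation that the vertical edges of a $2$-regular subgraph form an \emph{even} edge-cut between the odd and the even rows, which immediately fixes the parity of $k$.
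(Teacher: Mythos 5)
Your proposal is correct, and its first two steps coincide with the paper's: you reduce bipartiteness to the absence of odd directed closed walks via the symmetry of ${\cal T}^*_m$ (Theorem~\ref{thm:TkC3}), and you convert a hypothetical closed walk of odd length $k$ into a 2-factor of $TkC_m(k)$ by lifting each arc of ${\cal D}^*_m$ to a column word of ${\cal D}_m$ and invoking Lemma~\ref{lem:1}. Where you diverge is in extracting the parity contradiction. The paper reads the parity of the number of $1$'s in the outlet words as the parity of the number of non-contractible cycles of the resulting 2-factor, and then applies Remark~\ref{Primedba1} (the base-figure edge counts) in two separate cases according to the parity of $m$ to show that the total number of edges cannot equal $m\cdot k$. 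You instead count edges directly: the horizontal edges are enumerated by the $1$'s of the outlet words, which gives $V\equiv k\pmod 2$ for the number $V$ of vertical edges, while the observation that the vertical edges constitute the edge cut between odd-indexed and even-indexed rows of a $2$-regular spanning subgraph forces $V$ to be even. Your route avoids the contractible/non-contractible classification and the case split on the parity of $m$ entirely, at the cost of spelling out the lifting of the walk to a code matrix (which the paper leaves implicit); both arguments are valid, and yours is arguably the more elementary and self-contained of the two.
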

\begin{proof}
Having in mind Theorem~\ref{thm:TkC3}, it is sufficient to prove that this  subdigraph does not contain any odd-length oriented cycle.
Assume the  opposite: that there exist an odd integer $n$ and  an oriented cycle of length $n$ in it. This implies the existence of a 2-factor in $TkC_m(n)$.
\\ \\ \underline{Case I: $m$-even}
Every   vertex of the considered oriented cycle  observed as a binary word has an  odd number of  1's. It implies that corresponding 2-factor in $TkC_m(n)$ contains
odd number of non-contractible cycles. Using Remark~\ref{Primedba1} we conclude that the number of all the edges in the  observed 2-factor has the same   parity as   $n$, i.e. is odd.
On the other side, this number must be $m \cdot n$, i.e.  even. Contradiction.
\\  \\ \underline{Case II: $ m$-odd}
Now, the corresponding 2-factor in $TkC_m(n)$ contains an  even number of non-contractible cycles (related binary words have  even number of  1's).
Applying Remark~\ref{Primedba1} again,  we conclude that the number of edges of the considered 2-factor is even, while  $m \cdot n$ is odd  which leads to a contradiction.
 \end{proof}

\begin{cor}  \label{cor:2}
The subdigraph of   ${\cal D}_{m}$ induced by the set of vertices whose outlet words have an  odd number of 0's is a bipartite digraph.
\end{cor}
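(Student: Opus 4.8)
The plan is to deduce the statement for ${\cal D}_m$ from Theorem~\ref{thm:TkC2c} by transporting a bipartition backwards along the contraction map. Recall that ${\cal D}^*_m$ is built from ${\cal D}_m$ by identifying all vertices that share a common outlet word, so that by this very construction the assignment $\phi\colon v \mapsto o(v)$ is a digraph homomorphism ${\cal D}_m \to {\cal D}^*_m$: whenever $v \to w$ in ${\cal D}_m$, the pair $o(v) \to o(w)$ is an arc of ${\cal D}^*_m$. First I would note that $\phi$ restricts to a homomorphism between the two subdigraphs at issue, since a vertex $v$ of ${\cal D}_m$ lies in the subdigraph under consideration exactly when $o(v)$ has an odd number of $0$'s, that is, exactly when $\phi(v)$ is a vertex of the subdigraph $H^*$ of ${\cal D}^*_m$ treated in Theorem~\ref{thm:TkC2c}.

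Next I would pull back a proper $2$-colouring. By Theorem~\ref{thm:TkC2c} the subdigraph $H^*$ is bipartite, and by Theorem~\ref{thm:TkC3} its adjacency matrix is symmetric, so its underlying undirected graph admits a proper $2$-colouring $c\colon V(H^*) \to \{0,1\}$. I define $c'(v) \stackrel{\rm def}{=} c(o(v))$ on the vertices of the subdigraph $H$ of ${\cal D}_m$ induced by the words whose outlet word has an odd number of $0$'s. For every arc $v \to w$ of $H$ we obtain the arc $o(v) \to o(w)$ of $H^*$, and it remains only to verify that its endpoints are distinct, so that the properness of $c$ transfers.

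The single delicate point, and the only genuine obstacle, is to exclude $o(v) = o(w)$ for adjacent $v, w$ inside $H$. Should this occur, $o(v) \to o(v)$ would be a loop of ${\cal D}^*_m$; but the unique looped vertex of ${\cal D}^*_m$ is $1^m \equiv 1 \cdots 1$, whose outlet word contains zero $0$'s and hence does not lie in $H^*$. Thus $o(v) \neq o(w)$, and properness of $c$ gives $c'(v) = c(o(v)) \neq c(o(w)) = c'(w)$. Hence $c'$ is a proper $2$-colouring of $H$, so the underlying graph of $H$ is bipartite; equivalently, any odd-length oriented cycle of $H$ would project under $\phi$ to a closed walk of odd length in the bipartite $H^*$, which is impossible. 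This proves the corollary.
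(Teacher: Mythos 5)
Your proof is correct and follows essentially the same route as the paper's: both reduce the statement to Theorem~\ref{thm:TkC2c} and then observe that the only obstruction to transporting the bipartition back along $v\mapsto o(v)$ is an arc $v\to w$ with $o(v)=o(w)$ inside the subdigraph, which must be excluded. The only cosmetic difference is that you exclude such an arc by appealing to the uniqueness of the loop $1^m\to 1^m$ in ${\cal D}^*_m$ (a fact the paper asserts when introducing ${\cal A}^*_m$), whereas the paper derives the same exclusion directly from $o(v)=i(w)$, so that $o(w)=i(w)$, together with the observation that $e^m$ is the only vertex with equal inlet and outlet words --- which is precisely the argument underlying that uniqueness claim.
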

\begin{proof}
It is sufficient to prove that two vertices connected with an  arc do not have the same outlet word.
 Suppose the opposite, that there exist two vertices  $v$ and $w$ with  $v \rightarrow w$ and $o(v) = o (w)$.
 Then,  $o(w) = o(v) = i(w)$ holds. Note that the only  vertex  in ${\cal D}_m$ with the same inlet and outlet word is $e^m$, which does not belong to the  considered subdigraph. Contradiction.
 \end{proof}

  \begin{cor}  \label{cor:2c}
For odd $m$, both ${\cal R}_m$ and ${\cal R}^*_m$ are bipartite digraphs.
\end{cor}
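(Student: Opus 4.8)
The plan is to realize both ${\cal R}_m$ and ${\cal R}^*_m$ as induced subdigraphs of the two bipartite digraphs already obtained in Corollary~\ref{cor:2} and Theorem~\ref{thm:TkC2c}, and then to use the elementary observation that an induced subdigraph of a bipartite digraph is again bipartite: every oriented cycle of the subdigraph is an oriented cycle of the ambient digraph, so the absence of odd-length oriented cycles is inherited. Thus the whole task reduces to a parity check confining the vertex sets of ${\cal R}_m$ and ${\cal R}^*_m$ to the appropriate induced subdigraph.

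For ${\cal R}_m$ I would argue as follows. Since $m$ is odd, Lemma~\ref{lem:6} gives ${\cal A}_m \not\equiv {\cal R}_m$, and (as recorded in its proof) the outlet word of every vertex of ${\cal R}_m$ has an even number of $1$'s. Because these words have odd length $m$, an even number of $1$'s forces an odd number of $0$'s. Hence the vertex set of the component ${\cal R}_m$ lies inside the set of vertices of ${\cal D}_m$ whose outlet words carry an odd number of $0$'s. As a connected component, ${\cal R}_m$ is an induced subdigraph of ${\cal D}_m$, so it is in fact an induced subdigraph of the bipartite digraph of Corollary~\ref{cor:2}, and therefore bipartite.

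The argument for ${\cal R}^*_m$ is parallel. Its seed vertex $0^m$ has $m$, hence an odd number of, $0$'s. By the parity argument of Lemma~\ref{lem:5}, which passes to ${\cal D}^*_m$ through the contraction (cf. Theorem~\ref{thm:SC}), adjacent vertices of ${\cal D}^*_m$ have the same parity of the number of $1$'s in their outlet words; so every vertex of the component ${\cal R}^*_m$ has an even number of $1$'s and thus an odd number of $0$'s. Then ${\cal R}^*_m$ is an induced subdigraph of the bipartite digraph of Theorem~\ref{thm:TkC2c}, and is bipartite. The only point needing care --- the nearest thing to an obstacle in an otherwise immediate corollary --- is the bookkeeping that "even number of $1$'s" together with "$m$ odd" yields "odd number of $0$'s", and the verification that this parity is genuinely constant along each component, so that all of ${\cal R}_m$ (resp. ${\cal R}^*_m$), and not merely its distinguished vertex, lands in the bipartite induced subdigraph; both are immediate from the identity that the numbers of $0$'s and $1$'s sum to $m$ and from the parity preservation established in Lemma~\ref{lem:5}.
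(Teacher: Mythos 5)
Your proof is correct and follows exactly the route the paper intends: the corollary is stated without proof precisely because, for odd $m$, the vertices of ${\cal R}_m$ and ${\cal R}^*_m$ have outlet words with an even number of $1$'s (recorded in the proof of Lemma~\ref{lem:6}), hence an odd number of $0$'s, placing them inside the bipartite induced subdigraphs of Corollary~\ref{cor:2} and Theorem~\ref{thm:TkC2c}. Your parity bookkeeping and the observation that components are induced subdigraphs are exactly the intended (omitted) details.
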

When $m$ is even, vertex $0^m$ is both a direct successor and a direct predecessor for $1^m$ in ${\cal D}^*_{m}$ (for example see  Figure~\ref{SirokiP4Cvorovi}).
The following theorem is a consequence of  Lemma~\ref{lem:6}.

\begin{thm}  \label{thm:parno}
 ${\cal A}^*_m \equiv {\cal R}^*_{m}$ iff   $m $ is even.
\end{thm}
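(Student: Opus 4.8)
The plan is to exploit the contraction map $o\colon V({\cal D}_m)\to V({\cal D}^*_m)$ that sends each vertex of ${\cal D}_m$ to its outlet word, together with Lemma~\ref{lem:6}. First I would record the two properties of this map that drive everything. On the one hand, by the very construction of ${\cal D}^*_m$ every arc $x\to y$ of ${\cal D}_m$ induces the arc $o(x)\to o(y)$ of ${\cal D}^*_m$, so $o$ carries any directed walk of ${\cal D}_m$ to a directed walk of ${\cal D}^*_m$ and hence maps each component of ${\cal D}_m$ into a single component of ${\cal D}^*_m$. On the other hand, $o(e^m)=1^m$ and $o(db^{m-2}f)=0^m$, so $o$ sends the component ${\cal A}_m$ into ${\cal A}^*_m$ and the component ${\cal R}_m$ into ${\cal R}^*_m$.

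For the ``if'' direction, suppose $m$ is even. Lemma~\ref{lem:6} gives ${\cal A}_m\equiv{\cal R}_m$, i.e. $e^m$ and $db^{m-2}f$ lie in one (strongly connected, by Theorem~\ref{thm:SC}) component of ${\cal D}_m$; in particular there is a directed walk between them. Pushing this walk through $o$ yields a directed walk joining $1^m$ and $0^m$ in ${\cal D}^*_m$, so these two vertices lie in the same component, which is precisely the statement ${\cal A}^*_m\equiv{\cal R}^*_m$.

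For the ``only if'' direction I would argue the contrapositive by means of a parity invariant. Recall from the proof of Lemma~\ref{lem:5} that whenever $v\to w$ in ${\cal D}_m$ the numbers of $1$'s in $o(v)$ and in $o(w)$ have equal parity; since the vertices of ${\cal D}^*_m$ are themselves outlet words and every arc of ${\cal D}^*_m$ lifts to an arc of ${\cal D}_m$, the parity of the number of $1$'s is constant on each component of ${\cal D}^*_m$. When $m$ is odd, $1^m$ has an odd number of $1$'s whereas $0^m$ has an even number (namely none), so ${\cal A}^*_m$ and ${\cal R}^*_m$ cannot coincide. The step I expect to require the most care is exactly this transfer of the parity invariant from ${\cal D}_m$ to ${\cal D}^*_m$: one must make sure that contraction of equal-outlet-word vertices (which share a component, by the remark preceding Theorem~\ref{thm:SC}) does not accidentally merge vertices of different $1$-parity, so that the parity is genuinely a well-defined component invariant of ${\cal D}^*_m$. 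Granting that, the two directions combine to show ${\cal A}^*_m\equiv{\cal R}^*_m$ holds exactly when $m$ is even.
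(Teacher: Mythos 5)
Your proof is correct and follows essentially the same route as the paper, which states the theorem as a direct consequence of Lemma~\ref{lem:6} via the fact that the contraction $x\mapsto o(x)$ preserves arcs (so components of ${\cal D}_m$ map into components of ${\cal D}^*_m$) and that vertices sharing an outlet word already lie in a common component; your explicit walk for even $m$ and your parity-of-$1$'s invariant for odd $m$ are exactly the ingredients of the paper's Lemma~\ref{lem:6} argument transported to ${\cal D}^*_m$. The extra care you flag is in fact automatic, since two contracted vertices have the \emph{same} outlet word and hence trivially the same $1$-parity.
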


\begin{thm}  \label{thm:nova}
For even  $m$, all palindromes from $V({\cal D}_m^*)$ belong to the  component ${\cal A}^*_m$ (i.e. ${\cal R}^*_m$).
\end{thm}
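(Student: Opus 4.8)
The plan is to reduce the statement to a reachability question inside $\mathcal{D}^*_m$ and then to isolate the one numerical invariant that distinguishes palindromes from the other even--weight words. By Theorem~\ref{thm:parno}, for even $m$ the component $\mathcal{A}^*_m$ coincides with $\mathcal{R}^*_m$, the component containing $0^m$; and since $\mathcal{T}^*_m$ is symmetric (Theorem~\ref{thm:TkC3}) while every component is strongly connected (Theorem~\ref{thm:SC}), it is enough to exhibit, for each palindrome $v\in V(\mathcal{D}^*_m)$, a directed walk $0^m\rightsquigarrow v$. Recall (Lemma~\ref{lem:1} and the construction of $\mathcal{D}^*_m$) that an arc $u\to w$ amounts to a valid column $y$ with inlet word $u$ and outlet word $w$; encoding the vertical edges of $y$ by $d_0,\dots,d_m\in\{0,1\}$ with $d_0=d_m=0$, the condition that each vertex of $y$ has degree two is
\[
 u_j+w_j+d_{j-1}+d_j=2,\qquad 1\le j\le m.
\]

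First I would extract the separating invariant. Set $\delta(u)=\sum_{j=1}^{m}(-1)^{j+1}u_j$, the number of $1$'s of $u$ in odd positions minus the number in even positions. Multiplying the identity above by $(-1)^{j+1}$ and summing over $j$, the terms $2\sum_j(-1)^{j+1}$ vanish because $m$ is even, and the contribution of the $d$'s telescopes to $0$; hence along every arc
\[
 \delta(u)+\delta(w)=0 .
\]
Thus $\delta$ merely changes sign along arcs, so $|\delta|$ is constant on each component, and on $\mathcal{A}^*_m=\mathcal{R}^*_m$ we have $\delta\equiv 0$ (as $\delta(0^m)=0$). For a palindrome $v$ the entries $v_j$ and $v_{m+1-j}$ are equal while $j$ and $m+1-j$ have opposite parities, so each symmetric pair contributes $0$ to $\delta$ and $\delta(v)=0$. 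This is exactly the necessary side: a palindrome carries the same invariant as $0^m$, and the computation already rules out the spurious even-weight words (for instance $1010$, with $\delta=2$) that occupy other components of $\mathcal{B}^*_m$.

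The heart of the proof is the converse, i.e.\ that a palindrome is actually \emph{reachable} from $0^m$. Here the palindrome structure is decisive: the $1$'s of $v$ occur in symmetric pairs $(j,m+1-j)$, these pairs are properly nested, and each has \emph{odd} span $m+1-2j$ precisely because $m$ is even. I would realize $v$ as the outlet word of a column in a genuine $2$-factor of some $RG_m(N)$ --- which suffices, since every column of a rectangular $2$-factor lies on a closed walk through $0^m$ and is therefore in $\mathcal{R}^*_m$. Concretely, I would cap the pairs from the innermost outward by nested serpentine cycles: the block of rows bounded by a pair $(j,m+1-j)$ has an even number of rows, so it admits a Hamiltonian-type cycle joining the two crossing edges that sit at rows $j$ and $m+1-j$, and nesting keeps successive cycles disjoint while threading the already-capped inner blocks.

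The main obstacle is exactly this construction and its verification. A single column transition is rigid --- a maximal run of $0$'s forces all the intervening $d_j$ to equal $1$, which over-constrains the outer part --- so one cannot cap a pair while freezing the rest in one column; the cap must be spread over several columns and routed around the inner blocks already closed off to the right. Making the family of cycles genuinely spanning (every vertex of degree two) with exactly the prescribed crossings, and checking the Column and Adjacency-of-column conditions of Lemma~\ref{lem:1} throughout, is the technical work; and it is the evenness of $m$ (odd spans, equivalently endpoints of opposite colour in the bipartition of the grid) that guarantees each cap closes up. I expect this bookkeeping, best presented with an explicit figure, to be the only delicate point, the invariant computation above being the conceptual core.
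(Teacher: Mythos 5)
Your reduction is sound as far as it goes: since the outlet word of every column of a $2$-factor of $RG_m(N)$ lies on a closed walk through $0^m$ in ${\cal D}^*_m$, it would indeed suffice to realize each palindrome as the cut profile of a rectangular $2$-factor, and this is essentially the idea that drives the paper's own proof (which builds such realizations as explicit alpha-word walks, by strong induction on $m=2k$). The invariant $\delta(u)=\sum_j(-1)^{j+1}u_j$ is correct and does explain why words such as $1010$ sit in other components, but it is logically idle for this theorem: it only shows that palindromes are not \emph{excluded} from ${\cal A}^*_m$ by that one invariant, and the statement asserts nothing in the converse direction.

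The genuine gap is that the reachability construction --- which you yourself identify as the heart of the proof and defer as ``the technical work'' --- is never carried out, and the plan you sketch breaks already at $m=4$ on the palindrome $v=0110$. The Column conditions (Lemma~\ref{lem:1}) force the \emph{unique} admissible column with outlet word $0110$ to be $dcaf$; in it row $2$ (letter $c$) is joined vertically to row $1$ and row $3$ (letter $a$) to row $4$, while rows $2$ and $3$ are not joined to each other. Hence the two crossing strands necessarily pass through rows $1$ and $4$ at the cut column, and no $2$-factor of any $RG_4(N)$ realizes this profile by ``a cycle joining the two crossing edges'' inside the block of rows $2$--$3$ with rows $1$ and $4$ covered by separate non-crossing cycles. (A realization does exist --- for instance the Hamiltonian cycle of $RG_4(4)$ with code-matrix columns $abbc$, $dcaf$, $afdc$, $dbbf$ --- but its topology is not the nested-caps picture.) The same difficulty recurs whenever zero rows lie outside or between your nested pairs: a cycle cannot occupy a single row, so those rows must be hooked onto the crossing cycles, and controlling how they attach without creating extra crossings at the cut is precisely the content of the paper's induction, which peels off a frame $1w\overline{w}1$ (padding the inductive walk with letters $e$) or a frame $0^s1w\overline{w}10^s$ (padding with $db^{s-1}c$ and $ab^{s-1}f$ in alternation, with a parity split on the length of the inductive walk). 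Until a construction is supplied at that level of detail --- or the innermost-cap step is repaired --- the proof is incomplete.
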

\begin{proof}
We prove this theorem,   by strong induction, for all $k \in N$ where $m=2k$. The base case is obviously correct ($ 00 \leftrightarrow 11$).
Assume that the statement holds for all palindromes $w\overline{w}$ of length less then $2k$.
Consider a palindrome $v\overline{v} \in  V({\cal D}_m^*)$, where $v\overline{v} \neq 0^m$.

\underline{\bf Case 1:} \ $v\overline{v}  = 1w\overline{w}1$. \\
Let $x\overline{x}$ be one of the alpha words from  $V({\cal D}_{m-2})$ whose outlet word is $w\overline{w}$, i.e.
$o(x\overline{x}) = w\overline{w}$. Then there exist such palindromes $w_j\overline{w}_j \in V({\cal D}^*_{m-2})$ and alpha words $x_j\overline{x}_j \in V({\cal D}_{m-2})$ with $o(x_j\overline{x_j}) = w_j\overline{w_j}$,   $1 \leq j \leq t$ $(t \in N)$
for which there exist walks
 $ w\overline{w} \rightarrow  w_1\overline{w_1} \rightarrow w_2\overline{w_2} \rightarrow  \ldots  \rightarrow w_t \overline{w_t}$  and
$ x\overline{x} \rightarrow  x_1\overline{x_1} \rightarrow x_2\overline{x_2} \rightarrow  \ldots  \rightarrow x_t \overline{x_t}$ in ${\cal D}^*_{m-2}$ and  ${\cal D}_{m-2}$, respectively, and  $w_t\overline{w}_t = 0^{m-2}$ (inductive hypothesis).
Now, the walk $ ex\overline{x}e \rightarrow  ex_1\overline{x_1}e \rightarrow ex_2\overline{x_2}e \rightarrow  \ldots  \rightarrow ex_t \overline{x_t}e \rightarrow db^{2(k-1)}f$ in ${\cal D}_{m}$ ($x_t \overline{x_t} \in {\cal L}_{m-2}$) justifies the   existence of the  walk $ v\overline{v} = 1w\overline{w}1 \rightarrow  1w_1\overline{w_1}1 \rightarrow 1w_2\overline{w_2}1 \rightarrow  \ldots  \rightarrow 1w_t \overline{w_t}1 \rightarrow 0^m$ in ${\cal R}^*_m$ (see Figure~\ref{palin}a).

\begin{figure}[htb]
\begin{center}
\includegraphics[width=5.5in]{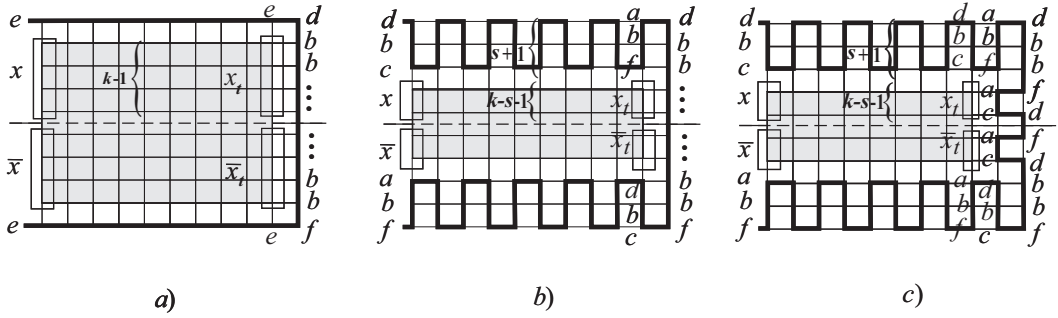}
\\ \ \vspace*{-18pt}
\end{center}
\caption{Constructions walks to vertex $0^{2k}$ in digraph ${\cal D}_{2k} ^*$.}
\label{palin}
\end{figure}

\underline{\bf Case 2:} \ $v\overline{v}  = 0^s1w\overline{w}10^s$ where  $s \geq 1$ (including the possibility that $w$ is an empty word). \\
If $w$ is not an empty word, then there exist such  walks
 $ w\overline{w} \rightarrow  w_1\overline{w_1} \rightarrow w_2\overline{w_2} \rightarrow  \ldots  \rightarrow w_t \overline{w_t}$  and
$ x\overline{x} \rightarrow  x_1\overline{x_1} \rightarrow x_2\overline{x_2} \rightarrow  \ldots  \rightarrow x_t \overline{x_t}$ (both of length $t \in N$) in ${\cal D}^*_{2(k-s-1)}$ and  ${\cal D}_{2(k-s-1)}$, respectively,  where $o(x\overline{x}) = w\overline{w}$,  $o(x_j\overline{x_j}) = w_j\overline{w_j}$ for all   $1 \leq j \leq t$ and
$w_t\overline{w}_t = 0^{2(k-s-1)}$ (inductive hypothesis).

\underline{\bf Case 2.1:} $t$ is odd. \\
Required walk (of length $t+1$) in ${\cal D}_m$ is $ db^{s-1}cx\overline{x}ab^{s-1}f \rightarrow  ab^{s-1}fx_1\overline{x_1} db^{s-1}c \rightarrow  db^{s-1}cx_2\overline{x_2}ab^{s-1}f \rightarrow  ab^{s-1}fx_3\overline{x_3} db^{s-1}c \rightarrow  db^{s-1}cx_4\overline{x_4}ab^{s-1}f \rightarrow \ldots  \rightarrow ab^{s-1}fx_t \overline{x_t}db^{s-1}c    \rightarrow db^{m-2}f$. The fact $db^{m-2}f \in {\cal L}_{m}$ implies the statement (see Figure~\ref{palin}b).

\underline{\bf Case 2.2:} $t$ is even.  \\
Required walk (of length $t+2$) in ${\cal D}_m$ is $ db^{s-1}cx\overline{x}ab^{s-1}f \rightarrow  ab^{s-1}fx_1\overline{x_1} db^{s-1}c \rightarrow  db^{s-1}cx_2\overline{x_2}ab^{s-1}f \rightarrow  ab^{s-1}fx_3\overline{x_3} db^{s-1}c \rightarrow  db^{s-1}cx_4\overline{x_4}ab^{s-1}f \rightarrow \ldots  \rightarrow db^{s-1}cx_t \overline{x_t}ab^{s-1}f \rightarrow ab^{s-1}f (ac)^{k-s-1} db^{s-1}c \rightarrow  db^{s}f(df)^{k-s-2}db^{s}f$.
Since $db^{s}(fd)^{k-s-1}b^{s}f \in {\cal L}_{m}$,  the statement holds (see Figure~\ref{palin}c).

\underline{\bf Case 2.3:} $v\overline{v}  = 0^{k-1}110^{k-1}$. \\
The walk (of length $2$) $db^{k-2}cab^{k-2}f \rightarrow ab^{k-2}f  db^{k-2}c \rightarrow  db^{m-2}f$ in  ${\cal R}_m$  implies
the existence of   the  walk  $0^{k-1}110^{k-1} \rightarrow 1 0^{m-2}1 \rightarrow 0^{m}$  in $ {\cal R}^*_m$.
Consequently, $0^{k-1}110^{k-1} \in  {\cal R}^*_m$.
\end{proof}

Further reduction  of the transfer  matrices is possible just  in case $G=RG_m(n)$ using the following
\begin{thm}  \label{thm:simmetry}
If   $v \in V({\cal R}^*_{m})$,  then  $ \overline{v}  \in V({\cal R}^*_{m})$.
\end{thm}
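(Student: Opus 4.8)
The plan is to exhibit the reversal map $v \mapsto \overline v$ as a digraph automorphism of ${\cal D}^*_m$ that fixes the vertex $0^m$. Since ${\cal R}^*_m$ is by definition the component of ${\cal D}^*_m$ containing $0^m$, and each component is strongly connected (Theorem~\ref{thm:SC}), an automorphism fixing $0^m$ must carry its component onto itself, which is precisely the assertion $v \in V({\cal R}^*_m) \Rightarrow \overline v \in V({\cal R}^*_m)$. Concretely, I would first prove that reversal preserves arcs, that is, $p \to q$ in ${\cal D}^*_m$ implies $\overline p \to \overline q$; then, for a given $v \in V({\cal R}^*_m)$, I would use strong connectivity to pick a directed walk $0^m = v_0 \to v_1 \to \cdots \to v_k = v$ and apply arc-preservation term by term to obtain a directed walk $0^m = \overline{v_0} \to \overline{v_1} \to \cdots \to \overline{v_k} = \overline v$, which places $\overline v$ in the same component as $0^m$ (here I use $\overline{0^m} = 0^m$).

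The heart of the argument is a reflection symmetry at the level of the uncontracted digraph ${\cal D}_m$. I would show that horizontal conversion $x \mapsto \overline x$ (reverse the column and bar every letter) is an automorphism of ${\cal D}_m$, namely $x \to y$ in ${\cal D}_m$ iff $\overline x \to \overline y$ in ${\cal D}_m$. This is the combinatorial shadow of reflecting the grid about a horizontal axis, a symmetry of the underlying graph that sends any valid code matrix to a valid one. Spelled out through Lemma~\ref{lem:1}, such a reflection reverses the top-to-bottom order within each column while leaving the left-to-right order of columns intact, so it suffices to check that ${\cal D}_{ud}$ is invariant under $(\alpha,\beta)\mapsto(\overline\beta,\overline\alpha)$ (arc reversal together with the bar, since the vertical direction is flipped) and that ${\cal D}_{lr}$ is invariant under $(\alpha,\beta)\mapsto(\overline\alpha,\overline\beta)$ (no reversal, since columns are not interchanged). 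Both invariances are finite verifications over the alphabet $\{a,b,c,d,e,f\}$; they simultaneously guarantee that Condition~1b and the endpoint constraints are preserved, so $\overline x \in V({\cal D}_m)$ whenever $x \in V({\cal D}_m)$.

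With this automorphism in hand, the descent to ${\cal D}^*_m$ is immediate. Given an arc $p \to q$ in ${\cal D}^*_m$, I would lift it to an arc $x \to y$ in ${\cal D}_m$ with $o(x) = p$ and $o(y) = q$; applying the automorphism gives $\overline x \to \overline y$ in ${\cal D}_m$, and then Remark~\ref{Primedba3}, which states $o(\overline x) = \overline{o(x)}$, yields $o(\overline x) = \overline p$ and $o(\overline y) = \overline q$, hence $\overline p \to \overline q$ in ${\cal D}^*_m$. This also shows $\overline v = o(\overline x) \in V({\cal D}^*_m)$ is a genuine vertex, so the walk-reversal of the first paragraph lands on an actual vertex of the component. (Alternatively one may invoke Theorem~\ref{thm:TkC3}: since ${\cal T}^*_m$ is symmetric, the component structure is that of an undirected graph and reachability from $0^m$ is unambiguous, which streamlines the bookkeeping.)

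I expect the only real obstacle to be the verification that horizontal conversion is arc-preserving on ${\cal D}_m$ — in particular, keeping track of the fact that the vertical compatibility digraph ${\cal D}_{ud}$ must be checked for invariance under \emph{arc reversal} (because flipping about a horizontal axis interchanges the roles of ``upper'' and ``lower''), whereas the horizontal compatibility digraph ${\cal D}_{lr}$ is checked without reversal. Everything else — the lift through $o$, the application of Remark~\ref{Primedba3}, and the appeal to strong connectivity of ${\cal R}^*_m$ — is routine.
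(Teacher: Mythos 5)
Your proposal is correct and is essentially the paper's own argument: both rest on the fact that horizontal conversion is an arc-preserving symmetry of ${\cal D}_m$, combined with Remark~\ref{Primedba3} ($o(\overline{x})=\overline{o(x)}$) and strong connectivity of the components. The only difference is bookkeeping --- the paper runs the walk argument upstairs in ${\cal D}_m$, using a walk from $x$ to the fixed vertex $db^{m-2}f$ (with $\overline{db^{m-2}f}=db^{m-2}f$) and descending via $o$ only at the end, whereas you first push the reversal map down to an automorphism of ${\cal D}^*_m$ fixing $0^m$ and then argue there; both are valid.
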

\begin{proof}
If  $o(x) = v$ ($x \in V({\cal R}_{m})$),  then   there exists an integer $n \geq 1$ for which ${\cal W}^{db^{m-2}f}_x (n) \neq 0$.  By  using the  property of reflection symmetry ($\overline{db^{m-2}f} = db^{m-2}f$),
we obtain that ${\cal W}^{db^{m-2}f}_{\overline{x}}  (n) = {\cal W}^{db^{m-2}f}_x (n) \neq 0$, which implies $\overline{x} \in V({\cal R}_{m})$.
Since $\overline{v} = o(\overline{x})$, we conclude that  $ \overline{v}  \in V({\cal R}^*_{m})$.
 \end{proof}

Now, we can contract the vertices $v$ and $\overline{v}$   into one vertex for all $v, \overline{v} \in V({\cal R}^*)$ resulting in a  new digraph ${\cal R}_m^{**}$.
For example, for  $m=5$ digraph ${\cal R}^*_{5}$ is bipartite  and has $6$ (unordered) pairs of different vertices $\{v, \overline{v}\}$
 which are rounded in Figure~\ref{SirokiP5Cvorovi}.
During contraction of vertices  $v$ and $\overline{v}$ we retain arcs starting from just one of these two vertices and delete the ones starting from another vertex.
Multiple (double) arcs appear  when $v$ and $\overline{v}$ have a  common direct predecessor (see Figure~\ref{SirokiP3Cvorovi}).
The symmetry of the rectangular grid $RG_m(n)$ leads us to the conclusion that ${\cal W}^{0^m}_{\overline{v}}  (n) = {\cal W}^{0^m}_v(n)$ for any vertex $v \in V({\cal R}^*)$.
Consequently, the number ${\cal W}^{0^m}_{0^m}(n)$ remains the same  in both digraph ${\cal R}^*$ and ${\cal R}^{**}$, i.e.

\begin{thm}  \label{thm:contraction} The number
$f_m^{RG}(n)$ is equal to entry $a_{1,1}^{(n)}$ of the $n$-th degree of the adjacency matrix for  ${\cal R}^{**}$ where   $v_1 \equiv 0^m$.
\end{thm}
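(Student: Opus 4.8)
The plan is to identify $f_m^{RG}(n)$ with a count of closed walks at $0^m$ and then show that folding $\mathcal{R}^*_m$ along the reversal involution $v \mapsto \overline{v}$ leaves this count unchanged. First I would invoke Theorem~\ref{thm:f2}, which gives $f_m^{RG}(n) = a_{1,1}^{(n)} = {\cal W}^{0^m}_{0^m}(n)$ computed in ${\cal D}^*_m$. Since $0^m$ lies in the component ${\cal R}^*_m$ and, by Theorem~\ref{thm:SC}, distinct (weakly connected) components of ${\cal D}^*_m$ are joined by no arc, every walk that starts and ends at $0^m$ stays inside ${\cal R}^*_m$. Hence ${\cal W}^{0^m}_{0^m}(n)$ may be computed in ${\cal R}^*_m$ alone, and it suffices to compare this number with the closed-walk count at $[0^m]$ in ${\cal R}^{**}_m$.

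Next I would record the two structural facts that make the folding legitimate. Write $\pi(v) = \overline{v}$. Using Remark~\ref{Primedba3} (so $o(\overline{x}) = \overline{o(x)}$) together with the invariance of ${\cal D}_{lr}$ under the letter reflection $\overline{(\cdot)}$ — the algebraic shadow of the up--down symmetry of the grid — one checks that $v \to w$ in ${\cal R}^*_m$ if and only if $\overline{v} \to \overline{w}$; that is, $\pi$ is a digraph automorphism of ${\cal R}^*_m$, whose vertex set is $\pi$-closed by Theorem~\ref{thm:simmetry}. Because $0^m$ is a palindrome it is fixed by $\pi$. I would then set $W_v(n) := {\cal W}^{0^m}_v(n)$ and use the symmetry ${\cal W}^{0^m}_{\overline{v}}(n) = {\cal W}^{0^m}_v(n)$ established just above the theorem; equivalently, the recursion $W_v(n) = \sum_u a_{v,u} W_u(n-1)$ (entries $a_{v,u}$ of ${\cal T}^*_m$) has the $\pi$-invariant initial vector $e_{0^m}$, and $\pi$ commutes with the transfer matrix, so $W_v(n) = W_{\overline{v}}(n)$ for every $n$.

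The heart of the argument is to show that ${\cal R}^{**}_m$ exactly realises the restriction of the transfer matrix to $\pi$-invariant vectors. Indexing ${\cal R}^{**}_m$ by the orbits $[w] = \{w,\overline{w}\}$ and reading off the contraction rule (retain the out-arcs of one representative, a double arc appearing precisely when $v$ and $\overline{v}$ share a direct predecessor), one obtains that its adjacency matrix satisfies $A^{**}_{[v],[w]} = \sum_{u \in [w]} a_{v,u}$. Setting $c_{[v]}(n) := W_v(n)$ (well defined on orbits by $\pi$-invariance) and grouping the recursion $W_v(n+1) = \sum_u a_{v,u} W_u(n)$ by orbits collapses each orbit sum, since $W_u(n) = W_w(n)$ for $u \in [w]$, to give $c(n+1) = A^{**} c(n)$ with $c(0) = e_{[0^m]}$. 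Hence $c(n) = (A^{**})^n e_{[0^m]}$, and reading the $[0^m]$-coordinate yields $a_{1,1}^{(n)} = c_{[0^m]}(n) = W_{0^m}(n) = {\cal W}^{0^m}_{0^m}(n) = f_m^{RG}(n)$, which is the assertion.

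The step I expect to require the most care is matching the informally described contraction with the clean formula $A^{**}_{[v],[w]} = \sum_{u \in [w]} a_{v,u}$: one must treat palindrome orbits (where $w = \overline{w}$ contributes a single term) separately from genuine pairs, and confirm that discarding the out-arcs of one representative loses no walk. This is exactly where the symmetry $W_w(n) = W_{\overline{w}}(n)$ is indispensable, as it lets the two members of an orbit be merged without altering the weighted successor sum. The only other point needing verification is the commuting relation $\pi \circ {\cal T}^*_m = {\cal T}^*_m \circ \pi$, which reduces to the invariance of ${\cal D}_{lr}$ under $\overline{(\cdot)}$ and can be checked directly from Figure~\ref{cvornikod2}.
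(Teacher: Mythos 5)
Your proposal is correct and follows essentially the same route as the paper: both arguments rest on the walk-count symmetry ${\cal W}^{0^m}_{\overline{v}}(n) = {\cal W}^{0^m}_v(n)$ (which the paper attributes to the reflection symmetry of $RG_m(n)$ and you derive from $v\mapsto\overline{v}$ being an automorphism of ${\cal R}^*_m$ fixing $0^m$), and both use it to justify that contracting each pair $\{v,\overline{v}\}$ preserves the closed-walk count at $0^m$. You merely make explicit the folded recursion $c(n+1)=A^{**}c(n)$ with $A^{**}_{[v],[w]}=\sum_{u\in[w]}a_{v,u}$, which the paper leaves as an immediate consequence.
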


This way we  obtain the  recurrence relations for $f^{RG}_m(n)$ of lower order then the ones we got using the digraph ${\cal R}^*$(see Tabular 2).
Note that for odd $m$, ${\cal R}^{*}$ is a bipartite digraph and  vertices $v$ and $\overline{v}$ are at an  even distance (of the same colour), so   ${\cal R}^{**}$ has no loops. On the contrary, when $m$ is even  new loops can appear. For example, for $m=4$ (see Figure~\ref{SirokiP4Cvorovi}) one more loop  appears when the vertices $1100$ and $0011$ are contracted.

Computing the  generating functions
 \ $\ds   {\cal F}^G_{m}(x) \stackrel{\rm def}{=} \sum_{n\geq 1}^{\infty}
f^G_{m}(n)x^{n} $ is a matter of
routine \cite{BC08,CDS,S86}.
We wrote computer programs for the computation of the  adjacency
matrices of  these   digraphs ${\cal D}_m$,  \ ${\cal D}^*_m$, ${\cal R}_m$,  \ ${\cal R}^*_m$, \  ${\cal R}^{**}_m$ and initial members of required sequences
$f^G_m(n)$.

\section{\bf \ \ Computational results }
\label{sec:CR}

\subsection{\bf \ \  Cardinality of  sets of vertices  for components of   ${\cal D}^*_m$ }
\label{subsec:CC1} $\;  $ \\
Some  properties of the  digraphs ${\cal D}_m$, ${\cal D}^*_m$ and ${\cal R}^*_m$, and their  related  sequences $f^G_m(n)$,  spotted upon analyzing the computational data for $m \leq 12$ (in case of $RG_m(n)$ for $m \leq 17 $)
  have been  discussed and  proved in the previous section for arbitrary  $m \in N$.
Beside numerical results we further present  a few more properties   concerning the cardinality of  sets of vertices  for components of   ${\cal D}^*_m$.
Due to limited  space these properties are here formulated as conjectures  and will be the subject of a separate paper.

\begin{conj}  \label{conj:1}
For each  $m\geq 2 $,  digraph  ${\cal D}^{*}_m$ has exactly $\ds \left\lfloor    \ds \frac{m}{2} \right\rfloor + 1$ components, i.e.
 ${\cal D}^{*}_m = {\cal A}^*_m    \cup {\cal B}^*_m, $ where ${\cal B}^*_m $ consists of exactly  $ \left\lfloor    \ds \frac{m}{2} \right\rfloor $ components
 ${\cal B}^{*(1)}_{m},  {\cal B}^{*(2)}_{m}, \ldots , {\cal B}^{*(\lfloor     m/2 \rfloor )}_{m}$ \  ($ \mid V({\cal B}^{*(1)}_{m}) \mid  \geq
  \mid V({\cal B}^{*(2)}_{m}) \mid  \geq  \ldots $ $\geq \mid V( {\cal B}^{*(\lfloor    m/2 \rfloor )}_{m}) \mid $).
All the components  ${\cal B}^{*(k)}_{m}$ ($ 1 \leq k \leq \ds \left\lfloor    \ds \frac{m}{2} \right\rfloor $) are bipartite digraphs and \\
$$ \mid V({\cal B}^{*(k)}_{m}) \mid \; \; = \; \;
 \left \{
 \begin{array}{rl}
 \ds  {m + 1 \choose   (m+1)/2 -k},  &                   \mbox{ if  $m$ is  odd}, \\ \\
 2 \cdot \ds  {m \choose   m/2 -k} , & \mbox{ if $m$  is     even}, \end{array}  \right. $$

  and
$$\mid V({\cal A}^{*}_{m}) \mid \; \; = \; \;
 \left \{
  \begin{array}{rl}   \ds {m  \choose (m-1)/2},  & \mbox{ if $m$ is  odd},     \\
    \ds {m \choose m/2} , & \mbox{ if  $m$  is     even.} \end{array}  \right. $$
      The vertices $v$ and $\overline{v}$ belong to  the same component. If $v \in {\cal B}^{*(s)}_{m}, \; \;  1 \leq s \leq \ds \left\lfloor    \ds \frac{m}{2} \right\rfloor $, then  $\overline{v}$ is  placed in the same  class  iff $m$ is odd.
      \end{conj}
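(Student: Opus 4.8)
The plan is to prove all six assertions simultaneously by studying the \emph{undirected} graph underlying ${\cal D}^*_m$. By Theorem~\ref{thm:TkC3} the transfer matrix ${\cal T}^*_m$ is symmetric, so $v\to w$ iff $w\to v$; hence the ``components'' of Theorem~\ref{thm:SC} are exactly the connected components of this graph, and everything reduces to deciding which binary words are mutually reachable. First I would record the combinatorial content of one arc. As in Theorem~\ref{thm:TkC6}, an arc is carried by a single column $y\in V({\cal D}_m)$ with $i(y)=v$ and $o(y)=w$, and such a column is precisely a partition of the rows $\{1,\dots ,m\}$ into consecutive \emph{runs} (maximal blocks of the letter $b$ together with their two endpoints), each run joining exactly two horizontal stubs. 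Thus an arc $v\to w$ is a \emph{flip move}: $w$ arises from $v$ by toggling the bits at the run endpoints, where the toggled positions pair up so that only $0$'s lie strictly between the two ends of a run, and only $1$'s lie between consecutive runs. These flip moves, together with strong connectedness of each component, are the only mechanisms I would use.

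The crude invariant is already available: by Lemma~\ref{lem:5} the parity of the number of $1$'s is constant on each component, which is what forces disconnectedness. To obtain the \emph{exact} count I must refine this into an explicit flip--invariant $\Phi$ taking $\lfloor m/2\rfloor+1$ distinct values. Interpreting closed walks as $2$--factors of $TkC_m(n)$ (Theorem~\ref{thm:f2}), $\Phi$ should measure a signed winding of the strands crossing the interface --- a $\mathbb{Z}$--valued lift of the $\mathbb{Z}/2$ parity of Lemma~\ref{lem:5}, reduced modulo the reflection symmetry of the strip. The model words $10\cdots0$ and $010\cdots0$, whose single strands land in ${\cal A}^*_m$ respectively in a ${\cal B}$--component according to the parity of the occupied row, already show that $\Phi$ must see the \emph{position} of the strands and not merely their number. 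Constructing such a $\Phi$ and proving that every flip move leaves it unchanged is the first main obstacle; this is the step where the homotopical picture of ``contractible'' column surgeries must be turned into a genuine combinatorial statistic on words.

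The converse --- that each level set of $\Phi$ is connected --- is the technical heart. Here I would give an explicit reduction in the spirit of the walk constructions in the proof of Theorem~\ref{thm:nova}: starting from any $v$, apply flip moves to cancel ``contractible'' pairs of adjacent strands and slide the surviving strands onto a fixed canonical staircase, thereby joining $v$ to a unique representative of its level. This would show there are \emph{exactly} $\lfloor m/2\rfloor+1$ components, namely ${\cal A}^*_m$ and the ${\cal B}^{*(k)}_m$. For the cardinalities I would biject one colour class of a component with the lattice paths of length $m$ (step $+1$ for a $1$, $-1$ for a $0$) carrying the prescribed value of $\Phi$, and evaluate the count by the reflection principle, producing the central binomial $\binom{m}{\lfloor m/2\rfloor}$ for ${\cal A}^*_m$ and the off--centre binomials for the ${\cal B}^{*(k)}_m$; as a global check these must sum to $\mid V({\cal D}^*_m)\mid$ of Theorem~\ref{thm:TkC5}. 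The dichotomy between a single binomial ($m$ odd) and twice a binomial ($m$ even), as well as the final sentence of the statement, both come from Remark~\ref{Primedba3}: the reversal $v\mapsto\overline v$ preserves $o(\cdot)$ up to reversal, hence maps each component to itself, and on the two colour classes of a bipartite ${\cal B}^{*(k)}_m$ it acts as the identity when $m$ is odd (so $v,\overline v$ share a class, counted once) and swaps them when $m$ is even (so the two equal classes give the factor $2$, and $v,\overline v$ lie in opposite classes).

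It remains to settle bipartiteness and the ordering. That ${\cal A}^*_m={\cal R}^*_m$ iff $m$ is even is Theorem~\ref{thm:parno}, and ordering the ${\cal B}^{*(k)}_m$ by size is automatic once the cardinalities are known. For bipartiteness: whenever a component consists of words with an odd number of $0$'s it is covered directly by Theorem~\ref{thm:TkC2c}; for the remaining ${\cal B}^{*(k)}_m$ I would argue that an odd closed walk would be a $2$--factor of $TkC_m(n)$ with $n$ odd whose value of $\Phi$ is not the extremal one, and derive a contradiction from the edge--parity identities \eqref{r1}--\eqref{r4} of Remark~\ref{Primedba1}, exactly as in the two cases of Theorem~\ref{thm:TkC2c}; the point is that only ${\cal A}^*_m$, which carries the loop $1^m\to 1^m$, can support an odd cycle. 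I expect the difficulty to be concentrated entirely in the two claims of the second and third paragraphs --- that the strand statistic $\Phi$ is well defined and flip--invariant, and that each of its level sets is connected via the canonical--form reduction --- since the enumeration then follows by a reflection argument and the bipartiteness by the already available parity identities.
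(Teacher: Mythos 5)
First, a point of calibration: the paper does not prove this statement. It is explicitly labelled a conjecture, and the authors state that these properties ``will be the subject of a separate paper.'' So there is no proof of record to compare you against; your proposal has to stand on its own. As a research plan it is sensible and correctly identifies the right reductions (symmetry of ${\cal T}^*_m$ via Theorem~\ref{thm:TkC3}, arcs as single columns as in Theorem~\ref{thm:TkC6}, the parity invariant of Lemma~\ref{lem:5}, the reversal symmetry of Remark~\ref{Primedba3}). But it is not a proof: the two load-bearing claims --- that a $\mathbb{Z}$-valued ``winding'' statistic $\Phi$ exists and is invariant under every flip move, and that each level set of $\Phi$ is connected via a canonical-form reduction --- are exactly the content of the conjecture, and you leave both unconstructed. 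You acknowledge this yourself (``the first main obstacle,'' ``the technical heart''), so what you have is a correct identification of where the difficulty lives, not a resolution of it. Everything downstream (the reflection-principle count, the identification of which binomial goes with which component, the ordering by size) is conditional on those two missing lemmas.

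Two smaller but genuine gaps. (i) You assert that because reversal $v\mapsto\overline v$ is a digraph automorphism of ${\cal D}^*_m$ (which does follow from Remark~\ref{Primedba3}), it ``maps each component to itself.'' An automorphism only permutes the components; to conclude it fixes each one you need either a fixed vertex (a palindrome) in every component --- which is how Theorem~\ref{thm:simmetry} handles ${\cal R}^*_m$, using $\overline{0^m}=0^m$ --- or the fact that all component sizes are distinct, which is circular here since the sizes are what you are trying to compute. This sub-claim is itself part of the conjecture's final assertion and needs its own argument. (ii) Your bipartiteness argument covers the components whose vertices have an odd number of $0$'s via Theorem~\ref{thm:TkC2c}, but for the remaining ${\cal B}^{*(k)}_m$ you appeal to ``the extremal value of $\Phi$'' and a contradiction with Remark~\ref{Primedba1}; since $\Phi$ is undefined, this is not yet an argument, and note that the parity identities \eqref{r1}--\eqref{r4} alone cannot distinguish between two components whose vertices have the same parity of $1$'s, which is precisely the situation for ${\cal A}^*_m$ versus some ${\cal B}^{*(k)}_m$ when $m$ is even. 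Until $\Phi$ is written down and its flip-invariance verified on the explicit column moves, the proposal remains a (plausible) programme rather than a proof.
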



\begin{table}[htb]
\begin{center}
\begin{tabular}{||c||r|r|r|r|r|r|r|r|r|r|r||}
\hline \hline  m   &  2 & 3 & 4 & 5& 6& 7 & 8 & 9 & 10 & 11 & 12    \\
\hline \hline
 $ \mid V({\cal D}_{m}) \mid $  &   5& 13
& 41 &121 &365 &1093 & 3281 & 9841 & 29525 & 88573 &$ 265721$
\\ \hline
 $ \mid V({\cal D}^*_{m}) \mid $  & 4 & 7 & 16 & 31 & 64 &127 & 256 & 511  & 1024 &2047 &4096
\\ \hline  \hline
 $ \mid V({\cal A}^*_{m}) \mid $ & 2 & 3 &  6& 10& 20
 & 35 & 70 & 126 &252  & 462 & 924
\\  \hline
 $ \mid V({\cal B}^{*(1)}_{m}) \mid $ &2  & 4 &  8&15 &30
 & 56 &  112 &210  & 420 & 792  & 1584
 \\  \hline
 $ \mid V({\cal B}^{*(2)}_{m}) \mid $ &-  & - & 2 & 6
 & 12 & 28 & 56 &120  &240  &495  &990
 \\  \hline
 $ \mid V({\cal B}^{*(3)}_{m}) \mid $ &-  &-  &-  &-
 &2  &8  &16  &45  &90  &220  &440
 \\  \hline
 $ \mid V({\cal B}^{*(4)}_{m}) \mid $ &- &-  &-  &-
 &-  &-  & 2 & 10 & 20 & 66 &132
 \\  \hline
 $ \mid V({\cal B}^{*(5)}_{m}) \mid $ &-  & - & - &
- & - & - & - & - & 2 & 12 & 24
 \\  \hline
 $ \mid V({\cal B}^{*(6)}_{m}) \mid $ &- & - & - &
- & - & - & - & - & - & - &2
\\  \hline \hline
 order  & 4 & 5 & 13 &19 &
49 & 69 & 178 &  249& 649 &-&  -
\\
 \hline
\hline
\end{tabular}
\caption{The numbers of  vertices of  \ ${\cal D}_m$, ${\cal D}^{*}_m$, components of ${\cal D}^{*}_m$  \ and
the order  of the  recurrence relations  (the same) for both  thick cylinder graphs \ $TkC_{m}(n)$ and  Moebius strips $MS_m(n)$.}
\label{tab1}
\end{center}
\end{table}

\begin{conj}  \label{conj:2}
 For $m-odd$, the number of vertices in    ${\cal R}^{*}_{m}$ is equal  to the  binomial coefficients  (in OEIS A001791):
 \begin{eqnarray} \label{e0} \mid V({\cal R}^{*}_{m}) \mid =  {m+1 \choose (m-1)/2 }  \end{eqnarray}    For  $m-even$, that number is equal to the Central binomial coefficients (in OEIS A000984):
 \begin{eqnarray} \label{e1} \mid V({\cal R}^{*}_{m}) \mid = {m \choose m/2 }  \end{eqnarray}  while   the number of vertices in    digraph ${\cal R}^{**}_{m}$ is equal  to  \mbox{  A005317 in OEIS}, i.e.
 \begin{eqnarray} \label{e2} \mid  V({\cal R}^{**}_{m}) \mid = 2^{(m-2)/2}+ \frac{1}{2}{m \choose m/2}. \end{eqnarray}   \end{conj}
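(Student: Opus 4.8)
The plan is to determine the vertex set $V(\mathcal{R}^*_m)$ exactly by exhibiting a numerical invariant that is constant on each strongly connected component of $\mathcal{D}^*_m$ and whose level sets are precisely those components; counting the level set that contains $0^m$ then yields \eqref{e0}--\eqref{e1}, and an orbit count under reversal yields \eqref{e2}. For a binary word $v\equiv v_1v_2\ldots v_m\in\{0,1\}^m$ set $T(v)\stackrel{\rm def}{=}\sum_{i=1}^m(-1)^iv_i$. First I would show that $T$ is rigidly constrained along every arc. An arc $v\to w$ of $\mathcal{D}^*_m$ is witnessed by a column $y\in V(\mathcal{D}_m)$ with $i(y)=v$ and $o(y)=w$; reading off the four edge-directions of Figure~\ref{cvornikod1}, at each row $i$ the two edges incident to $y_i$ split into horizontal ones (whose indicators are exactly $v_i=i_i(y)$ and $w_i=o_i(y)$) and vertical ones, so that $v_i+w_i+u_i+d_i=2$, where $u_i,d_i\in\{0,1\}$ record the up- and down-edges of $y_i$. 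The column conditions force $u_1=d_m=0$ and $d_i=u_{i+1}$ for $1\le i\le m-1$, whence $\sum_i(-1)^i(u_i+d_i)=-u_1=0$ by telescoping, and therefore
\[
T(v)+T(w)=\sum_{i=1}^m(-1)^i(v_i+w_i)=2\sum_{i=1}^m(-1)^i=
\begin{cases}0,&m\text{ even},\\ -2,&m\text{ odd}.\end{cases}
\]

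Consequently $|T|$ (for even $m$) and $|T+1|$ (for odd $m$) are invariant along arcs, so each component of $\mathcal{D}^*_m$ is contained in a single level set of this invariant. Since $T(0^m)=0$, this already gives the inclusions $V(\mathcal{R}^*_m)\subseteq\{v:T(v)=0\}$ for even $m$ and $V(\mathcal{R}^*_m)\subseteq\{v:T(v)\in\{0,-2\}\}$ for odd $m$. The reverse inclusion is the crux: I must prove that each of these level sets lies in a single component, i.e.\ that every word $v$ in it is reachable from $0^m$, after which strong connectivity of components (Theorem~\ref{thm:SC}) finishes the identification. Here I would generalize the explicit walk construction used in the proof of Theorem~\ref{thm:nova}: given $v$ in the level of $0^m$, I would peel off a matched pair consisting of a $1$ in an odd position and a $1$ in an even position (available in balanced numbers precisely because $T(v)=0$, resp.\ $T(v)\in\{0,-2\}$) by exhibiting a short directed walk in $\mathcal{D}_m$ -- assembled from the letters $e$, $ac$, $df$ and blocks $db^{k}f$ exactly as in that proof -- which lowers a complexity measure (the number of $1$'s, or their total spread) while staying inside the level set, terminating at $0^m$. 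Proving that these local reductions are always realizable by genuine arcs, with all intermediate $u_i,d_i\in\{0,1\}$, is the main obstacle; the symmetry $\mathcal{T}^*_m=(\mathcal{T}^*_m)^{T}$ (Theorem~\ref{thm:TkC3}) is helpful, since it lets me treat each level set as an undirected graph and argue ordinary connectivity.

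Granting this characterization, \eqref{e0}--\eqref{e1} become routine enumerations. For even $m$, the condition $T(v)=0$ says the number of $1$'s among the $m/2$ even positions equals that among the $m/2$ odd positions, so $|V(\mathcal{R}^*_m)|=\sum_j\binom{m/2}{j}^2=\binom{m}{m/2}$ by Vandermonde's identity. For $m=2n+1$ odd, adding the two contributions $T=0$ and $T=-2$ gives $\binom{2n+1}{n}+\binom{2n+1}{n-1}=\binom{2n+2}{n}=\binom{m+1}{(m-1)/2}$ by Pascal's rule, and one checks that the single excluded vertex $(01)^n0$ (cf.\ Theorem~\ref{thm:TkC5}) is exactly the word with the extremal invariant value, so its removal does not affect this count.

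Finally, for even $m$ the digraph $\mathcal{R}^{**}_m$ is the quotient of $\mathcal{R}^*_m$ by the reversal involution $v\mapsto\overline v$, which maps $V(\mathcal{R}^*_m)$ into itself by Theorem~\ref{thm:simmetry} and whose fixed points are exactly the palindromes. There are $2^{m/2}$ palindromes of length $m$, and every one of them lies in $\mathcal{R}^*_m$ by Theorem~\ref{thm:nova} (consistently, each palindrome satisfies $T=0$, since for even $m$ the positions $i$ and $m+1-i$ have opposite parity and cancel). The orbit-counting (Burnside) formula then gives
\[
|V(\mathcal{R}^{**}_m)|=\frac{1}{2}\bigl(|V(\mathcal{R}^*_m)|+2^{m/2}\bigr)=\frac{1}{2}\binom{m}{m/2}+2^{(m-2)/2},
\]
which is exactly \eqref{e2}.
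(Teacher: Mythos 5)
First, be aware that the statement you are proving appears in the paper only as a conjecture: the authors derive \eqref{e0} and \eqref{e1} conditionally from their (unproved) Conjecture~\ref{conj:1} together with Theorem~\ref{thm:parno}, and obtain \eqref{e2} from \eqref{e1} via Theorem~\ref{thm:nova}, so a complete unconditional proof would go beyond what the paper does. Your invariant $T(v)=\sum_{i=1}^m(-1)^iv_i$ is a genuinely new and correct ingredient: the identity $v_i+w_i+u_i+d_i=2$ together with $u_1=d_m=0$ and $d_i=u_{i+1}$ does give $T(v)+T(w)=0$ ($m$ even) resp.\ $-2$ ($m$ odd) along every arc of ${\cal D}^*_m$, hence $V({\cal R}^*_m)$ is contained in the level set $\{v:T(v)=0\}$ resp.\ $\{v:T(v)\in\{0,-2\}\}$. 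Your Vandermonde/Pascal counts of these sets, the check that the one non-vertex word $(01)^{(m-1)/2}0$ has $T=(m-1)/2\notin\{0,-2\}$ and so does not disturb the count, and the Burnside computation for ${\cal R}^{**}_m$ (using Theorems~\ref{thm:simmetry} and \ref{thm:nova}) are all correct.

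The gap is exactly where you place it, and it is not a formality: what you have actually proved is only the inclusion, i.e.\ the upper bounds $\mid V({\cal R}^*_m)\mid\le{m\choose m/2}$ and $\mid V({\cal R}^*_m)\mid\le{m+1\choose (m-1)/2}$. The equalities require that every word in the level set of $0^m$ be reachable from $0^m$, and your proposed reduction --- peeling off matched pairs of $1$'s by walks assembled from $e$, $ac$, $df$ and $db^kf$ blocks while decreasing a complexity measure --- is a program, not an argument; you yourself flag the realizability of these local moves as ``the main obstacle.'' For scale, the paper's proof of Theorem~\ref{thm:nova} already needs a multi-case induction just to reach the palindromes for even $m$, and the full level set is strictly larger, so this step cannot be waved through. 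Until that reachability lemma is supplied, \eqref{e0} and \eqref{e1}, and therefore \eqref{e2}, are not established. On the positive side, your invariant also reproduces, as level sets of $\mid T\mid$ resp.\ $\mid T+1\mid$, all the conjectured component cardinalities in Conjecture~\ref{conj:1}, which strongly suggests your approach is the right one and that the missing connectivity lemma is the real content of both conjectures.
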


For $m$-odd, equation \eqref{e0} is a trivial consequence of
Conjecture~\ref{conj:1}  with the assumption that   $0^m \in V( {\cal B}^{*(1)}_{m})$.
Equation \eqref{e1}  is a consequence of     Theorem~\ref{thm:parno} and Conjecture~\ref{conj:1}.
Using  Theorem~\ref{thm:nova}  the equality \eqref{e2} is easy to prove from \eqref{e1}.
Also, we noticed that the  members of the sequence  $\mid V({\cal R}_{m} \mid $
for  even $m \leq 10$ coincide with the  initial members of the sequence A082758 in OEIS \cite{ON}.

\begin{table}[htb]
\begin{center}
\begin{tabular}{||c||r|r|r|r|r|r|r|r|r|r|r|r||}
\hline \hline  m  &  2 & 3 & 4 & 5& 6& 7 & 8 & 9 & 10 & 11 & 12 & 13 \\
\hline \hline
 $ \mid {\cal F}_{m} \mid  = \mid {\cal L}_{m} \mid $   & 1 & 2 & 3
& 5 & 8 & 13 & 21 & 34 & 55 & 89 & 144 & 233
\\ \hline
 $ \mid V({\cal R}_{m}) \mid $  &  3& 6& 19
& 60 &141 & 532 & 1107 & 4608 & 8953  & $\ll$&$\ll$  &
$\ll$
\\ \hline
 $ \mid V({\cal R}^{*}_{m}) \mid $  & 2 & 4 & 6&
15 & 20 & 56 &70 & 210 & 252  & 792 & 924 & 3003
\\ \hline
 $ \mid V({\cal R}^{**}_{m}) \mid $  & 2 & 3 & 5 &
9 & 14 & 31 & 43 & 110 & 142  & 406 & 494 & 1519
\\  \hline
 order  & 2 &1 & 5 &
3 & 13 & 9 & 35 & 25 & 96 &- & - & -
\\
 \hline
\hline
\end{tabular}

\begin{tabular}{||c||r|r|r|r||}
\hline \hline  m  & 14 & 15 & 16  & 17  \\
\hline \hline
 $ \mid {\cal F}_{m} \mid  = \mid {\cal L}_{m} \mid$   &377  & 610  & 987
& 1597
\\ \hline
 $ \mid V({\cal R}^{*}_{m}) \mid $  & 3433 & $\ll$& $\ll$&$\ll$
\\ \hline
 $ \mid V({\cal R}^{**}_{m}) \mid $  & 1781 & 5756 & 6564 &21943
 \\
 \hline
\hline
\end{tabular}
\end{center}
\caption{The numbers of  vertices of \ ${\cal
R}_m$, \ ${\cal R}^{*}_m$ \ and  ${\cal R}^{**}_m$  \ and
the order of the recurrence relations for \ $RG_m(n)$.}
\label{tab2}
\end{table}

\subsection{\bf \ \  Asymptotic behaviour  of the  numbers of 2-factors $f^{G}_m(n)$  }
\label{subsec:AB}$\;  $ \\
The properties referring to asymptotic behaviour  of  numbers of 2-factors $f^{RG}_m(n)$ and $f^{TkC}_m(n)$ (when $ n \rightarrow \infty $)  were observed to be similar to the ones that  appeared   while studying  Hamiltonian cycles.

Since the  adjacency matrix ${\cal T}_m^*$ of ${\cal D}_m^*$ ($m \geq 2$) is symmetric (Theorem~\ref{thm:TkC3}), i.e. Hermitian, the spectrum of  ${\cal D}_m^*$ contains only real numbers. Each of the  components of ${\cal D}_m^*$  is a  strongly connected digraph (Theorem~\ref{thm:SC}) and, therefore,
 has an irreducible adjacency matrix \cite{CDS} (which is a block in the diagonal block matrix  ${\cal T}_m^*$).
 From Perron-Frobenius theorems \cite{HJ}
 the maximum modulus  eigenvalues for these nonnegative and irreducible matrices are algebraically simple eigenvalues.
 If the set of all maximum modulus  eigenvalues for a nonnegative and irreducible matrix has exactly $k \geq 2$ distinct elements, they are precisely the $k$th roots of $1$ times the maximum   eigenvalue $\theta$ \cite{HJ}.  Since all eigenvalues of our considered matrices are real numbers,
  for bipartite digraphs there exist exactly  two simple  eigenvalues of maximal modulus ($\theta$ and $-\theta$), i.e. $k$ must be two.

 Let $ \theta_m$  (we also use the labels $ \theta^{TkC}_m$  and $ \theta^{MS}_m$ to emphasize  corresponding grid graph)
  and $ \theta^{RG}_m$ be the maximum eigenvalues of the adjacency matrices of ${\cal D}_m^*$ and ${\cal R}_m^*$, respectively.
Computational results for $m \leq 12$ show that the maximum  eigenvalue $\theta_m$ of ${\cal T}^*_m$ is simple and unique maximum modulus  eigenvalue.
Additionally, $ \theta_m$ is attached  to the component ${\cal A}^*_m$ for all $m  \geq 2$, i.e.

\begin{conj} \label{conj:3}
 The maximum  eigenvalue of  ${\cal A}^*_m$ is  the unique maximum modula eigenvalue of ${\cal D}_m^*$.
\end{conj}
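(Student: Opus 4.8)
The plan is to first reduce the statement to a strict comparison of Perron roots among the components of $\mathcal{D}^*_m$, and then attack that comparison combinatorially. By Theorem~\ref{thm:TkC3} the matrix $\mathcal{T}^*_m$ is symmetric, so its spectrum is real, and by Theorem~\ref{thm:SC} the digraph $\mathcal{D}^*_m$ is a disjoint union of strongly connected components; accordingly, after relabelling, $\mathcal{T}^*_m$ is block-diagonal with one irreducible nonnegative block per component, and the spectrum of $\mathcal{D}^*_m$ is the union of the spectra of the blocks. For each block the Perron--Frobenius theorem \cite{HJ} supplies a simple Perron root equal to its spectral radius; since the block of $\mathcal{A}^*_m$ carries the loop $1^m \rightarrow 1^m$ it is primitive (aperiodic), so within $\mathcal{A}^*_m$ the root $\theta_m$ is the unique eigenvalue of maximal modulus (in particular $-\theta_m$ is not an eigenvalue of that block). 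Hence the conjecture is equivalent to the single family of strict inequalities
\begin{eqnarray*}
\theta_m = \rho(\mathcal{A}^*_m) > \rho(\mathcal{B}^{*(k)}_m) \qquad \mbox{for every component } \mathcal{B}^{*(k)}_m.
\end{eqnarray*}

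For the comparison I would use the walk-growth characterization $\rho = \lim_{n} (W_n)^{1/n}$, where $W_n$ is the number of closed oriented walks of length $n$ in a strongly connected component; by Theorem~\ref{thm:f2} this counts 2-factors of $TkC_m(n)$ whose column outlet words all lie in that component, i.e. 2-factors with a prescribed number of non-contractible cycles (the number of $1$'s in a column outlet word recording how many non-contractible strands cross that cut, as exploited in the proof of Theorem~\ref{thm:TkC2c}). The idea is to construct, for each $k$, a length-bounded injection $\Phi$ from closed walks in $\mathcal{B}^{*(k)}_m$ into closed walks in $\mathcal{A}^*_m$, obtained by ``capping off'' two of the non-contractible cycles and reconnecting them into contractible structure, thereby lowering the nc-count toward the minimal value realised in $\mathcal{A}^*_m$. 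Such a map yields at once the non-strict inequality $\rho(\mathcal{B}^{*(k)}_m) \le \theta_m$.

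The hard part will be strictness. A mere injection gives only $\rho(\mathcal{B}^{*(k)}_m) \le \theta_m$, and to exclude equality one must show that the growth rate inside $\mathcal{B}^{*(k)}_m$ is genuinely smaller. I would try to quantify this by exhibiting that the image $\Phi(\cdot)$ misses an exponentially large family of closed walks in $\mathcal{A}^*_m$ (for instance all those that repeatedly traverse the loop at $1^m$, which have no preimage of fixed positive nc-count), so that $W_n(\mathcal{B}^{*(k)}_m) \le c\, \lambda^n$ with $\lambda < \theta_m$. An alternative, perhaps cleaner, route is a Collatz--Wielandt / sub-invariance argument: construct an explicit positive test vector $x$ on $V(\mathcal{B}^{*(k)}_m)$ and a constant $\epsilon_m > 0$ with $(\mathcal{T}^*_m|_{\mathcal{B}^{*(k)}_m}\, x)_v \le (\theta_m - \epsilon_m)\, x_v$ for all $v$, which forces $\rho(\mathcal{B}^{*(k)}_m) \le \theta_m - \epsilon_m$. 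The principal obstacle is obtaining such a bound uniformly in $m$ and $k$: the components $\mathcal{B}^{*(k)}_m$ grow both in number and in size with $m$ (Conjecture~\ref{conj:1}), and controlling how much combinatorial freedom is lost per non-contractible cycle --- enough to guarantee a strict, not merely asymptotic, spectral gap --- is precisely the difficulty that has so far kept the statement at the level of a conjecture.
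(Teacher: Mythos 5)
First, a point of calibration: the statement you are asked to prove is Conjecture~\ref{conj:3}, and the paper itself offers no proof --- only numerical evidence for $m\leq 12$ that $\theta_m$ is simple, of uniquely maximal modulus, and attached to ${\cal A}^*_m$. So there is no argument in the paper to compare yours against. Your reduction is sound as far as it goes: by Theorem~\ref{thm:TkC3} and Theorem~\ref{thm:SC} the spectrum of ${\cal D}^*_m$ is the union of the real spectra of irreducible blocks, and your observation that the loop at $1^m$ makes the block of ${\cal A}^*_m$ primitive, so that $-\theta_m$ cannot occur there, is correct and is a genuine (small) step beyond what the paper records. The conjecture is indeed equivalent to the strict inequalities $\rho({\cal A}^*_m)>\rho({\cal B}^{*(k)}_m)$ for all $k$.

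However, the proposal does not constitute a proof, and the gap is exactly where you locate it --- but it is wider than you suggest. The injection $\Phi$ is never defined, and there is a structural obstacle to defining it the way you describe: a component of ${\cal D}^*_m$ fixes only the \emph{parity} of the number of $1$'s in the outlet words (this is what the proof of Lemma~\ref{lem:5} actually establishes), not the number of non-contractible cycles of the associated 2-factor. The number of $1$'s in a column's outlet word counts horizontal edges crossing that cut; this is congruent to the nc-count mod $2$ but can exceed it, and it varies from column to column within a single closed walk. Consequently ``capping off two nc-cycles'' is not an operation on components: there is no guarantee it lands in ${\cal A}^*_m$ rather than in some other ${\cal B}^{*(k')}_m$ of the same parity class, and for even $m$ the component ${\cal A}^*_m\equiv{\cal R}^*_m$ already contains 2-factors with \emph{even, possibly nonzero} numbers of nc-cycles, so the target of the map is not characterized by ``minimal nc-count'' at all. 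Even granting a well-defined injection, you correctly note that it yields only $\rho({\cal B}^{*(k)}_m)\le\theta_m$; neither of your two routes to strictness (exhibiting an exponentially large missed family, or a Collatz--Wielandt test vector with a uniform $\epsilon_m>0$) is carried out, and you concede that the uniform control over $m$ and $k$ is precisely the open difficulty. The proposal is a reasonable research plan, with one correct new observation (primitivity of ${\cal A}^*_m$), but the statement remains unproved.
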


According to the foregoing we have
 \bc $ \ds f^{TkC}_{m}(n) \sim    a^{TkC}_m \theta_m^n $,  where $a^{TkC}_m = 1$ .  \ec
(Note that the property for the   coefficients of  maximum eigenvalue  being equal to $1$ appeared by Hamiltonian cycles when  $m$ was odd \cite{BKDP1} and \cite{BKDjDP}.)
For instance, \\
$\ds f^{TkC}_{9}(99) =$ \parbox[t]{8cm}{
{\bf 175073846218165}2771338808207772701955030703442028258017318088093361136\
0786760679564966706639273723674798766385930557092858331879012953635968\
195685205, }   \\
$\ds f^{TkC}_{9}(100) =$ \parbox[t]{8cm}{
{\bf 5503}488851650192832857551518533018608271730034860817348840930779798339\
9866850567422183774919747362024619387408919222429539996042852109168447\
1910843826, } \\ \
$\ds f^{TkC}_{10}(99) =$ \parbox[t]{8cm}{
{\bf 547294669589}5734348165268778293176272799246831355358749477747452490650\
1214615708064019534391418227552525368357696963283863359292333457421226\
269199598481596902807547077 \ \ and  }  \\
$\ds f^{TkC}_{10}(100) =$ \parbox[t]{8cm}{
{\bf 2645}316310319933683496095009841718024759437947764204153951092048286901\
7777259031349223534872931966971355650359749930614841881983326875548074\
53750119675251682976586688605 } \\ while   \\
$\ds \theta_{9}^{99} = $ \parbox[t]{8cm}{{\bf \underline{1.7507384621816}5}923653 $ \ldots \cdot 10^{148}$, }
\\
$\ds  \theta_{9}^{100} =  $ \parbox[t]{8cm}{ \underline{ {\bf 5.503}}355253 $ \ldots  \cdot 10^{149}$,}
\\
$\ds \theta_{10}^{99} = $ \parbox[t]{8cm}{{\bf \underline{5.47}294669589}622318 $ \ldots \cdot 10^{166}$, \ \ and  }
  \\
$\ds  \theta_{10}^{100} =  $ \parbox[t]{8cm}{ \underline{ {\bf 2.645}121801666}474501 $\ldots  \cdot 10^{168}$.}  \\
The coefficient  $a^{TkC}_m$ is equal to $ 1$ because of  Theorem~\ref{thm:f2} and the fact that the trace of $n$-th degree of matrix ${\cal T}^*_m$ is equal to the sum of $n$-th degrees of all of its  eigenvalues (the value $f^{TkC}_{m}(n)$ has the unique representation as a linear combination of  $n$ standard  solutions of the   recurrence relation for ${\cal T}^*_m$ corresponding to all its  eigenvalues).

Recall, that for Hamiltonian cycles in $TkC_m(n)$, contractible HCs are more numerous than non-contractible ones iff $m$ is even \cite{BKDjDP}. The similar assertion can be formulated for 2-factors dividing them into 2-factors with even and  odd number of nc-cycles. Note that in case $m$ is even, the digraph ${\cal A}_m^*$ determines 2-factors in $TkC_m(n)$ with an even number of nc-cycles (however  not all of them). For odd $m$  ${\cal A}_m^*$ determines 2-factors with odd numbers of  nc-cycles and this kind of 2-factors are then dominant assuming  Conjecture~\ref{conj:3}.
More precisely,
\bc  $ \ds f^{TkC}_{m}(n) \sim \left\{ \begin{array}{ll}   f^{TkC}_{1,m}(n) \; \; , & \; \; \mbox{ for } m  \mbox{ odd }\\ \\
                                                   f^{TkC}_{0,m}(n)  \; \; , & \; \; \mbox{ for } m \mbox{ even }   \end{array}  \right. \; \; \;
                                                   (n \rightarrow + \infty). $ \ec
Assuming that  all components  ${\cal B}^{*(k)}_{m}$ are bipartite (Conjecture~\ref{conj:1})    we have  that, for $n$ odd, the  only  2-factors obtained from
component ${\cal A}_m^*$     are counted   in  $f^{TkC}_{m}(n)$. (Therefore, more digits  coincide in  $f^{TkC}_{9}(99)$ and $\theta_{9}^{99} $, or  $f^{TkC}_{10}(99)$  and $\theta_{10}^{99}$, than in   $f^{TkC}_{9}(100)$ and $\theta_{9}^{100} $, or  $f^{TkC}_{10}(100)$  and $\theta_{10}^{100}$.)

\begin{table}[htb]
\begin{center}
\begin{tabular}{||c||r|r||} \hline\hline
$m$ & \multicolumn{1}{c| }{$\theta^{TkC}_m = \theta^{MS}_m$}
    & \multicolumn{1}{c||}{$a^{TkC}_m = a^{MS}_m$}
\\ \hline\hline
2 & 1.6180339887498948482045868344
  & 1 \\ \hline
3 & 2.4142135623730950488016887242
  & 1 \\ \hline
4 & 3.6941816601239106665999753656
  & 1 \\ \hline
5 & 5.6532020378824433814716902315
  & 1 \\ \hline
6 &                   8.6709538972300632454385724873
  & 1\\ \hline
7 &                  13.3121782399972542081592050166
  & 1 \\ \hline
8 &                  20.4516932294114966231186908391
  & 1 \\ \hline
9 &                  31.4344796371815965829996668429
  & 1 \\ \hline
10 &                 48.3308526218584373943242746007 & 1 \\ \hline
11 & $\approx_{(100)}$ 74.32697213
  & 1 \\ \hline
12 & $\approx_{(50)}$ 114.326
  & 1 \\ \hline
   \hline
\hline
\end{tabular}
\end{center}
\caption{The approximate values of $\theta_m = \theta^{TkC}_m = \theta^{MS}_m$ and $a^{TkC}_m=a^{MS}_m = 1$ for $1\leq m\leq 12$,
  where $\approx_{(n)}$ means the estimate based on the first $n$  entries of the sequence.}
\label{tabTkCMS}
\end{table}

\begin{table}[htb]
\begin{center}
\begin{tabular}{||c||r|r||} \hline\hline
$m$ & \multicolumn{1}{c| }{$\theta^{RG}_m$}
    & \multicolumn{1}{c||}{$a^{RG}_m$}
\\ \hline\hline
2 & $(1+\sqrt{5})/2$ & $\sqrt{5}/5 $ \\ \hline
3 & 1.73205080756887729352744634151
  & 0.2886751345948128822545743903 \\ \hline
4 & 3.69418166012391066659997536564
  & 0.3118537771565198570113824680 \\ \hline
5 & 4.62518160134423951692596223359
  & 0.2689660737850244855426998625 \\ \hline
6 & 8.67095389723006324543857248731
  & 0.2520573399762828621654010912\\ \hline
7 & 11.5193830042298614862975296130
  & 0.2420402401081641797612878583 \\ \hline
8 & 20.4516932294114966231186908391
  & 0.2149686611014229925654013297 \\ \hline
9 & 28.0703410924057870863760633239
  & 0.2185598738607493954133759244 \\ \hline
10  & 48.3308526218584373943242746007
   & 0.1885668461094284796839894294\\ \hline
11 & $\approx_{(600)}$ 67.7256340927618460544544369622
   & $\approx_{(600)}$ 0.1987190117694364038206719883  \\ \hline
12 & $\approx_{(600)}$ 114.3265540751374759033150378963
   & $\approx_{(600)}$ 0.1683321933349066394611832136\\ \hline
13 & $\approx_{(200)}$ 162.5256416517095900095387075181
   & $\approx_{(200)}$ 0.1818325481375590304998965322\\ \hline
14 & $\approx_{(200)}$ 270.594404874261731
   & $\approx_{(200)}$ 0.152084575433189642\\ \hline
15 & $\approx_{(200)}$ 388.7591582316368266038304859009
   & $\approx_{(200)}$ 0.1672787181981763741720923489 \\ \hline
16 & $\approx_{(200)}$ 640.690454998007
   & $\approx_{(200)}$ 0.1386133711863155\\ \hline
17 & $\approx_{(100)}$ 927.945466754283
   & $\approx_{(100)}$ 0.154581709489037
   \\ \hline\hline
\end{tabular}
\end{center}
\caption{The approximate values of $\theta^{RG}_m$ and $a^{RG}_m$ for $2\leq m\leq 17$,
  where $\approx_{(n)}$ means the estimate based on the first $n$ entries of the sequence.}
\label{tabRG}
\end{table}

Recall, that   contractible
 Hamiltonian cycles in  $TkC_m(n)$ and $RG_m(n)$ have the same positive dominant eigenvalue when  $m$ is even \cite{BKDjDP}.
In case of 2-factors this property is more obvious if we assume  Conjecture~\ref{conj:3} and apply Theorem~\ref{thm:parno} (see Tabular~\ref{tabTkCMS} and  Tabular~\ref{tabRG}), i.e.
  \bc $ \ds f^{RG}_{m}(n) \sim \left\{ \begin{array}{cl}    a^{RG}_m \theta_m^n \; \; , & \; \; \mbox{ for } m  \mbox{ even }\\ \\
                                                   a^{RG}_m (\theta^{RG}_m)^n  + a^{RG}_m \left( - \theta^{RG}_m \right)^n \; \; , & \; \; \mbox{ for } m \mbox{ odd }   \end{array}  \right.  \; \; \;
                                                   (n \rightarrow + \infty), $ \ec
  where $a^{RG}_m$ are  positive numbers.

The novelty appears with the  Moebius strip $MS_m(n)$. According to the  Conjecture~\ref{conj:3} we have
 \bc $ \ds f^{MS}_{m}(n) \sim    a^{MS}_m \theta_m^n $.  \ec
  Numerical data show that the coefficient of the  maximal eigenvalue is again one, i.e.  $a^{MS}_m = 1$.
\\ \\
  For example, \\ \\
  $\ds f^{MS}_{9}(99) = $ \parbox[t]{8cm}{  \underline{17507384621816}65701723082146927338193581515086747844712341253685406319\
1553795793184785069123799306157361712365337734920832932311990627148209\
033567981,} \\ \\
     $\ds f^{MS}_{9}(100) = $ \parbox[t]{8cm}{  \underline{5503}488851650163776931116763714293076427970032180999497847873716909388\
7001537742285391720378803648896975631451618649151832275727205508702975\
7535019482,} \\ \\
  $\ds f^{MS}_{10}(99) = $ \parbox[t]{8cm}{  \underline{547}3392413551435904097524137222257556522154835755962205364841172527434\
5587572680168475798796527038604871940909874270974422734365157886590851\
507415380950441091624785881, \ \ and  }   \\ \\
    $\ds f^{MS}_{10}(100) = $ \parbox[t]{8cm}{  \underline{2645121801666}648913048490842342065467547225492026995244876289719579663\
8721230465776899400708104186158155469514422006233945602999944685195281\
33438673321151104001586481523.}
\\ \\
Here, if we assume Conjecture~\ref{conj:3} is true, then  in case $m$-even, digraph ${\cal A}_m^*$ determines the  majority of the 2-factors without a short  nc-cycle (though generally not all of them). For $m$-odd,  ${\cal A}_m^*$ determines the 2-factors which contain a short nc-cycle and this kind of 2-factors are then dominant.
More precisely,
\bc  $ \ds f^{MS}_{m}(n) \sim \left\{ \begin{array}{ll}   f^{MS}_{1,m}(n) \; \; , & \; \; \mbox{ if $m$ is  odd }\\
                                                   f^{MS}_{0,m}(n)  \; \; , & \; \; \mbox{ if $m$ is even }   \end{array},
                                                  \mbox{ when } n \rightarrow + \infty \right. .    $ \ec
Assuming Conjecture~\ref{conj:1}    we have  that for both $m$ and $n$  odd the  only  2-factors obtained from
component ${\cal A}_m^*$  (containing a short nc-cycle)   are counted   in    $f^{MS}_{m}(n)$. (Therefore more digits coincide in  numbers  $f^{MS}_{9}(99)$ and $\theta_{9}^{99} $ than in  $f^{MS}_{9}(100)$ and $\theta_{9}^{100}. $ On the other hand,   for both $m$ and $n$ even, the only the  2-factors obtained from
component ${\cal A}_m^*$  (without a short nc-cycle)   are counted   in    $f^{MS}_{m}(n)$. (Therefore, more digits coincide in  $f^{MS}_{10}(100)$ and $\theta_{10}^{100} $ than in   $f^{MS}_{10}(99)$ and $\theta_{10}^{99}. $

\subsection{\bf \ \ Generating functions }
\label{subsec:RG} $\;  $\\
Our results for the  rectangular grid graph $RG_{m}(n) \equiv P_{m} \times P_{n}$  for $m \leq 7$
confirm the data previously obtained in another way (by coding cells) in  1994 \cite{BT94}.
We got the generating functions  ${\cal F}^{RG}_{m}(x)$, ${\cal F}^{TkC}_{m}(x)$ and  ${\cal F}^{MS}_{m}(x)$ for  $ 2\leq m \leq 10$. These  generating functions and the first $30$ members of  the sequences $ f^{RG}_m (n)$ ($2 \leq m \leq 17$), $ f^{TkC}_m (n)$ ($2 \leq m \leq 12$) and $ f^{MS}_m (n)$ ($2 \leq m \leq 12$) are  exposed in Appendix.

By observing the  denominators of   the generating functions for the components ${\cal B}_m^{*(k)}$, where $k \geq 2$, one can notice that
each of them consists of the factors of the  denominator of  the generating functions for  ${\cal B}_m^{*(1)}$ or ${\cal A}_m^{*}$.
\normalsize
\vskip 0.4 true cm

\begin{center}{\textbf{Acknowledgments}}
\end{center}
The authors would like to express their  gratitude to Bojana Panti\' c
 for her meticulous reading of the first draft of the manuscript and on many useful suggestions and
helpful comments which  improved the clarity of the presentation.\\ \\
This work  was  supported by  the Ministry of Education and
Science of the
  Republic of Serbia (Grants  451-03-9/2021-14/200125, 451-03-68/2020-14/200156).\\ \\
\vskip 0.4 true cm


\bigskip
\bigskip

{\footnotesize \pn{\bf Jelena \Dj oki\' c}\; \\ {Faculty of Technical Sciences}, {University
of Novi Sad, P.O.Box 21000,} { Novi Sad, Serbia}\\
{\tt Email: jelenadjokic@uns.ac.rs}

{\footnotesize \pn{\bf Olga Bodro\v{z}a-Panti\'{c}}\; \\ {Department of
Mathematics and Informatics, Faculty of Sciences}, {University
of Novi Sad, P.O.Box 21000,} {Novi Sad, Serbia}\\
{\tt Email: olga.bodroza-pantic@dmi.uns.ac.rs}

{\footnotesize \pn{\bf Ksenija Doroslova\v cki}\; \\ {Faculty of Technical Sciences}, {University
of Novi Sad, P.O.Box 21000,} { Novi Sad, Serbia}\\
{\tt Email: ksenija@uns.ac.rs}
\normalsize
\newpage
\section{\bf \ \  Appendix}
\label{sec:CV}

\vspace*{1cm}

{\bf \large  Rectangular grid graph $RG_{m}(n) \equiv P_{m} \times P_{n}$ ($ 2\leq m \leq 17$) }
\\ \\
    $ {\cal F}^{RG}_2 (x) =$ \parbox[t]{5.4in}{$ \ds   \frac{x^2}{1-x-x^2} = \sum_{n \geq 1} F_{n-1}x^n $} \\ \\
 \hspace*{1cm}  $=$ \parbox[t]{5.4in}{ $ \ds
  0 x^{1} +
1 x^{2} +
1 x^{3} +
2 x^{4} +
3 x^{5} +
5 x^{6} +
8 x^{7} +
13 x^{8} +
21 x^{9} +
34 x^{10} +
55 x^{11} +
89 x^{12} +
144 x^{13} +
233 x^{14} +
377 x^{15} +
610 x^{16} +
987 x^{17} +
1597 x^{18} +
2584 x^{19} +
4181 x^{20} +
6765 x^{21} +
10946 x^{22} +
17711 x^{23} +
28657 x^{24} +
46368 x^{25} +
75025 x^{26} +
121393 x^{27} +
196418 x^{28} +
317811 x^{29} +
514229 x^{30} +
  \ldots  $}

  \bc $ \mbox{--------------------------------------    } $ \ec \noindent
$ \ds  {\cal F}^{RG}_{3}(x) = $ \parbox[t]{5.4in}{ $ \ds \frac{x^2}{1 - 3x^2} = \sum_{k \geq 0} 3^kx^{2k+2} $} \\ \\
 \hspace*{1.2cm}  $=$ \parbox[t]{5.4in}{ $ \ds x^2 + 3 x^4 + 9 x^6 + 27 x^8 + 81 x^{10} + 243 x^{12} + 729 x^{14} +
 2187 x^{16}
 +6561 x^{18} +19683 x^{20} +59049 x^{22} +177147 x^{24} +531441 x^{26} +1594323 x^{28} +4782969 x^{30} +
 \ldots $} \bc $ \mbox{--------------------------------------    } $ \ec  \noindent
 $ \ds  {\cal F}^{RG}_{4}(x)= $ \parbox[t]{5.4in}{ $ \ds \frac{x^2 (2 - x -2 x^2 +x^3)}{1 - 2 x - 7 x^2 + 2 x^3 + 3 x^4- x^5}  $}  \\ \\
 \hspace*{1.2cm}  $=$ \parbox[t]{5.4in}{ $ \ds
   0 x^{1} +
2 x^{2} +
3 x^{3} +
18 x^{4} +
54 x^{5} +
222 x^{6} +
779 x^{7} +
2953 x^{8} +
10771 x^{9} +
40043 x^{10} +
147462 x^{11} +
545603 x^{12} +
2013994 x^{13} +
7442927 x^{14} +
27490263 x^{15} +
101563680 x^{16} +
375176968 x^{17} +
1386004383 x^{18} +
5120092320 x^{19} +
18914660608 x^{20} +
69873991466 x^{21} +
258127586367 x^{22} +
953569519203 x^{23} +
3522660270539 x^{24} +
13013344688975 x^{25} +
48073663465682 x^{26} +
177592838241869 x^{27} +
656060220073148 x^{28} +
2423605607111629 x^{29} +
8953239432543485 x^{30} + \dots  $}
 \bc $ \mbox{--------------------------------------    } $ \ec  \noindent
${\cal F}^{RG}_{5}(x) = $ \parbox[t]{5.4in}{ $ \ds - \frac{3 x(x - 6 x^3 + 5 x^5)}{-1 + 24 x^2 - 57 x^4 + 26 x^6}$}
\\ \\
 \hspace*{1.2cm}  $=$ \parbox[t]{5.4in}{ $ \ds 3 x^2 + 54 x^4 + 1140 x^6 + 24360 x^8 + 521064 x^{10} + 11146656 x^{12} +
 +238452456 x^{14} +5101047216 x^{16} +109123156248 x^{18} +2334395822496 x^{20} +49938107061384 x^{22} +1068291209653392 x^{24} +
22853211220567416 x^{26} +488882861126970624 x^{28} +10458331198925940456 x^{30} +
 \ldots $}\bc $ \mbox{--------------------------------------    } $ \ec  \noindent
${\cal F}^{RG}_{6}(x) = $ \parbox[t]{5.4in}{ $ \ds - [x^2 (5 - 16 x - 68 x^2 + 169 x^3 + 184 x^4 - 440 x^5 + 41 x^6 +
    159 x^7 - 24 x^8 - 21 x^9 + 2 x^{10} + x^{11})] / $}
 \\  \hspace*{1.4cm} \parbox[t]{5.4in}{ $[-1 + 5 x + 49 x^2 -
  116 x^3 - 363 x^4 + 627 x^5 + 544 x^6 - 1061 x^7 + 133 x^8 +
  264 x^9 - 47 x^{10} - 26 x^{11} + 3 x^{12} + x^{13}]$}\\ \\
 \hspace*{1.2cm}  $=$ \parbox[t]{5.4in}{ $ \ds  5 x^2 + 9 x^3 + 222 x^4 + 1140 x^5 + 13903 x^6 + 99051 x^7 +
 972080 x^8 + 7826275 x^9 + 71053230 x^{10} +
599141127 x^{11} +5285091303 x^{12} +
45349095730 x^{13} +395755191515 x^{14} +
3418116104881 x^{15} +29709767180643 x^{16} +
257232791130155 x^{17} +2232466696767889 x^{18} +
19346930092499853 x^{19} +167813061128260612 x^{20} +
1454798219804865516 x^{21} +12616086588695738786 x^{22} +
109385021015592147639 x^{23} +948517510315349620643 x^{24} +
8224312539935356519632 x^{25} +71313903875397020754957 x^{26} +
618352836534571281094702 x^{27} +5361744695265385528847028 x^{28} +
46491251251770863340309040 x^{29} +403124503785899198402752468 x^{30} +
  \ldots $}
  \bc $ \mbox{--------------------------------------    } $ \ec  \noindent
\noindent
$ {\cal F}^{RG}_{7}(x) \ds = $
\parbox[t]{5.4in}{ $ \ds
[x^2 (8 - 589 x^2 + 9810 x^4 - 59710 x^6 + 148304 x^8 - 170714 x^{10} +
   93298 x^{12} - 22631 x^{14} + 1904 x^{16})] / $}
 \\  \hspace*{1.4cm} \parbox[t]{5.4in}{ $[1 - 171 x^2 + 5496 x^4 -
 56617 x^6 + 240021 x^8 - 457923 x^{10} + 420254 x^{12} - 186912 x^{14} +
 37569 x^{16} - 2584 x^{18}]$}
\\ \\
 \hspace*{1.2cm}  $=$ \parbox[t]{5.4in}{ $ \ds 8 x^2 + 779 x^4 + 99051 x^6 + 13049563 x^8 + 1729423756 x^{10} +
 229435550806 x^{12} +30443972466433 x^{14} +4039769151988768 x^{16} +536061241088972481 x^{18} +
71133264482944200277 x^{20} +9439112402375129121841 x^{22} +1252534193959746441955912 x^{24} +166206508635573867359551206 x^{26} +
22054969579015463381016539631 x^{28} +2926610318841947932231378200008 x^{30} +
  \ldots $}
  \bc $ \mbox{--------------------------------------    } $ \\  $ \mbox{--------------------------------------    } $ \ec  \noindent
 \noindent
$ {\cal F}^{RG}_{8}(x) \ds = $
\parbox[t]{5.4in}{ $ \ds
- [ x^2  (13 - 155 x - 1728 x^2 + 19243 x^3 + 64036 x^4 - 794298 x^5 -
      565784 x^6 + 14250929 x^7 - 7187021 x^8 - 118967700 x^9 +
      129305109 x^{10}  + 492780993 x^{11}  - 689112311 x^{12}  -
      1071061947 x^{13}  + 1713117207 x^{14}  + 1273147977 x^{15}  -
      2210122027 x^{16}  - 886182340 x^{17}  + 1551922901 x^{18}  +
      391857072 x^{19}  - 602359977 x^{20}  - 110992789 x^{21}  +
      129338396 x^{22}  + 17394085 x^{23}  - 16027643 x^{24}  - 1363285 x^{25}  +
      1191501 x^{26}  + 43104 x^{27}  - 52660 x^{28} + 443 x^{29}  + 1275 x^{30}  -
      59 x^{31} - 13 x^{32}  + x^{33})] / $}
   \\  \hspace*{1.4cm} \parbox[t]{5.4in}{ $[ -1 + 14 x + 331 x^2 - 3474 x^3 -
   24357 x^4 + 237534 x^5 + 541266 x^6 - 6604103 x^7 - 1905497 x^8 +
   85855152 x^9 - 60009003 x^{10}  - 545836271 x^{11}  + 672927757 x^{12}  +
   1747850343 x^{13}  - 2763674623 x^{14}  - 2917536240 x^{15}  +
   5513512152 x^{16}  + 2653029943 x^{17}  - 5852097578 x^{18}  -
   1465977019 x^{19}  + 3471750395 x^{20}  + 568784352 x^{21}  -
   1167520145 x^{22}  - 154667330 x^{23} + 221656480 x^{24}  +
   23823457 x^{25}  - 24542626 x^{26}  - 1818710 x^{27}  + 1646233 x^{28}  +
   57030 x^{29}  - 66339 x^{30}  + 348 x^{31}  + 1479 x^{32}  - 61 x^{33}  -
   14 x^{34}  + x^{35}].$}
\bc $ \mbox{--------------------------------------    } $ \ec
$ {\cal F}^{RG}_{8}(x) \ds = $ \parbox[t]{5.4in}{ $ \ds  0 x^{1} +
13 x^{2} +
27 x^{3} +
2953 x^{4} +
24360 x^{5} +
972080 x^{6} +
13049563 x^{7} +
360783593 x^{8} +
6044482889 x^{9} +
142205412782 x^{10} +
2645920282312 x^{11} +
57787769198498 x^{12} +
1130122135817708 x^{13} +
23838761889677477 x^{14} +
477334902804794530 x^{15} +
9905649696435264827 x^{16} +
200572437515846530901 x^{17} +
4130348948437378850158 x^{18} +
84074883624291031055071 x^{19} +
1725061733607816846672084 x^{20} +
35201911945083165877105598 x^{21} +
721041937227213471236222936 x^{22} +
14731026760739434523775920272 x^{23} +
301492247130186410656766864436 x^{24} +
6162966556594442193757310209147 x^{25} +
126086101870795129720839096783333 x^{26} +
2578070083185284447937587182277129 x^{27} +
52734387801729163635906223494385644 x^{28} +
1078388240037660942562424414577181926 x^{29} +  $ \\ $
22056541466571843558470704997624920958 x^{30} +   \ldots $}
  \bc $ \mbox{--------------------------------------    } $  \\  $ \mbox{--------------------------------------    } $ \ec  \noindent
\noindent
$ {\cal F}^{RG}_{9}(x) \ds = $ \parbox[t]{5.4in}{ $ \ds x[-21 x + 14429 x^3 - 2977612 x^5 + 287882596 x^7 - 15396210323 x^9 +
   495074875987 x^{11} - 10065902326113 x^{13} + 134416133040711 x^{15} -
   1217169158720764 x^{17} + 7672025883235936 x^{19} -
   34359015998529872 x^{21} + 111034237538363778 x^{23} -
   261756587386897382 x^{25} + 453246428062005818 x^{27} -
   578310298795433179 x^{29} + 543657156125982437 x^{31} -
   375278277913632996 x^{33} + 188891515846196440 x^{35} -
   68536442609580103 x^{37} + 17611233979787535 x^{39} -
   3119350414858553 x^{41} + 365120087483245 x^{43} -
   26348708004138 x^{45} + 1030937887896 x^{47}-
   16048304096 x^{49}]/ $ \\ $[(-1 + 7 x^2) (1 - 1193 x^2 + 376246 x^4 -
     48953410 x^6 + 3288145988 x^8 - 127374411928 x^{10} +
     3015668747782 x^{12} - 45191010425846 x^{14} +
     441384780778588 x^{16} - 2898283223877346 x^{18} +
     13154666974580501 x^{20} - 42187756055653825 x^{22} +
     97142224830553641 x^{24} - 162322162033938237 x^{26} +
     198042290945862570 x^{28} - 176799585485005402 x^{30} +
     115298993750386955 x^{32} - 54611642383339285 x^{34} +
     18586566465572115 x^{36} - 4467086405032683 x^{38} +
     737944576901349 x^{40} - 80312266104179 x^{42} +
     5368435066393 x^{44} - 193455857453 x^{46} + 2735506380 x^{48})]$}
     \bc $ \mbox{--------------------------------------    } $ \ec
 $ {\cal F}^{RG}_{9}(x) \ds = $ \parbox[t]{5.4in}{ $ \ds
    21 x^{2} +
10771 x^{4} +
7826275 x^{6} +
6044482889 x^{8} +
4738211572702 x^{10} +
3728454567619186 x^{12} +
2936793145852970503 x^{14} +
2313819828221792568231 x^{16} +
1823117803971861533276785 x^{18} +
1436506068810722263846966803 x^{20} +
1131884615499177238765812997622 x^{22} +
891861379870929131575395762708116 x^{24} +
702736791667402399627515571022726473 x^{26} +  $ \\ $
553717257645251892181598471499890805627 x^{28} +  $ \\ $
436298214862451852483213957197628599972855 x^{30} +
     \ldots $}
 \bc $ \mbox{--------------------------------------    }$ \\ $\mbox{--------------------------------------    } $ \ec
\noindent
$ {\cal F}^{RG}_{10}(x) \ds = $ \parbox[t]{5.4in}{ $ \ds
 [x^2 (-34 + 1347 x + 35881 x^2 - 1662089 x^3 - 10309779 x^4 +
      771197801 x^5 - 346702675 x^6 - 181431735340 x^7 +
      698129954414 x^8 + 24314041221008 x^9 - 150052606942535 x^{10} -
      1955172098649311 x^{11} + 16342077034176585 x^{12} +
      95187483126478256 x^{13} - 1082538790666807385 x^{14} -
      2632342959447463549 x^{15} + 47190107143602870694 x^{16} +
      25149352192030341611 x^{17} - 1420522867178712125486 x^{18} +
      942375457477994083262 x^{19} + 30544728579120039592509 x^{20} -
      45811635254374125870547 x^{21} - 481024654106604142658754 x^{22} +
      1034639523180001447651336 x^{23} +
      5650755350310331133281764 x^{24} -
      15379099750826572124032129 x^{25} -
      50138517276885465580264781 x^{26} +
      165014594468953372583716508 x^{27} +
      338134006349071894103973007 x^{28} -
      1333169187054548030456272181 x^{29} -
      1729935216080270865107872599 x^{30} +
      8307485120568915399123891036 x^{31} +
      6606891842108732324100582627 x^{32} -
      40537611346244740085982960471 x^{33} -
      17910894279373108316521092023 x^{34 }+
      156424654912960078568444931679 x^{35} +
      28639513227693834286646693909 x^{36} -
      480217673925082801373266665091 x^{37} +
      6215060674611087974386868437 x^{38} +
      1176546451149220018624683293218 x^{39} -
      196016176729365270408848413341 x^{40} -
      2301806795478656683806927108523 x^{41} +
      671482889680656520724854005411 x^{42} +
      3589449311739965939755136676932 x^{43 }-
      1430065207899407466173244608566 x^{44} -
      4442389907986722561120177476313 x^{45} +
      2198439329420030259008800264867 x^{46} +
      4332308965605070604141219519647 x^{47} -
      2551630188027298209948844954100 x^{48} -
      3292910612416258412767808633745 x^{49} +
      2272730860876774276760437655867 x^{50} +
      1918257264947044701126773815295 x^{51} -
      1560552230291000671555830113730 x^{52} -
      832978851315941289318293630901 x^{53} +
      824676218354533947168453969998 x^{54} +
      255343919601756250583297382745 x^{55} -
      333495878322041349631842287090 x^{56} -
      47558210494863310947937006626 x^{57} +
      102290962816799339382005037827 x^{58} +
      1441759998454840560470630830 x^{59} -
      23511660501016232854983698392 x^{60} +
      2100054136003464406076320001 x^{61} +
      3985059711601160982005723241 x^{62} -
      714180858251836658799770366 x^{63} -
      486814987941308989735168779 x^{64} +
      130997898090477219110002544 x^{65} +
      41265311576931652337785265 x^{66} -
      15813670343559865739961997 x^{67} -
      2229569111235017818843115 x^{68} +
      1328140074764536970942380 x^{69} + 54160990055954074415814 x^{70} -
      79509464898694123907388 x^{71} + 1950961185520045979921 x^{72} +
      3439957867838613593851 x^{73} - 248380845738698994226 x^{74} -
      108461186853587325114 x^{75} + 12130636053499251523 x^{76} +
      2499366907230690769 x^{77} - 376092051435600570 x^{78} -
      41863262779260293 x^{79} + 8189167140633561 x^{80} +
      499418309570138 x^{81} - 129807749028789 x^{82} -
      4028121425512 x^{83} + 1514497739548 x^{84} + 18856167098 x^{85} -
      12936246167 x^{86} - 15444588 x^{87} + 79073791 x^{88} -
      376673 x^{89} - 329233 x^{90} + 2147 x^{91} + 841 x^{92} - 4 x^{93} -
      x^{94})]/   $ }\newpage

      $       $ \\
      \hspace*{1.2cm}  \parbox[t]{5.4in}{
      $ \ds [-1 + 42 x + 2133 x^2 - 88106 x^3 - 1159305 x^4 +
    59614372 x^5 + 152015398 x^6 - 18819762746 x^7 +
    40034190687 x^8 + 3264921562892 x^9 - 15997694438884 x^{10} -
    336798102365195 x^{11} + 2393078617098732 x^{12} +
    21262382124790258 x^{13} - 203991793583280580 x^{14} -
    802344394384627520 x^{15} + 11100949049156620432 x^{16} +
    15124565789332508780 x^{17} - 408720018862600116122 x^{18} +
    70049454004597571197 x^{19} + 10586093658527044258990 x^{20} -
    12631025888273228837204 x^{21} - 198508817597338205942399 x^{22} +
    387402987345930766315590 x^{23} + 2753586280130848546932800 x^{24} -
    7116726860274335286714358 x^{25} -
    28694708927568151334441162 x^{26} +
    91397507315681115211888499 x^{27} +
    226797214695129167282089048 x^{28} -
    870318027304318790444523656 x^{29} -
    1363049305769562405267119112 x^{30} +
    6338665081932290022053776544 x^{31} +
    6178413328764184607346600034 x^{32} -
    35988003082214887026742513610 x^{33} -
    20492020074586065235166736101 x^{34} +
    161266450872180007915993984787 x^{35} +
    45099606966665951271693058428 x^{36} -
    575001009719437474541893835090 x^{37} -
    37571777602636385348482415900 x^{38} +
    1639314263514932955906264058314 x^{39 }-
    161019967972020863550757762131 x^{40} -
    3745699796292066069224199321372 x^{41 }+
    851274013919532238198719327872 x^{42} +
    6859539342150922395619895904651 x^{43} -
    2285177200474388410515202543750 x^{44} -
    10046278725288151331993329222702 x^{45} +
    4263336781464515502088030143628 x^{46} +
    11714187061919722875421984542193 x^{47} -
    5951624800503928235794807036596 x^{48} -
    10796511948612809185000930216914 x^{49} +
    6382964048451425290619045645907 x^{50} +
    7780699596208522812005194118918 x^{51} -
    5309228126699605717428066646335 x^{52} -
    4313080247506333381456597185155 x^{53} +
    3429850650853924016071831349581 x^{54} +
    1790273570607001253851113421413 x^{55} -
    1715098075696198937373310320219 x^{56} -
    528375021398399743450311766854 x^{57} +
    659377687421116918032870695489 x^{58} +
    96604393710766619419597252621 x^{59} -
    193029880068729539119900529947 x^{60} -
    4069176647541912150779380527 x^{61} +
    42490073730573974965749981782 x^{62} -
    3413445482746271605206739212 x^{63} -
    6917976301240196729904525844 x^{64} +
    1165798086293282165887806486 x^{65} +
    814163354061253637176607304 x^{66} -
    208629109950888272910391562 x^{67} -
    66693134076508177810310164 x^{68} +
    24445572577097454676637446 x^{69} +
    3497333010513277273515012 x^{70} - 1989966629028961602789938 x^{71} -
    83751655748872954401344 x^{72} + 115359756476647736425785 x^{73} -
    2722708701437214377921 x^{74} - 4826028852207290118097 x^{75} +
    345815430004445639338 x^{76} + 146771902754630028398 x^{77} -
    16434126042052261132 x^{78} - 3249431151334904798 x^{79} +
    493956208888831088 x^{80} + 51941522002476761 x^{81} -
    10408635523210110 x^{82} - 583754494635043 x^{83} +
    159473348039387 x^{84} + 4295711619291 x^{85} - 1796857404178 x^{86} -
    16067848999 x^{87} + 14814596478 x^{88} - 26776331 x^{89} -
    87402987 x^{90} + 632038 x^{91} + 351431 x^{92} - 2932 x^{93} -
    868 x^{94} + 5 x^{95} + x^{96}]$}
      \bc $ \mbox{--------------------------------------    } $ \ec
$ {\cal F}^{RG}_{10}(x) \ds = $ \parbox[t]{5.4in}{ $ \ds  0 x^{1} +
34 x^{2} +
81 x^{3} +
40043 x^{4} +
521064 x^{5} +
71053230 x^{6} +
1729423756 x^{7} +
142205412782 x^{8} +
4738211572702 x^{9} +
303872744726644 x^{10} +
11986520595161863 x^{11} +
674837188667280276 x^{12} +
29188352280018463610 x^{13} +
1532451456181020466428 x^{14} +
69707888449637626994606 x^{15} +
3524148419679038665338017 x^{16} +
164782546678464075204330111 x^{17} +
8161612120472629734758702827 x^{18} +
387404299957157944331168782389 x^{19} +
18975092783963004675735587294202 x^{20} +
908104916618474893269843972030000 x^{21} +
44209698933437116068409590480429252 x^{22} +
2125264991542391467278261919541938229 x^{23} + $ \\ $
103123528613528669519078412311840825913 x^{24} + $ \\ $
4969501454786829485527284038834319001975 x^{25} + $ \\ $
240699212742310352022631306455020391840255 x^{26} + $ \\ $
11614680633292318140437397088402434381887655 x^{27} +$ \\ $
562008082519876500509235006660432148062016214 x^{28} +$ \\ $
27138757083274637538136034727280002688597205157 x^{29} +$ \\ $
1312480307994378411517307946704246357901598884389 x^{30} +  \ldots $}
  \bc $ \mbox{--------------------------------------    } $ \\
  $ \mbox{--------------------------------------    } $ \ec
${\cal F}^{RG}_{11}(x) = $ \parbox[t]{5.4in}{$\ds
    55 x^{2} +
147462 x^{4} +
599141127 x^{6} +
2645920282312 x^{8} +
11986520595161863 x^{10} +
54755153078468134960 x^{12} +
250811052174680822733959 x^{14} +
1149900478069130190749532847 x^{16} +
5273548680646779160230746586859 x^{18} +
24187345557092636781475699267236187 x^{20} + $ \\ $
110939825540114835884546566285845762280 x^{22} + $ \\ $
508851861608867456819833568314582960129595 x^{24} + $ \\ $
2333978113478971879379146342461349384545507487 x^{26} + $ \\ $
10705394905474347467310075872932119902375191735556 x^{28} + $ \\ $
49103084149419532904485985834926622182640443919990354 x^{30} +  \ldots
           $}  \bc $ \mbox{--------------------------------------    } $ \\
  $ \mbox{--------------------------------------    } $ \ec
${\cal F}^{RG}_{12}(x) = $  \parbox[t]{5.4in}{$\ds
0 x^{1} +
89 x^{2} +
243 x^{3} +
545603 x^{4} +
11146656 x^{5} +
5285091303 x^{6} +
229435550806 x^{7} +
57787769198498 x^{8} +
3728454567619186 x^{9} +
674837188667280276 x^{10} +
54755153078468134960 x^{11} +
8217125138015950451626 x^{12} +
764291947227525464744293 x^{13} +
102778523332781609788923496 x^{14} +
10377666858261924113127188462 x^{15} +
1307545840659068316275398430609 x^{16} +
138717888384983395400743673070647 x^{17} +
16811107880554885949723365263385113 x^{18} +
1837416806334748618002441976965826916 x^{19} +  $ \\ $
217546798094878586614776187637251947154 x^{20} + $ \\ $
24207544299916846326223500527520404672797 x^{21} + $ \\ $
2826348353596869316989646072613818424334629 x^{22} +  $ \\ $
317913170521025428336656341733279252103645915 x^{23} +  $ \\ $
36807752016516373736486909090218756904178467297 x^{24} +  $ \\ $
4167154676845333334464279553525507959718825743928 x^{25} +  $ \\ $
480044645073281997728004688609940910251915235881911 x^{26} +  $ \\ $
54560205876703359925688778263581530252043634665998495 x^{27} +  $ \\ $
6266182773788800302528578730998645394708164827181600165 x^{28} +  $ \\ $
713864557620383330913701983097695540944505535055335848728 x^{29} +  $ \\ $
81837579118030976966882984400592164512413484550125332803113 x^{30}  + \dots $ \\ $
 \ldots $}  \bc $ \mbox{--------------------------------------    } $ \\ $ \mbox{--------------------------------------    } $ \ec
${\cal F}^{RG}_{13}(x) = $ \parbox[t]{5.4in}{$\ds
144 x^{2} +
2013994 x^{4} +
45349095730 x^{6} +
1130122135817708 x^{8} +
29188352280018463610 x^{10} +
764291947227525464744293 x^{12} +
20119942924108379011391597989 x^{14} +
530757331488177559806803222962531 x^{16} +
14012518359036670226842012669455804921 x^{18} + $ \\ $
370060693927736976102598123201451153750400 x^{20} +
 $ \\ $
9774237174242112936074588927469304811518971316 x^{22} +
  $ \\ $
258174575833190416092931230106166876774272922816326 x^{24} +
$ \\ $
6819493526917964144433813509795990979935178761456609095 x^{26} +
 $ \\ $
180133257930816702828649317813228158539407388700258052545414 x^{28} +
 $ \\ $
4758136595785599184827205547148480043809502832166468626937652141 x^{30} +
  \ldots $}  \bc $ \mbox{--------------------------------------    } $ \\ $ \mbox{--------------------------------------    } $ \ec
${\cal F}^{RG}_{14}(x) =$  \parbox[t]{5.8in}{$ \ds
 0 x^{1} +
233 x^{2} +
729 x^{3} +
7442927 x^{4} +
238452456 x^{5} +
395755191515 x^{6} +
30443972466433 x^{7} +
23838761889677477 x^{8} +
2936793145852970503 x^{9} +
1532451456181020466428 x^{10} +
250811052174680822733959 x^{11} +
102778523332781609788923496 x^{12} +
20119942924108379011391597989 x^{13} +
7095967027221343377167292602835 x^{14} +
1558052539448513320447263528275071 x^{15} +
499710516860655913807488713037993676 x^{16} +
118148889686793059954503676188547861439 x^{17} +
35663187828618798448176359283472706878251 x^{18} + $ \\ $
8844869553721036151571830337235039811285498 x^{19} +  $ \\ $
2567902709068972249983061282979663158259005188 x^{20} +  $ \\ $
656843753793091466111271960113665150268181910540 x^{21} +  $ \\ $
185985360340172624437226057862914191982817411306798 x^{22} +  $ \\ $
48531671985430295372643234667885796698180209383007846 x^{23} +  $ \\ $
13522045451583935214311942745211781450584019741823946467 x^{24} +  $ \\ $
3574230153258805534442262789706227987917456760619395321102 x^{25} +  $ \\ $
985574315180475305127558549314931246032163539439255975211553 x^{26} +  $ \\ $
262688013352420151641401780917713156435617350757523668540401656 x^{27} +
71951462951957662733223296222348007193287965199739874099102425149 x^{28} +
19280611320272675682366923144444641296515354441079256732526117681312 x^{29} +
5258300742958152860303362507698568462642123253466258390520471533041763 x^{30} +
 \ldots $}  \bc $ \mbox{--------------------------------------    } $ \\ $ \mbox{--------------------------------------    } $ \ec
 ${\cal F}^{RG}_{15}(x) =$ \parbox[t]{5.8in}{$ \ds
377 x^{2} +
27490263 x^{4} +
3418116104881 x^{6} +
477334902804794530 x^{8} +
69707888449637626994606 x^{10} +
10377666858261924113127188462 x^{12} +
1558052539448513320447263528275071 x^{14} +
234788223520702255614480389250160811898 x^{16} +
35438872103032923948503473063105831888120296 x^{18} + $ \\ $
5352980916509968569866944597338016618770836593777 x^{20} + $ \\ $
808814487707939424103258442436215917419236785722366089 x^{22} + $ \\ $
122225728569396111511730187227610652723460707513653410334890 x^{24} + $ \\ $
18471534283961665470838147288989010252709658184911178787526609504 x^{26} + $ \\ $
2791611790627002375709359890035431269889691472289917390598410862157068 x^{28} + $ \\ $
421902632488607037181505456822308879531291489513527561733780462073430723695 x^{30} +
 \ldots $}  \bc $ \mbox{--------------------------------------    } $ \\ $ \mbox{--------------------------------------    } $\ec
 \small
${\cal F}^{RG}_{16}(x) =$ \parbox[t]{5.4in}{ $\ds
 0 x^{1} +
610 x^{2} +
2187 x^{3} +
101563680 x^{4} +
5101047216 x^{5} +
29709767180643 x^{6} +
4039769151988768 x^{7} +
9905649696435264827 x^{8} +
2313819828221792568231 x^{9} +
3524148419679038665338017 x^{10} +
1149900478069130190749532847 x^{11} +
1307545840659068316275398430609 x^{12} +
530757331488177559806803222962531 x^{13} +
499710516860655913807488713037993676 x^{14} + $ \\ $
234788223520702255614480389250160811898 x^{15} + $ \\ $
195081705501438193439250404333039349462635 x^{16} + $ \\ $
101199388044301804167035198499446336399419451 x^{17} + $ \\ $
77324499738655428250428753054462763663473289098 x^{18} + $ \\ $
42906124439946680432242192936932389250449772385665 x^{19} + $ \\ $
30980523428299019458567823961648073011309062657974817 x^{20} + $ \\ $
17997177997131365762945853787261691797327217525918292125 x^{21} + $ \\ $
12506395048117034637941019258039871289830975170048857051005 x^{22} +  $ \\ $
7495739849844426872239299370840211457131733872858505243081816 x^{23} +$ \\ $
5075148445796857040053104431056506039875319568038077044908910516 x^{24} +$ \\ $
3107230011226517409969202256830258033314041885795677305379697962640 x^{25} +$ \\ $
2066974994609702017007988593816601920999780365479446917196525270118142 x^{26} +$ \\ $
1283970419502603731918140288038253636237197324650286051850270498862486325 x^{27} +$ \\ $
843919599048775485922343383280158888132623141387354468999629362152530427159 x^{28} +$ \\ $
529429090664842091425478647428185270992198760555038102521772553083678245220083 x^{29} + $ \\ $
345149098654298536175064215796331411355131680852917620919233112633273313585965081 x^{30} + $ \\ $
  \ldots $ }  \bc $ \mbox{--------------------------------------    } $ \\ $ \mbox{--------------------------------------    } $\ec
 ${\cal F}^{RG}_{17}(x) =$ \parbox[t]{5.4in}{$ \ds
 987 x^{2} +
375176968 x^{4} +
257232791130155 x^{6} +
200572437515846530901 x^{8} +
164782546678464075204330111 x^{10} +
138717888384983395400743673070647 x^{12} +
 118148889686793059954503676188547861439 x^{14} +
 101199388044301804167035198499446336399419451 x^{16} +
$ \\ $
 86918369741985767628242106496018767545685968221295 x^{18} +
$ \\ $
 74751308877855238601956611250031893025070379093798275467 x^{20} +
$ \\ $
 64328517891756221041740151134989268459463141424942999295477530 x^{22} +
$ \\ $
 55376125908869097780426555401191895400494047978296658342206340169311 x^{24} +
$ \\ $
 47676741632845917269695972808131435556699492747785589737028049129016308815 x^{26} +
 $ \\ $
 41050835690083594611243421104615824512345671970046160209809624943994676910850163 x^{28} +
$ \\ $
 35347007379909510253850320656294681456954784652577466801856109957124530806874455817574 x^{30} +
 \ldots  $}
  \bc $ \mbox{--------------------------------------    } $ \\ $ \mbox{--------------------------------------    } $\ec

\normalsize
\newpage
{\bf \large \ \ Thick cylinder graph $TkC_{m}(n) \equiv P_{m} \times C_{n}$ ($ 2 \leq m \leq 12$) }
\\ \\
  $ \ds  {\cal F}^{TkC}_{2}(x) = $  \parbox[t]{5.4in}{$ \ds   \frac{x (-1 - 4 x + 3 x^2 + 4 x^3)}{(1 - x) (1 + x) (-1 + x + x^2)} =  \ds -\frac{x (1 + 2 x)}{-1 + x + x^2} - \frac{2 x^2}{(-1 + x) (1 + x)}  $}
   \\ \\
 \hspace*{1.2cm}  $=$ \parbox[t]{5.4in}{ $ \ds x + 5 x^2 + 4 x^3 + 9 x^4 + 11 x^5 + 20 x^6 + 29 x^7 + 49 x^8 + 76 x^9 + 125 x^{10} + 199 x^{11} + 324 x^{12} + 521 x^{13} +
  845 x^{14} +
 1364 x^{15} + 2209 x^{16} + 3571 x^{17} + 5780 x^{18} + 9349 x^{19} +
 15129 x^{20} + 24476 x^{21} + 39605 x^{22} + 64079 x^{23} + 103684 x^{24} +
 167761 x^{25} + 271445 x^{26} + 439204 x^{27} + 710649 x^{28} +
 1149851 x^{29} + 1860500 x^{30} + \ldots $}
   \bc $ \mbox{--------------------------------------    } $ \ec  \noindent
$ \ds  {\cal F}^{TkC}_{3}(x) = $  \parbox[t]{5.4in}{$ \ds   \frac{x (1+ 12 x-6 x^2 - 36 x^3 -15 x^4)}{(1 + x) (-1 + 2 x +
    x^2) (-1 + 3 x^2)} \ds = \frac{6 x^2}{ 1 - 3 x^2}  - \frac{x (1 + 6 x + 3 x^2)}{(1 + x) (-1 + 2 x + x^2)}  $ }   \\ \\
 \hspace*{1.2cm}  $=$ \parbox[t]{5.4in}{ $ \ds  1 x^{1} + 13 x^{2} + 13 x^{3} + 53 x^{4} + 81 x^{5} + 253 x^{6} +
 477 x^{7} + 1317 x^{8} + 2785 x^{9} + 7213 x^{10} + 16237 x^{11} +
  40661 x^{12} + 94641 x^{13} + 232861 x^{14} + 551613 x^{15} +
 1344837 x^{16} + 3215041 x^{17} + 7801165 x^{18} + 18738637 x^{19} +
 45357173 x^{20} + 109216785 x^{21} + 264026941 x^{22} + 636562077 x^{23} +
 1537859685 x^{24} + 3710155681 x^{25} + 8960296813 x^{26} +
 21624372013 x^{27} + 52215418133 x^{28} + 126036076401 x^{29} +
 304306702813 x^{30}+ \ldots $}
  \bc $ \mbox{--------------------------------------    } $ \ec  \noindent
${\cal F}^{TkC}_{4}(x) =  $  \parbox[t]{5.4in}{$ \ds   \frac{(x (1 + 18 x + 15 x^2 - 20 x^3 - 10 x^4 + 6 x^5)}{(1 + x)(-1 + 2 x + 7 x^2 - 2 x^3 - 3 x^4 + x^5)}
- \frac{2 x^2 (-10 + 30 x^2 - 21 x^4 + 4 x^6)}{(-1 + x) (1 + x) (-1 - 3 x + x^3) (1 - 3 x + x^3)} $ \\ $ \ds  - \frac{2 x^2}{(-1 + x) (1 + x)}$}
   \\ \\
   \hspace*{1.2cm}  $=$ \parbox[t]{5.4in}{ $ \ds  0 + 1 x^{1} + 41 x^{2} + 43 x^{3} + 341 x^{4} + 666 x^{5} +
 3725 x^{6} + 9318 x^{7} + 44269 x^{8} + 127897 x^{9} +
 552136 x^{10} +  1747846 x^{11} + 7110833 x^{12} + 23860994 x^{13} +
 93547152 x^{14} + 325657968 x^{15} + 1247739997 x^{16} + 4444339751 x^{17} +
 16788557123 x^{18} + 60652119125 x^{19} + 227129712696 x^{20} +
 827719193253 x^{21} + 3083193286940 x^{22} + 11295869268631 x^{23} +
 41939971370665 x^{24} + 154154492857866 x^{25} + 571223860105758 x^{26} +
 2103743033971603 x^{27} + 7786110865225936 x^{28} +
 28709735327481674 x^{29} + 106179210831285100 x^{30} +
  \ldots $ }  \bc $ \mbox{--------------------------------------    } $ \ec  \noindent
${\cal F}^{TkC}_{5}(x) =  $  \parbox[t]{5.4in}{$ \ds   \frac{ 4 x^2 (15 - 206 x^2 + 732 x^4 - 882 x^6 + 325 x^8)}{(1 - x) (1 + x) (-1 + 2 x^2) (-1 + 5 x^2) (1 - 22 x^2 + 13 x^4)}    $ \\ \\ $ \ds  +  \frac{ x (1 + 50 x + 66 x^2 - 296 x^3 - 310 x^4 + 420 x^5 + 196 x^6 -
    224 x^7 + 36 x^8)}{(1 + 4 x + 2 x^2 - 2 x^3) (1 - 5 x - 7 x^2 + 18 x^3 + 6 x^4 - 12 x^5 + 2 x^6)} + \frac{10 x^2}{1 - 5 x^2} $}
      \\ \\
   \hspace*{1.2cm}  $=$ \parbox[t]{5.4in}{ $ \ds  0 + 1 x^{1} +
121 x^{2} +
142 x^{3} +
2169 x^{4} +
5431 x^{5} +
53812 x^{6} +
181231 x^{7} +
1474937 x^{8} +
5864650 x^{9} +
42414281 x^{10} +
188142923 x^{11} +
1258226556 x^{12} +
6019949235 x^{13} +
38161604217 x^{14} +
192461352562 x^{15} +
1176032485833 x^{16} +
6151532634859 x^{17} +
36655273043812 x^{18} +
196602170924523 x^{19} +
1151603700231369 x^{20} +
6283221090447394 x^{21} +
36379355621923193 x^{22} +
200804265323151235 x^{23} +
1153560534422212652 x^{24} +
6417449257742279731 x^{25}
 + 36672154206505187465 x^{26} +
 205093366164948581890 x^{27} + 1167841438010655902633 x^{28} +
 6554516723796443441531 x^{29} + 37233853354848282540212 x^{30} + \ldots   $ }
  \bc $ \mbox{--------------------------------------    } $ \ec  \noindent
    ${\cal F}^{TkC}_{6}(x) = $  \parbox[t]{5.4in}{$ \ds   [x (1 + 141 x + 399 x^2 - 3329 x^3 - 9584 x^4 + 19102 x^5 +
    52373 x^6 - 50803 x^7 - 107836 x^8 + 78024 x^9 + 89970 x^{10} -
    59838 x^{11} - 28625 x^{12} + 17295 x^{13} + 4800 x^{14} - 2128 x^{15} -
    462 x^{16} + 96 x^{17} + 20 x^{18})] / $ \\
    $[(1 - x) (1 + x) (-1 - 5 x - 5 x^2 + 2 x^3 + x^4) (-1 + 5 x + 49 x^2 - 116 x^3 - 363 x^4 +
    627 x^5 + 544 x^6 - 1061 x^7 + 133 x^8 + 264 x^9 - 47 x^{10} -
    26 x^{11} + 3 x^{12} + x^{13})]  $ \\ \\
      $   - [2 x^2 (90 - 4838 x^2 + 84741 x^4 - 686268 x^6 + 3002045 x^8 -
      7691820 x^{10} + 12049212 x^{12} - 11813888 x^{14} + 7364115 x^{16} -
      2947020 x^{18} + 757262 x^{20} - 123048 x^{22} + 12129 x^{24} -
      658 x^{26} + 15 x^{28} )] / $ \\ $[(-1 - 3 x + 6 x^2 + 4 x^3 - 5 x^4 - x^5 +
      x^6) (-1 + 3 x + 6 x^2 - 4 x^3 - 5 x^4 + x^5 + x^6) (1 - 9 x +
      6 x^2 + 53 x^3 - 45 x^4 - 66 x^5 + 52 x^6 - 6 x^8 + x^9) (-1 -
      9 x - 6 x^2 + 53 x^3 + 45 x^4 - 66 x^5 - 52 x^6 + 6 x^8 +
      x^9)]  $ \\ \\ $ \ds - \frac{2 x^2 (-21 + 140 x^2 - 252 x^4 + 180 x^6 - 55 x^8 +
      6 x^{10})}{(-1 - 3 x + 6 x^2 + 4 x^3 - 5 x^4 - x^5 + x^6) (-1 +
      3 x + 6 x^2 - 4 x^3 - 5 x^4 + x^5 + x^6)} $ \\ $ \ds - \frac{2 x^2}{(-1 + x) (1 + x)} $}  \\ \\
   \hspace*{1.2cm}  $=$ \parbox[t]{5.4in}{ $ \ds   0 + 1 x^{1} +
365 x^{2} +
469 x^{3} +
13825 x^{4} +
44466 x^{5} +
781043 x^{6} +
3562728 x^{7} +
49622793 x^{8} +
273687040 x^{9} +
3321537720 x^{10} +
20737405360 x^{11} +
229320317359 x^{12} +
1563616901289 x^{13} +
16179908259446 x^{14} +
117686516544594 x^{15} +
1160288515380121 x^{16} +
8851836530897104 x^{17} +
84238150673398010 x^{18} +
665628453678422066 x^{19} +
6172931794568240240 x^{20} +
50048363930787424965 x^{21} +
455504059143499103894 x^{22} +
3762987077441200385071 x^{23} +
33785095873930763984487 x^{24} +
282924063299636353075791 x^{25} +
2515329205574671707105301 x^{26} +
21271833119204266130369677 x^{27} +
187783161427523487902032166 x^{28} +
1599333925756691726920065197 x^{29} +
14046958280315913526905097038 x^{30} + \ldots  $}
   \bc $ \mbox{--------------------------------------    } $ \ec   \noindent
    ${\cal F}^{TkC}_{7}(x) = $  \parbox[t]{5.4in}{$ \ds
    -[2 x^2 (-266 + 49470 x^2 - 3409197 x^4 + 121091580 x^6 -
        2559558540 x^8 + 34862003130 x^{10} - 321313648313 x^{12} +
        2068311266640 x^{14} - 9491956496442 x^{16} +
        31463254982980 x^{18} - 75883794591264 x^{20} +
        133502684915796 x^{22} - 170943894184804 x^{24} +
        158097217968018 x^{26} - 104102314661145 x^{28} +
        47666357585520 x^{30} - 14614561483284 x^{32} +
        2817584069634 x^{34} - 303650135577 x^{36} +
        13687086240 x^{38})]/ $ \\ $[(-1 + 3 x^2) (-1 - 6 x - 9 x^2 +
        3 x^4) (-1 + 6 x - 9 x^2 + 3 x^4) (1 - 14 x^2 + 17 x^4) (1 -
        60 x^2 + 454 x^4 - 956 x^6 + 577 x^8) (-1 + 171 x^2 -
        5496 x^4 + 56617 x^6 - 240021 x^8 + 457923 x^{10} -
        420254 x^{12} + 186912 x^{14} - 37569 x^{16} + 2584 x^{18})]$ \\ \\
 $ - [(x (-1 - 392 x - 960 x^2 + 39200 x^3 + 95200 x^4 - 1296168 x^5 -
       2504264 x^6 + 21332992 x^7 + 27475452 x^8 - 201282400 x^9 -
       130694300 x^{10} + 1131132504 x^{11} + 131565122 x^{12 }-
       3731908992 x^{13} + 988208340 x^{14} + 7002895232 x^{15} -
       3424413360 x^{16} - 7609684680 x^{17} + 4621831320 x^{18 }+
       4919663340 x^{19} - 3161787279 x^{20} - 1941648016 x^{21} +
       1132172608 x^{22} + 480442536 x^{23} - 195103475 x^{24} -
       73266440 x^{25} + 11158344 x^{26} + 5243588 x^{27} +
       366821 x^{28})]/ $ \\ $ [(-1 + x) (-1 + 7 x^2) (-1 - 14 x - 44 x^2 +
       52 x^3 + 303 x^4 + 44 x^5 - 462 x^6 - 252 x^7 + 107 x^8 +
       88 x^9 + 13 x^{10}) (-1 + 14 x + 37 x^2 - 670 x^3 + 216 x^4 +
       7866 x^5 - 10202 x^6 - 27170 x^7 + 56210 x^8 + 5872 x^9 -
       66223 x^{10} + 22200 x^{11} + 25320 x^{12} - 12008 x^{13} -
       2888 x^{14} + 1256 x^{15} + 139 x^{16})] $ \\ \\
$ - \ds \frac{2 x^2 (77 - 3066 x^2 + 36795 x^4 - 186700 x^6 + 446835 x^8 -
    496794 x^10 + 205989 x^{12})}{(-1 + 3 x^2) (1 - 14 x^2 +
    17 x^4) (1 - 60 x^2 + 454 x^4 - 956 x^6 + 577 x^8)} +
     \frac{14 x^2}{1 - 7 x^2} $} \\ \\
 \hspace*{1.2cm}  $=$ \parbox[t]{5.4in}{ $ \ds
x + 1093 x^2 + 1549 x^3 + 88093 x^4 + 364061 x^5 + 11328703 x^6 +
 70182449 x^7 + 1670194477 x^8 + 12839393125 x^9 +
 261079885983 x^{10} + 2304956480025 x^{11} + 42157390580371 x^{12} +
 410685863818829 x^{13} + 6955967193562675 x^{14} +
 72945933567920729 x^{15} + 1166255193469341005 x^{16} +
 12939617301085024529 x^{17} + 197983223809129991221 x^{18} +
 2294032263176793405709 x^{19} + 33940092430391553332183 x^{20} +
 406606652245177626433361 x^{21} + 5863258912633455285257837 x^{22} +
 72061859473867959667019309 x^{23} +
 1019002446024036683543413987 x^{24} +
 12770787731127477795116042761 x^{25} +
 177924053187181378492219053991 x^{26} +
 2263194608166222107070101243149 x^{27} +
 31178222232265755481412717950347 x^{28} +
 401072323924104852077254184227297 x^{29} +
 5478470730011608244359772114877693 x^{30} + \ldots $}
    \bc $ \mbox{--------------------------------------    } $ \\ $ \mbox{--------------------------------------    } $  \ec  \noindent
 ${\cal F}^{TkC}_{8}(x) = $  \parbox[t]{5.4in}{$ \ds
   -[x (1 + 1106 x + 3456 x^2 - 380960 x^3 - 1265450 x^4 +
         48085788 x^5 + 138769869 x^6 - 3204566808 x^7 -
         7166185659 x^8 + 130912828640 x^9 + 201000116515 x^{10} -
         3528895913796 x^{11} - 3099483692487 x^{12} +
         65290376214658 x^{13} + 20601900448935 x^{14} -
         848910526618144 x^{15} + 116762671264678 x^{16} +
         7894168967938824 x^{17} - 3855259753368817 x^{18} -
         53370261280411640 x^{19} + 40288771832558214 x^{20} +
         266680528560477416 x^{21} - 254856183979947518 x^{22} -
         1001228898109396296 x^{23} + 1106050661519880450 x^{24} +
         2870997403499935710 x^{25} - 3473527801283826489 x^{26} -
         6391248919845409200 x^{27} + 8136198344092308724 x^{28} +
         11226645720257794020 x^{29} - 14496097136557554582 x^{30} -
         15804102272553391872 x^{31} + 19907630968767236079 x^{32} +
         18069375080025641078 x^{33} - 21252793388482968375 x^{34} -
         16933496115380147028 x^{35} + 17712349720891361206 x^{36} +
         13049494614554522258 x^{37} - 11522895817209789324 x^{38} -
         8243960519699875080 x^{39} + 5823007886337424151 x^{40} +
         4233494797648211340 x^{41} - 2261065609667714676 x^{42} -
         1746009302105224376 x^{43} + 661549210479454725 x^{44} +
         570260520422775764 x^{45} - 140777566762420997 x^{46} -
         145295742885744480 x^{47} + 20220218000063669 x^{48} +
         28439984938628400 x^{49} - 1540115674944846 x^{50} -
         4213964855453496 x^{51} - 46016162958923 x^{52} +
         466719109628076 x^{53} + 27506894875350 x^{54} -
         38318301708680 x^{55} - 3542688363396 x^{56} +
         2324639722354 x^{57} + 263472623887 x^{58} - 104090681220 x^{59} -
         12730836091 x^{60} + 3425279156 x^{61} + 407727495 x^{62} -
         81580224 x^{63} - 8409635 x^{64} + 1350558 x^{65} + 101773 x^{66} -
         14076 x^{67} - 552 x^{68} + 70 x^{69})]/ $ \\ $ \ds [(1 - 7 x + 14 x^2 -
         5 x^3 - 5 x^4 + x^5) (1 + 4 x - 10 x^2 - 10 x^3 + 15 x^4 +
         6 x^5 - 7 x^6 - x^7 + x^8) (1 - 4 x - 10 x^2 + 10 x^3 +
         15 x^4 - 6 x^5 - 7 x^6 + x^7 + x^8) (1 + 20 x + 122 x^2 +
         164 x^3 - 690 x^4 - 1751 x^5 + 480 x^6 + 3573 x^7 +
         1588 x^8 - 1385 x^9 - 1217 x^{10} - 266 x^{11} + 13 x^{12} +
         11 x^{13} + x^{14}) (-1 + 14 x + 331 x^2 - 3474 x^3 -
         24357 x^4 + 237534 x^5 + 541266 x^6 - 6604103 x^7 -
         1905497 x^8 + 85855152 x^9 - 60009003 x^{10} -
         545836271 x^{11} + 672927757 x^{12} + 1747850343 x^{13} -
         2763674623 x^{14} - 2917536240 x^{15} + 5513512152 x^{16}+
         2653029943 x^{17} - 5852097578 x^{18} - 1465977019 x^{19} +
         3471750395 x^{20} + 568784352 x^{21} - 1167520145 x^{22} -
         154667330 x^{23} + 221656480 x^{24} + 23823457 x^{25} -
         24542626 x^{26} - 1818710 x^{27} + 1646233 x^{28} + 57030 x^{29} -
         66339 x^{30} + 348 x^{31} + 1479 x^{32} - 61 x^{33} - 14 x^{34} +
         x^{35})]$}

  \noindent
 \hspace*{1.2cm} $-$ \parbox[t]{5.4in}{
         $ \ds
[2 x^2 (-784 + 476076 x^2 - 116588381 x^4 + 15789090382 x^6 -
         1356158705385 x^8 + 79891719333866 x^{10} -
         3396666207504809 x^{12} + 108022486126780644 x^{14} -
         2638606408306440592 x^{16} + 50515019900883249532 x^{18} -
         770094872481299969229 x^{20} + 9468055865340829781546 x^{22} -
         94854246821132793986726 x^{24} +
         780971311623813881054046 x^{26} -
         5322216842654634684825502 x^{28} +
         30203095029547418165520630 x^{30} -
         143469074682768685161127425 x^{32} +
         573006284226316093693515432 x^{34} -
         1931783161319310182389650188 x^{36} +
         5516458536983063883349698292 x^{38} -
         13384589486720192919479214353 x^{40} +
         27668861475144429498976041030 x^{42} -
         48853344502932072289147389670 x^{44} +
         73837554646218155871407449654 x^{46} -
         95719065898952727277919257597 x^{48} +
         106614006027866384160385380476 x^{50} -
         102183431711082749008135970704 x^{52 }+
         84382266059163112509904423768 x^{54} -
         60100320449233882107856423934 x^{56} +
         36948907591988335819053998194 x^{58 }-
         19618292636434482765390118001 x^{60 }+
         8998701999089476593485162284 x^{62} -
         3565889922267097744709910317 x^{64} +
         1220446933427050507410769508 x^{66} -
         360583356141429452470446877 x^{68} +
         91891468200618096273944570 x^{70 }-
         20176247923305013249869507 x^{72} +
         3811258839592645070587098 x^{74} -
         618248800772164980608457 x^{76} +
         85929263260820286200628 x^{78} -
         10204593695082332193606 x^{80} + 1031931740063277446820 x^{82} -
         88490109610005877955 x^{84}+ 6401587098797864482 x^{86} -
         388181540253454496 x^{88}+ 19570500615145706 x^{90} -
         811788351390113 x^{92} + 27326379866244 x^{94} -
         732721891153 x^{96} + 15246086050 x^{98} - 236803536 x^{100} +
         2579106 x^{102} - 17543 x^{104} + 56 x^{106})]/ $  \\ $[(-1 + x) (1 +
         x) (1 - 28 x + 134 x^2 + 1464 x^3 - 11646 x^4 - 8775 x^5 +
         234042 x^6 - 318372 x^7 - 1512042 x^8 + 3990140 x^9 +
         1327546 x^{10} - 12508340 x^{11} + 8235416 x^{12} +
         11304952 x^{13} - 15649778 x^{14} + 1400926 x^{15} +
         6404612 x^{16} - 2944582 x^{17} - 312236 x^{18} + 418067 x^{19} -
         31381 x^{20} - 22903 x^{21} + 3184 x^{22} + 556 x^{23} - 97 x^{24} -
         5 x^{25} + x^{26}) (1 + 28 x + 134 x^2 - 1464 x^3 - 11646 x^4 +
         8775 x^5 + 234042 x^6 + 318372 x^7 - 1512042 x^8 -
         3990140 x^9 + 1327546 x^{10} + 12508340 x^{11} + 8235416 x^{12} -
         11304952 x^{13} - 15649778 x^{14} - 1400926 x^{15} +
         6404612 x^{16} + 2944582 x^{17} - 312236 x^{18} - 418067 x^{19} -
         31381 x^{20} + 22903 x^{21} + 3184 x^{22} - 556 x^{23} - 97 x^{24} +
         5 x^{25} + x^{26}) (-1 + 9 x + 92 x^2 - 581 x^3 - 2083 x^4 +
         11003 x^5 + 18456 x^6 - 89508 x^7 - 76454 x^8 + 363004 x^9 +
         148765 x^{10} - 782325 x^{11} - 101940 x^{12} + 931622 x^{13} -
         46433 x^{14} - 638214 x^{15} + 110532 x^{16} + 256374 x^{17} -
         68466 x^{18} - 59420 x^{19} + 20987 x^{20} + 7328 x^{21} -
         3425 x^{22} - 351 x^{23} + 281 x^{24} - 9 x^{25} - 9 x^{26} +
         x^{27}) (1 + 9 x - 92 x^2 - 581 x^3 + 2083 x^4 + 11003 x^5 -
         18456 x^6 - 89508 x^7 + 76454 x^8 + 363004 x^9 -
         148765 x^{10} - 782325 x^{11} + 101940 x^{12} + 931622 x^{13} +
         46433 x^{14} - 638214 x^{15} - 110532 x^{16} + 256374 x^{17} +
         68466 x^{18} - 59420 x^{19} - 20987 x^{20} + 7328 x^{21} +
         3425 x^{22} - 351 x^{23} - 281 x^{24} - 9 x^{25} + 9 x^{26} + x^{27})] $}
       \\ \\
 \hspace*{1.2cm} $-$ \parbox[t]{5.4in}{
         $ \ds
         [2 x^2 (-266 + 46706 x^2 - 2936661 x^4 + 92960520 x^6 -
         1725727350 x^8 + 20491047750 x^{10} - 164350070864 x^{12} +
         923456110368 x^{14} - 3729690794520 x^{16} +
         11039887243530 x^{18} - 24330288950594 x^{20} +
         40470025352148 x^{22} - 51416302161224 x^{24} +
         50406642154750 x^{26} - 38453137571055 x^{28} +
         22972741852096 x^{30} - 10794669605306 x^{32} +
         3998085999264 x^{34} - 1166952360510 x^{36} +
         267658533100 x^{38} - 47956307721 x^{40} + 6645525414 x^{42} -
         701378744 x^{44} + 55073040 x^{46} - 3101850 x^{48} +
         117936 x^{50} - 2700 x^{52} + 28 x^{54})]/ $ \\  $[(-1 + x) (1 + x) (-1 +
         9 x + 92 x^2 - 581 x^3 - 2083 x^4 + 11003 x^5 + 18456 x^6 -
         89508 x^7 - 76454 x^8 + 363004 x^9 + 148765 x^{10} -
         782325 x^{11} - 101940 x^{12} + 931622 x^{13} - 46433 x^{14} -
         638214 x^{15} + 110532 x^{16} + 256374 x^{17} - 68466 x^{18} -
         59420 x^{19} + 20987 x^{20} + 7328 x^{21} - 3425 x^{22} - 351 x^{23} +
         281 x^{24} - 9 x^{25} - 9 x^{26} + x^{27}) (1 + 9 x - 92 x^2 -
         581 x^3 + 2083 x^4 + 11003 x^5 - 18456 x^6 - 89508 x^7 +
         76454 x^8 + 363004 x^9 - 148765 x^{10} - 782325 x^{11} +
         101940 x^{12} + 931622 x^{13} + 46433 x^{14} - 638214 x^{15} -
         110532 x^{16} + 256374 x^{17} + 68466 x^{18} - 59420 x^{19} -
         20987 x^{20} + 7328 x^{21} + 3425 x^{22} - 351 x^{23} - 281 x^{24} -
         9 x^{25} + 9 x^{26} + x^{27})] $}
        \\ \\
 \hspace*{1.2cm} $-$ \parbox[t]{5.4in}{
         $ \ds
           [2 x^2 (-36 + 420 x^2 -
         1386 x^4 + 1980 x^6 - 1430 x^8 + 546 x^{10} - 105 x^{12} +
         8 x^{14})] / $ \\  $ [(1 + 4 x - 10 x^2 - 10 x^3 + 15 x^4 + 6 x^5 -
         7 x^6 - x^7 + x^8) (1 - 4 x - 10 x^2 + 10 x^3 + 15 x^4 -
         6 x^5 - 7 x^6 + x^7 + x^8)]$}
          \\ \\
          \hspace*{1.2cm} $-$  \parbox[t]{5.4in}{ $ \ds
           \frac{(2 x^2)}{(-1 + x) (1 + x)} $}  \bc $ \mbox{--------------------------------------    } $ \ec
            \noindent
   ${\cal F}^{TkC}_{8}(x) = $  \parbox[t]{5.4in}{ $ \ds
x + 3281 x^2 + 5116 x^3 + 561357 x^4 + 2981201 x^5 + 164342144 x^6 +
 1384148396 x^7 + 56241588037 x^8 + 604211712448 x^9 +
 20559551095851 x^{10} + 257573508034492 x^{11} + 7784584974538368 x^{12} +
 108680224298965775 x^{13} + 3013814772945044388 x^{14} +
 45643040884844001536 x^{15} + 1185537358216390245429 x^{16} +
 19127617320816121366452 x^{17} + 472116548787108690744794 x^{18} +
 8007718640288679793007567 x^{19 }+ 189866075873770814991703727 x^{20} +
 3350817898120123595046160844 x^{21} +
 76965624292097393220127868826 x^{22} +
 1401831762167059299778164115466 x^{23} +
 31400919568784474985238557000304 x^{24} +
 586401753216671829580093021638776 x^{25} +
 12877877037714243225365022029863413 x^{26} +
 245286171577403118116883927290448238 x^{27} + $ \\ $
 5303427339483094210044349210160088892 x^{28} + $ \\ $
 102598425625894938585872354365515091531 x^{29} + $ \\ $
 2191357558208001087370202721666183256764 x^{30} +  \ldots $}
    \bc $ \mbox{--------------------------------------    } $ \\ $ \mbox{--------------------------------------    } $ \ec  \noindent
 ${\cal F}^{TkC}_{9}(x) = $  \parbox[t]{5.4in}{$ \ds
  -[(4 x^2 (1152 - 2206597 x^2 + 1807769940 x^4 - 860478696692 x^6 +
         271394988599842 x^8 - 61129048677172881 x^{10} +
         10325212841842911826 x^{12} - 1353825936442525863750 x^{14} +
         141392867120412980286438 x^{16} -
         11999207460234121959949283 x^{18} +
         840679654256968498368980410 x^{20} -
         49257139757849490877927070484 x^{22} +
         2439569974008328995823937710340 x^{24} -
         103054787449811934712722869675244 x^{26} +
         3741625339141960685479905918819684 x^{28 }-
         117531759075853838052008575939685668 x^{30} + $ \\ $
         3212523903133186061692420680378232140 x^{32} - $ \\ $
         76793155385996505350177192512107604143 x^{34} + $ \\ $
         1612576030720239216104735528006452413760 x^{36} - $ \\ $
         29865135674377359402357253372254071946470 x^{38} + $ \\ $
         489554622438197898118913984319053406012798 x^{40} - $ \\ $
         7125612635514102460216877115387631680701028 x^{42} + $ \\ $
         92360515626279950365237519661783198749080036 x^{44} - $ \\ $
         1068895817380708290533389006273379476977335364 x^{46} + $ \\ $
         11071526522007272294397697749779599638271754574 x^{48} - $ \\ $
         102860388764696736244891194752685388811296848838 x^{50} + $ \\ $
         858853023759632300903892887424256653390801507060 x^{52} -
         6456607859451424936977014115658311646553442825244 x^{54} +
         43774282328683483950589913363621461498687759002270 x^{56} -
         268047175389006192979911264467018887170891433557660 x^{58} +
         1484458263317691438899879472117255474928557841522390 x^{60} -
         7444202913918183601119169475813074724175649386256522 x^{62} +
         33840405900113216368039295505510918386095739251165146 x^{64} -
         139585709446719896970995265534998378548498518538133333 x^{66}
+ 522882889365866854198479790961912955205713697542988958 x^{68} -
         1780109772224902337792352028208151332360041376052751632 x^{70}
+ 5511132451994509012201612959166727678498084741467194038 x^{72} -
         15524314278817883298706115695046037857068451538788105266
x^{74} + 39805303523502452222240610846137579209607852072462584968 x^{76}
- 92930822910051984285533672559029361475263967498396039188 x^{78} +
         197586088267141781898425067964899211878776752057806289444 x^{80} - 382623672367378752492150417243490451926441883893463019981 x^{82}
+ 674839507453138576123408186500461625002632027549579798834 x^{84} -
         1083899065552077762714031789808611358078590994960140989406
x^{86} + 1585017953044366797303958249252120447257903476943506240916
x^{88} - 2109516427670921510010239240579282933585091915128628083983
x^{90} + $ } \\
\\
 \hspace*{1.4cm}  \parbox[t]{5.4in}{
         $ \ds 2554042935879964380546977423012626041601105303737425649836
x^{92} - 2811299983123677520405748408047610574298238749051185393900
x^{94} + 2811237961193741835319891777354506083645198593908837507440
x^{96} - 2551605650594589508100365226809130758645656466161828122540
x^{98} + 2099909305583510683942573711811859768984918998577698691786
x^{100} - 1565054569354074023627420336421691093367501137324583943706
x^{102} + 1054847267027220888928717267878553076517650022705325092442
x^{104} - 641918216718905455466201195189961636451702679947248988248
x^{106} + 352044939047358377943587973140979153816527256061527074638
x^{108} - 173632116959842677798117796336416002197530474889638111614
x^{110} + 76830351865625749067652061203856306079429418454537775004
x^{112} - 30417068650057657079949964083866024730627727561822250049
x^{114} + 10740534518756801751087841937388013456342038466240406800  x^{116} - 3370575798268931937605669718640573969136591537041811490 x^{118} +
 936194752338041027150382554160149822725214215170683358 x^{120} -
         229060794567879430879986197916270350229389570933032790 x^{122}
+ 49097981020077321821394054308837675408365223310824724 x^{124} -
         9160162908109338279130089543484639952841071997906924 x^{126 }+
         1476251218877966473156136661379527905668352737637724 x^{128 }-
         203648302833520038046608849670590277759293561306063 x^{130 }+
         23783292408157109218404190043013411327197079483912 x^{132} -
         2319630349743410293849352255862856465279294764084 x^{134} +
         185716889380954519854696803288600568148137400608 x^{136} -
         11935732786173705051350546870274420224169228763 x^{138} +
         597365756972132465348125559350976243010717526 x^{140} - $ \\$
         22291298699609263186012724844614398896394230 x^{142} + $ \\$
         579480946528436768019023951476570179717540 x^{144} - $ \\$
         9287440866939596231661962056823280350520 x^{146} + $ \\$
         68423540017319466905734972657794961920 x^{148})]/
        $} \\ \\
 \hspace*{1.4cm}  \parbox[t]{5.4in}{
         $ \ds [(-1 +3 x) (1 + 3 x) (-1 + 7 x^2) (-1 + 27 x - 258 x^2 + 973 x^3 -
         324 x^4 - 6744 x^5 + 12454 x^6 + 7362 x^7 - 33489 x^8 +
         15667 x^9 + 19476 x^{10} - 20253 x^{11} + 3729 x^{12} +
         2151 x^{13} - 966 x^{14} + 107 x^{15}) (1 + 27 x + 258 x^2 +
         973 x^3 + 324 x^4 - 6744 x^5 - 12454 x^6 + 7362 x^7 +
         33489 x^8 + 15667 x^9 - 19476 x^{10} - 20253 x^{11} -
         3729 x^{12} + 2151 x^{13} + 966 x^{14} + 107 x^{15}) (-1 + 282 x^2 -
         25297 x^4 + 1073828 x^6 - 25390104 x^8 + 363078264 x^{10} -
         3296168948 x^{12} + 19596248926 x^{14} - 77743595826 x^{16} +
         207473742096 x^{18} - 371467677512 x^{20} + 439852643504 x^{22} -
         334403214576 x^{24} + 154506971212 x^{26} - 38880116221 x^{28} +
         4020609392 x^{30}) (-1 + 591 x^2 - 95361 x^4 + 6922350 x^6 -
         272760016 x^8 + 6420903257 x^{10} - 95828390004 x^{12} +
         943120716586 x^{14} - 6295303405260 x^{16} +
         29127727204334 x^{18} - 95000584032342 x^{20} +
         220996143367594 x^{22} - 369011778492872 x^{24} +
         442338529800436 x^{26} - 377839526215177 x^{28} +
         226022147292981 x^{30} - 91695292950038 x^{32} +
         23818609428605 x^{34} - 3541207333504 x^{36} +
         226978239492 x^{38}) (1 - 1193 x^2 + 376246 x^4 -
         48953410 x^6 + 3288145988 x^8 - 127374411928 x^{10} +
         3015668747782 x^{12} - 45191010425846 x^{14} +
         441384780778588 x^{16} - 2898283223877346 x^{18} +
         13154666974580501 x^{20} - 42187756055653825 x^{22} +
         97142224830553641 x^{24} - 162322162033938237 x^{26} +
         198042290945862570 x^{28} - 176799585485005402 x^{30} +
         115298993750386955 x^{32} - 54611642383339285 x^{34} +
         18586566465572115 x^{36} - 4467086405032683 x^{38} +
         737944576901349 x^{40} - 80312266104179 x^{42} +
                 5368435066393 x^{44} - 193455857453 x^{46} +
         2735506380 x^{48})]   $} \\ \\
 \hspace*{1.2cm} $-$ \parbox[t]{5.4in}{
         $ \ds
          [ x (-1 - 3138 x - 12189 x^2 +
         3515332 x^3 + 15404430 x^4 - 1556674230 x^5 -
         6321089166 x^6 + 382398690520 x^7 + 1311805776717 x^8 -
         60191451351230 x^9 - 161714477451835 x^{10} +
         6565943755483548 x^{11} + 12678569829491020 x^{12} -
         520824200595298166 x^{13} - 635057264653902600 x^{14} +
         31018975690335967808 x^{15 }+ 17969042737456614346 x^{16} -
         1419104748968250857064 x^{17} - 28573194282709941282 x^{18} +
         50739971287643233161280 x^{19} -
         23321605296749449030530 x^{20} -
         1437595501278814830945696 x^{21} +
         1246436276742341171802738 x^{22} +
         32653647530636803573858728 x^{23} -
         39549571533248439377462425 x^{24} -
         600747530859523253744627206 x^{25} +
         901838365992119390863259835 x^{26} +
         9036269601831280229461670084 x^{27} -
         15767174968682617291823228808 x^{28} -
         112110393728184116740742129970 x^{29 }+
         218396535574115920591352161252 x^{30} +
         1156969110445872340506621316800 x^{31} -
         2445977215939258136686771457556 x^{32 }-
         10013021974875673448875473181612 x^{33 }+
         22468753830812884590897349545720 x^{34} +
         73254537161346178025352603410184 x^{35} -
         171116404295705733576070463769138 x^{36} -
         456546369711656869175950887422116 x^{37} +
         1089501611835078838476654071949126 x^{38} +
         2441799546977454933233303612239360 x^{39} -
         5837835938120854141503189705750966 x^{40} -
         11283222038646959984806437055484328 x^{41} +
         26459796657217710589610954854748018 x^{42} +
         45305655498248473533492601646920280 x^{43} -
         101830809088972415306050371382356000 x^{44}  $} \\ \\
 \hspace*{1.2cm} $-$ \parbox[t]{5.4in}{
         $ \ds
         158769597569467546957236381478241884 x^{45} + $ \\$
         333602480872399458310608825887872060 x^{46} + $ \\$
         486891111829495745400315864384599856 x^{47} - $ \\$
         931512112285624814472562522013924153 x^{48} - $ \\$
         1307621805898576836730254939863124250 x^{49} + $ \\$
         2216858943621280965971203658449091583 x^{50} + $ \\$
         3072311426335150064111784410898873804 x^{51} - $ \\$
         4489893407841152215806825363972580824 x^{52} - $ \\$
         6298780695313693563204719523565583970 x^{53} + $ \\$
         7714682182116505131395647593211352320 x^{54} + $ \\$
         11226004883194166915002239490358455144 x^{55} - $ \\$
         11186932378073136432066627959730632601 x^{56} - $ \\$
         17314097813292462552629492820365378158 x^{57} + $ \\$
         13579058806531715431643941341835603523 x^{58} + $ \\$
         22993197647642170789056440237018698980 x^{59} - $ \\$
         13621068066940280399402107416974207424 x^{60} - $ \\$
         26152891377100473713869813061381754434 x^{61} + $ \\$
         11048521659909951982828945243383497700 x^{62} + $ \\$
         25338494312883572130973760001987138752 x^{63} - $ \\$
         6945789858469200809020665747038605105 x^{64} - $ \\$
         20794403404415553590317797637527775926 x^{65} + $ \\$
         3032860264722045783767482620742503735 x^{66} + $ \\$
         14372377604960769204836570883901477740 x^{67} - $ \\$
         510402517612737823772356095439614600 x^{68} - $ \\$
         8317292140617923673352631136892286490 x^{69} - $ \\$
         492730082201815735591482689949520564 x^{70} + $ \\$
         4005802325781374053712890057445454552 x^{71} + $ \\$
         563733563102550147260095801146488403 x^{72} - $ \\$
         1595648262683964820223988282675689718 x^{73} - $ \\$
         336787073290574339033261345124831525 x^{74} + $ \\$
         522249728624845086759177188585051428 x^{75} + $\\$
         142066679139061764744439947602204844 x^{76} - $ \\$
         139476132350625289583208947011451178 x^{77} - $ \\$
         45312821551964621122963064573776428 x^{78} + $ \\$
         30173125171564888052177237864679760 x^{79} +
         $ } \\
         \hspace*{1.4cm}  \parbox[t]{5.4in}{ $
         11166465509774787347592798792199797 x^{80} -
         5247668526582028055700229832909050 x^{81} -
         2134698340398432305328651032311151 x^{82} +
         728447057952532535130254335181244 x^{83} +
         314716693247201043560851990206550 x^{84} -
         80222598271908421470783237786410 x^{85} -
         35273538383727573759248324022822 x^{86} +
         6980065935695458245293364595456 x^{87} +
         2933330070032759915228223085064 x^{88} -
         477757230965691698459902340880 x^{89} -
         173936904703798126094290246160 x^{90} +
         25348066369510808199075310400 x^{91} +
         6861110985320269301927583648 x^{92} -
         992686617044143586989227248 x^{93} -
         155810662662956739123972640 x^{94} +
         25345512133505650393850880 x^{95} +
         1254708009279296759769216 x^{96} -
         308540156964491828873728 x^{97} +
         10484643019216813894656 x^{98})] /
        $} \\ \\
 \hspace*{1.4cm}  \parbox[t]{5.4in}{
         $ \ds
         [(1 + x) (-1 + 2 x) (1 + 2 x) (1 - 8 x^2 + 11 x^4) (1 - 28 x^2 + 71 x^4) (1 - 8 x +
         20 x^2 - 14 x^3 - 5 x^4 + 4 x^5) (1 - 116 x^2 + 1546 x^4 -
         4556 x^6 + 3781 x^8) (-1 - 48 x - 708 x^2 - 1640 x^3 +
         45946 x^4 + 341664 x^5 - 270194 x^6 - 9936934 x^7 -
         22840421 x^{8} + 84442096 x^9 + 418028192 x^{10} +
         132842204 x^{11} - 2217116283 x^{12 }- 3910256042 x^{13} +
         2255114380 x^{14} + 12952834812 x^{15} + 9797088008 x^{16} -
         9926380870 x^{17} - 19739579992 x^{18} - 6238648278 x^{19} +
         8703028142 x^{20} + 7538491050 x^{21} + 208212 x^{22} -
         2158389564 x^{23} - 641271414 x^{24} + 192328290 x^{25} +
         111002758 x^{26} + 399556 x^{27} - 5772921 x^{28} - 305408 x^{29} +
         97576 x^{30}) (1 - 42 x - 75 x^2 + 17718 x^3 - 112036 x^4 -
         2032663 x^5 + 20898099 x^6 + 68505152 x^7 - 1367337026 x^8 +
         1223952387 x^9 + 39924985841 x^{10} - 123520021512 x^{11} -
         507429891185 x^{12} + 2815200597001 x^{13} +
         1444158632036 x^{14} - 29150884549434 x^{15} +
         26197136927827 x^{16} + 152864286212934 x^{17} -
         281984662214835 x^{18} - 398046044696312 x^{19} +
         1249282231645481 x^{20} + 358977981283233 x^{21} -
         3123666474218858 x^{22} + 675827827012992 x^{23} +
         4889486045969022 x^{24} - 2485484028220910 x^{25} -
         5032512202755210 x^{26} + 3593534888276858 x^{27} +
         3475263477225909 x^{28} - 3098930934608376 x^{29} -
         1601027066102163 x^{30} + 1744452573290918 x^{31} +
         470550625308136 x^{32} - 657690139223493 x^{33} -
         75567716491829 x^{34} + 165305848427048 x^{35} +
         1498941418335 x^{36} - 26855694880119 x^{37} +
         1821955387786 x^{38} + 2634688847668 x^{39} -
         334995651352 x^{40} - 134705018616 x^{41} + 24157589768 x^{42} +
         2269409656 x^{43} - 605268248 x^{44} +
         22971944 x^{45})]
          $} \\ \\ \\
 \hspace*{1.2cm} $-$  \parbox[t]{5.4in}{
         $ \ds [2 x^2 (882 - 590354 x^2 +
         157273161 x^4 - 22983115792 x^6 + 2119494744100 x^8 -
         133590374014446 x^{10} + 6058887885974721 x^{12} -
         204935302170164648 x^{14} + 5306071771929323052 x^{16} -
         107251721247833706960 x^{18} + 1718345284467307783496 x^{20} -
         22084257636082742645904 x^{22} +
         229857036770612349580820 x^{24} -
         1952359449465652537576140 x^{26} +
         13616617708467269758731900 x^{28} -
         78367989158751373954628480 x^{30} +
         373654558327977301862358522 x^{32} -
         1480352154646506945693324048 x^{34} +
         4883719271006483039945303166 x^{36} -
         13433591701765415961153857560 x^{38} +
         30823850149478993381873048244 x^{40} -
         58971942961519011103241421464 x^{42} +
         93941006726387697717195658180 x^{44} -
         124286525502217618209539873280 x^{46} +
         136049531908889279215906382100 x^{48} -
         122556391797201522095216232744 x^{50} +
         90186798622481620965906398496 x^{52} -
         53678118116110605690524591388 x^{54} +
         25494519661741120363771537818 x^{56} -
         9485124196343409579683767530 x^{58} +
         2692664405977965225731923549 x^{60} -
         560914565743916029157069344 x^{62} +
         80510797349088438843948756 x^{64} -
         7088230137939231002107176 x^{66} +
         287466115066565560292160 x^{68})]/ $ } \\
         \hspace*{1.4cm}  \parbox[t]{5.4in}{ $ \ds
          [(-1 + 3 x) (1 + 3 x) (-1 +
         282 x^2 - 25297 x^4 + 1073828 x^6 - 25390104 x^8 +
         363078264 x^{10} - 3296168948 x^{12} + 19596248926 x^{14} -
         77743595826 x^{16} + 207473742096 x^{18} - 371467677512 x^{20} +
         439852643504 x^{22} - 334403214576 x^{24} + 154506971212 x^{26} -
         38880116221 x^{28} + 4020609392 x^{30}) (-1 + 591 x^2 -
         95361 x^4 + 6922350 x^6 - 272760016 x^8 + 6420903257 x^{10} -
         95828390004 x^{12} + 943120716586 x^{14} - 6295303405260 x^{16} +
         29127727204334 x^{18} - 95000584032342 x^{20} +
         220996143367594 x^{22} - 369011778492872 x^{24} +
         442338529800436 x^{26} - 377839526215177 x^{28} +
         226022147292981 x^{30} - 91695292950038 x^{32} +
         23818609428605 x^{34} - 3541207333504 x^{36} +
         226978239492 x^{38})]  $} \\ \\
 \hspace*{1.2cm} $-$ \parbox[t]{5.4in}{ $ \ds
         [8 x^2 (39 - 3318 x^2 + 90522 x^4 - 1129606 x^6 + 7435280 x^8 -
      27384066 x^{10} + 56511462 x^{12} - 60869058 x^{14 }+
      26576649 x^{16})]/ $ \\ $ [(-1 + 2 x) (1 + 2 x) (1 - 8 x^2 + 11 x^4) (1 -
      28 x^2 + 71 x^4) (1 - 116 x^2 + 1546 x^4 - 4556 x^6 +
      3781 x^8)] $ } \\ \\
       \hspace*{1.2cm}  $+$ \parbox[t]{5.4in}{ $ \ds
        \frac{18 x^2}{1 - 9 x^2} $}
         \bc $ \mbox{--------------------------------------    } $ \ec  \noindent
 ${\cal F}^{TkC}_{9}(x) = $  \parbox[t]{5.4in}{ $ \ds
0 + 1 x^{1} +
9841 x^{2} +
16897 x^{3} +
3577121 x^{4} +
24412606 x^{5} +
2384008549 x^{6} +
27309182412 x^{7} +
1893972519489 x^{8} +
28474336325785 x^{9} +
1619938572971116 x^{10} +
28865363400315608 x^{11} +
1440027274442086769 x^{12} +
28885443646068696636 x^{13} +
1310571139307486059744 x^{14} +
28724255612327969672932 x^{15} +
1212265596856254020971761 x^{16} +
28474590328963725085347639 x^{17} +
1135126364276555282932647235 x^{18} +
28182531221515571455363232707 x^{19} +
1073280239866322664805781387076 x^{20} +
27871117924536494891873491537509 x^{21} +
1022905044375877221224023143033552 x^{22} +
27551901782293541801815673718843291 x^{23} +
981327300899811413079069217249330809 x^{24} + $ \\$
27230675474213911617346695093962018106 x^{25} +$ \\$
946586555765313010849925032923100801942 x^{26} +$ \\$
26910336272521921019466721739562952111225 x^{27} +$ \\$
917202654223319757045260969142997398374748 x^{28} +$ \\$
26592323189960514104579704865917109070263368 x^{29} +$ \\$
892038379019266817578176241221901860341244424 x^{30}
+ \ldots $ \bc $ \mbox{--------------------------------------    } $ \\ $ \mbox{--------------------------------------    } $ \ec  }

    ${\cal F}^{TkC}_{10}(x) = $      \small \parbox[t]{5.4in}{$ \ds
    -[(x (-1 - 8952 x - 42378 x^2 + 31475188 x^3 + 177149080 x^4 -
        46320241878 x^5 - 254632814051 x^6 + 39369421482240 x^7 +
        194236107853509 x^8 - 22180531873155530 x^9 -
        92632975657548205 x^{10}+ 8946071465390269248 x^{11} +
        29944681484503722677 x^{12} - 2712694965729684190934 x^{13} -
        6821796752863618485930 x^{14} + 639653978831376989637792 x^{15} +
        1098557890660467400167110 x^{16} -
        120222998658273772802013624 x^{17} -
        117297263165834919423023856 x^{18} +
        18353971438425771153497758420 x^{19} +
        5480569615508704955485178358 x^{20} -
        2310389450263143868179223573482 x^{21} +
        720037287440820029338544329207 x^{22} +
        242768777209137801637455469908072 x^{23} -
        208091773566300940926007919152925 x^{24} -
        21515681369354051649190671292763682 x^{25} +
        28879011434160378642274840827307593 x^{26} + $ \\ $
        1622784448261892336647177402088793428 x^{27} - $ \\ $
        2875843141277540351611628395850915888 x^{28} - $ \\ $
        104987883416770137250477418790679986600 x^{29} + $ \\ $
        225980550550802797687300540387263029052 x^{30 }+ $ \\ $
        5867679370199052369983491882588793145120 x^{31} - $ \\ $
        14599996646710480565757825057778042587387 x^{32} - $ \\ $
        285128556065637548541671502943517240838794 x^{33} + $ \\ $
        793781917016602529567172474316700043554090 x^{34} +  $ } \\ \\
          \hspace*{1.4cm}   \parbox[t]{5.4in}{ $ \ds
        12118130518074842864122358729876085325584672 x^{35} - $ \\ $
        36879427815426645041299766842776121622792875 x^{36} - $ \\ $
        452936687853672291908021081597673012496915126 x^{37} + $ \\ $
        1480705528326975791403857177622971195770004289 x^{38} + $ \\ $
        14964863396962327304831269413978032648202146600 x^{39} - $ \\ $
        51826161957336276972803123957811864568926963548 x^{40}- $ \\ $
        439164309501526201562880635646576593825874852942 x^{41 }+ $ \\ $
        1592638515348519669981661209285476538792963465060 x^{42} + $ \\ $
        11498954785140204403899452134835894854789823900840 x^{43} - $ \\ $
        43228474620379937746588821331452978045039176774550 x^{44 }- $ \\ $
        269777276264675601050026431839280145559475906419312 x^{45} + $ \\ $
        1041677058072338574382683123301831738205738835060154 x^{46 }+ $ \\ $
        5693655985450927583185442533273252931322220662984048 x^{47 }- $ \\ $
        22384199956771306708465192753314385517794235475044367 x^{48} - $ \\ $
        108498545283176811247843668274268976099443582383667350 x^{49} + $ \\ $
        430618370684300799648919480736073986642571421581577929 x^{50}+ $ \\ $
        1873239479146494970397758759620142112780582265491838848 x^{51}
- 7441892421448850605079876428023690900471063177393961083 x^{52} -
        29394490911940650866419867188266739582087612324041776134 x^{53}
+ 115888208688017331593166415953400467282936637627755880160 x^{54} +
        420415587592329561326984783856708261929865470055511604760
x^{55} - 1630550035072904902617619337796910031284018663446932600757
x^{56} - 5494565584059558565016816299687535512242473393836180193522
x^{57} + 20778168986713167498426781885246255996366918952678261621702
x^{58} + 65764505301955891937515813021116716025296469172739956433700
x^{59} - 240312493222727168761837936266677725578346593664077382360907
x^{60} - 722229520474429220289373554936169707626795680681487648259530
x^{61} + 2527249685782626453695730272954910792628423242062889319350561
x^{62} + 7289031655445433053641082900491767646281054197621919014931648
x^{63} - 24206240144840519070837597481244379487712392633759138979219130
x^{64 }- 67691289234576618562816688727216914047277393660995099855335936 x^{65 }+
 211457673279464232410375830609286671709334580295640931633682496 x^{66} + 579036742437762688964673894703526574276360576906661045754817392 x^{67} - 1686771680652499696478401109660679955776261333898053201197809371 x^{68} -
        4565962225397745749970454125461167135916360589350941802160222830 x^{69} +
        12298712850206874531874456951950586711512901354709701214108142045 x^{70} +  $ } \\
          \hspace*{1.4cm}   \parbox[t]{5.4in}{ $ \ds
        33210226215513979791105204604794633797946147489369129940753834440 x^{71} -
        82031940561043368851947175232697529169184229461663993546891573861 x^{72}-
        222905604898387489765390901824896769623204838728843489882454759560 x^{73} +
        500830477896174951947773059100488738179889556169446445447199447975 x^{74 }+
        1381111317608022961850440963926691081492958559170139045112458146396 x^{75} -
        2800004223587533515838556031775472603158481135189650468451607668018 x^{76} -
        7901529536430293547912988242294534182967734087556513825320383936428 x^{77} +
        14337246832555124456595013638897485833275034855798802283201831383576 x^{78} +
        41750744882094507723412581090146169544231274020243400314547641991440 x^{79} -
        67232629309830396470621253477350010337534240025998906646493163109166 x^{80} -
        203785226306391692481086213920187314025855718850286809404929540547634 x^{81} +
        288626249091353469449423851713205684902162710931862488254625246422344 x^{82} +
        919003992128872190902035316976646879964758649586010255245254926072972 x^{83} -
        1133433037665476569734932354977170790684743072948336582827219338596935 x^{84} -
        3829833432196321348016803221689518150680112364877236344564683322039946 x^{85} +
        4066226786081155362564812413368682003581455806897070025587207428234959 x^{86} +
        14751811392408453106510889384718123633784806030417094788160460184076072 x^{87} -
        13299113722049662918826638843919440753090674659391087939466507725126442 x^{88} -
        52529605240637322469417245612414306994653942175309276265691164635886160 x^{89} +
        39526090953218293673350968549985756750752754764296418923442253317024854 x^{90} +
        172962610316216673197651634423999477732503903391428891616224755339672548 x^{91} -
        106205386031054657152485976349973720765815157584821327015084234709694297 x^{92} -
        526733630447860557777926136581438870458493746336658004405936136435493242 x^{93} +
        255814219663558696791363703851158734018203229179295314118211149122779085 x^{94}+
        1483975596115213214394280210764055795436814717110364244409990981346277344 x^{95} -
        544073674261365034080289702636621646736119293160040555073569754737327523 x^{96} -
        3868738307705636187167361732469758186684446719113983622311459049894704244 x^{97} +
        991211271522653992782257113349745783603575532041050333813784645943455940 x^{98} +
        9335370661187883061935691160738188579066094132013061648087449298291540800 x^{99} -
        1434297012747987928033581729541889610245109739011752903428613765245979826 x^{100} -
        20855843840090880478061008931244201384416734732692206934328553550825300816 x^{101} +
        1213300340928846505842933026992264025573889799398288326238141908889994065 x^{102} +
        43149212571318267150453473898868490433283941277250961429153450930697172496 x^{103} +
        1334027245942203357300530372005042977786207196680004047321016315599257310 x^{104} -
        82695398651372370173468960619129274535095844645071290709692658989818755274 x^{105} -
        9455310520979043785821277120959746516441765720500871093672929257831064213 x^{106} +
        146847393080770441013099675558618972268902703342684075933190161106257286632 x^{107} +
        28306974717935828803945921287585219024211838725982737858713387761153803674 x^{108} -
        241678726976247766028594502205438015669095927402718656884402732043335354950 x^{109} -
        64488139184894935955386979305460083915165260300135672796603187706651859311 x^{110} + $ } \\
          \hspace*{1.4cm}   \parbox[t]{5.4in}{ $ \ds
        368727792487968281249749488677214632478813033598043967374176250347215675536 x^{111} +
        124319301006831206210700452397046244764885723265517451178406902350415108907 x^{112}-
        521642703538483364417446807511970551707017112375898064867628362576369396480 x^{113} -
        210907828007954614433579324094825071330578443331987435594341677893594435025 x^{114} +
        684448776260128631270524375712536295518673419408507085340539359751006094068 x^{115} +
        320910900261745530325516230510450724955578649763835525632674053751393381727 x^{116} -
        833117056414725728723319165621895843369095263683315006573688428004087166164 x^{117} -
        442656390266491701076320399913841916900713150667436360959737023566219053639 x^{118}+
        940938058390104166517226369130559823817378764335861992129743937298197780080 x^{119} +
        557244293616358513102728736259221583246960279006971926422208694045186050326 x^{120} -
        986267777254046462970097255625124579030110678971298432750291223175880330326 x^{121} -
        643091191413677690215134152788194162412600756136701391241641554162037870016 x^{122}+
        959606149897810360494873777895294866162916488512315537870475550077186722356 x^{123 }+
        682545162344625194936011456703487299326117998371780168730578078429868206125 x^{124} -
        866839509097534177211393062951404751579626045905821362069200577495948467568 x^{125} -
        667799532013060356350495890606416477235141588991889797155512432299016731329 x^{126 }+
        727132974471135347186143093878624351879391935158236311339194459151451919104 x^{127} +
        603393143285107148028673964695250091114075989659587164253653919305515880214 x^{128} -
        566501515224111253735494439754677407543185838104014779035582106369388090750 x^{129} -
        504208571844969101268777986488301269025605098592406296359031015723002487917 x^{130} +
        410002174433224834671809481524294946610738759333132956996279240646545936360 x^{131} +
        390094958170213828757467147252777801745411949560118103592575978052103142752 x^{132}-
        275715551080904559219499518926300788603320190731832266533394918232625694760 x^{133} -
        279694812374619927946636453278801035989642111895399667877864888825071604380 x^{134} +
        172317448662162669944396877108731523834677715548680718165154048197243029704 x^{135} +
        185988361825141983169718749282931180674422200026885116065086445700233986837 x^{136} -
        100117318734193425709090209602541415447159067117358093768613350084261731268 x^{137}-
        114776125436090801920841899814718853301243560082130240764478144164778645093 x^{138} +
        54093276488980390744410286845768910829725173160744232463261503386483402140 x^{139} +
        65767706114668361207668652371189893258533421841767401848481802611955803199 x^{140} -
        27189749735372928492781621384374894287521902249055076783744952513077584768 x^{141} -
        35007477758189762841433754637398606911014981905290896093303632025819436792 x^{142 }+
        12720680069239320123446214150984317210664093363923501304136363272904724736 x^{143} +
        17316223845017070495954853620675307582788998047335780399689345755293815765 x^{144} -
        5542773319523083966304772320932624387452760195735310395496882709212207522 x^{145} -
        7961886254809245744643494578021560689752817777941989210401782354973906663 x^{146 }+
        $} \\
          \hspace*{1.4cm}   \parbox[t]{5.4in}{ $ \ds
        2251072230847618155620955691366879542150238811235312780086984924730991592 x^{147} +
        3403661618890617816339700308645928467052653163141115929107971700467010246 x^{148} -
        852919920403850426324646716175449080497205722799717298256016644984820800 x^{149} -
        1353050279717778359545200555051987081527298996154879460730432250097712308 x^{150}+
        301842689006519610607813635022674827521464319608237764789269346264895336 x^{151 }+
        500219917419782838640846038895307385303332868888483699141744835316034342 x^{152 }-
        99906198445966833483245016167389380337694252215491855743034051930176576 x^{153} -
        171989770701320808082172539304669198482029234545913183932641000140636095 x^{154 }+
        30974895412117380325995486708194872967629037082996904914199628130737964 x^{155 }+
        54995379849944774017293303715607862334803936118650071928518188111155259 x^{156} -
        9010634131542275402450700619744620539337498555557063482150270763564926 x^{157} -
        16352737413700192950781036887279784192481538364595067581604901882617246 x^{158} +
        2463559789918447044517484129850286111345470605897347821238160435773280 x^{159} +
        4520893340084739550389627212686032066853133559457918816268118158523970 x^{160} -
        634035611967485177553290351946922949739992796033982995919167489207288 x^{161} -
        1161783417903185938488501935237506297042630739326392662524235958210080 x^{162} +
        153798915896146535532813114446585536476156912584130874785168604551136 x^{163} +
        277432808383753104515676052084219975789698311454202735717689470819925 x^{164 }-
        35189310701487586454152002123477762747152983404703963164711026664032 x^{165} -
        61539226316370228865217386256654557399726219583982959399175316313144 x^{166 }+
        7595032591089993535476403401961345503599760907413023564542528739352 x^{167 }+
        12673626497576182688767071976293841530896127255386691016518388658953 x^{168 }-
        1545373323775566864583067102002111183627874473252100824378807301560 x^{169} -
        2421913762234874881933568994515621986373595417275391608623206387192 x^{170 }+
        296028372843104808621710506052879031672729737645760318912004714032 x^{171} + $\\$
        429179031225414973144903873905358440133573222720403772689828227190 x^{172} - $\\$
        53281474151885337856665765189065226813718010621934432624595256562 x^{173} - $\\$
        70470931019290730333755485948443991089960557119702138294900556225 x^{174 }+ $\\$
        8989145815341109408358577602359586241787992641103218698352186064 x^{175 }+ $\\$
        10712645023550199570854579518617646000177598421320850094879013259 x^{176 }- $\\$
        1417775215446936243969349912868125738059570870367978047388676474 x^{177} - $\\$
        1506166545918499655016712660210174863513040330582682891803581284 x^{178 }+ $ \\$
        208475281854477165865325746770789810054304312911520738884646800 x^{179 }+ $\\$
        195641380302114073187188503545994138270959517189400168862474270 x^{180 }- $\\$
        28502954897099005918331689264152178134222731544820497868773458 x^{181}
         -  $\\$ 23449044593504153864898761115701649198025497682166576103704949 x^{182} +$\\$
        3614056628855168174208032064622544872174512726345649153798448 x^{183} + $\\$
        2589822588158364457205359101426026407856291098762422063677800 x^{184 }        - $\\$
         423953098395175149079461676996240599159532882583507907066054 x^{185}         - $\\$
          263166850044641059037462840284581736188195883701552611984460 x^{186}          + $\\$
          45906458459855226219020000865524027211324904309614171436508 x^{187 }+ $\\$
           24562273422124707919827941510450610398691144580148824390122 x^{188} - $\\$
           4578651803345621485133382265260690253851744846373279164250 x^{189} -$} \\
          \hspace*{1.4cm}   \parbox[t]{5.4in}{ $ \ds
           2101625129904254341024603173516493937046030361682131409109 x^{190} +  $\\$
           419788996233280282157770347238879257863886376630419600768 x^{191} +  $\\$
           164498804928973113411064578528756318000940373208518841646 x^{192} - $\\$
            35310910953256106521470937232599346812734566520008211452 x^{193} -  $\\$
            11750061175895690912029407363823715611106727360972937500 x^{194} +  $\\$
            2719853689247157217669336718671829801326489415268772192 x^{195} + $\\$
            763807575693260799317608586496353978494641616116679590 x^{196} - $\\$
        191479571686660752573177861015186151314206202946582926 x^{197 }-  $\\$
        45039464604190943698865440762248189486266922718250727 x^{198 }+ $\\$
        12297531288346925235787646930518976049476238734403600 x^{199} + $\\$
        2399932901299950589726776007311400735598702998016871 x^{200} - $\\$
        719115251322766336087091321690964490900126050841386 x^{201} - $\\$
        115013983688669250108129506338445939107643281260662 x^{202} + $\\$
        38213077573414683077938857874495879303136560600524 x^{203} + $\\$
        4927397712692699305993551794699997142174151087570 x^{204 }- $\\$
        1841545643589425961241725274819720526343386512686 x^{205} - $\\$
        187165671179348586594378149300351078239519685043 x^{206 }+ $\\$
        80316727636009557650737993403738857635941406480 x^{207} + $\\$
        6227615802621221186853214859882676456662226314 x^{208} - $\\$
        3163352335124312500203202086333909889504834620 x^{209} - $\\$
        177942582641173819758830840822689625379031333 x^{210} + $\\$
        112263421628271321654952820161512034735261148 x^{211} + $\\$
        4201989658611040094445182362718528943381591 x^{212} - $\\$
        3581561082891229212386143682420595822642652 x^{213} - $\\$
        74430429951173160663101115030202851929985 x^{214} + $\\$
        102471550975703351910872495709334657548288 x^{215 }+ $\\$
        621159867915845012227733281979348161099 x^{216} - $\\$
        2622636973316311289644553360602151467440 x^{217} + $\\$
        18179428251707696948079451226190234285 x^{218} + $\\$
        59886517413131220385405540433942693300 x^{219} - $\\$
        1099582594917931303174334755468354211 x^{220} - $\\$
        1216634734454528983623551715546070050 x^{221} +
        34491510314148180206441576421052466 x^{222} +
        21924472849356544849249853345100832 x^{223} -
        810843732524026172630954654308350 x^{224} -
        349316612424664860864065467488778 x^{225} +
        15477491445524123843928807928006 x^{226 }+
        4902979792801291056162495684924 x^{227} -
        246760667143790967087421413260 x^{228 }-
        60376891410047044159702489200 x^{229} +
        3321323416436357122101802029 x^{230} +
        649196915040321174445087384 x^{231} -
        37831695763386927111947608 x^{232 }-
        6060180926241810390399624 x^{233} +
        363675992014020763793570 x^{234} +
        48764483872934760132984 x^{235 }-
        2929897130007898165419 x^{236} - 335155514039363727612 x^{237} +
        19557375076055281692 x^{238} + 1943516100568863840 x^{239} -
        106343697447226121 x^{240} - 9348197170306412 x^{241} +
        459401826347208 x^{242} + 36386193310788 x^{243} -
        1517907694695 x^{244} - 110366736666 x^{245} + 3605313764 x^{246 }+
        245188672 x^{247} - 5484474 x^{248} - 355250 x^{249} + 4016 x^{250} +
        252 x^{251})]/
     $} \\
          \hspace*{1.4cm}   \parbox[t]{5.4in}{ $ \ds
          ((1 + x) (-1 + 35 x - 473 x^2 + 3042 x^3 - 8357 x^4 - 3077 x^5 +
      69776 x^6 - 115677 x^7 - 82259 x^8 + 385137 x^9 - 228633 x^{10} -
      268530 x^{11} + 373867 x^{12} - 98551 x^{13} - 64909 x^{14} +
      49836 x^{15} - 12711 x^{16} + 1135 x^{17} + 73 x^{18} - 20 x^{19} +
      x^{20}) (-1 + 15 x + 195 x^2 - 2476 x^3 - 9408 x^4 + 128774 x^5 +
      151702 x^6 - 3080005 x^7 - 152040 x^8 + 39805335 x^9 -
      22147982 x^{10} - 300921194 x^{11} + 284159318 x^{12} +
      1383107908 x^{13} - 1722027429 x^{14} - 3930756397 x^{15} +
      6069754917 x^{16} + 6915053418 x^{17} - 13500672554 x^{18} -
      7214631815 x^{19} + 19878551923 x^{20} + 3475881699 x^{21} -
      19978574007 x^{22} + 1204754727 x^{23} + 13960633114 x^{24} -
      3214431392 x^{25} - 6832632284 x^{26} + 2528557309 x^{27} +
      2326963032 x^{28 }- 1184186750 x^{29 }- 534678044 x^{30} +
      369015343 x^{31 }+ 75261636 x^{32 }- 78835592 x^{33} - 4000296 x^{34} +
      11506048 x^{35 }- 640379 x^{36} - 1110448 x^{37} + 155006 x^{38} +
      65338 x^{39} - 14541 x^{40} - 1860 x^{41} + 680 x^{42} - 2 x^{43} -
      13 x^{44} + x^{45}) (1 + 15 x - 195 x^2 - 2476 x^3 + 9408 x^4 +
      128774 x^5 - 151702 x^6 - 3080005 x^7 + 152040 x^8 +
      39805335 x^9 + 22147982 x^{10} - 300921194 x^{11} -
      284159318 x^{12} + 1383107908 x^{13} + 1722027429 x^{14} -
      3930756397 x^{15} - 6069754917 x^{16} + 6915053418 x^{17 }+
      13500672554 x^{18} - 7214631815 x^{19} - 19878551923 x^{20} +
      3475881699 x^{21} + 19978574007 x^{22} + 1204754727 x^{23} -
      13960633114 x^{24} - 3214431392 x^{25} + 6832632284 x^{26} +
      2528557309 x^{27} - 2326963032 x^{28} - 1184186750 x^{29 }+
      534678044 x^{30} + 369015343 x^{31} - 75261636 x^{32} -
      78835592 x^{33} + 4000296 x^{34} + 11506048 x^{35} + 640379 x^{36} -
      1110448 x^{37} - 155006 x^{38} + 65338 x^{39} + 14541 x^{40} -
      1860 x^{41} - 680 x^{42} - 2 x^{43 }+ 13 x^{44} + x^{45}) (-1 - 75 x -
      2106 x^2 - 25677 x^3 - 59530 x^4 + 1782084 x^5 + 17272711 x^6 +
      16915722 x^7 - 549259769 x^8 - 2774119728 x^9 +
      1945931095 x^{10} + 53672650626 x^{11} + 125905839478 x^{12} -
      255069305286 x^{13} - 1662208525171 x^{14} - 1706160646334 x^{15} +
      5962148469123 x^{16} + 17962282960815 x^{17} + 7082138902898 x^{18} -
      41641400945047 x^{19} - 71155511878215 x^{20} -
      7096003752745 x^{21} + 98767225236595 x^{22} +
      104041755500065 x^{23} - 4751714309754 x^{24} -
      84592421865025 x^{25} - 57545771312940 x^{26} +
      4180055107783 x^{27} + 24201660957901 x^{28 }+
      11084074318344 x^{29} - 364628264623 x^{30} - 1894529779890 x^{31} -
      523784901608 x^{32} + 41364917589 x^{33} + 43581474910 x^{34} +
      4664480242 x^{35} - 1210342521 x^{36} - 282744551 x^{37} +
      5619268 x^{38} + 6144308 x^{39} + 323744 x^{40} - 54975 x^{41} -
      5843 x^{42} + 82 x^{43} + 30 x^{44} + x^{45}) (-1 + 42 x + 2133 x^{2} -
      88106 x^{3} - 1159305 x^{4} + 59614372 x^5 + 152015398 x^6 -
      18819762746 x^7 + 40034190687 x^8 + 3264921562892 x^9 -
      15997694438884 x^{10} - 336798102365195 x^{11} +
      2393078617098732 x^{12} + 21262382124790258 x^{13} -
      203991793583280580 x^{14} - 802344394384627520 x^{15} +
      11100949049156620432 x^{16} + 15124565789332508780 x^{17} -
      408720018862600116122 x^{18} + 70049454004597571197 x^{19} +
      10586093658527044258990 x^{20} - 12631025888273228837204 x^{21} -
      198508817597338205942399 x^{22} + 387402987345930766315590 x^{23} +
             2753586280130848546932800 x^{24} -
      7116726860274335286714358 x^{25} -
      28694708927568151334441162 x^{26} +
      91397507315681115211888499 x^{27} +
      226797214695129167282089048 x^{28} -
      870318027304318790444523656 x^{29} -
      1363049305769562405267119112 x^{30} +
      6338665081932290022053776544 x^{31} +
      6178413328764184607346600034 x^{32} -
      35988003082214887026742513610 x^{33} -
      20492020074586065235166736101 x^{34} + $} \\
           \hspace*{1.4cm}  \parbox[t]{5.4in}{ $ \ds
      161266450872180007915993984787 x^{35} +
      45099606966665951271693058428 x^{36} -
      575001009719437474541893835090 x^{37} -
      37571777602636385348482415900 x^{38} +
      1639314263514932955906264058314 x^{39} -
      161019967972020863550757762131 x^{40} -
      3745699796292066069224199321372 x^{41} +
      851274013919532238198719327872 x^{42} +
      6859539342150922395619895904651 x^{43} -
      2285177200474388410515202543750 x^{44} -
      10046278725288151331993329222702 x^{45} +
      4263336781464515502088030143628 x^{46} +
      11714187061919722875421984542193 x^{47} -
      5951624800503928235794807036596 x^{48} -
      10796511948612809185000930216914 x^{49 }+
      6382964048451425290619045645907 x^{50} +
      7780699596208522812005194118918 x^{51}-
      5309228126699605717428066646335 x^{52} -
      4313080247506333381456597185155 x^{53 }+
      3429850650853924016071831349581 x^{54} +
      1790273570607001253851113421413 x^{55} -
      1715098075696198937373310320219 x^{56} -
      528375021398399743450311766854 x^{57} +
      659377687421116918032870695489 x^{58} +
      96604393710766619419597252621 x^{59} -
      193029880068729539119900529947 x^{60} -
      4069176647541912150779380527 x^{61} +
      42490073730573974965749981782 x^{62} -
      3413445482746271605206739212 x^{63} -
      6917976301240196729904525844 x^{64} +
      1165798086293282165887806486 x^{65} +
      814163354061253637176607304 x^{66} -
      208629109950888272910391562 x^{67} -
      66693134076508177810310164 x^{68} +
      24445572577097454676637446 x^{69 }+
      3497333010513277273515012 x^{70} -
      1989966629028961602789938 x^{71} - 83751655748872954401344 x^{72} +
      115359756476647736425785 x^{73} - 2722708701437214377921 x^{74} -
      4826028852207290118097 x^{75} + 345815430004445639338 x^{76} +
      146771902754630028398 x^{77} - 16434126042052261132 x^{78} -
      3249431151334904798 x^{79} + 493956208888831088 x^{80} +
      51941522002476761 x^{81} - 10408635523210110 x^{82} -
      583754494635043 x^{83} + 159473348039387 x^{84 }+
      4295711619291 x^{85} - 1796857404178 x^{86} - 16067848999 x^{87} +
      14814596478 x^{88} - 26776331 x^{89} - 87402987 x^{90} +
      632038 x^{91} + 351431 x^{92} - 2932 x^{93} - 868 x^{94} + 5 x^{95} +
      x^{96})]
          $} \\ \\
          \tiny
      \hspace*{1.2cm} $+$  \parbox[t]{5.4in}{ $ \ds
      2 x^2 [6765 - 39834856 x^2 + 104523970813 x^4 -
     164982564885013 x^6 + 177906536920343761 x^8 -
     140842274713114435797 x^{10} + 85779717815477144689740 x^{12} -
     41544013632033624057516594 x^{14} +
     16399568914591554281955684297 x^{16} -
     5378911095231315451404140851742 x^{18} +
     1488575890268219728101042520480867 x^{20} -
     351994030199792535376574911451604266 x^{22} +
     71871884346922333500853886232541044867 x^{24} - $\\$
     12785595487435798515128463356678397400349 x^{26} + $\\$
     1996895791506528512332929205178130059708642 x^{28} - $\\$
     275651883864977780695356681297317352816229428 x^{30} + $\\$
     33828447728979113683629342660329589506205101207 x^{32} - $\\$
     3709977469112994599708981916557512065408235588649 x^{34} + $\\$
     365289948325972345823076805077470890954161877009713 x^{36} - $\\$
     32425404856480749966662530290729058489056585516438855 x^{38} + $\\$
     2604616139247698717871459645328404217259966135196184926 x^{40} - $\\$
     189973147774713810331536112846342972934478308758548175688 x^{42} + $\\$
     12620622132083263498546715168396675009061385173421055097923 x^{44 }-  $\\$
      765854482314104637205303235329436798360280180658644527598753 x^{46} +
      $} \\
      \hspace*{1.4cm}  \parbox[t]{5.4in}{ $ \ds
     42562436526820650860707602166648255414139798711228171581020655 x^{48} -  $\\$
     2171559463258983354104537024199792919521656639139402410358295711 x^{50} + $\\$
      101942345979906885079316490841316223826221488541870800772164392320 x^{52} - $\\$
     4412436848474684114470265849454202230730045242271797018981916286878 x^{54} + $\\$
     176434918753367226089132827733647336462524534897411315478471499006440 x^{56} - $\\$
     6529169327447092979111289719501464302923305251758180210919854340279423 x^{58} + $\\$
     223992363959888131556614785346234342043125632543832498958707165387902106 x^{60} - $\\$
     7135081291594307765035750042713355457023687312428278457158772113249748020 x^{62} + $\\$
     211349719870075782092314373592219762450211190842573213520200973538098023705 x^{64} - $\\$
     5829747217962700797775831879989006647967406571886676045401123657389560305135 x^{66 }+ $\\$
     149939528071418231793537280414800452106055638810520348651804752031329332298958 x^{68} - $\\$
     3600324290507618056748431686649358313129920169288750702370984438366493499667957 x^{70} + $\\$
     80804769009989367675465118287335591309170582951248358572296021608179934575771534 x^{72} - $\\$
     1697014116468396675601177265098611425265363921263842227434660212437937449880358576 x^{74} + $\\$
     33384477888870027082009969624806188063272868918556863271783628718539911554697003113 x^{76} - $\\$
     615812039663158067427035226817313403880194617065071588614438087436284764042158389250 x^{78} + $\\$
     10661242870925396694061183900824606932543715327939212858870246765373478111799908168003 x^{80} - $\\$
     173386197018949019297092026313015971585981611229799779936169713909400502257051583764237 x^{82} + $\\$
     2651183903888434806549726241332429567358981401636316760890289939008725629273625115133647 x^{84} - $\\$
     38144978497487720397254343445160000582330183406909700905242551632552633635018282675841231 x^{86} + $\\$
     516825168806365635236885234512762320133853856175285856068018482406907535491112616999187870 x^{88} - $\\$
     6599018366230807657388479765432604216701703096467182894380148680198084869224733901075500562 x^{90} + $\\$
     79460276562185351672912727675623045979195727970212924090131051590777615593147866215541549218 x^{92} - $\\$
     902916091161565888595907394540095514693504869745118622625216699664749593627800071984663290108 x^{94} + $\\$
     9688343088637406683887382241265119966498757539438741549889158203583075785874532151676508321479 x^{96} - $\\$
     98225003054859128414334574438272314112297530773495580047065843635983090790831801167138118074467 x^{98} + $\\$
     941496566220322666306287949016344276328171777277670713891809552257172366149255486216992966620125 x^{100} - $\\$
     8536542927135374123407067170728273502006163975954671969287632295611146443494549207711986540525045 x^{102} + $\\$
     73256079917929564996704779710324870874854319954792365339346245961645736596295777565247129573791711 x^{104} - $\\$
     595285478995291666322053496266449314045648353134044515879580823112219003478102517248622911662832097 x^{106} + $\\$
     4582883644240591161777089176136248499570890242792379660286318765482940051059604076091687341143772740 x^{108} - $\\$
     33441602567642202244355560644472677788484949162769661997932200886164195606154044498890819435505752185 x^{110} + $\\$
     231400382631743375766696661600795477433235330559447587717568528089234226773065974868933214244255050916 x^{112}-
     1518992038583742864039658113161658332623109534341523122909600583133739514950415788045222329965776726659 x^{114} +
     9463223247441931902532986887609413925694579275105045432036181258763123616542853182094092039180879053581 x^{116}-
     55973821482794331259883376616070471193416033900268040332569120650443433662574755980889488059263558187268 x^{118} +
     314453157811414693920007745084380702087401657720856017365629245069339882476297299662747866770671794224797 x^{120} -
     1678448307652673849793331939527261819932248161091896135999040312580069139361464104167144554400344266839249 x^{122 }+
     8515108649882248872111814284976016327901147570888696316872435779248218875490818097198190552475102180025454 x^{124} -
     41071960717276887795334491030969394937580963006618106935345341725729654686955631753441481693514181395070664 x^{126} +
     188412782082389520732874917882960552373974361973170290296873816530933181606301189717773937829122797016233108 x^{128} -
     822272454585975349969746016449904221502548265718277803172743129475326093156584112843517763911526166417797724 x^{130} +
     3414968313014643877335007207050620159463541240598614585348591222334176383503353608505782921317388964417082507 x^{132} -
     13500281593416878060305643148784521528442654993427569567888296106478582486907432475519025779288027033491821789 x^{134} +
     50815780997254027296137678348743532630965897929853982968995776832135709170741427875990365989513320524941167684 x^{136} -
     182164347596816814304391298592125671354168351879811153110557955945025717468283201235178758896750459780421229641 x^{138} +
     622073464989177921228086947462802118496707770257052628220284743209539730931173951339406505669149304521325138713 x^{140} -
     2024106328011596494279156225552824129561908508941369428683038347036380771601920506313884649842265203364982554252 x^{142} +
     6276732902650129353038044542945420048212902036739145853062014441326535290808139262330083419851433593286216395850 x^{144} -
     18553832605683807234817562435710455253484582526173944405946245784641542771517240026582498114546002446763223570761 x^{146} +
     52290171794313458418869521770385478206903167393994977153126111409767143149636481775310335186800678447663041516329 x^{148} -
     140532152260698769421350047506094592683847247044618255643483106882933735559771459894758139330965189111541827703026 x^{150}+
     360229574506981271617187441598865650884711596063749255728658253988338426592425828961825531623947463913627869489874 x^{152} -
     880858503312266972670216453511522703915427740537726617425539712221805211402842562204364865634722790554151842502628 x^{154} +
     2055074175576266153839688324507158507346811725352374156753897103979916436529465176993852767336627010104562841612929 x^{156} -
     4575213402712356486839626089043573477236709534461955444327646975149337719143969812366535596723395745300419903783775 x^{158} +
     9721253410368269713767144633057250440717503019110853261195827563407340067072422703237630605343963934260579554943153 x^{160} -
      $} \\
          \hspace*{1.4cm}   \parbox[t]{5.4in}{ $ \ds
     19716122507503071136806140162764556379469456468957536640049603940390210828371922510170361048245904340312459644847921 x^{162} +
     38173959442683616295928845624050261396522203135115836317559525129071230213978478929427721983208383775203194430687642 x^{164}-
     70569076127982414011265213591792482411166452585150068421529723515689200839641735596460575207672920790892872267662896 x^{166} +
     124570525967518606095406279998180772247621264525251121537147006233530112490856226925263416723741694488644486562676094 x^{168} -
     209999892653006984283471339286231825104552108644610021534509774903481134138455699650152353536496111811510635457372081 x^{170} +
     338121092208621488499473755449377319056767761572299884729740497180128022157968497798061217607013460928002147077000959 x^{172} -
     520018397132416166762370937978105250409969581457545867167824375194407994526843961993184217810750230081672292221420948 x^{174} +
     764011091265892684185648587978169358426514275061927804022440298689136165300452057356436806476470008877280427662982340 x^{176} -
     1072392778242539734760062680817567305522418623475769587971482679101517700563688592439228956101382873414623734532120692 x^{178} +
     1438194707950002790379402404766343989600934113953467552310733978553816910008881168585894166515747003773182047200058305 x^{180} -
     1842998798678072598994854242722581002033917151863434894535937805897242924837994400356151705758964674804944495192908474 x^{182} +
     2256875816120841538791970225338037137729003787012682743767588729299178169444552045835375819715004748614941614967808350 x^{184} -
     2641161629992199440141827788079575705295126773150172192995479336534446523321696247048621272579822073676085651147397436 x^{186} +
     2954025887886099992135923432244633404075419594613830084833996683764618531704656030241229605643331443286503487028240578 x^{188} -
     3157844924376201788691666361138675753745856220596842600469926616876025014342497504723857635358291763938555297095691804 x^{190} +
         3226618389842777376076326359538597077103870780681386873242490559995682584707237636410940647794871675768199885128556805 x^{192} -
     3151416574579330746822691645375702073478888078243345298713511600228024066792719177402146878394415497605413017052203059 x^{194} +
     2942283720512853982148497568278536935126357362475544542467215832316952359924253544610103973034793904416410821739773829 x^{196} -
     2626037840253520740613681740857999456465707505018173215643929626891252126758440694502977618394564013789145207776281617 x^{198} +
     2240631116933171713749386090429655847642530932341880436677761599386750154886240791853826785938512319526094551003087913 x^{200} -
     1827704830075698829836360518305931814024871997421250733852278973333479968739298377911508062153719716317585029561795563 x^{202 }+
     1425343552193193623035168957734484842261375296945222026413503259871910088596216857973632167960737635300861054635754926 x^{204 }-
     1062723966355988575636985691689359823567489014350885562823058803159611805563760711779796754029083733601196041167338019 x^{206} +
     757559220178218967502293930391590122947304344246703767731667176430668631239776959519996149635609632298189009555487868 x^{208} -
     516313899258746262246819241024954800021751388058408991997204392760431924349088463539988672690373155600276439308231981 x^{210} +
     336446665069002882055840075396669594325892355625291932498181670740123021637542786865028510280560111519471542555214670 x^{212} -
     209616773543972253921115365368793004779409657484322573894184789302459320691443021957546528440686341967837991089059434 x^{214} +
     124865818287375405331822931396408278545067757028188539991869701744246587405082944335030324218134185310246582824946775 x^{216} -
     71115994175248601380637577626251686905888629332871618987283678969892430754493351883692635735019514013654005856611852 x^{218} +
     38725205204177819526760026499475643065663750797140761765471631802499254043323854503769754528358944161690649231599877 x^{220} -
     20161252056594532974486848207571765584172073056723509885250269295098569445392044967498719971750243096072617356384646 x^{222} +
     10035320293592922191916633962794017252432898069276532762727869093007460007208813335050239293448051377401405619516957 x^{224} -
     4775574009853012299174361201820691694081543522977312389258700900045202438832203367674206010420311391142538044312008 x^{226} +
     2172646729153017164061405926555719590595417044108770316407668511566251968257303881517976078476473691082441409443025 x^{228} -
     944948957765280281359123963342482224195822510740554042405671901213900545831627091194105240479638263680725294292675 x^{230} +
     392886991631429048654020793019103050318222150500348579878462755348214900273580815977628573947764994788688380587213 x^{232} -
     156152589722653813209986277203288795860738907054376037020464243390062928335831823925184975929692599412437185248759 x^{234} +
     59324338633271087550562483139155085171932503850265770943706266741764246399250169629295433733266855792963970440444 x^{236 }-
     21542515526700778449166012522286681112154273975100806936881010234380831047121754076983888381279800855093147327887 x^{238 }+
     7476786335622251783133011546708378861795409450303763773874115897029411780740113401952928442087718755087118387205 x^{240} -
     2480055395921370071444754405498796956810870022486040374008459659411642922614690294697731006277596354122366741001 x^{242} +
     786150957333387059006346571504485426568530039956340468424836297040960154545188640552141879993070945276840684060 x^{244} -
     238131242986569793162937840738022502707332725015043426655480540718622621941324569995018539674019654097607292145 x^{246} +
     68921975989235177211793485339940745461526225908025270230935161183724209938067206843791988938787995528348593003 x^{248} -
     19058655862618388547257726172977901298558127951487471879006285075436942245500692561662731839465859293530552924 x^{250} +
     5034768268008391294920381428199468823315225681260038757322410591599569879137445224818328488572074139506292784 x^{252} -
     1270503647160352890984144746166801874300168285157191546935819991963513650982945227900094273179484991290677173 x^{254} +
     306221080635222405942379792566287064969790054105115530708825128560477165826531618728184696004451285621100132 x^{256 }-
     70486737622545656838534903405127621219054540237661763586907201497238070102522816122996053248987997433285298 x^{258} +
     15493163446200155366520494530910239250991859764309599184798281795957124677109473815817641699420168853158231 x^{260} -
     3251449852427068887449989208481123378772059610699735223464177110164734700672963922498589088287393442960395 x^{262} +
     651416515652375341905987600716440260756428127441759451855373397246174719471849622058994317212571643675574 x^{264} -
     124572497650394275879993057170721842732976050046617470492536497564976491721587387418385052586093036127402 x^{266} +
     22735294971307061084815792425115655621753255279530941991378875571585480910820699966251124351638695540145 x^{268} -
     3959347134792331189469791279308635433539267186210832329997677813270898968650117310589866661892707409037 x^{270 }+
     657834731223350160364504356205518561755438666803801049763392361877218057836482345132212778401226157934 x^{272} -
     104256202236856291982749429413388959380939378709310804213582154424646932153868847220081493208525622064 x^{274} +
     15757829101245602509367987965244076174437748616828592171513766172517931193534662666187888470527993476 x^{276} -
     2270980798771618363760238232331263762924417026562403577211630132122295728274277670952541869941070932 x^{278} +
     312005243964255094732818745575524674157153257960297349290288807441519916470066043294320790285146412 x^{280} - $} \\
          \hspace*{1.4cm}   \parbox[t]{5.4in}{ $ \ds
     40855059029456018506070064588773239836033869895247286996203430918884792720358908941215306182956950 x^{282} +
     5097587660212825828186278934241247951149192829131186807278252950627516751609246458843341171380051 x^{284} -
     605916698492688423516250074356921238447291031375241722760735000953209187526369433891633630698717 x^{286} +
     68593001519864252715143806882743081184214773870900285990131808913036872571715247170435811790897 x^{288} -
     7393487261938723490408212827098321911646892220875875983549435003736922817693456306410903751842 x^{290} +
     758577392637308279864656300925684169970009524167747854487321793379253658424513093051187532607 x^{292} -
     74063481211411381595258737067389675181612116742791286846544146014360353988209237557313836184 x^{294} +
     6879051658816123687964438262992032016462142583088699697293157145767284014088008383683746110 x^{296} -
     607621989067552613189187400502978206080357398592778597101474912142499738061737117094922662 x^{298} +
     51023760738960009046449074432933863376830695488096556782318051464693783220084267951361998 x^{300} -
     4071853784698907322107489775043671700538583273543051469295377104141178699918886028557681 x^{302} +  $ \\ $
     308696671077895078948662194692721599581228275957699781514986780099735792094251892569792 x^{304 }-  $ \\ $
     22224060119629800720382714410540085105739259210675739194792771517849472353300981008299 x^{306 }+  $ \\ $
     1518763039101297355281949385146466740413904636902417316152144473139445669339079639524 x^{308 }-  $ \\ $
     98479682000652173718282480786333373600818559190292403115976150501486909983690531600 x^{310 }+  $ \\ $
     6056182242768880142227176545145182948104360161192935167211533161891881599985498791 x^{312} -  $ \\ $
     353055381615689862835803046564767804217464904224281086192260791303132991929261155 x^{314} +  $ \\ $
     19501312781335256104190551650338035006542237486884869111925916390920464163393958 x^{316} -  $ \\ $
     1020084940767040984395483629753682196190130842817191232077466295696631279561621 x^{318} +  $ \\ $
     50503642528762218785109881040303222418788964310257412931692061555272461848701 x^{320} -  $ \\ $
     2365225060288826836548779510988584163965650608151074031898554225206131080716 x^{322} +  $ \\ $
     104717983364314648907125112708419308722918213301804787548386960765716849875 x^{324} -  $ \\ $
     4380156951060286466738102009687322229186055113137888618835811633827422913 x^{326} +  $ \\ $
     172974431296067509222464944769861082541054482930053827410132262202694870 x^{328} -  $ \\ $
     6444428609425051283998094077998459614535330570938082054554162170447605 x^{330} + $ \\ $
     226341489470455684436362799483823645100489588445790924444755297113982 x^{332} -  $ \\ $
     7488017168159965326577661582244638059388170667830537824771914274479 x^{334 }+  $ \\ $
     233138837364953325388203618892253025743719714533335376872548183958 x^{336 }-  $ \\ $
     6825019896846946665290193267408986035766773057937612889151423978 x^{338} +   $ \\ $ 187673290047558259475872109835593149931102737150948548595233191 x^{340} -  $ \\ $
     4842239080835135988238443002724457921485104372565097067049108 x^{342} +   $ \\ $ 117094295868181578096249292168771870964176344845568558922378 x^{344} -   $ \\ $ 2650524921394341631467629843136165831773636852869685217082 x^{346} +   $ \\ $ 56085557996620661358321898617450447104956696301408671810 x^{348} -   $ \\ $ 1107794791152113528420675194892462676356115970184789937 x^{350} +   $ \\ $ 20392285541529170864044049477629987416036226356654101 x^{352} -  $ \\ $
     349232911808186575834411835324605698313593875224192 x^{354} +  $ \\ $
     5553634127895030853414869279594506533848427062171 x^{356} -  $ \\ $
     81834596405538825949023871226872439638030117910 x^{358} +  $ \\ $
     1114756448312869080036910783669761075143270389 x^{360} -  $ \\ $
     14001571527005665534625142095721810681529099 x^{362} + $} \\
          \hspace*{1.4cm}   \parbox[t]{5.4in}{ $ \ds
     161682136395127319455376887239995074151781 x^{364} -
     1710841163357663171735009970872808242956 x^{366} +
     16527274432951996513030939134032579012 x^{368} -
     145139419130755014621647420370861118 x^{370} +
     1152990355728528489095169741790030 x^{372} -
     8238182888101087108933594665254 x^{374} +
     52585320030030488028030429967 x^{376} -
     297447715429520164887559530 x^{378} +
     1476373129427972834656035 x^{380} - 6352114911359318056518 x^{382} +
     23325518029435792577 x^{384} - 71626628809713792 x^{386 }+
     178858122357592 x^{388} - 348739187433 x^{390} + 497872624 x^{392} -
     462706 x^{394} + 210 x^{396}]/ $} \\ \\ \tiny
       \hspace*{1.4cm}   \parbox[t]{5.4in}{ $ \ds
      [(-1 + x) (1 + x) (1 - 2 x - x^2 + x^3) (-1 - 2 x + x^2 + x^3) (-1 +
     9 x - 27 x^2 + 28 x^3 - 9 x^5 + x^6) (1 - 8 x + 8 x^2 + 6 x^3 -
     6 x^4 - x^5 + x^6) (1 + 8 x + 8 x^2 - 6 x^3 - 6 x^4 + x^5 +
     x^6) (-1 - 9 x - 27 x^2 - 28 x^3 + 9 x^5 + x^6) (1 - 90 x +
     2133 x^2 + 19654 x^3 - 1407750 x^4 + 12020085 x^5 +
     211505759 x^6 - 4005889491 x^7 + 1140372558 x^8 +
     418020172543 x^9 - 2487724623906 x^{10}  - 15524197875990 x^{11} +
     207409855235855 x^{12}  - 168411001997175 x^{13}  -
     6989852039646672 x^{14}  + 29176347222301350 x^{15}  +
     85446200204468703 x^{16 } - 862651296424382517 x^{17 } +
     732228455321894508 x^{18 } + 11231294123644498821 x^{19}  -
     33803475249656479500 x^{20}  - 54921206512262415741 x^{21}  +
     413908203902580672042 x^{22 } - 246207028620567111678 x^{23 } -
     2478006706489040058135 x^{24 } + 4892059779725991163731 x^{25}  +
     6604769927708293281489 x^{26 } - 29615041041062939657101 x^{27}  +
     5692338640092901485984 x^{28}  + 95353352498492938925253 x^{29}  -
     102458115245643694623619 x^{30}  - $} \\ \\
       \hspace*{1.4cm}   \parbox[t]{5.4in}{ $    163321849837230400108482 x^{31}  +
     356486700262708176382602 x^{32}  + 75818151591482233031073 x^{33}  -
     667256343974500352911503 x^{34}  + 278039887830219509936100 x^{35}  +
     733454500237993350748973 x^{36}  - 685954133222650252610760 x^{37}  -
     427320993379027646418498 x^{38}  + 781904366023459571507369 x^{39}  +
     28153248400039056561681 x^{40}  - 531396998038886850489789 x^{41}  +
     154086139945066863345177 x^{42}  + 222672705102690029154612 x^{43}  -
     127277447297634414536733 x^{44}  - 53175680795563803057275 x^{45}  +
     53846433449462045531871 x^{46}  + 3955591274039798235417 x^{47}  -
     13978733150559004421783 x^{48}  + 1617114491250147902040 x^{49}  +
     2287879124568249125151 x^{50}  - 596315566352941519713 x^{51}  -
     225529529482003369278 x^{52}  + 97905185502923227626 x^{53}  +
     10648699061080306343 x^{54}  - 9487814076406871859 x^{55}  +
     206953141963128534 x^{56}  + 570333880393160836 x^{57}  -
     62350849780138683 x^{58}  - 20789244041051196 x^{59}  +
     4077900882295116 x^{60}  + 395015565737433 x^{61}  -
     144460876536018 x^{62}  - 252140561385 x^{63}  + 3043888623648 x^{64} -
     172278608445 x^{65}  - 36698018483 x^{66}  + 4110045927 x^{67}  +
     192261069 x^{68}  - 45656109 x^{69}  + 614187 x^{70}  + 243603 x^{71}  -
     12637 x^{72}  - 369 x^{73}  + 45 x^{74}  - x^{75} ) (1 + 90 x + 2133 x^2 -
     19654 x^3 - 1407750 x^4 - 12020085 x^5 + 211505759 x^6 +
     4005889491 x^7 + 1140372558 x^8 - 418020172543 x^9 -
     2487724623906 x^{10}  + 15524197875990 x^{11}  +
     207409855235855 x^{12}  + 168411001997175 x^{13}  -
     6989852039646672 x^{14}  - 29176347222301350 x^{15}  +
     85446200204468703 x^{16}  + 862651296424382517 x^{17}  +
     732228455321894508 x^{18}  - 11231294123644498821 x^{19}  -
     33803475249656479500 x^{20}  + 54921206512262415741 x^{21}  +
     413908203902580672042 x^{22}  + 246207028620567111678 x^{23 } -
     2478006706489040058135 x^{24}  - 4892059779725991163731 x^{25 } +
     6604769927708293281489 x^{26}  + 29615041041062939657101 x^{27}  +
     5692338640092901485984 x^{28}  - 95353352498492938925253 x^{29}  -
     102458115245643694623619 x^{30}  + 163321849837230400108482 x^{31}  +
     356486700262708176382602 x^{32 } - 75818151591482233031073 x^{33}  -
     667256343974500352911503 x^{34}  - 278039887830219509936100 x^{35}  +
     733454500237993350748973 x^{36 } +
      685954133222650252610760 x^{37}  -
     427320993379027646418498 x^{38 } - 781904366023459571507369 x^{39}  +
     28153248400039056561681 x^{40}  + 531396998038886850489789 x^{41}  +
     154086139945066863345177 x^{42}  - 222672705102690029154612 x^{43}  -
     127277447297634414536733 x^{44}  + 53175680795563803057275 x^{45}  +
     53846433449462045531871 x^{46}  - 3955591274039798235417 x^{47}  -
     13978733150559004421783 x^{48}  - 1617114491250147902040 x^{49}  +
     2287879124568249125151 x^{50}  + 596315566352941519713 x^{51}  -
     225529529482003369278 x^{52}  - 97905185502923227626 x^{53}  +
     10648699061080306343 x^{54}  + 9487814076406871859 x^{55}  +
     206953141963128534 x^{56}  - 570333880393160836 x^{57}  -
     62350849780138683 x^{58}  + 20789244041051196 x^{59}  +
     4077900882295116 x^{60}  - 395015565737433 x^{61}  -
     144460876536018 x^{62}  + 252140561385 x^{63}  + 3043888623648 x^{64}  +
     172278608445 x^{65}  -
           36698018483 x^{66} - 4110045927 x^{67} +
     192261069 x^{68}  + 45656109 x^{69}  + 614187 x^{70}  - 243603 x^{71}  -
     12637 x^{72}  + 369 x^{73}  + 45 x^{74}  + x^{75}) (-1 - 27 x + 1032 x^2 +
     25802 x^3 - 373186 x^4 - 9247650 x^5 + 68985132 x^6 +
     1756981328 x^7 - 7544737839 x^8 - 205344558763 x^9 +
     520362914282 x^{10}  + 16055543418770 x^{11}  - 22765506950292 x^{12}  -
     886185755306711 x^{13}  + 567341618795392 x^{14}  +
     35824949732566099 x^{15}  - 2114077619505885 x^{16}  -
     1089052544253505079 x^{17}  - 434310955869650180 x^{18}  +
     25382085183437405700 x^{19} + 19657336252849199189 x^{20}  -
     460196787380533985842 x^{21}  - 503282143127348655479 x^{22}  +
     6563545564790657680027 x^{23}  + 9019917524975714828718 x^{24}  -
     74279664653212096085631 x^{25}  - 121126151372353616341848 x^{26}  +
     671522546480445322781198 x^{27}  +
                    1259864230974286367427691 x^{28}  -
     4874434339938184506685397 x^{29}  -
     10355985403728393543409067 x^{30}  +
     28508897879645580528825667 x^{31}  +
     68215067001652545430286192 x^{32}  -
     134568911549356278360128102 x^{33}  -
     363861169332415595759624659 x^{34}  +
     512191062359197441924007885 x^{35 } +
     1584783594082588087382278463 x^{36}  -
     1564210162255894419536702349 x^{37 } -
     5674861959676935021395050413 x^{38 } +
     3785132858090620875642211416 x^{39}  +
     16803139396061165672290060717 x^{40}  -
     7044093577535657561000115327 x^{41}  -
     41343132893232215469798457354 x^{42}  +
     9285232522254031569698694689 x^{43}  +
     84883083360723557875730426434 x^{44}  -
     5937121438225419731565543654 x^{45}  -
     145956035701218221797085712592 x^{46}  -
     7890101748188581051360660346 x^{47}  +
     210852803270320434959494868010 x^{48}  +
     32505194104448575284667024966 x^{49}  -
     256625689573728687673431393828 x^{50}  -
     60386088834617104154813509703 x^{51}  +
     263791426465741835712351964159 x^{52}  +
     79446568823858899041889805170 x^{53}  -
     229529669618761966378616574822 x^{54}  -
     81368503811164989012654074179 x^{55}  +
     169410836644260710451175862716 x^{56}  +
     67317485231746697845168652537 x^{57}  -
     106273241553496096920653558337 x^{58}  -
     45836707489100134208832885680 x^{59}  +
     56768242863723954140114918264 x^{60}  +
     25971874970291859366352369232 x^{61}  -
     25868414266252042008113790681 x^{62}  -
     12334499250260624842980447944 x^{63}  +
     10073049237434908783966012615 x^{64}  +
     4934518938825367378469146563 x^{65}  -
     3357135730239884258076120873 x^{66}  -
     1668961310040579145324436854 x^{67}  +
     958979388811076795805220259 x^{68}  +
     478484381627959740231307898 x^{69}  -
     235070462661933368244141645 x^{70}  -
     116494763716325583154471593 x^{71}  +
     49490588354774441565598327 x^{72}  +
     24113150837187023305271901 x^{73}  -
     8953841615972305424578920 x^{74}  -
     4245305143743032443594882 x^{75}  +
     1392157069027476262098948 x^{76}  + 635580648350932217926483 x^{77}  -
     185931065427671751697600 x^{78}  - 80838922982850596922843 x^{79}  +
     21306408936395504377734 x^{80}  + 8719476782236791891707 x^{81}  -
     2090911132691824012433 x^{82}  - 795459886846428765825 x^{83}  +
     175226009815304083621 x^{84}  + 61148524359498140722 x^{85}  -
     12490776292567568868 x^{86}  -
     3941245311464514974 x^{87}  +
     753388475396092803 x^{88}  + 211610725442247167 x^{89}  - $} \\ \\
       \hspace*{1.4cm}   \parbox[t]{5.4in}{ $
     38184703986094031 x^{90}  - 9385073357203492 x^{91}  +
     1611821886653267 x^{92}  + 340074435384128 x^{93}  -
     56011183019777 x^{94}  - 9923940361375 x^{95}  + 1578324064638 x^{96}  +
     228740033731 x^{97 } - 35344978435 x^{98} - 4053271905 x^{99}  +
     611772610 x^{100}  + 53065630 x^{101}  - 7859404 x^{102}  -
     481716 x^{103}  + 70281 x^{104}  + 2698 x^{105}  - 389 x^{106}  - 7 x^{107}  +
     x^{108}) (-1 + 27 x + 1032 x^2 - 25802 x^3 - 373186 x^4 +
     9247650 x^5 + 68985132 x^6 - 1756981328 x^7 - 7544737839 x^8 +
     205344558763 x^9 + 520362914282 x^{10}  - 16055543418770 x^{11}  -
     22765506950292 x^{12}  + 886185755306711 x^{13}  +
     567341618795392 x^{14}  - 35824949732566099 x^{15}  -
     2114077619505885 x^{16}  + 1089052544253505079 x^{17}  -
     434310955869650180 x^{18}  - 25382085183437405700 x^{19}  +
     19657336252849199189 x^{20}  + 460196787380533985842 x^{21}  -
     503282143127348655479 x^{22}  - 6563545564790657680027 x^{23}  +
     9019917524975714828718 x^{24}  + 74279664653212096085631 x^{25}  -
     121126151372353616341848 x^{26}  - 671522546480445322781198 x^{27}  +
     1259864230974286367427691 x^{28}  +
     4874434339938184506685397 x^{29}  -
     10355985403728393543409067 x^{30}  -
     28508897879645580528825667 x^{31}  +
     68215067001652545430286192 x^{32}  +
     134568911549356278360128102 x^{33}  -
     363861169332415595759624659 x^{34}  -
     512191062359197441924007885 x^{35}  +
     1584783594082588087382278463 x^{36}  +
     1564210162255894419536702349 x^{37}  -
     5674861959676935021395050413 x^{38}  -
     3785132858090620875642211416 x^{39}  +
     16803139396061165672290060717 x^{40}  +
     7044093577535657561000115327 x^{41}  -
     41343132893232215469798457354 x^{42}  -
     9285232522254031569698694689 x^{43}  +
     84883083360723557875730426434 x^{44}  +
     5937121438225419731565543654 x^{45}  -
     145956035701218221797085712592 x^{46}  +
     7890101748188581051360660346 x^{47}  +
     210852803270320434959494868010 x^{48}  -
     32505194104448575284667024966 x^{49}  -
     256625689573728687673431393828 x^{50}  +
     60386088834617104154813509703 x^{51}  +
     263791426465741835712351964159 x^{52}  -
     79446568823858899041889805170 x^{53}  -
     229529669618761966378616574822 x^{54}  +
     81368503811164989012654074179 x^{55}  +
     169410836644260710451175862716 x^{56}  -
     67317485231746697845168652537 x^{57}  -
     106273241553496096920653558337 x^{58}  +
     45836707489100134208832885680 x^{59}  +
     56768242863723954140114918264 x^{60}  -
     25971874970291859366352369232 x^{61}  -
     25868414266252042008113790681 x^{62}  +
     12334499250260624842980447944 x^{63}  +
     10073049237434908783966012615 x^{64}  -
     4934518938825367378469146563 x^{65}  -
     3357135730239884258076120873 x^{66}  +
     1668961310040579145324436854 x^{67}  +
     958979388811076795805220259 x^{68}  -
     478484381627959740231307898 x^{69}  -
     235070462661933368244141645 x^{70}  +
     116494763716325583154471593 x^{71}  +
     49490588354774441565598327 x^{72}  -
     24113150837187023305271901 x^{73}  -
     8953841615972305424578920 x^{74}  +
     4245305143743032443594882 x^{75}  +
     1392157069027476262098948 x^{76}  - 635580648350932217926483 x^{77}  -
     185931065427671751697600 x^{78}  + 80838922982850596922843 x^{79}  +
     21306408936395504377734 x^{80}  - 8719476782236791891707 x^{81}  -
     2090911132691824012433 x^{82}  + 795459886846428765825 x^{83}  +
     175226009815304083621 x^{84}  - 61148524359498140722 x^{85}  -
     12490776292567568868 x^{86}  + 3941245311464514974 x^{87}  +
     753388475396092803 x^{88}  - 211610725442247167 x^{89}  -
     38184703986094031 x^{90}  + 9385073357203492 x^{91}  +
     1611821886653267 x^{92}  - 340074435384128 x^{93}  -
     56011183019777 x^{94}  + 9923940361375 x^{95}  + 1578324064638 x^{96}  -
     228740033731 x^{97}  - 35344978435 x^{98}  + 4053271905 x^{99}  +
     611772610 x^{100}  - 53065630 x^{101}  - 7859404 x^{102}  +
     481716 x^{103}  + 70281 x^{104}  - 2698 x^{105}  - 389 x^{106}  + 7 x^{107} +
     x^{108})]$} \\   \\ \small
              \hspace*{1.2cm} $-$  \parbox[t]{5.4in}{ $ \ds
     [(2 x^2 (-2850 + 6723322 x^2 - 6759668263 x^4 + 3942336304012 x^6 -
        1522935878054248 x^8 + 420297644798819964 x^{10} -
        87061420747476801792 x^{12} + 14017522325018234443820 x^{14} -
        1800691755710287649136536 x^{16} +
        188339701222253001915401008 x^{18} -
        16301340717115495133297512750 x^{20} +
        1183161780558124120995496715832 x^{22} -
        72811713139146107588601162301885 x^{24} +3834865450794129182979163318600488 x^{26} -
        174246321502744931429532990462875144 x^{28} +
        6877788456677265485701921137593457592 x^{30} - $  \\ $
        237265725373820790084982826016430065324 x^{32} + $  \\ $
        7191857393372354804557967599707792525704 x^{34} - $  \\ $
        192454439380257306670502099935970291948287 x^{36} + $  \\ $
        4566043270046851019844894427773717398558322 x^{38} - $  \\ $
        96413602080128957229656853667253058076966478 x^{40} + $  \\ $
        1818138679596462634993805497777102207272268828 x^{42} - $  \\ $
        30716752962046065733955398527802370463153453468 x^{44} + $  \\ $
        466267583012996158423660005909885027589658350532 x^{46} - $  \\ $
        6376118596726605408588335027560665019457695744918 x^{48} + $  \\ $
        78740535733173513455041562643270803259375995530058 x^{50} - $} \\
          \hspace*{1.4cm}   \parbox[t]{5.4in}{ $ \ds
        880118246491475601601423751310031046933201491720606 x^{52} + $  \\ $
        8922638175355253016485311034187403219400049031380550 x^{54} - $  \\ $
        82205438380503609783493365234820961159981095846071378 x^{56} + $  \\ $
        689526717426032793861172718026480848694305711950968784 x^{58} - $ \\ $
        5274502790200303684138080315604728615280948921373387877 x^{60}  +  $ \\ $
        36853732104768759635963701467203731490491696287517571270 x^{62} - $ \\ $
        235557924021941156542669828228788631817828751703098719117 x^{64} +  $ \\ $
        1379233266102751106680069828991484844090624964080219581054 x^{66} - $ \\ $
         7407518667047801024011626478759013550761362805406353959535 x^{68} + $ \\ $
 36537648865425956553949459856633538612321320017147092420992 x^{70} -  $ \\ $
 165709710650234473956541618685483802123998444837800186863433 x^{72} +  $ \\ $
 691787296298266802163462211187175065445750531644287557378858 x^{74} - $ \\ $
  2661124132393193880482903626656542061543206968078106781306641 x^{76} + $ \\ $
   9441714766142941114858729559093399652439549276374491612358220 x^{78} -  $ \\ $
   30926529202300371517259076142607092917939887818672120674748793 x^{80} +  $ \\ $
   93601949373888484217861098961903359321465957704607022875649956 x^{82} -  $ \\ $
   261980852719197384074580900183645596880121589285171292981168803 x^{84} + $ \\ $
    678613561916244539832896686029719323844649470185047084200549198 x^{86} - $ \\ $
     1628028845994958189127456581824194502978441830701064274342183786 x^{88} + $ \\ $
        3619831235637093310878667937714674022443929217211014482141028644 x^{90} - $ \\ $
        7464213259796436753836641691845287651024187099286634501274407693 x^{92} + $ \\ $
        14282874970334852019359262693472291214846115403395852076997066418 x^{94} - $ \\ $
        25376624622203352824370596917257229498235588203975067919037421202 x^{96} + $ \\ $
        41886545220230362404215108650496969628250043230197153077967711328 x^{98} - $ \\ $
        64262751773718460981279526325583380467376706959098687516980920463 x^{100} + $ \\ $
        91684569742863751779248206303707694097123651794233523839847845350 x^{102} - $ \\ $
        121697163146866032210903318267912517815771127362001581018894303713 x^{104} + $ \\ $
        150346963731718494406475483108959291087531042754314087437307752192 x^{106} - $ \\ $
        172945693803604434824779951991769055458294368966683842833555750248 x^{108} + $ \\ $
        185304189128532505026027209000605167307470573335753634222446877344 x^{110} - $ \\ $
        184999133161478796553941778145476670018479100690268555791160880669 x^{112} + $ \\ $
        172147673664149903602784491803709786720607137419743075458376278256 x^{114} - $ \\ $
        149350770943370141395174331350548880948900909117740282999033461640 x^{116} + $ \\ $
        120838840953211958897346923927315670640312889507318262215715247744 x^{118} - $ \\ $
        91202481246658997146850952125526719734854605709759176009633796013 x^{120} +   $ \\ $
        64225357023519175729733018612457705186024790890392698790674424006 x^{122} - $ \\ $
        42208074569875646452973183961404319749857646755713675378114388417 x^{124} + $ \\ $
        25891293405617484568187234753114516636654531766663785530082971896 x^{126} - $  \\ $
        14826951661369991335979744778444815659173301934639077935088838536 x^{128} + $} \\
          \hspace*{1.4cm}   \parbox[t]{5.4in}{ $ \ds
        7927774017115768349777154280407363734319108396931345538931156406 x^{130} - $ \\ $
        3958254838081197025427391291673953083613929149998436362203685537 x^{132} + $ \\ $
        1845659039833880103765432953160795924372049443566581274892485312 x^{134} - $ \\ $
        803760415939924473352705326930842126751915100980430061626779971 x^{136} + $ \\ $
        326927224678450719282675448546850900103446813097950176039294476 x^{138} - $ \\ $
        124204167321700166495231700801570066696875646871865557753263312 x^{140} + $ \\ $
        44073956621718485588482627020963359229143298504480708888654204 x^{142} - $ \\ $
         14607547238068026946501304746588662657658916546086222765483094 x^{144} + $ \\ $
        4521664967997477721978081508619506948828662557606481140476048 x^{146} -  $ \\ $
        1307100318053416842468898250389366360556773144964314211639685   x^{148} +  $ \\ $
        352824763408083792343698686320361345849486164699883762174526 x^{150} -  $ \\ $
        88916971792938737901704247929096710605526909216521212354606 x^{152} +  $ \\ $
        20917341329193841181378149767654984544571248989838272802406 x^{154} - $ \\ $
         4592289114413323002128931261581886649124661360017675386462 x^{156} +  $ \\ $
         940677005417114131694888970306489928079621368667583992830 x^{158} - $ \\ $
          179725578226176871763494840587332095791213583343429327910 x^{160} +  $ \\ $
          32017500639586680646947676844497845958112184109319124640 x^{162} - $ \\ $
           5316215400223655932853788080317523269761898748093636387 x^{164} +  $ \\ $
           822361180335131458413089596241506642610727216197850374 x^{166} - $ \\ $
        118454362853823004936265083311731703569123654427110030 x^{168}+  $ \\ $
        15879103072578820257409119317688928634788599575790130 x^{170} -  $ \\ $
        1979784187173967953538490446185030773251689578051108 x^{172} + $ \\ $
        229418033448085553009081960491599743182847249418966 x^{174} - $ \\ $
        24690151645039503212974202281502752118254050446109 x^{176} + $ \\ $
        2465699852092291972282508185323173117798802734346 x^{178} -  $ \\ $
        228282464452576991423943997675937666413157696177 x^{180} + $ \\ $
        19573781106855649345981215999443840782086949464 x^{182} - $ \\ $
        1552583018599702027759412089108667886921421639 x^{184} + $ \\ $
        113781155721621997638138347377663419814008752 x^{186} - $ \\ $
        7693475011706994657339312213875138105330997 x^{188} + $ \\ $
        479236684585625282931405079285561526771974 x^{190} - $ \\ $
        27454978052780940432299251412054225187241 x^{192} + $ \\ $
        1443857850655771853208974651709898270296 x^{194} - $ \\ $
        69559814412598454831315928221310825287 x^{196} + $ \\ $
        3062793890835423527711425984743269830 x^{198} - $ \\ $
        122935569661251024307892770722340355 x^{200} + $ \\ $
        4485112036898997674033003608307590 x^{202} -
        148243713573336247868200974798482 x^{204} +
        4422445002084556526593775154486 x^{206} -
        118569346817098273547037977341 x^{208} +
        2842888738083317801242622136 x^{210} -
        60606998232620774044317279 x^{212} +
        1141066636699083306980514 x^{214} -
        18819066524471888592988 x^{216} + 269214632318128784170 x^{218} -
        3299890687581682338 x^{220} + 34122906167736194 x^{222} -
        291656209997528 x^{224} + 2003699656452 x^{226} -
        10624233918 x^{228} + 40769788 x^{230} - 100672 x^{232} +
        120 x^{234})] / $ }  \\
      \hspace*{1.4cm}   \parbox[t]{5.4in}{ $ \ds
     [(-1 + x) (1 + x) (1 - 2 x - x^2 + x^3) (-1 - 2 x + x^2 + x^3) (1 -
      8 x + 8 x^2 + 6 x^3 - 6 x^4 - x^5 + x^6) (1 + 8 x + 8 x^2 -
      6 x^3 - 6 x^4 + x^5 + x^6) (-1 - 27 x + 1032 x^2 + 25802 x^3 -
      373186 x^4 - 9247650 x^5 + 68985132 x^6 + 1756981328 x^7 -
      7544737839 x^8 - 205344558763 x^9 + 520362914282 x^{10} +
      16055543418770 x^{11} - 22765506950292 x^{12} -
      886185755306711 x^{13} + 567341618795392 x^{14} +
      35824949732566099 x^{15} - 2114077619505885 x^{16} -
      1089052544253505079 x^{17} - 434310955869650180 x^{18} +
      25382085183437405700 x^{19} + 19657336252849199189 x^{20} -
      460196787380533985842 x^{21} - 503282143127348655479 x^{22} +
      6563545564790657680027 x^{23} + 9019917524975714828718 x^{24} -
      74279664653212096085631 x^{25} - 121126151372353616341848 x^{26} +
      671522546480445322781198 x^{27} +
      1259864230974286367427691 x^{28} -
      4874434339938184506685397 x^{29} -
      10355985403728393543409067 x^{30} +
      28508897879645580528825667 x^{31} +
      68215067001652545430286192 x^{32} -
      134568911549356278360128102 x^{33} -
      363861169332415595759624659 x^{34} +
      512191062359197441924007885 x^{35} +
      1584783594082588087382278463 x^{36} -
      1564210162255894419536702349 x^{37}-
      5674861959676935021395050413 x^{38 }+
      3785132858090620875642211416 x^{39} +
      16803139396061165672290060717 x^{40} -
      7044093577535657561000115327 x^{41} -
      41343132893232215469798457354 x^{42} +
      9285232522254031569698694689 x^{43} +
      84883083360723557875730426434 x^{44} -
      5937121438225419731565543654 x^{45} -
      145956035701218221797085712592 x^{46} -
      7890101748188581051360660346 x^{47} +
      210852803270320434959494868010 x^{48} +
      32505194104448575284667024966 x^{49} -
      256625689573728687673431393828 x^{50} -
      60386088834617104154813509703 x^{51} +
      263791426465741835712351964159 x^{52} +
      79446568823858899041889805170 x^{53} -
      229529669618761966378616574822 x^{54} -
      81368503811164989012654074179 x^{55} +
      169410836644260710451175862716 x^{56} +
      67317485231746697845168652537 x^{57} -
      106273241553496096920653558337 x^{58} -
      45836707489100134208832885680 x^{59} +
      56768242863723954140114918264 x^{60} +
      25971874970291859366352369232 x^{61} -
      25868414266252042008113790681 x^{62} -
      12334499250260624842980447944 x^{63} +
      10073049237434908783966012615 x^{64} +
      4934518938825367378469146563 x^{65} -
      3357135730239884258076120873 x^{66} -
      1668961310040579145324436854 x^{67} +
      958979388811076795805220259 x^{68} +
      478484381627959740231307898 x^{69} -
      235070462661933368244141645 x^{70} -
      116494763716325583154471593 x^{71} +
      49490588354774441565598327 x^{72} +
      24113150837187023305271901 x^{73} -
      8953841615972305424578920 x^{74} -
      4245305143743032443594882 x^{75} +
      1392157069027476262098948 x^{76} +
      635580648350932217926483 x^{77} - 185931065427671751697600 x^{78} -
      80838922982850596922843 x^{79} + 21306408936395504377734 x^{80} +
      8719476782236791891707 x^{81} - 2090911132691824012433 x^{82} -
      795459886846428765825 x^{83} + 175226009815304083621 x^{84} +
      61148524359498140722 x^{85} - 12490776292567568868 x^{86} -
      3941245311464514974 x^{87} + 753388475396092803 x^{88} +
      211610725442247167 x^{89} - 38184703986094031 x^{90} -
      9385073357203492 x^{91 } + 1611821886653267 x^{92} +
      340074435384128 x^{93} - 56011183019777 x^{94} -
      9923940361375 x^{95} + 1578324064638 x^{96} + 228740033731 x^{97} -
      35344978435 x^{98 }- 4053271905 x^{99} + 611772610 x^{100} +
      53065630 x^{101} -  $ }  \\
      \hspace*{1.4cm}   \parbox[t]{5.4in}{ $ \ds
      7859404 x^{102} - 481716 x^{103} + 70281 x^{104} +
      2698 x^{105} - 389 x^{106} - 7 x^{107} + x^{108}) (-1 + 27 x +
      1032 x^{2} - 25802 x^{3} - 373186 x^{4} + 9247650 x^{5} +
      68985132 x^{6} - 1756981328 x^{7} - 7544737839 x^{8} +
      205344558763 x^{9} + 520362914282 x^{10} - 16055543418770 x^{11} -
      22765506950292 x^{12} + 886185755306711 x^{13} +
      567341618795392 x^{14} - 35824949732566099 x^{15}-
      2114077619505885 x^{16 }+ 1089052544253505079 x^{17} -
      434310955869650180 x^{18} - 25382085183437405700 x^{19 }+
      19657336252849199189 x^{20} + 460196787380533985842 x^{21} -
      503282143127348655479 x^{22}- 6563545564790657680027 x^{23} +
      9019917524975714828718 x^{24} + 74279664653212096085631 x^{25} -
      121126151372353616341848 x^{26} - 671522546480445322781198 x^{27} +
      1259864230974286367427691 x^{28} +
      4874434339938184506685397 x^{29} -
      10355985403728393543409067 x^{30} -
      28508897879645580528825667 x^{31} +
      68215067001652545430286192 x^{32} +
      134568911549356278360128102 x^{33} -
      363861169332415595759624659 x^{34} -
      512191062359197441924007885 x^{35} +
      1584783594082588087382278463 x^{36} +
      1564210162255894419536702349 x^{37} -
      5674861959676935021395050413 x^{38} -
      3785132858090620875642211416 x^{39} +
      16803139396061165672290060717 x^{40} +
      7044093577535657561000115327 x^{41} -
      41343132893232215469798457354 x^{42} -
      9285232522254031569698694689 x^{43} +
      84883083360723557875730426434 x^{44} +
      5937121438225419731565543654 x^{45} -
      145956035701218221797085712592 x^{46} +
      7890101748188581051360660346 x^{47} +
      210852803270320434959494868010 x^{48} -
      32505194104448575284667024966 x^{49} -
      256625689573728687673431393828 x^{50 }+
      60386088834617104154813509703 x^{51 }+
      263791426465741835712351964159 x^{52 }-
      79446568823858899041889805170 x^{53}-
      229529669618761966378616574822 x^{54 }+
      81368503811164989012654074179 x^{55 }+
      169410836644260710451175862716 x^{56} -
      67317485231746697845168652537 x^{57} -
      106273241553496096920653558337 x^{58} +
      45836707489100134208832885680 x^{59 }+
      56768242863723954140114918264 x^{60} -
      25971874970291859366352369232 x^{61 }-
      25868414266252042008113790681 x^{62} +
      12334499250260624842980447944 x^{63} +
      10073049237434908783966012615 x^{64} -
      4934518938825367378469146563 x^{65} -
      3357135730239884258076120873 x^{66 }+
      1668961310040579145324436854 x^{67} +
      958979388811076795805220259 x^{68} -
      478484381627959740231307898 x^{69} -
      235070462661933368244141645 x^{70} +
      116494763716325583154471593 x^{71} +
      49490588354774441565598327 x^{72} -
      24113150837187023305271901 x^{73} -
      8953841615972305424578920 x^{74} +
      4245305143743032443594882 x^{75} +
      1392157069027476262098948 x^{76} -
      635580648350932217926483 x^{77} - 185931065427671751697600 x^{78} +
      80838922982850596922843 x^{79} + 21306408936395504377734 x^{80} -
      8719476782236791891707 x^{81} - 2090911132691824012433 x^{82} +
      795459886846428765825 x^{83} + 175226009815304083621 x^{84} -
      61148524359498140722 x^{85} - 12490776292567568868 x^{86} +
      3941245311464514974 x^{87} + 753388475396092803 x^{88} -
      211610725442247167 x^{89} - 38184703986094031 x^{90} +
      9385073357203492 x^{91} + 1611821886653267 x^{92} -
      340074435384128 x^{93} - 56011183019777 x^{94} +
      9923940361375 x^{95} + 1578324064638 x^{96} - 228740033731 x^{97} -
      35344978435 x^{98} + 4053271905 x^{99} + 611772610 x^{100} -
      53065630 x^{101} - 7859404 x^{102} + 481716 x^{103} + 70281 x^{104} -
      2698 x^{105} - 389 x^{106} + 7 x^{107} + x^{108})]$} \\ \\
 \hspace*{1.2cm}  $-$ \parbox[t]{5.4in}{ $ \ds
    [2 x^2 (615 - 262242 x^2 + 41898960 x^4 - 3512269288 x^6 +
       179492561770 x^8 - 6096366193848 x^{10} + 145494979941521 x^{12} -
       2535542754193824 x^{14} + 33176572069791591 x^{16} -
       332854570511823920 x^{18} + 2603194747035482556 x^{20} -
       16086202616248441320 x^{22} + 79449951342676275114 x^{24} -
       316835773017958293666 x^{26} + 1029558822605836522005 x^{28}-
       2748968228790112897744 x^{30} + 6077130032222836986250 x^{32} -
       11200610700373443578820 x^{34} + 17318239432637807097927 x^{36} -
       22588873120902162099180 x^{38} + 24976811408592140276865 x^{40} -
       23511283684510389385926 x^{42} + 18909973847672503011799 x^{44} -
       13034728729155138663696 x^{46} + 7719367749577148252450 x^{48} -
       3935122847104604482756 x^{50} + 1729087729879680905445 x^{52} -
       655385761453789840096 x^{54} + 214329562544165022798 x^{56} -
       60448487760081534780 x^{58} + 14686763734847933637 x^{60} -
       3068282127014427136 x^{62} + 549679341236297976 x^{64} -
       84128865085674860 x^{66} + 10945816897612520 x^{68} -
       1202836407725052 x^{70} + 110704524241604 x^{72} -
       8440225044136 x^{74} + 525369688896 x^{76} - 26180086680 x^{78} +
       1016113988 x^{80} - 29498700 x^{82} + 600452 x^{84} - 7612 x^{86} +
       45 x^{88})]/ $} \\
        \\
 \hspace*{1.4cm}   \parbox[t]{5.4in}{ $ \ds
       [(-1 + 15 x + 195 x^2 - 2476 x^3 - 9408 x^4 +
      128774 x^5 + 151702 x^6 - 3080005 x^7 - 152040 x^8 +
      39805335 x^9 - 22147982 x^{10} - 300921194 x^{11} +
      284159318 x^{12} + 1383107908 x^{13} - 1722027429 x^{14} -
      3930756397 x^{15} + 6069754917 x^{16} + 6915053418 x^{17} -
      13500672554 x^{18} - 7214631815 x^{19} + 19878551923 x^{20} +
      3475881699 x^{21} - 19978574007 x^{22} + 1204754727 x^{23} +
      13960633114 x^{24} - 3214431392 x^{25} - 6832632284 x^{26} +
      2528557309 x^{27} + 2326963032 x^{28} - 1184186750 x^{29} -
      534678044 x^{30} + 369015343 x^{31} + 75261636 x^{32} -
      78835592 x^{33} - 4000296 x^{34} + 11506048 x^{35} - 640379 x^{36} -
      1110448 x^{37} + 155006 x^{38} + 65338 x^{39} - 14541 x^{40} -
      1860 x^{41} + 680 x^{42} - 2 x^{43} - 13 x^{44} + x^{45})
    (1 + 15 x - 195 x^2 - 2476 x^3 + 9408 x^4 + 128774 x^5 -
      151702 x^6 - 3080005 x^7 + 152040 x^8 + 39805335 x^9 +
      22147982 x^{10} - 300921194 x^{11} - 284159318 x^{12} +
      1383107908 x^{13} + 1722027429 x^{14} - 3930756397 x^{15} -
      6069754917 x^{16} + 6915053418 x^{17} + 13500672554 x^{18} -
      7214631815 x^{19} - 19878551923 x^{20} + 3475881699 x^{21} +
      19978574007 x^{22} + 1204754727 x^{23} - 13960633114 x^{24} -
      3214431392 x^{25} + 6832632284 x^{26} + 2528557309 x^{27} -
      2326963032 x^{28} - 1184186750 x^{29} + 534678044 x^{30} +
      369015343 x^{31} - 75261636 x^{32} - 78835592 x^{33} + 4000296 x^{34} +
      11506048 x^{35} + 640379 x^{36}- 1110448 x^{37} - 155006 x^{38} +
      65338 x^{39} + 14541 x^{40} - 1860 x^{41} - 680 x^{42} - 2 x^{43} +
      13 x^{44} + x^{45})]
     $} \\ \\ \\
 \hspace*{1.2cm}  $-$ \parbox[t]{5.4in}{ $ \ds
  [(2 x^2 (-55 + 990 x^2 - 5148 x^4 + 12012 x^6 - 15015 x^8 +
      10920 x^{10} - 4760 x^{12} + 1224 x^{14} - 171 x^{16} + 10 x^{18})]/$ \\
      $[ 1 - 55 x^2 + 495 x^4 - 1716 x^6 + 3003 x^8 - 3003 x^{10} +
    1820 x^{12} - 680 x^{14} + 153 x^{16} - 19 x^{18} + x^{20}]$} \\ \\ \\
 \hspace*{1.2cm}  $-$ \parbox[t]{5.4in}{ $ \ds
     \frac{2 x^2}{-1 + x^2} $}
     \bc $ \mbox{--------------------------------------    } $  \ec
  ${\cal F}^{TkC}_{10}(x) = $ \parbox[t]{5.4in}{ $ \ds
x + 29525 x^2 + 55807 x^3 + 22794425 x^4 + 199912706 x^5 +
 34583478677 x^6 + 538897819048 x^7 + 63782453175969 x^8 +
 1342851693261496 x^9 + 127666740816792660 x^{10} +
 3240058791241468318 x^{11} + 266592485903824019297 x^{12} +
 7697782672223977809178 x^{13} + 570941650352643832290496 x^{14} +
 18144383836051636673861867 x^{15} +
 1243579048192809989812145169 x^{16} +
 42589885261525799514745869499 x^{17} +
 2742392002573492985349801499994 x^{18} +
 99747165686702441614952745202978 x^{19} +
 6106442739965045528592404793625380 x^{20} +
 233330348371077318790986888910572997 x^{21} +
 13704758306706316190825701744380567594 x^{22} + $\\ $
 545454160156590238865681796289941865692 x^{23} + $\\ $
 30960502764205283074176988011335478152409 x^{24} + $\\ $
 1274650770709308207856649639205906364343631 x^{25} + $\\ $
 70330983064894193743004668482926028100698862 x^{26} + $\\ $
 2978107397482949463652001340178244580060843958 x^{27} + $\\ $
 160514087920332719815868792799619501992413739904 x^{28} + $\\ $
 6957350295166340134184435628772224999713210715452 x^{29} + $\\ $
 367781299620754736758669478854175346991901525768337 x^{30}  + \ldots $}

    \bc $ \mbox{--------------------------------------    } $  \\ $ \mbox{--------------------------------------    } $ \ec
     ${\cal F}^{TkC}_{11}(x) = $  \parbox[t]{5.4in}{$ \ds
     x + 88573 x^2 + 184318 x^3 + 145252485 x^4 + 1637069691 x^5 +
 501682800748 x^6 + 10634850017387 x^7 + 2147983445752757 x^8 +
 63350881300193974 x^9 + 10062119265462622683 x^{10} +
 364014089441637130211 x^{11} + 49370866298667719771964 x^{12} +
 2054738065680739228638707 x^{13} + 248946408250205225732190657 x^{14} +
 11488975420539457121794585998 x^{15} +
 1277895262634953569701628954885 x^{16} +
 63905315558068166767129771712679 x^{17} +
 6643863136834483490999934197961784 x^{18} +
 354419932121539626627635855762488303 x^{19} +
 34880518154857844080556760102870914055 x^{20} +  $\\ $
 1962349859371378210299064120807482879842 x^{21} +  $\\ $
 184567015284713513365573390874875081815257 x^{22} +  $\\ $
 10854817365779249881219061645315445576104495 x^{23} +  $\\ $
 983009792071692074244201624174027978204956828 x^{24} +  $\\ $
 60011264993110778296038980871956735067594275091 x^{25} +  $\\ $
 5264523828057269322900568790047120990273047789859 x^{26} +  $\\ $
 331671290250618896825564154271703753607683170445470 x^{27} +  $\\ $
 28327480833649325978874974048998854014057102254501305 x^{28} +  $\\ $
 1832759266061590962705855675730301992067640249118562251 x^{29} +  $\\ $
 153042167979062116228157184495496323568383893309232511908 x^{30} + \ldots $}

      \bc $ \mbox{--------------------------------------    } $  \\ $ \mbox{--------------------------------------    } $ \ec
     ${\cal F}^{TkC}_{12}(x) = $  \parbox[t]{5.4in}{$ \ds
     x + 265721 x^2 + 608761 x^3 + 925589701 x^4 + 13405842666 x^5 +
 7277627334803 x^6 + 209878072831673 x^7 + 72337143245836829 x^8 +
 2989171289995095295 x^9 + 793072716833845192356 x^{10} +
 40916785291407964066602 x^{11} + 9144433503092353217515639 x^{12} +
 549007484897518979688253782 x^{13} +
 108594562872624965291980865517 x^{14} +
 7286202085630847737325323395906 x^{15} +
 1314373438943841131293557932317677 x^{16} +
 96096516725583526494032612816236112 x^{17} +
 16121626293866934494559934285910094227 x^{18} +
 1262780485977348334267781781991467055379 x^{19} +  $\\ $
 199724042414660904427470237413507262400756 x^{20} +  $\\ $
 16558114341585131120547984869353913374589204 x^{21} +  $\\ $
 2493900397938854697670288260556750511325686526 x^{22} +  $\\ $
 216838508725758375059954476572184526500702119944 x^{23} +  $\\ $
 31343557492605404324253360704947067629992713200607 x^{24} +  $\\ $
 2837455189235018400278168812902801322974786027900941 x^{25} +  $\\ $
 396094945355495460632836679520098519054199877278433322 x^{26} +  $\\ $
 37112678357696754207068876992544302223088398622621632569 x^{27} +  $\\ $
 5029131175386992926691728973991724107551644145610067932113 x^{28} +  $\\ $
 485284127122604253413359884653261788559391346482048060107639 x^{29} +  $\\ $
 64113951070393524282099764277420069836993858131130145451721088 x^{30}+ \ldots $}

    \bc $ \mbox{--------------------------------------    } $  \\ $ \mbox{--------------------------------------    } $ \ec
\newpage {\bf \large \ Moebius strip $MS_{m}(n)$ ($ 2 \leq m \leq 12$)}
\\
\\
    ${\cal F}^{MS}_{2}(x) = $  \parbox[t]{5.4in}{$ \ds \frac{-(x (1 + 2 x))}{-1 + x + x^2} - \frac{2 x}{(-1 + x) (1 + x)} $} \\ \\
 \hspace*{1.2cm}  $=$ \parbox[t]{5.4in}{ $ \ds 3 x + 3 x^2 + 6 x^3 + 7 x^4 + 13 x^5 + 18 x^6 + 31 x^7 + 47 x^8 +
 78 x^9 + 123 x^{10} + 201 x^{11} + 322 x^{12} + 523 x^{13}
  + 843 x^{14} +
 1366 x^{15} + 2207 x^{16} + 3573 x^{17} + 5778 x^{18} + 9351 x^{19} +
 15127 x^{20} + 24478 x^{21} + 39603 x^{22} + 64081 x^{23} + 103682 x^{24} +
 167763 x^{25} + 271443 x^{26} + 439206 x^{27} + 710647 x^{28} +
 1149853 x^{29} + 1860498 x^{30}   +  \ldots $}
  \bc $ \mbox{--------------------------------------    } $ \ec  \noindent
    ${\cal F}^{MS}_{3}(x) = $  \parbox[t]{5.4in}{$ \ds  \frac{  6 x^2}{ 1 - 3 x^2}   - \frac{x (3 + 2 x + x^2)}{(1 + x) (-1 + 2 x + x^2)}  $} \\ \\
 \hspace*{1.2cm}  $=$ \parbox[t]{5.4in}{ $ \ds  3 x + 11 x^2 + 15 x^3 + 51 x^4 + 83 x^5 + 251 x^6 + 479 x^7 +
 1315 x^8 + 2787 x^9 + 7211 x^{10} + 16239 x^{11} + 40659 x^{12} +
 94643 x^{13} + 232859 x^{14} + 551615 x^{15} + 1344835 x^{16} +
 3215043 x^{17} + 7801163 x^{18} + 18738639 x^{19} + 45357171 x^{20} +
 109216787 x^{21} + 264026939 x^{22} + 636562079 x^{23} + 1537859683 x^{24} +
 3710155683 x^{25} + 8960296811 x^{26} + 21624372015 x^{27} +
 52215418131 x^{28} + 126036076403 x^{29} + 304306702811 x^{30}
 + \dots  $}
\\ \\ \bc $ \mbox{--------------------------------------    } $ \ec  \noindent
    ${\cal F}^{MS}_{4}(x) = $  \parbox[t]{5.4in}{$ \ds   - \frac{x (3 + 14 x + x^2 - 16 x^3 - 4 x^4 + 4 x^5)}{(1 + x) (-1 + 2 x + 7 x^2 - 2 x^3 - 3 x^4 + x^5)}  $
    \\ $ \ds + \frac{  2 x (2 + 3 x^2 - 3 x^4 + x^6)}{(-1 + x) (1 + x) (-1 - 3 x + x^3) (1 - 3 x + x^3)} - \frac{2 x}{(-1 + x) (1 + x)}$} \\ \\
 \hspace*{1.2cm}  $=$ \parbox[t]{5.4in}{ $ \ds   9 x + 17 x^2 + 93 x^3 + 197 x^4 + 1064 x^5 + 2579 x^6 + 12602 x^7 +
 34813 x^8 + 155109 x^9 + 473782 x^{10} +
 1973442 x^{11} + 6461255 x^{12} +
 25731364 x^{13} + 88161636 x^{14} + 341164928 x^{15} + 1203089485 x^{16} +
 4572905761 x^{17} + 16418366051 x^{18} + 61718041819 x^{19} +
 224060510622 x^{20} + 836556608393 x^{21} + 3057746964212 x^{22} +
 11369139034731 x^{23} + 41728999487503 x^{24} + 154761962184614 x^{25} +
 569474721891612 x^{26} + 2108779476730869 x^{27} +
 7771609006274884 x^{28} + 28751491766120674 x^{29} +
 106058977958127364 x^{30} +
  \ldots $ }
   \bc $ \mbox{--------------------------------------    } $ \ec  \noindent
    ${\cal F}^{MS}_{5}(x) = $  \parbox[t]{5.4in}{$ \ds - \frac{4 x^2 (9 - 52 x^2 + 150 x^4 - 156 x^6 + 65 x^8)}{(-1 + x) (1 +
     x) (-1 + 2 x^2) (-1 + 5 x^2) (1 - 22 x^2 + 13 x^4)}  $ \\
     $  \ds
       - \frac{   x (9 + 18 x - 42 x^2 - 148 x^3 - 34 x^4 + 156 x^5 + 44 x^6 -
      64 x^7 + 12 x^8)}{(-1 - 4 x - 2 x^2 + 2 x^3) (1 - 5 x -
      7 x^2 + 18 x^3 + 6 x^4 - 12 x^5 + 2 x^6)} + \frac{10 x^2}{ 1 - 5 x^2}$} \\ \\
 \hspace*{1.2cm}  $=$ \parbox[t]{5.4in}{ $ \ds
 9 x + 73 x^2 + 210 x^3 + 1857 x^4 + 6079 x^5 + 51268 x^6 +
 187679 x^7 + 1452017 x^8 + 5929338 x^9 + 42196753 x^{10} +
 188792811 x^{11} + 1256103924 x^{12} + 6026479923 x^{13} +
 38140588993 x^{14} + 192526981810 x^{15} + 1175822873153 x^{16} +
 6152192171627 x^{17} + 36653174443084 x^{18} +   $  \\  $ \ds 196608798903531 x^{19} +
 1151582649902817 x^{20} + 6283287697977122 x^{21} + $ \\ $
 36379144274981889 x^{22} + 200804934692074883 x^{23} +
 1153558411488272612 x^{24} +    $ \\ $ \ds  6417455984530684979 x^{25} +
 36672132877121356337 x^{26} + 205093433765461090818 x^{27} + $ \\ $ \ds
 1167841223686770652161 x^{28} + 6554517403144301138299 x^{29} +
 37233851201134748834428 x^{30} + \ldots $}
 \bc $ \mbox{--------------------------------------    } $ \ec  \noindent
    ${\cal F}^{MS}_{6}(x) = $  \parbox[t]{5.4in}{$ \ds   [x (7 + 107 x + 63 x^2 - 2167 x^3 - 4608 x^4 + 8548 x^5 + 23235 x^6 -
   15711 x^7 - 43966 x^8 + 22258 x^9 + 36534 x^{10} - 19246 x^{11} -
   11561 x^{12} + 5877 x^{13} + 1910 x^{14} - 758 x^{15} - 182 x^{16} +
   36 x^{17} + 8 x^{18})] / $ \\
    $[(1 - x) (1 + x) (-1 - 5 x - 5 x^2 + 2 x^3 + x^4) (-1 + 5 x + 49 x^2 - 116 x^3 - 363 x^4 +
    627 x^5 + 544 x^6 - 1061 x^7 + 133 x^8 + 264 x^9 - 47 x^{10} -
    26 x^{11} + 3 x^{12} + x^{13})]  $ \\ \\
      $   - [2 x (6 - 201 x^2 + 4060 x^4 - 38192 x^6 + 197410 x^8 - 601653 x^{10} +
   1140106 x^{12} - 1391696 x^{14} + 1102163 x^{16} - 558186 x^{18} +
   178482 x^{20} - 35341 x^{22} + 4160 x^{24} - 265 x^{26} + 7 x^{28})] / $ \\ $[(-1 - 3 x + 6 x^2 + 4 x^3 - 5 x^4 - x^5 +
      x^6) (-1 + 3 x + 6 x^2 - 4 x^3 - 5 x^4 + x^5 + x^6) (1 - 9 x +
      6 x^2 + 53 x^3 - 45 x^4 - 66 x^5 + 52 x^6 - 6 x^8 + x^9) (-1 -
      9 x - 6 x^2 + 53 x^3 + 45 x^4 - 66 x^5 - 52 x^6 + 6 x^8 +
      x^9)]  $ \\ \\ $ \ds + \frac{2 x (3 + 6 x^2 - 16 x^4 + 17 x^6 - 7 x^8 + x^{10})/}{(-1 - 3 x + 6 x^2 + 4 x^3 - 5 x^4 - x^5 + x^6) (-1 +
      3 x + 6 x^2 - 4 x^3 - 5 x^4 + x^5 + x^6)} $ \\ $ \ds - \frac{2x}{(-1 + x) (1 + x)} $} \\ \\
 \hspace*{1.2cm}  $=$ \parbox[t]{5.4in}{ $ \ds   27 x + 107 x^2 + 1371 x^3 + 6415 x^4 + 87922 x^5 + 446645 x^6 +
 5847400 x^7 + 32574167 x^8 + 395913126 x^9 + 2420149972 x^{10} +
 27298250776 x^{11} + 181134018889 x^{12} + 1916020966341 x^{13} +
 13595224053238 x^{14} + 136617755072476 x^{15} + 1021501284712391 x^{16} +
 9868854731861982 x^{17} + 76783359869377940 x^{18} +
 720264693432184820 x^{19} + 5772464612095502460 x^{20} +
 52983534685851344203 x^{21} + 433990470327409239362 x^{22} +
 3920670502127763596399 x^{23} + 32629347205036863480113 x^{24} +
 291395142403062432504997 x^{25} + 2453240099048398652098941 x^{26} +
 21726916981450035120472251 x^{27} + 184447608098723151001797246 x^{28} +
 1623781970639434445578728789 x^{29} +
 13867765496249055757164990460 x^{30} + \ldots  $}
   \bc $ \mbox{--------------------------------------    } $  \ec   \noindent
    ${\cal F}^{MS}_{7}(x) = $  \parbox[t]{5.4in}{
    $ \ds
    -[2 x^2 (-110 + 13706 x^2 - 797065 x^4 + 25766696 x^6 -
    510464656 x^8 + 6624939954 x^{10} - 58930004497 x^{12} +
    370288441956 x^{14} - 1675373801542 x^{16} + 5518828075268 x^{18} -
    13304788221252 x^{20} + 23490902716736 x^{22} - 30266698848600 x^{24} +
    28218903583250 x^{26} - 18761650734737 x^{28} + 8689526543844 x^{30} -
    2701217093484 x^{32} + 529736859486 x^{34} - 58424749065 x^{36} +
    2737417248 x^{38})]/ $ \\ $[(-1 + 3 x^2) (-1 - 6 x - 9 x^2 +
        3 x^4) (-1 + 6 x - 9 x^2 + 3 x^4) (1 - 14 x^2 + 17 x^4) (1 -
        60 x^2 + 454 x^4 - 956 x^6 + 577 x^8) (-1 + 171 x^2 -
        5496 x^4 + 56617 x^6 - 240021 x^8 + 457923 x^{10} -
        420254 x^{12} + 186912 x^{14} - 37569 x^{16} + 2584 x^{18})]$ \\ \\
 $ - [x (-27 - 120 x + 2412 x^2 + 13164 x^3 - 63280 x^4 - 353312 x^5 +
   935740 x^6 + 4498708 x^7 - 9257992 x^8 - 31760720 x^9 +
   61308064 x^{10}+ 129398824 x^{11} - 258786202 x^{12} - 299911716 x^{13} +
   665027252 x^{14} + 378206596 x^{15} - 1001583056 x^{16} -
   271534384 x^{17} + 908429872 x^{18} + 119973056 x^{19} -
   499552689 x^{20} - 42554924 x^{21} + 161021916 x^{22} + 16091064 x^{23} -
   27349437 x^{24} - 4587912 x^{25} + 1798792 x^{26} + 520492 x^{27} +
   37947 x^{28})]/ $ \\ $ [(-1 + x) (-1 + 7 x^2) (-1 - 14 x - 44 x^2 +
       52 x^3 + 303 x^4 + 44 x^5 - 462 x^6 - 252 x^7 + 107 x^8 +
       88 x^9 + 13 x^{10}) (-1 + 14 x + 37 x^2 - 670 x^3 + 216 x^4 +
       7866 x^5 - 10202 x^6 - 27170 x^7 + 56210 x^8 + 5872 x^9 -
       66223 x^{10} + 22200 x^{11} + 25320 x^{12} - 12008 x^{13} -
       2888 x^{14} + 1256 x^{15} + 139 x^{16})] $ \\ \\
$ - \ds \frac{2 x^2 (43 - 790 x^2 + 6893 x^4 - 28692 x^6 + 62677 x^8 - 68086 x^{10} +
   29427 x^{12})}{(-1 + 3 x^2) (1 - 14 x^2 +
    17 x^4) (1 - 60 x^2 + 454 x^4 - 956 x^6 + 577 x^8)} +
     \frac{14 x^2}{1 - 7 x^2} $}
      \bc $ \mbox{--------------------------------------    } $  \ec   \noindent
      ${\cal F}^{MS}_{7}(x) = $  \parbox[t]{5.4in}{$ \ds
x + 27 x + 467 x^2 + 3027 x^3 + 63563 x^4 + 466327 x^5 + 9885365 x^6 +
 77761767 x^7 + 1576019355 x^8 + 13410238539 x^9 +
 254659534737 x^{10}  + 2348103031059 x^{11}  + 41707067330849 x^{12}  +
 413949610094255 x^{13}  + 6923710078740005 x^{14}  +
 73192857948396727 x^{15}  + 1163907957691068795 x^{16}  +
 12958299547681496355 x^{17 } + 197810440098798519899 x^{18}  +
 2295445771020812064223 x^{19 } + 33927267387694624015233 x^{20}  +
 406713599158978373001435 x^{21}  + 5862301338769843254084583 x^{22}  +
 72069951150114502314410907 x^{23}  +
 1018930653982635230381173233 x^{24}  +
 12771399952923354000903038527 x^{25 } +
 177918655304268284871263029425 x^{26}  +
 2263240929289963306546494172155 x^{27}  +
 31177815574907220476475328111613 x^{28}  +
 401075828612326194198324636969703 x^{29} +
 5478440052254244075488036134078215  x^{30} + \ldots $}
    \bc $ \mbox{--------------------------------------    } $ \ec  \noindent
 ${\cal F}^{MS}_{8}(x) = $  \parbox[t]{5.4in}{$ \ds
   -[x (16 x^{69}-106 x^{68} -3300 x^{67} +19069 x^{66} +322018 x^{65} -1520881 x^{64} -19540184 x^{63} +70000745 x^{62} +813402380 x^{61} -2016657129 x^{60} -24229609066 x^{59} +36282942825 x^{58} +525974379758 x^{57} -354587080920 x^{56} -8382141054110 x^{55} +96041957468 x^{54} +98434229981132 x^{53} +50073146080067 x^{52} -856318266137178 x^{51} -782642683316906 x^{50} +5572771563451640 x^{49} +6877679495110129 x^{48} -27497401899346396 x^{47} -41065478766170299 x^{46} +104459777429437818 x^{45} +178226150870613303 x^{44} -310495387974992080 x^{43} -582331242990297348 x^{42} +734215010668220576 x^{41} +1463222964474435545 x^{40} -1404402493924561124 x^{39} -2865484266560851312 x^{38} +2206696033233984392 x^{37} +4407996779606152990 x^{36} -2880891529521306050 x^{35} -5344844616793602333 x^{34} +3138031841625354084 x^{33} +5106446404933696315 x^{32} -2838392903240797068 x^{31} -3829413607863188624 x^{30} +2105703638706576880 x^{29} +2238182134050490742 x^{28} -1260082905713256950 x^{27} -1008935331527316393 x^{26} +597524837271834058 x^{25} +345742658107544312 x^{24} -220658210127275750 x^{23} -88285693006375070 x^{22} +62392443922387784 x^{21} +16326073430547716 x^{20} -13279291584669980 x^{19} -2092486193456287 x^{18} +2090580626011962 x^{17} +171726193316100 x^{16} -239303441030292 x^{15} -7302354821989 x^{14} +19606449076710 x^{13} -27460269177 x^{12} -1132815851256 x^{11} +19319259107 x^{10} +45302493186 x^9-875572995 x^8-1215183364 x^7+13400891 x^6+20549832 x^5+90062 x^4-191092 x^3-4164 x^2 +662 x +19 )]/ $ \\ $ \ds [(1 - 7 x + 14 x^2 -
         5 x^3 - 5 x^4 + x^5) (1 + 4 x - 10 x^2 - 10 x^3 + 15 x^4 +
         6 x^5 - 7 x^6 - x^7 + x^8) (1 - 4 x - 10 x^2 + 10 x^3 +
         15 x^4 - 6 x^5 - 7 x^6 + x^7 + x^8) (1 + 20 x + 122 x^2 +
         164 x^3 - 690 x^4 - 1751 x^5 + 480 x^6 + 3573 x^7 +
         1588 x^8 - 1385 x^9 - 1217 x^{10}  - 266 x^{11}  + 13 x^{12}  +
         11 x^{13} + x^{14}) (-1 + 14 x + 331 x^2 - 3474 x^3 -
         24357 x^4 + 237534 x^5 + 541266 x^6 - 6604103 x^7 -
         1905497 x^8 + 85855152 x^9 - 60009003 x^{10} -
         545836271 x^{11} + 672927757 x^{12} + 1747850343 x^{13} -
         2763674623 x^{14} - 2917536240 x^{15} + 5513512152 x^{16}+
         2653029943 x^{17} - 5852097578 x^{18} - 1465977019 x^{19} +
         3471750395 x^{20} + 568784352 x^{21} - 1167520145 x^{22} -
         154667330 x^{23} + 221656480 x^{24} + 23823457 x^{25} -
         24542626 x^{26} - 1818710 x^{27} + 1646233 x^{28} + 57030 x^{29} -
         66339 x^{30} + 348 x^{31} + 1479 x^{32} - 61 x^{33} - 14 x^{34} +
         x^{35})]$} \\ \\ \\
          \hspace*{1.2cm}   $-$ \parbox[t]{5.4in}{ $ \ds
 [2 x (7 x^{106} -2013 x^{104} +271473 x^{102} -22837326 x^{100} +1344929482 x^{98}y-59001771396 x^{96} +2003687770594 x^{94} -54052302641305 x^{92} +1179766678438145 x^{90} -21119750730820190 x^{88} +313342566058268162 x^{86} -3884627255914375276 x^{84} +40508469694380475941 x^{82} -357241905819153023588 x^{80} +2676454522797891427342 x^{78} -17099572626664510198179 x^{76} +93458497642559882402281 x^{74} -438122178754594179109914 x^{72} +1765238379012792344117924 x^{70} -6121625633315557702191520 x^{68} +18285573602229620700662612 x^{66} -47043716977223741089255092 x^{64} +104136439522045979737927530 x^{62} -197908247547795197818961672 x^{60} +321736263477162026054021799 x^{58} -444903527190320804983071579 x^{56} +518838713379473467639889021 x^{54} -503365156129050659894964377 x^{52} +396737493442153310733272690 x^{50} -241848026079896682187451681 x^{48} +99017128600728600175281028 x^{46} -8250124492992517280577465 x^{44} -27128803308564534282557705 x^{42} +28034219020492249326361451 x^{40} -17428199774635760630410343 x^{38} +8039292364003021973390218 x^{36} -2916939582709971330165897 x^{34} +852131433111988876145808 x^{32} -
 202548778297014269690756 x^{30} +39351344773252650496937 x^{28} -6255390685011652043507 x^{26} +812491167862693563189 x^{24} -85935201585787065880 x^{22} +7361461667826596399 x^{20}-  506869718718388616 x^{18} +27766085251949691 x^{16} -1193593543359977 x^{14} +39522727612450 x^{12} -982459882849 x^{10} +17678093168 x^8 -218447260 x^6 +1716966 x^4 -7681 x^2 +16)]/ $  \\ $[(-1 + x) (1 +
         x) (1 - 28 x + 134 x^2 + 1464 x^3 - 11646 x^4 - 8775 x^5 +
         234042 x^6 - 318372 x^7 - 1512042 x^8 + 3990140 x^9 +
         1327546 x^{10}  - 12508340 x^{11} + 8235416 x^{12} +
         11304952 x^{13} - 15649778 x^{14} + 1400926 x^{15} +
         6404612 x^{16} - 2944582 x^{17} - 312236 x^{18} + 418067 x^{19} -
         31381 x^{20} - 22903 x^{21} + 3184 x^{22} + 556 x^{23} - 97 x^{24} -
         5 x^{25} + x^{26}) (1 + 28 x + 134 x^2 - 1464 x^3 - 11646 x^4 +
         8775 x^5 + 234042 x^6 + 318372 x^7 - 1512042 x^8 -
         3990140 x^9 + 1327546 x^{10} + 12508340 x^{11} + 8235416 x^{12} -
         11304952 x^{13} - 15649778 x^{14} - 1400926 x^{15} +
         6404612 x^{16} + 2944582 x^{17} - 312236 x^{18} - 418067 x^{19} -
         31381 x^{20} + 22903 x^{21} + 3184 x^{22} - 556 x^{23} - 97 x^{24} +
         5 x^{25} + x^{26}) (-1 + 9 x + 92 x^2 - 581 x^3 - 2083 x^4 +
         11003 x^5 + 18456 x^6 - 89508 x^7 - 76454 x^8 + 363004 x^9 +
         148765 x^{10} - 782325 x^{11} - 101940 x^{12} + 931622 x^{13} -
         46433 x^{14} - 638214 x^{15} + 110532 x^{16} + 256374 x^{17} -
         68466 x^{18} - 59420 x^{19} + 20987 x^{20} + 7328 x^{21} -
         3425 x^{22} - 351 x^{23} + 281 x^{24} - 9 x^{25} - 9 x^{26} +
         x^{27}) (1 + 9 x - 92 x^2 - 581 x^3 + 2083 x^4 + 11003 x^5 -
         18456 x^6 - 89508 x^7 + 76454 x^8 + 363004 x^9 -
         148765 x^{10} - 782325 x^{11} + 101940 x^{12} + 931622 x^{13} +
         46433 x^{14} - 638214 x^{15} - 110532 x^{16} + 256374 x^{17} +
         68466 x^{18} - 59420 x^{19} - 20987 x^{20} + 7328 x^{21} +
         3425 x^{22} - 351 x^{23} - 281 x^{24} - 9 x^{25} + 9 x^{26} + x^{27})] $}
         \\ \\ \\
          \hspace*{1.2cm} $-$  \parbox[t]{5.4in}{ $ \ds
         [2 x (10 x^{54} -870 x^{52} +34330 x^{50} -816543 x^{48} +13122246 x^{46}-151378746 x^{44} +1300146874 x^{42} -8510330887 x^{40} +43108593514 x^{38} -170635526475 x^{36} +530739787448 x^{34} -1300043069551 x^{32} +2506100333450 x^{30} -3789498307175 x^{28} +4469718348562 x^{26} -4081472729978 x^{24}  +2859659499822 x^{22}  -1522831705010 x^{20} +610647347906 x^{18} -182640864270 x^{16} +40243099424 x^{14} -6410542019 x^{12} +717731134 x^{10} -54281527 x^8 +2630666 x^6 -76229 x^4 +1189 x^2 -10)]/ $ \\  $[(-1 + x) (1 + x) (-1 +
         9 x + 92 x^2 - 581 x^3 - 2083 x^4 + 11003 x^5 + 18456 x^6 -
         89508 x^7 - 76454 x^8 + 363004 x^9 + 148765 x^{10}  -
         782325 x^{11}  - 101940 x^{12}  + 931622 x^{13}  - 46433 x^{14}  -
         638214 x^{15} + 110532 x^{16} + 256374 x^{17} - 68466 x^{18} -
         59420 x^{19} + 20987 x^{20} + 7328 x^{21} - 3425 x^{22} - 351 x^{23} +
         281 x^{24} - 9 x^{25} - 9 x^{26} + x^{27}) (1 + 9 x - 92 x^2 -
         581 x^3 + 2083 x^4 + 11003 x^5 - 18456 x^6 - 89508 x^7 +
         76454 x^8 + 363004 x^9 - 148765 x^{10} - 782325 x^{11} +
         101940 x^{12} + 931622 x^{13} + 46433 x^{14} - 638214 x^{15} -
         110532 x^{16} + 256374 x^{17} + 68466 x^{18} - 59420 x^{19} -
         20987 x^{20} + 7328 x^{21} + 3425 x^{22} - 351 x^{23} - 281 x^{24} -
         9 x^{25} + 9 x^{26} + x^{27})] $}
         \\ \\ \\
          \hspace*{1.2cm}  $+$ \parbox[t]{5.4in}{ $ \ds
           [2 x (x^{14} -11 x^{12} +47 x^{10} -98 x^8+ 103 x^6-50 x^4+10 x^2+4)] / $ \\  $ [(1 + 4 x - 10 x^2 - 10 x^3 + 15 x^4 + 6 x^5 -
         7 x^6 - x^7 + x^8) (1 - 4 x - 10 x^2 + 10 x^3 + 15 x^4 -
         6 x^5 - 7 x^6 + x^7 + x^8)]$}
          \\ \\ \\
          \hspace*{1.2cm}  $-$ \parbox[t]{5.4in}{ $ \ds
          \frac{(2 x)}{(-1 + x) (1 + x)} $}

          \bc $ \mbox{--------------------------------------    } $  \ec

           \noindent
 ${\cal F}^{MS}_{8}(x) = $ \parbox[t]{5.4in}{ $ \ds  81 x + 681 x^2 + 19938 x^3 + 214413 x^4 + 7038371 x^5 +
 80708418 x^6 + 2638009924 x^7 + 32098743013 x^8 +
 1005558579804 x^9 + 13098751097381 x^{10} + 387302349839244 x^{11} +
 5413697605176834 x^{12} + 150727029170351741 x^{13} +
 2251447792306687144 x^{14} + 59281316927210554958 x^{15} +
 939149357512715316501 x^{16} + 23552269407202601688808 x^{17} +
 392311453180178638708326 x^{18} + 9443291793834966241192225 x^{19} +
 163992331457474964991423233 x^{20} +
 3816596095571035059285183652 x^{21} +
 68573510908910563426761767810 x^{22} +
 1552956371004992979029194886848 x^{23} +
 28678442547155385562725345847906 x^{24} +
 635435137052454220118946894076046 x^{25} +
 11994608414187692420665420022474599 x^{26} +
 261195385744336718141349209941059768 x^{27} +
 5016853080596606189611124150609224448 x^{28 }+
 107760278788336250059515948060261673009 x^{29} + $ \\$
 2098377771025447068273229367638586157098 x^{30} +  \ldots $}
    \bc $ \mbox{--------------------------------------    } $   \\  $ \mbox{--------------------------------------    } $ \ec  \noindent
 ${\cal F}^{MS}_{9}(x) = $  \parbox[t]{5.4in}{$ \ds
  -[4x^2(3601238948279971942407103824094471680 x^{148} $ \\ $  -474259471159898766783550645203555567096 x^{146} $ \\ $  +29022425468291559381088932380331238122228 x^{144} $ \\ $  -1102045327082881530315053625122143125119982 x^{142} $ \\ $  +29270581425690596819984954875235660805417960 x^{140} $ \\ $  -581208060624715919750429033694899207521444791 x^{138} $ \\ $  +9004001255736396380557126978506011770795971312 x^{136} $ \\ $  -112122736430791559024284020193334299425011245410 x^{134} $ \\ $  +1147318324233993667347681451338477668847431200326 x^{132} $ \\ $  -9812521844207589162729068830834278223401122908641 x^{130} $ \\ $  +71094037650858793997196724571050385503536763139418 x^{128} $ \\ $  -441151717292710558376915845909327731669642985735590 x^{126} $ \\ $  +2365731533743122569651970285447246215414865458702916 x^{124} $ \\ $  -11047139424567699096435516722891809058971213852319884 x^{122} $ \\ $  +45208871039705347602852171185943706648353113928365286 x^{120} $ \\ $  -163029478983338688006043906088027009913605965794128974 x^{118} $ \\ $  +520507099686402878979606641569975200152824572493002428 x^{116} $ \\ $  -1477341707945106267381691779282415141560207489224445637 x^{114} $ \\ $  +3740889015606875416950290158147101280505405433948366392 x^{112} $ \\ $  -8477319280331597691761870781703195177903922918621135544 x^{110} $ \\ $  +17239095646059741617655046828058940594291844990208203948 x^{108} $ \\ $  -31534024720479050017241309031006617959995696590842275242 x^{106} $ \\ $  +51995133798665055201527667542288082114741813134146315960 x^{104} $ \\ $  -77421469033339217477421998048769861882745272767864686948 x^{102} $ \\ $  +104273200485911034539674545575470924804136790549804456680 x^{100} $ \\ $  -127205047278705080394168209841694104451769672471294081418 x^{98} $ \\ $  +140728946642557850360731567426506013894357696768241068190 x^{96} $ \\ $  -141339598802117208570311080695382157164502029207159303542 x^{94} $ \\ $  +128983006480943298029155079430726652070978003935100310108 x^{92} $ \\ $  -107032235833769809806043314701539889030820727184420259155 x^{90} $ \\ $  +80812438948113130909132549950570218540859954167249688590 x^{88} $ \\ $  -55544106147779976389121590023450222652712764735321887206 x^{86} $ \\ $  +34766282141988819566642009520825284913501446393886483404 x^{84} $ \\ $  -19822372432338823932609146237921913321395847754136599787 x^{82} $ \\ $  +10296770356875946903898446707381386928143066798669453834 x^{80} $ \\ $
   -4873277579009059611647285267943665773609114951409821080 x^{78} $ \\ $  +2101355679544073267045751996873638803505223445771604500 x^{76} $ \\ $  -825424373735364456668440088772133842922158790229701176 x^{74} $ \\ $  +295292805814545649214013627674575322689120283075002592 x^{72} $ \\ $  -96179398030055520953947740433244806570514904004387838 x^{70} $ \\ $  +28508671737510221755889925631416863181848140566391012 x^{68} $ \\ $  -7686079213481637816140497035436298760109009082101753 x^{66} $ }

   \noindent
   \hspace*{1.4cm}    \parbox[t]{5.4in}{
         $ \ds +1883589053582694325166193115837859651183600763819318 x^{64} $ \\ $  -419268631118072182297281619041879429933812317157668 x^{62} $ \\ $  +84692156660452862075933436495539703994338708267852 x^{60}
         -15509695504934434528817567228852728333156413545316 x^{58} $ \\ $  +2572044468709094711531577162700109103981487343524 x^{56} $ \\ $  -385755119424184678696259117404499509392709088016 x^{54} $ \\ $  +52249038044371347891325325250847539297052211178 x^{52} $ \\ $  -6380887727308724840788113418175252944724394300 x^{50} $ \\ $  +701366584492067369966091751501453717985633110 x^{48} $ \\ $  -69248687598713212959889742224975948877523164 x^{46} $ \\ $  +6128187168746732699834238754525465251446260 x^{44} $ \\ $  -484910061729718007400292355395071072020176 x^{42} $ \\ $  +34217560001928996192905822531236744279340 x^{40} $ \\
         $ \ds   -2146996313569411441872268292963953461030 x^{38} $ \\ $  +119403696662056129697656199804407886622 x^{36} $ \\ $  -5865124969491300118717654888845705199 x^{34} +253464984182842123124923624969096606 x^{32} $ \\ $  -9595362071924435538971864800207280 x^{30}   +316670411645212935155666148651968 x^{28} $ \\ $  -9061253896687662876071573146384 x^{26}  +223411293208638068122042091662 x^{24} $ \\ $  -4712421429044314029064168310 x^{22}  +84325967173965384196768590 x^{20} $ \\ $  -1267430664692520809946133 x^{18}   +15808459062940762176132 x^{16} $ \\ $  -161207417534678200784 x^{14}  +1318985590268390020 x^{12} $ \\ $  -8450248836636137 x^{10}   +41032351364484 x^8-144316319446 x^6+343725546 x^4-494857 x^2+330)]/   $}    \\ \\
 \hspace*{1.4cm}    \parbox[t]{5.4in}{
         $ \ds [(-1 +3 x) (1 + 3 x) (-1 + 7 x^2) (-1 + 27 x - 258 x^2 + 973 x^3 -
         324 x^4 - 6744 x^5 + 12454 x^6 + 7362 x^7 - 33489 x^8 +
         15667 x^9 + 19476 x^{10} - 20253 x^{11} + 3729 x^{12} +
         2151 x^{13} - 966 x^{14} + 107 x^{15}) (1 + 27 x + 258 x^2 +
         973 x^3 + 324 x^4 - 6744 x^5 - 12454 x^6 + 7362 x^7 +
         33489 x^8 + 15667 x^9 - 19476 x^{10} - 20253 x^{11} -
         3729 x^{12} + 2151 x^{13} + 966 x^{14} + 107 x^{15}) (-1 + 282 x^2 -
         25297 x^4 + 1073828 x^6 - 25390104 x^8 + 363078264 x^{10} -
         3296168948 x^{12} + 19596248926 x^{14} - 77743595826 x^{16} +
         207473742096 x^{18} - 371467677512 x^{20} + 439852643504 x^{22} -
         334403214576 x^{24} + 154506971212 x^{26} - 38880116221 x^{28} +
         4020609392 x^{30}) (-1 + 591 x^2 - 95361 x^4 + 6922350 x^6 -
         272760016 x^8 + 6420903257 x^{10} - 95828390004 x^{12} +
         943120716586 x^{14} - 6295303405260 x^{16} +
         29127727204334 x^{18} - 95000584032342 x^{20} +
         220996143367594 x^{22} - 369011778492872 x^{24} +
         442338529800436 x^{26} - 377839526215177 x^{28} +
         226022147292981 x^{30} - 91695292950038 x^{32} +
         23818609428605 x^{34} - 3541207333504 x^{36} +
         226978239492 x^{38}) (1 - 1193 x^2 + 376246 x^4 -
         48953410 x^6 + 3288145988 x^8 - 127374411928 x^{10} +
         3015668747782 x^{12} - 45191010425846 x^{14} +
         441384780778588 x^{16} - 2898283223877346 x^{18} +
         13154666974580501 x^{20} - 42187756055653825 x^{22} +
         97142224830553641 x^{24} - 162322162033938237 x^{26} +
         198042290945862570 x^{28} - 176799585485005402 x^{30} +
         115298993750386955 x^{32} - 54611642383339285 x^{34} +
         18586566465572115 x^{36} - 4467086405032683 x^{38} +
         737944576901349 x^{40} - 80312266104179 x^{42} +
                 5368435066393 x^{44} - 193455857453 x^{46} +
         2735506380 x^{48})]   $} \\ \\
 \hspace*{1.2cm} $-$ \parbox[t]{5.4in}{
         $ \ds
          [(x (-953149365383346717696 x^{98}+26114673044757854577152 x^{97}-88557437460023887090048 x^{96}-2228203846243744835990144 x^{95}+11536959510745258379188192 x^{94}+93347741439551471495673328 x^{93}-504874736014387117801293440 x^{92}-2565341866801000262660549696 x^{91}+12531151351266541072682492400 x^{90}+51137498352941451346463182864 x^{89}-203878823875170976725652113576 x^{88}-767933760238339305642036596512 x^{87}+2321872351568696587632779117502 x^{86}+8835331057636262652376567752738 x^{85}-19121083172126041637000524041666 x^{84}-78797209352098489028589254914996 x^{83}+114999568849267609738967914891967 x^{82}+550622261992834949317717734554862 x^{81}-495803969561958312991777971971017 x^{80}-3045528243642151002582270910473608 x^{79}+1398846804934161918094759164538620 x^{78}+13456757745193420346086689826521562 x^{77}-1440724690623504240284659391427732 x^{76}-47883984558548846561959812362438988 x^{75}-8666777473755518366078147016737591 x^{74}+138169310608946998129913783915025470 x^{73}+58701180282927196132848480009717373 x^{72}-325191764176658349443591714223440620 x^{71}-208426897357032574335148341822310600 x^{70}+627310979023675251349977976120424674 x^{69}+529247970253200429314621467669176296 x^{68}-995694549952524453741104080452452028 x^{67}-1042665061435475970736728878808964563 x^{66}+1303977813890105101301182555644939918 x^{65}+1652470656609510909465179367232994533 x^{64}-1410868675816252645266095520674557348 x^{63}-2150938614707116088899214117558459760 x^{62}+1260204556387439730429655116660212430 x^{61}+2331426428125569394551566891263609020 x^{60}-925507484687189259226850702117561324 x^{59}-2125635766638185629665544565075409631 x^{58}+553464127474067796010428013354004478 x^{57}+1642809964940764031122254824140166389 x^{56}-263901363736577168136268569457662644 x^{55}-1082794795624762266989253691646310000 x^{54}+95525720371886598673048877875053286 x^{53}+611514178852636175427811806134089436 x^{52}-22562427017346103638783607792072176 x^{51}-296953691001577521233627775141211039 x^{50}+701156032080053256854692625098098 x^{49}+124285327407638717604718297599384893 x^{48}+2307829441426671559852895620667892 x^{47}-44889291031693490526281381208715376 x^{46}-1276658600697241065463703125617264 x^{45}+13993291530023039253131711080350588 x^{44}+411924352281985537035900320026200 x^{43}-3761340569909269327666212843998926 x^{42}-88491324616143585956501121986432 x^{41}+870066214250910712707063555076186 x^{40}+10884547785003587249149206819668 x^{39}-172693139142129440119278563641174 x^{38}+341202461030475623600025101952 x^{37}+29300190012360104533601256734494 x^{36}-541612600905852068267380595492 x^{35}-4230465082419657475304340501464 x^{34}+151393223256430559125259462324 x^{33}+517165425037228664327652944816 x^{32}-27343465083322387416381358300 x^{31}-53236452685820726290887653208 x^{30}+3670189334446798039991188294 x^{29}+4587904751249978039372253020 x^{28}-382494024246534764329357228 x^{27}-329049186673175730033538655 x^{26}+31477712381545053004787302 x^{25}+19522487159509485410816637 x^{24}-2057755279113643060500828 x^{23}-952369858858997313846602 x^{22}+106876557884196527325124 x^{21}+37962986213100238440250 x^{20}-4394660877077409655812 x^{19}-1228066093988736853534 x^{18}+142091991144588773900 x^{17}+31973538805566877158 x^{16}-3575035883148707372 x^{15}-662662275265276628 x^{14}+68933167944024434 x^{13}+10764844178128464 x^{12}-996260840605620 x^{11}-134045501666733 x^{10}+10444687656494 x^9+1238550887327 x^8-75552258680 x^7-8090032822 x^6+347162866 x^5+34589142 x^4-863736 x^3-84055 x^2+754 x+81)] /
        $} \\ \\
 \hspace*{1.4cm}  \parbox[t]{5.4in}{
         $ \ds
         [(1 + x) (-1 + 2 x) (1 + 2 x) (1 - 8 x^2 + 11 x^4) (1 - 28 x^2 + 71 x^4) (1 - 8 x +
         20 x^2 - 14 x^3 - 5 x^4 + 4 x^5) (1 - 116 x^2 + 1546 x^4 -
         4556 x^6 + 3781 x^8) (-1 - 48 x - 708 x^2 - 1640 x^3 +
         45946 x^4 + 341664 x^5 - 270194 x^6 - 9936934 x^7 -
         22840421 x^{8} + 84442096 x^9 + 418028192 x^{10} +
         132842204 x^{11} - 2217116283 x^{12 }- 3910256042 x^{13} +
         2255114380 x^{14} + 12952834812 x^{15} + 9797088008 x^{16} -
         9926380870 x^{17} - 19739579992 x^{18} - 6238648278 x^{19} +
         8703028142 x^{20} + 7538491050 x^{21} + 208212 x^{22} -
         2158389564 x^{23} - 641271414 x^{24} + 192328290 x^{25} +
         111002758 x^{26} + 399556 x^{27} - 5772921 x^{28} - 305408 x^{29} +
         97576 x^{30}) (1 - 42 x - 75 x^2 + 17718 x^3 - 112036 x^4 -
         2032663 x^5 + 20898099 x^6 + 68505152 x^7 - 1367337026 x^8 +
         1223952387 x^9 + 39924985841 x^{10} - 123520021512 x^{11} -
         507429891185 x^{12} + 2815200597001 x^{13} +
         1444158632036 x^{14} - 29150884549434 x^{15} +
         26197136927827 x^{16} + 152864286212934 x^{17} -
         281984662214835 x^{18} - 398046044696312 x^{19} +
         1249282231645481 x^{20} + 358977981283233 x^{21} -
         3123666474218858 x^{22} + 675827827012992 x^{23} +
         4889486045969022 x^{24} - 2485484028220910 x^{25} -
         5032512202755210 x^{26} + 3593534888276858 x^{27} +
         3475263477225909 x^{28} - 3098930934608376 x^{29} -
         1601027066102163 x^{30} + 1744452573290918 x^{31} +
         470550625308136 x^{32} - 657690139223493 x^{33} -
         75567716491829 x^{34} + 165305848427048 x^{35} +
         1498941418335 x^{36} - 26855694880119 x^{37} +
         1821955387786 x^{38} + 2634688847668 x^{39} -
         334995651352 x^{40} - 134705018616 x^{41} + 24157589768 x^{42} +
         2269409656 x^{43} - 605268248 x^{44} +
         22971944 x^{45})]
          $} \\ \\
 \hspace*{1.2cm} $-$  \parbox[t]{5.4in}{
         $ \ds [2 x^2 (41066587866652222898880 x^{68}-992758353952415482125432 x^{66}+11077609155778996489726116 x^{64}-75937875357304272686739204 x^{62}+359148152735756018858079969 x^{60}-1247813989255646700068478270 x^{58}+3311448052135607522000845098 x^{56}-6890903677759089743304624336 x^{54}+11455026870286109405904696648 x^{52}-15419776399897881394577426496 x^{50}+16979006361774608245851454992 x^{48}-15409236371634600666485663796 x^{46}+11590835972360315745039290048 x^{44}-7255341364039829631336457944 x^{42}+3789484771314030044433891392 x^{40}-1654057453037482613678691232 x^{38}+603652240778687392509345866 x^{36}-184114820410242547277514040 x^{34}+46866285726514066157069562 x^{32}-9933989942874187006372964 x^{30}+1747920002743414438604224 x^{28}-254279893889486283307536 x^{26}+30432921243932040036704 x^{24}-2978546929594730451164 x^{22}+236662835103954604904 x^{20}-15130466428085744180 x^{18}+769776921839397140 x^{16}-30730959661308308 x^{14}+945385372794725 x^{12}-21881906116106 x^{10}+369035694784 x^8-4338348484 x^6+33333873 x^4-150766 x^2+318)]/ $ } \\
         \hspace*{1.4cm}  \parbox[t]{5.4in}{ $ \ds
          [(-1 + 3 x) (1 + 3 x) (-1 +
         282 x^2 - 25297 x^4 + 1073828 x^6 - 25390104 x^8 +
         363078264 x^{10} - 3296168948 x^{12} + 19596248926 x^{14} -
         77743595826 x^{16} + 207473742096 x^{18} - 371467677512 x^{20} +
         439852643504 x^{22} - 334403214576 x^{24} + 154506971212 x^{26} -
         38880116221 x^{28} + 4020609392 x^{30}) (-1 + 591 x^2 -
         95361 x^4 + 6922350 x^6 - 272760016 x^8 + 6420903257 x^{10} -
         95828390004 x^{12} + 943120716586 x^{14} - 6295303405260 x^{16} +
         29127727204334 x^{18} - 95000584032342 x^{20} +
         220996143367594 x^{22} - 369011778492872 x^{24} +
         442338529800436 x^{26} - 377839526215177 x^{28} +
         226022147292981 x^{30} - 91695292950038 x^{32} +
         23818609428605 x^{34} - 3541207333504 x^{36} +
         226978239492 x^{38})]  $} \\ \\
 \hspace*{1.2cm} $-$ \parbox[t]{5.4in}{ $ \ds
         [8 x^2 (2952961 x^{16}-6624312 x^{14}+6193068 x^{12}-3126324 x^{10}+924170 x^8-161984 x^6+16308 x^4-852 x^2+21)]/ $ \\ $ [(-1 + 2 x) (1 + 2 x) (1 - 8 x^2 + 11 x^4) (1 -
      28 x^2 + 71 x^4) (1 - 116 x^2 + 1546 x^4 - 4556 x^6 +
      3781 x^8)] $ } \\ \\
       \hspace*{1.2cm}  $+$ \parbox[t]{5.4in}{ $ \ds
        \frac{18 x^2}{1 - 9 x^2} $}
 \bc $ \mbox{--------------------------------------    } $ \ec  \noindent
  ${\cal F}^{MS}_{9}(x) = $ \parbox[t]{5.4in}{ $ \ds
81 x + 2977 x^2 + 43869 x^3 + 2148297 x^4 + 36446086 x^5 +
 1859277661 x^6 + 33175969740 x^7 + 1670274252969 x^8 +
 31407641485041 x^9 + 1519348101693492 x^{10} +
 30342339901944756 x^{11} + 1393561851576932601 x^{12} +
 29630625622653835328 x^{13} + 1288717962120945898600 x^{14} +
 29100441987912020023684 x^{15} + 1201843312383267583490073 x^{16} +
 28664530577970597174268947 x^{17} +
 1130098720537853186076017611 x^{18} +
 28278438657454112242752450823 x^{19} +
 1070832065514275801677136628332 x^{20} +
 27919545617813707754526340605069 x^{21} +
 1021703749526691895346459783179664 x^{22} +
 27576355061298160788271199625774163 x^{23} +
 980734176066852268972595763904905297 x^{24} +
 27243023028767511712725936258610836086 x^{25} + $ \\ $
 946292255083398990605453697630600206206 x^{26} + $ \\ $
 26916571107441928663000559345794173964329 x^{27} + $ \\ $
 917056053132788309820000325710205811761252 x^{28} + $ \\ $
 26595471438495138268329427549921452474114356 x^{29} + $ \\ $
 891965127088014389729702685438179850576819096 x^{30}
+ \ldots $}
 \bc $ \mbox{--------------------------------------    } $  \\ $ \mbox{--------------------------------------    } $ \ec  \noindent

    ${\cal F}^{MS}_{10}(x) = $   \small \parbox[t]{5.4in}{$ \ds
    -[x (-32 x^{251}-490 x^{250}+45052 x^{249}+664162 x^{248}-31046724 x^{247}-433169136 x^{246}+13947253330 x^{245}+180856444523 x^{244}-4586038572056 x^{243}-54250667211984 x^{242}+1174202751866162 x^{241}+12437520421477397 x^{240}-243088787307593520 x^{239}-2263299964542167788 x^{238}+41710701708969336704 x^{237}+335097457740003359287 x^{236}-6034339378757215563492 x^{235}-41040912610306664024866 x^{234}+745224775252874107463670 x^{233}+4202961118542995434945104 x^{232}- 79297168045686211187696724 x^{231}-362037946881977529805327077 x^{230}+7322948425458736928185396712 x^{229}+26252948025995287180735769606 x^{228}- 590347995375327696410408765078 x^{227}-1592765877111985321707826169208 x^{226}+ 41747985732987715922859524086384 x^{225}+79316301812119820453700773442368 x^{224}-2600629641242444488332871409945568 x^{223}- 3077336492668131041106249316024592 x^{222}+143228606217632762645359663439568494 x^{221}+77286633396027622346414118047675813 x^{220}-6997169948399738720992771100961224066 x^{219}+ 282224504707283222080661229733789803 x^{218}+
    304137807650666245661429804743957342906 x^{217} - $    }

    \noindent
   \hspace*{1.4cm}    \parbox[t]{5.4in}{      $ \ds 171992954215765211300257561673486407123 x^{216} -11794966351451802767004207837950194455140 x^{215} $ \\ $ +12319875393980380622450002935333638531515 x^{214} $ \\ $ +409216251015034028509859159700416883707142 x^{213} $ \\ $ -607817079961942094515488735137050386778341 x^{212} $ \\ $ -12733048270376626516914991783889224142292710 x^{211} $ \\ $ +24133631450184358870398706985910988935432459 x^{210} $ \\ $ +356188224217281187724527372325651938038524656 x^{209} $ \\ $ -812607746298533133211559307243628412849469942 x^{208} $ \\ $ -8978328498062223467143390278996454657675722756 x^{207} $ \\ $ +23789956973315977766699580887502170258837402447 x^{206} $ \\ $ +204381800743779745494785077727391189859282863806 x^{205} $ \\ $ -614382627666180127639862917329319299754198713916 x^{204} $ \\ $ -4210619200553091165181471471542024084259887198472 x^{203} $ \\ $ +14129970915511009930024023971739813184431426839322 x^{202} $ \\ $ +78668390290854616048388223724989040640823236066678 x^{201} $ \\ $ -291382594644746576122165161084315980115056774998981 x^{200} $ \\ $  -1335579084740349778817270562501901846561099032446490 x^{199} $ \\ $ +5415984761210543615706209943388029851249076975266915 x^{198} $ \\ $ +20644014375263051334112701883929259460173174010153858 x^{197} $ \\ $ -91119091321873373012796213766395532675776009642679650 x^{196} $ \\ $ -291068979954003817352798094154354683323466857863243824 x^{195} $ \\ $ +1392453872763308285144722597061950107095189448231344158 x^{194} $ \\ $ +3750481598841579697524467487884187770371037895117082164 x^{193} $ \\ $ -19386160841147320702914177022312934019808277490711188226 x^{192} $ \\ $ -44246639106380583334856771419962193745428264156547757508 x^{191} $ \\ $ +246532483216281124582841420958536424963473248097833042295 x^{190} $ \\ $ +478853221543793165542095225168991208820029479189838994162 x^{189} $ \\ $ -2870293081267581625661800558944308966892104666798773620684 x^{188}-4763324680913166659286516899797988842495491538676125701194 x^{187}+30657513538967887055203409032941978699634480621446137893682 x^{186}+43642432814308841433768125561282344086115919595876154551502 x^{185}-300955446498323507179337517554076601833619076127986834964092 x^{184}-369124249805317011413766222118604242819143717086899748826384 x^{183}+2719802503233265644902136663147327248420935510469791798583925 x^{182}+2889098837572387048882492768924255822926865632111628944135398 x^{181}-22661361376180142625415324919702080629088771847726389294793632 x^{180}-20981717786002127973573912887169052908331644281745293148171864 x^{179}+174311820497515312120229397431335877154785238035152522385981136 x^{178}+141799500977729889366136861505385920564097267744354699574468112 x^{177}-1239307861144558565862609497349703208821743567012743348889454303 x^{176}-894572851891631360148860427921373476959998764641712274046690612 x^{175}+8152809917280400947302662093517156127431575661639309885460581153 x^{174}+ $    }

    \noindent
   \hspace*{1.4cm}    \parbox[t]{5.4in}{      $ \ds 5285146531275407872234991898620804467122276340912108663669426320 x^{173}-49673406073438849135102918272857920750966339522986553368622553826 x^{172}-29332831668497692753316159102388698085100381785095615353809637482 x^{171}+280539165697057636073502010949809564939545357789375484908943590908 x^{170}+153361873196672030043683827615007313736318663039584429278110354692 x^{169}-1469720844416898145859792931188371043995333147179899220283103943593 x^{168}-756998265461855490645040412116909313650482309153825277604498963022 x^{167}+7147000288994245075832977567310465843989583665996070703770235302780 x^{166}+3532388856689871503627355823232417332851932351332749164780346840918 x^{165}-32277168481263047148929550743277768561685590639696527407056579095003 x^{164}-15588315220262262461978929587005138571504320428563168008193292983240 x^{163}+135439805776948989039068266615200797251954479613339287022736703830126 x^{162}+65018407362926767038356738874690404866595420543003864603763864768768 x^{161}-528246499108523413872039924248014342091228583314283734217393752612480 x^{160}-255970582146764139518802571229527690426075233846976601160488231180972 x^{159}+1915543469062274675282007383788789112531136438299964420592469091288626 x^{158}+949386541505485059221093663660941527026093821410824748158394785461418 x^{157}-6459597583976182927919191240886047225191196510839118303246297073268669 x^{156}-3310330144231420360592497591616036163445787576122972625350224398448182 x^{155}+20259975407883213921607854066709042044495859604187246513867797118380161 x^{154}+10827977482106599763161607378969959956928809353873984930378325032691246 x^{153}-59105056385135590753811896678091317838224281744629646757763386766725746 x^{152}-33159883563614995906703262074590109268929023497468401524914723535523826 x^{151}+160385966369574262381883736885303002702580631898602337369105202906837382 x^{150}+ 94911683562035274865329420352205410854239491516204873724111599579567792 x^{149}-404799605499906899120266410280585318921589980123042461313806232634060396 x^{148}-253537655210641175295602470130857732357894553962368842172879319599835346 x^{147}+950154806308051574680683585155643172986717250115005717108371631983324841 x^{146}+631359198298414945355053063837921326945655906440367779772052804116988004 x^{145}-2073720803026688476525286045178458236304614431572035849491675372280519665 x^{144}-1464277875107759686419524539063629621375061272465286609782558539348556688 x^{143}+4207271825542383961000636509422740546241916593750498299491920764898593626 x^{142}+3160633317328157511047530664261622709286365188832631218775623447777024962 x^{141}-7932464208160031549134726559330604885999529142358850157950735118616164759 x^{140}-6345853143242121146074065766979991921159492390419623894958728792539698994 x^{139}+13893236053807234530592156573354896181767873985742259190886068154734919691 x^{138}+11846427568203946149069509726026218995087005411791035194558612221564529572 x^{137}-22593352825407561936644602715974576383134232783002913167532045979719681503 x^{136}-20555257043451629571130682188210642359077974915659961591813367269085483420 x^{135}+34095128603754435776288368539904154828536239730293173536544292390374873272 x^{134}+33142258190350440790195259190034856965311986411071000445545615209901164132 x^{133}-47713466222299293015985339768285187456416158040600134112418718308349907068 x^{132}$    }

   \noindent
   \hspace*{1.4cm}    \parbox[t]{5.4in}{      $ \ds -49644398978466401926241582656168894228631268322590102994991345374460233916 x^{131}+61868281832996750214009291485197419842925098301715691724387782163025792309 x^{130}+69072524129053348201904240791760451285141805138914126975794364097973269352 x^{129}-74257039155826033103242246491236014685575948742012034509305144097065345056 x^{128}-89250867373326333876838999938200853870687473859551010004136684347013433894 x^{127}+82397213263780879850884666016933485734786040964326576107946717704847638365 x^{126}+107082845629342917258623899205996190753955481868900411363210979391221275016 x^{125}-84395425486659217007458994638308139873389746284167790177857653148840552363 x^{124}-119276850051111852356847946853801340021903889797079111104981074704430146670 x^{123}+79632748485839188006741186166343683142199232356911649735891154989069004076 x^{122}+123324031199571580835792612773307621906671354729733917994834881791151327562 x^{121}-69037871146574001855918389660787771402823134039946436359555501392648304756 x^{120}-118336196549377511420998021585895795185997126039999289214200576326901303846 x^{119}+54794601080374484043906373610241871403653162219010243315769509072820711021 x^{118}+105362116507845769656487527127143223746829900308642640241152341249728819282 x^{117}-39608611510653800589216913260784164602960039306557078117589089658034515327 x^{116}-87028606079637546892430030201885379159427413388832392983846342040362132858 x^{115}+25870419832609151366255216879836120841349082199490293894060526961135711363 x^{114}+66674336610610908342718260486470376589658217392200891263769747375548949546 x^{113}-15068938701509159405513423698173792271680467289151337673224443518482009005 x^{112}-47367210124019550645870309075239278305109285067130214610950591217043277736 x^{111}+7638716098024710973022270076075100182144445070160086626989811773323920879 x^{110}+31197348950665974091595690126410334390215362008013728749321468051404373252 x^{109}-3190303313507916484751008539318484861951501105584221674248374683604977886 x^{108}-19044615272579330625003594488095034936363846679685821360850280161942075088 x^{107}+920346913590899688970753220469148528330486247037305099403176354883613885 x^{106}+10772866566820613522481634434395859352404532404757653047863787455016542686 x^{105}+12098606429879860496975533979886775447811741687169680427010002528369988 x^{104}- 5645230746935844057217423016347587350465180449314341803566460592875750548 x^{103}-262993362163563682748434449929181561198835611789505893128706571244771205 x^{102}+2739731326514097096190904127478041141074956089939771821956245211732801986 x^{101}+242462576613490203068847981846506972874934039683928027095351764934310482 x^{100}-1231100250410500198350533026560371889632311265624791986133833640819970798 x^{99}-156633267655940791689586471415885891712567791186828724729896761479840962 x^{98}+512060544004276143752903754326722238236310662858125174342184411565534974 x^{97}+83515791749751504975699538759764691732341835268366048685144033694698659 x^{96}-197095286478846899154220734830831557272724555892166711331953554366429658 x^{95}-38702398571196769370733681364578250205773208277722899933188977394080745 x^{94}+70185196883915262872053909044930008612919777447933404783621346494150330 x^{93}+15949129428658702937670488225427191883501616850846349889533545367909805 x^{92}-23116431007388075993649288750622989768270246272543142908562934288846686 x^{91}-5915530558965973252084357523734844881687237214751658059859004888248272 x^{90}+
   $    }

   \noindent
   \hspace*{1.4cm}    \parbox[t]{5.4in}{      $ \ds
   7040435412733505009776214815462480396285275227693347882250733856305272 x^{89}+1988601959252906277480529334271348911241358292412369645452029745325478 x^{88}-1982378890100438484775112327018164884232939118247376936444918009272614 x^{87}-608527378553502111554798372509180558920826279565140895729360249184579 x^{86}+515936180494635309877748335857688188649911334383152272519614563953422 x^{85}+169978978760766160844613454056520277481818610046737803244806630224797 x^{84}-124094090334226487986376706457556464188821384827348727380898649586662 x^{83}-43418311663874093985567293373879737770030341997034478774223538826384 x^{82}+27579467570788420390500871388425260192723278776742795135741992377032 x^{81}+10153325448217632006315464123150582473307934400233924499972603804270 x^{80}-5662964001969150496952633524209866819254868852791939987603666701652 x^{79}-2175174821576672209587770974062941146984169769888616730861276410204 x^{78}+1074176995724727282326552265120340740845484380470302059760202872030 x^{77}+427042901525414685135713126200839893877759929651166379390496189976 x^{76}-188207236838326016632160140953575996293240182746127514217853558780 x^{75}-76835309751722336498306542416442907531518049013135463788108598113 x^{74}+30456221766934503189287883385460606056393627856597171232217445338 x^{73}+12667233274534083771894388287885992486738162586480808973490145787 x^{72}-4551266893725163494864734362443439472757976749629166092254281170 x^{71}-1912778945472513387567072125653552053414344671446697551786774359 x^{70}+627936191116713167360894358512020663447693096544065495950256284 x^{69}+264397904975325957211943416459610720482649626932319907341354187 x^{68}-79963552544021481384215766739958937966505508002031674129517564 x^{67}-33429528353135334790034860578052766541091796737730841331002360 x^{66}+9394304130068268381726768527195673677185797965600914334218336 x^{65}+3862526297350995536656932199994482670170515487742329856521520 x^{64}-1017537335605569839646377926974593976720351612707887125221268 x^{63}-407372063778659884672826584415068413249975056111510665940363 x^{62}+101521881179576701769709466020919831725826786351199486359858 x^{61}+39166613252555474488960128373734995069934107620985853415999 x^{60}-9319159983706976954214894582575891972586698577868569291254 x^{59}-3427558093470429587296378458822950054819662124673685520458 x^{58}+785859468390309876391912067248871471850538684719138572500 x^{57}+272548809932565537384098310058416136294204669816916297489 x^{56}-60765515931589757070200646522176274956154618145266128570 x^{55}-19653626074111338283017500436302166779873471589652682754 x^{54}+4298870402190978852982760261361727420827640981034378576 x^{53}+1282398849228981622761622372323589726123489574353575959 x^{52}-277541702468794444453292713176650012596380929201966280 x^{51}-75529350139956880521038480703659187511924144780917603 x^{50} $ \\ $ +16305353949605219176101593288883238158231467543547826 x^{49} $ \\ $ +4004321427719720873899361167177406536067484526553193 x^{48}$    }

   \noindent
   \hspace*{1.4cm}    \parbox[t]{5.4in}{      $ \ds -868924422065040691434478639248236561187995728338972 x^{47} $ \\ $ -190519283863720980807148350510306775977463477785764 x^{46} $ \\ $ +41858801347597820517419524689911332924701082452300 x^{45} $ \\ $ +8107330450039774260655440255161272454253099593340 x^{44} $ \\ $ -1816096021954754651413591903131078381731270548598 x^{43}  $ \\ $ -307410850500532756960305519701754252770493299392 x^{42} $ \\ $ +70685378893463013737381280333557287550155707404 x^{41} $ \\ $ +10343322751702715605436520242839707069561586012 x^{40} $ \\ $ -2457820343923407812327012892611551959148848280 x^{39} $ \\ $ -307396971836508984974473222367024862710398731 x^{38} $ \\ $ +76013090032333846802488970524268149399030492 x^{37} $ \\ $ +8028070190506233160675351280418036737118811 x^{36} $ \\ $ -2081237662643148349343338127608426382459798 x^{35} $ \\ $ -183192481706820418844797630474336451764682 x^{34} $ \\ $ +50199986057943732783453109365829677638946 x^{33} $ \\ $ +3629079460543821513251439573961225072483 x^{32} $ \\ $ -1061070939942330035142960501676683823986 x^{31} $ \\ $ -61961596334968465422309339823608969622 x^{30} $ \\ $ +19542643429831579447993706571788260332 x^{29}+904269231261127998194767298205800738 x^{28}-311709069260363250251696726383783488 x^{27}-11174298026848311497831255691342299 x^{26}+4276688379766372836916890198565848 x^{25}+115666506078113843231732543285491 x^{24}-50093539919800877709071405722160 x^{23}-990881453042917703333320029873 x^{22}+496645781161863721248018799186 x^{21}+6938053534118136835461821918 x^{20}-4126437790781957217047408768 x^{19}-39328351858168236805821024 x^{18}+28393432299941148742837432 x^{17}+181163936481532315387292 x^{16}-159471324966255973616714 x^{15}-706841110728069951800 x^{14}+717882285530978248402 x^{13}+2578967578254945203 x^{12}-2529459415781845334 x^{11}-9491409923223955 x^{10}+6754960249210514 x^9+32285495443523 x^8-13053710287562 x^7-84847183417 x^6+16976021872 x^5+146997008 x^4-13025010 x^3-140812 x^2+4288 x+51)]/
     $} \\
          \hspace*{1.4cm}   \parbox[t]{5.4in}{ $ \ds
          [(1 + x) (-1 + 35 x - 473 x^2 + 3042 x^3 - 8357 x^4 - 3077 x^5 +
      69776 x^6 - 115677 x^7 - 82259 x^8 + 385137 x^9 - 228633 x^{10} -
      268530 x^{11} + 373867 x^{12} - 98551 x^{13} - 64909 x^{14} +
      49836 x^{15} - 12711 x^{16} + 1135 x^{17} + 73 x^{18} - 20 x^{19} +
      x^{20}) (-1 + 15 x + 195 x^2 - 2476 x^3 - 9408 x^4 + 128774 x^5 +
      151702 x^6 - 3080005 x^7 - 152040 x^8 + 39805335 x^9 -
      22147982 x^{10} - 300921194 x^{11} + 284159318 x^{12} +
      1383107908 x^{13} - 1722027429 x^{14} - 3930756397 x^{15} +
      6069754917 x^{16} + 6915053418 x^{17} - 13500672554 x^{18} -
      7214631815 x^{19} + 19878551923 x^{20} + 3475881699 x^{21} -
      19978574007 x^{22} + 1204754727 x^{23} + 13960633114 x^{24} -
      3214431392 x^{25} - 6832632284 x^{26} + 2528557309 x^{27} +
      2326963032 x^{28 }- 1184186750 x^{29 }- 534678044 x^{30} +
      369015343 x^{31 }+ 75261636 x^{32 }- 78835592 x^{33} - 4000296 x^{34} +
      11506048 x^{35 }- 640379 x^{36} - 1110448 x^{37} + 155006 x^{38} +
      65338 x^{39} - 14541 x^{40} -  1860 x^{41} + 680 x^{42} - 2 x^{43} -
      13 x^{44} + x^{45}) $    }

      \noindent
   \hspace*{1.4cm}    \parbox[t]{5.4in}{      $ \ds  (1 + 15 x - 195 x^2 - 2476 x^3 + 9408 x^4 +
      128774 x^5 - 151702 x^6 - 3080005 x^7 + 152040 x^8 +
      39805335 x^9 + 22147982 x^{10} - 300921194 x^{11} -
      284159318 x^{12} + 1383107908 x^{13} + 1722027429 x^{14} -
      3930756397 x^{15} - 6069754917 x^{16} + 6915053418 x^{17 }+
      13500672554 x^{18} - 7214631815 x^{19} - 19878551923 x^{20} +
      3475881699 x^{21} + 19978574007 x^{22} + 1204754727 x^{23} -
      13960633114 x^{24} - 3214431392 x^{25} + 6832632284 x^{26} +
      2528557309 x^{27} - 2326963032 x^{28} - 1184186750 x^{29 }+
      534678044 x^{30} + 369015343 x^{31} - 75261636 x^{32} -
      78835592 x^{33} + 4000296 x^{34} + 11506048 x^{35} + 640379 x^{36} -
      1110448 x^{37} - 155006 x^{38} + 65338 x^{39} + 14541 x^{40} -
      1860 x^{41} - 680 x^{42} - 2 x^{43 }+ 13 x^{44} + x^{45}) (-1 - 75 x -
      2106 x^2 - 25677 x^3 - 59530 x^4 + 1782084 x^5 + 17272711 x^6 +
      16915722 x^7 - 549259769 x^8 - 2774119728 x^9 +
      1945931095 x^{10} + 53672650626 x^{11} + 125905839478 x^{12} -
      255069305286 x^{13} - 1662208525171 x^{14} - 1706160646334 x^{15} +
      5962148469123 x^{16} + 17962282960815 x^{17} + 7082138902898 x^{18} -
      41641400945047 x^{19} - 71155511878215 x^{20} -
      7096003752745 x^{21} + 98767225236595 x^{22} +
      104041755500065 x^{23} - 4751714309754 x^{24} -
      84592421865025 x^{25} - 57545771312940 x^{26} +
      4180055107783 x^{27} + 24201660957901 x^{28 }+
      11084074318344 x^{29} - 364628264623 x^{30} - 1894529779890 x^{31} -
      523784901608 x^{32} + 41364917589 x^{33} + 43581474910 x^{34} +
      4664480242 x^{35} - 1210342521 x^{36} - 282744551 x^{37} +
      5619268 x^{38} + 6144308 x^{39} + 323744 x^{40} - 54975 x^{41} -
      5843 x^{42} + 82 x^{43} + 30 x^{44} + x^{45}) (-1 + 42 x + 2133 x^{2} -
      88106 x^{3} - 1159305 x^{4} + 59614372 x^5 + 152015398 x^6 -
      18819762746 x^7 + 40034190687 x^8 + 3264921562892 x^9 -
      15997694438884 x^{10} - 336798102365195 x^{11} +
      2393078617098732 x^{12} + 21262382124790258 x^{13} -
      203991793583280580 x^{14} - 802344394384627520 x^{15} +
      11100949049156620432 x^{16} + 15124565789332508780 x^{17} -
      408720018862600116122 x^{18} + 70049454004597571197 x^{19} +
      10586093658527044258990 x^{20} - 12631025888273228837204 x^{21} -
      198508817597338205942399 x^{22} + 387402987345930766315590 x^{23} +
      2753586280130848546932800 x^{24} -
      7116726860274335286714358 x^{25} -
      28694708927568151334441162 x^{26} +
      91397507315681115211888499 x^{27} +
      226797214695129167282089048 x^{28} -
      870318027304318790444523656 x^{29} -
      1363049305769562405267119112 x^{30} +
      6338665081932290022053776544 x^{31} +
      6178413328764184607346600034 x^{32} -
      35988003082214887026742513610 x^{33} -
      20492020074586065235166736101 x^{34} +
      161266450872180007915993984787 x^{35} +
      45099606966665951271693058428 x^{36} -
      575001009719437474541893835090 x^{37} -
      37571777602636385348482415900 x^{38} +
      1639314263514932955906264058314 x^{39} -
      161019967972020863550757762131 x^{40} -
      3745699796292066069224199321372 x^{41} +
      851274013919532238198719327872 x^{42} +
      6859539342150922395619895904651 x^{43} -
      2285177200474388410515202543750 x^{44} -
      10046278725288151331993329222702 x^{45} +
      4263336781464515502088030143628 x^{46} +
      11714187061919722875421984542193 x^{47} -
      5951624800503928235794807036596 x^{48} -
      10796511948612809185000930216914 x^{49 }+
      6382964048451425290619045645907 x^{50} +
      7780699596208522812005194118918 x^{51}-
      5309228126699605717428066646335 x^{52} -
      4313080247506333381456597185155 x^{53 }+
      3429850650853924016071831349581 x^{54} +
      1790273570607001253851113421413 x^{55} -
      1715098075696198937373310320219 x^{56} -
      528375021398399743450311766854 x^{57} +
      659377687421116918032870695489 x^{58} +
      96604393710766619419597252621 x^{59} -
      193029880068729539119900529947 x^{60} -
      4069176647541912150779380527 x^{61} +
      42490073730573974965749981782 x^{62} - $    }

      \noindent
   \hspace*{1.4cm}    \parbox[t]{5.4in}{      $ \ds
      3413445482746271605206739212 x^{63} -
      6917976301240196729904525844 x^{64} +
      1165798086293282165887806486 x^{65} +
      814163354061253637176607304 x^{66} -
      208629109950888272910391562 x^{67} -
      66693134076508177810310164 x^{68} +
      24445572577097454676637446 x^{69 }+
      3497333010513277273515012 x^{70} -
      1989966629028961602789938 x^{71} - 83751655748872954401344 x^{72} +
      115359756476647736425785 x^{73} - 2722708701437214377921 x^{74} -
      4826028852207290118097 x^{75} + 345815430004445639338 x^{76} +
      146771902754630028398 x^{77} - 16434126042052261132 x^{78} -
      3249431151334904798 x^{79} + 493956208888831088 x^{80} +
      51941522002476761 x^{81} - 10408635523210110 x^{82} -
      583754494635043 x^{83} + 159473348039387 x^{84 }+
      4295711619291 x^{85} - 1796857404178 x^{86} - 16067848999 x^{87} +
      14814596478 x^{88} - 26776331 x^{89} - 87402987 x^{90} +
      632038 x^{91} + 351431 x^{92} - 2932 x^{93} - 868 x^{94} + 5 x^{95} +
      x^{96}) ] +        $} \\ \\
          \tiny      \hspace*{1.2cm}  \parbox[t]{5.4in}{ $ \ds
      2 x (46 x^{396}-98663 x^{394}+103330923 x^{392}-70441197913 x^{390}+35155964875133 x^{388}-13698710132496906 x^{386}+4340125105002385014 x^{384}-1149759827746963528917 x^{382}+259929987100009696284557 x^{380}-50932861135918122223829810 x^{378}+8756638139440990389358313195 x^{376}-1333985217571940187197451240331 x^{374}+181533443205198151122082022185649 x^{372}-22217552793147805762981637573011688 x^{370}+2459600715754456504164889912881341506 x^{368}-$ \\ $  247515708877923375980879665810407297661 x^{366}+22738761311491358362702450409780988376752 x^{364}- $ \\ $  1914169087997037720315692655765521487952746 x^{362}+148140783524272534189591569529841509793921495 x^{360}-$ \\ $  10571110365132424605167548167570490005466916653 x^{358}+697354337768081361695832597331203244607312491200 x^{356}- $ \\ $  42627841622839732800715030364971725911757390774307 x^{354}+2419698520729661769408353476413504302049289304962092 x^{352}-$ \\ $  127788867501703032172805587660100147203507795796146580 x^{350}+ $ \\ $ 6289993657212304809675147080584097670105004493458174674 x^{348}- $ \\ $ 289021096163210136767216518331644094038841945366513323131 x^{346}+ $ \\ $  12415687292006613696646108266231544782991492014272798253710 x^{344}- $ \\ $  499301963831407137359455972585339777648843967791749897214877 x^{342}+ $ \\ $ 18821394945607889547976221264436359848702806768813821917606606 x^{340}- $ \\ $ 665796183580167320985636989469821908227827674161852292969132421 x^{338}+ $ \\ $ 22125906217205743117162084624202673828881337560649670615316033773 x^{336}- $ \\ $ 691462152110756942550101382023804219195898319683858174832453577993 x^{334}+ $ \\ $ 20340039138519979280996632551209737449039083523070070175394176335444 x^{332}- $ \\ $ 563681131014638341569002226319150452552577166274802601946503474544740 x^{330}+ $ \\ $ 14728961911425297405747127815089759552263635146635189566154384316814671 x^{328}- $ \\ $ 363165569063275748542820740285693036114537773099619359356290932184345234 x^{326}+ $ \\ $ 8455674581206667977718948142502644109096923941281777176243993323095558709 x^{324}- $ \\ $ 186038124966291071296147388609469438486219953863343193338677308238749373817 x^{322}+ $ \\ $ 3870330774341861708239103249511659728990124935985357906649766675108713727279 x^{320}- $ \\ $ 76182211035865467234855531181915188675803788818450436412387248111177499911328 x^{318}+ $ \\ $ 1419616628662939109960779504743244889516332502212233956180451620146694364967322 x^{316}- $ \\ $ 25057630651098414355142839914729267207053574160914981301451237090399477022357602 x^{314}+ $ \\ $ 419167324583285741038111405021093159189709062945180455307228106738810484677624673 x^{312}- $ \\ $ 6648579778523473147710693276408415856334731698103701666897522520417792200333815475 x^{310}+ $ \\ $ 100038960634997054637544254345841617270810359511684934106854406601464678089809549702 x^{308}- $ \\ $ 1428574662843738976180271280243182178849406980566756147313263547278466294260254708961 x^{306}+ $ \\ $ 19369363615348183925718637949576495829621058386935017546936101229911326184744324705490 x^{304}- $ \\ $ 249449468108612971898094179859605296815866260991901605830187862976533920174633866819727 x^{302}+ $ \\ $ 3052617211466537230943054693452330748221146532983630244493486276700154351918994551133112 x^{300}- 35509483016172287617303901330950686561425462168198356760161083480007569929282120301425730 x^{298}+392781189980479392361361268700225739320160220392660524224965433785212751899877871405362835 x^{296}-4132722272592502285820875209978569968769999182742951347425657750511188846743208175377980582 x^{294}+41375092066857640628256061347050660161909536278898588267645757493931699942942384661923477067 x^{292}-394266973535866143371341845461626005623496008255914854003496843848939330664807146449198175854 x^{290}+ $    }

      \noindent
   \hspace*{1.4cm}    \parbox[t]{5.4in}{      $ \ds 3576973430439872413841808028939123555453400404643187105660149680165243169704825051514540256725 x^{288}-30905385734059454581957858922462230240530576532577811370676787876050253052120898709760832662143 x^{286}+254366144294532251099758521854349002146616738567177244020853999495110323012309816503058096209438 x^{284}-1994799331100782282867680701792430177522094923740087709224720383121250442743181054750167347260476 x^{282}+14909282402886535619430150830770485167338209129045737168015135459356685507651182459667448394936304 x^{280}-106225729474632349832942047074987515773973146765093654705125213844467758880514426324329686990538506 x^{278}+721624345130202592924891303435954117915693422604324610660288759871197491625913158725905906779911022 x^{276}-4675084862017387837410247940644521847669234969903281062868519847858337426274370141403322871006632065 x^{274}+28890073091055252735699961120794201502530608851790515851307025242993121846661869184184951201144938746 x^{272}-170321033058872414212300304494811339527888792336409153885231235968232769355898020392735234993343140040 x^{270}+958129022916945258190673854071555417105044376424818491922996278716577535079573277207830913168393800118 x^{268}-5143839350760308870163849990187503661272926119190865471825541862515438538873493505091580189615616887839 x^{266}+26358840018154859740885388890094550208764747552059443748544514405702610168410078856915790875088914995872 x^{264}-128945096148995684131716153620703973992691880949873062324448274231719704376574973260162920457378548444582 x^{262}+602259434621220363966807858114503124730725174725172612531595610253585457799153146585815010660817950503304 x^{260}-2686085542543526020121593469342599035711286953648474853674064505254879047914183939604463406550251335241539 x^{258}+11441090615896087834971067252509572595477114168607920955495003197503082630163250380066218695547594484105984 x^{256}-46545437001854449479595199393352513836213488903883520078447149141860238942525195289799055123222233051390230 x^{254}+180882511833166186504480780434432477334655543101204106979171621920723508625834366378936469472558723259966471 x^{252}-671537938248400431667521616141322198148235310007454003405013089740928218577094960347401997251286055327806449 x^{250}+2381995276334002305717522052674554979256094141228396026773039318734983873063721608831748703104415047971810458 x^{248}-8073218581560077155886965032991221994991177071629232520419012348781146327314170671237235912049840280001759105 x^{246}+26147154123186640209901470581760304792780264870287275304231341911641032874222567715006558610079182174556352113 x^{244}-80929691001025027164361035141814565154108815057828215736547359257109786757545345193759045313267607058871382696 x^{242}+239402349196105318692594669968763290677235271722631085229167448904236169938583214642191493090574860053274741535 x^{240}-676885016648686495336293803105411115138469155537992041975872444182881348819060094625850977808729638286151441251 x^{238}+1829337752498465860454435076736800582614238177192494255679158150053780795285720025694169057537535507972976110716 x^{236}-4725955605814772643567200232816305196966831272914026282970837923335050677214071020821587475488770032678443734445 x^{234}+11671431654181885490202368181269902415844927813300908528180181816130766922006016466178072200526328201226263377706 x^{232}-27556065761718749529179938272404464580657577931538197178462791462427508695679146757084333445392655044039287003216 x^{230}+62199457423895509214055326059012623374001681326292809211622177754814735277742810894657267517216175112783732405556 x^{228}-134229485917116984021312123745055338152335999338828841812639245646229013603881081994963999074145809839197253994940 x^{226}+276958579833333279031606185040082727877991401976057270043286044597456457312233218481643680326665535723705409707954 x^{224}-546385193019682632932094721736354445778263704857573347494871698932365966301255590698423299926164345489782962782547 x^{222}+1030646916167002830668179848587630655743297551021865783924045025128914870774178835210334446460113037486323185203503 x^{220}-1858897980248429745727147050676720550877840292063067939434791922103703836851012374508441082455773126843995549095807 x^{218}+3205839295110340094155728894967150063826794382622291557011005724143230581966693364976879320826594010861296548331111 x^{216}-5286551952312701028415811192821179038579058446558466056540797226649837737968798191929044636489643817595534108287495 x^{214}+8335852052643236010880803593171947413823623544345149520345961494440297372017006372178094970248167527852407608641499 x^{212}-12568242978840248868590982612074266988730927309380490974232091203685050938702087307025197279238270797498039887576282 x^{210}+18119438298297706828877600882535243247057224427490885639067876617153225925976227958386910671307898790169603858972890 x^{208}-24977972284870665222145463503141110327194635512192061872526992190138104992279139301901472599594412426108253197742864 x^{206}+32923559900991458001060961717260858907831560782659106888464540301256118942401335040744992183538448514655251918145311 x^{204}-41494123904844778421731575954186883655577016260295393665150482788473758882257550945930621525922818907913836276032291 x^{202}+50002050931000668560906442791739081003125555262533065113723705141918770450464575960652002847365731853140755911015756 x^{200}-57610253940081409111621026588196730697113001142614415895358967004878232737460505607290990080964909711612341108957255 x^{198}+63461448282755849763782204891631232731942281042196777408522542451427265141195040017356873925958468576127686020910512 x^{196}-
      66834998643538788520257931301098532710132006053053805812774043785895522260855719373919435130971625653167961326867190 x^{194}+67292288373732871395803498067050012793717645466517800586365742179331699683967668681873774149663241576423911979339369 x^{192}-64770264908917988714707069529155313319392016809641127604667691628873274856200166285857436811754629808818906874471148 x^{190}+59595666016460851824024259766135813584108153534433187358486180679863612347076129580017780384388014257074934837985708 x^{188}-52415512880298154520461005165269678972182176738869742156746730275474555628625732374910802891587068280041778051385442 x^{186}+44064267594581987320537139589770870803586652826508967017941307358623667342573289512287459227740383776756361901438494 x^{184}-35405279645954469825427706569636986596125494372093066886953846167113123190434948824284423012235041095586056674344422 x^{182}+27187870990712318760930713050527511242183327426764338441379911860218476683542296264817001249359173202353073385619163 x^{180}-19951551794324622351205550898330134314817945155940205234029181110492080380085166259438211442303822654787785546383755 x^{178}+13990674988454925776827969008097977869908896494692254930637363907983240758843569885914046855915219333875876333269694 x^{176}-9373998375080706454321734508024263007296558369136595092919068927030949358103044280143355603450448327394398032942464 x^{174}+6000624232863086632420121977881139298235581797426296122461376535325773459762518116039815989366538784190778221052427 x^{172}-3669541526112423669716289525260120532984934456871414253192079768608640894018322746127653774089455935617270973693035 x^{170}+2143515872718947827539737144658115318847567716002891656557194309218946589476983895583144253731832567125684089528960 x^{168}-1195897523518517300893260999400062372289958737043869393030750379409358493149446299209149152105713724966905430212363 x^{166}+637181966421592542191226435100702353515440210327685600368802915292338754216538927011724848805953826319917975419780 x^{164}-$    }

      \noindent
   \hspace*{1.4cm}    \parbox[t]{5.4in}{      $ \ds
      324176884282112536775822085782616815009631345499702787023268075263312794304315705399925025923849157587569456058383 x^{162}+157468360033942062927273338707793692418097356841997421580585748151329367807125908274419875667182095247158646340609 x^{160}-73019289212808312243575970128049131892122988232350744942368582006344794797357642274920543757458439259541283287389 x^{158}+32318550227536039174765643684102515658165526746169180824384817057129852350435729485617521771707980422097860518683 x^{156}-13651158995597954809510487008309797855058959159001102352052717327865838510641357952126074893144168766196530527757 x^{154}+5501996747888069175626240693213685110555308735076288916476718576045567692498271627531742831488321988278163693989 x^{152}-2115584018453643886624524963453119773304755875284421198549019741598150091303579224820037834450221816808915182555 x^{150}+775928071524414772276669518848741539928397017275841055396378469934810756340833837980634438990596084948026917475 x^{148}-271401317974895971068360778268762057001477188811059022659434045977869503897376116440993598988167287413833923747 x^{146}+90513975585566147689765188686470910327684750755892977901690785431065123848150277014033719787324260454581977863 x^{144}-28776775077374740057559473967385090304023947074035370527777284278373016695054898071568418125038429537313932992 x^{142}+8719599617401979486594012333821447341587774605577528962821397446277543405266792176413275629089065207448105460 x^{140}-2517557826624048276833446099008503601267351858789288443092272062492037287640426616408205920675460157709954807 x^{138}+692447188882217823347254920133937354920576087358226862606603225085229597389027704689745512514629623882743481 x^{136}-181388203795565435945091285058585914185685198279372838944425334840496516737377974764743647899786484408210541 x^{134}+45241023442337521523669882838198195681663250647420797266304628446933035648193676081836159089433468277157600 x^{132}-10740818123189917804813505507760127462951915093798210945773150530858004610334770602057776646879945135129839 x^{130}+2426604473191101597709724263749232855530347490438638833845880686028933822716689503849993508350946848003040 x^{128}-521539028998598476514973945354690283289641877676388608252339658259473989748368091141580088019068140225212 x^{126}+106601787876123866582105211779963691439768946689150937835749835539194406414481210697693979636543501124658 x^{124}-20715223827473502142182665027695326748827667646432803408187460167038202061208504310833067654329240203390 x^{122}+3825723409411171579445687431531535219366750959740830768988132314828398939780226032077458493428654360015 x^{120}-671245058766678503087225029830741554805585740370808054482665074277555463418063772533795027635832956224 x^{118}+111848518157387504216980092807361270723479985990637375784389860531020725985972126511904059926560737516 x^{116}-17692609619919461295713109360166654736614798249285978070342116913910341082924559795199533640004195763 x^{114}+2655764936658706488631083576722920738413328646043525097116024716409501123904011202807596393815287329 x^{112}-378128398527545035891556283840497686437891203300778836217683118107231637771233743347365674733298047 x^{110}+51044468004669865238507597741241419442612176860871263937878952004215428052003670066171062238329837 x^{108}-6530059980469033096264434236772193333190510245885155248473196139882062002134505419262298410570012 x^{106}+791289347710924635333723167057775954642487754335831830471959902015229081085860573504351407719684 x^{104}-90778800413003119145124577142386651565523117003801719402952737676332522186864125945091802052808 x^{102}+9854534992686291518498439095876345794130669981619125347168165838278350999567793649530172421458 x^{100}-1011700155333153233491814900918345353333401756546121488214554427383428716755141262181703939069 x^{98}+98170762957251507760626972530718090626776499780239873229414985945299480010383894132999772211 x^{96}-8998426685994307063014595509361405250168952322671869458967098432367898026557664413370082365 x^{94}+778632583871476560604241077945982536014905793500353578715782361089269509652491539318552427 x^{92}-63561642662005286364513166027862948504105805298151282405760275886085290153231286972116771 x^{90}+ $ \\ $ 4891656390973669213700277463009229727153271064782291201852876727219543291986333875648007 x^{88}- $ \\ $ 354652349595437534969367855187774986142135626974528824081129251084033148831900014120641 x^{86}+ $ \\ $ 24205208064708439928394767798830161893511836629535532518689102678025138870269111453967 x^{84}- $ \\ $ 1553929362178094987762388314581874473108977712558698485180615797459955614730132555717 x^{82}+ $ \\ $ 93758587503696271629555237454367274832486155076210425468403372595650567575586941025 x^{80}- $ \\ $ 5312164249730504152655757025730148047959567889622597687120616440169574331713661183 x^{78}+ $ \\ $ 282369190862886095823820601319627711892900786762782725311884374246517650888257423 x^{76}- $ \\ $ 14068035168649166357478596964498327120795105468678617482742396780178329131351159 x^{74}+ $ \\ $ 656271881441418544551436728955438689090974298903787290786097964502981880621765 x^{72}- $ \\ $ 28635831898243396278103264376674245530069021049968474172016752615670354084401 x^{70}+ $ \\ $ 1167424770460965225265093888259660577210504514721053725627119841533470676505 x^{68}- $ \\ $ 44415320976877761138481650350760462919699694938585363397486561357235712112 x^{66}+ $ \\ $ 1575018136696732652438847045335982373731767048446118148268031314465653851 x^{64}- $ \\ $ 51990187597347140824993195569566581476203931984856721935386073140153813 x^{62}+ $ \\ $ 1595299371461245022355453257523357772888780393251006705266840163788081 x^{60}- $ \\ $ 45437535860544136075365165355094452753768994979203971854056345155667 x^{58}+ $ \\ $ 1199409595012685529348521651006362053090925515252033800127984404484 x^{56}- $ \\ $ 29294541352110155702064310714317538435438397418762600091754548426 x^{54}+ $ \\ $ 660863979381621350365336617165890810123337915626563449620421612 x^{52}- $ \\ $ 13744590467813642138525481316334127990751219373013051963818434 x^{50}+ $ \\ $ 263012532882370586257265329286804196602033000864841457765983 x^{48}- $ \\ $ 4620744767454649503096564075611604862853988022487724477636 x^{46}+ $ \\ $ 74358659745520181754591249520141332469458076467010216318 x^{44}- $ \\ $ 1093316070208727410914122061141694248400379600055147098 x^{42}+ $ \\ $ 14647753403146097990161070687249525562091562655670290 x^{40}-178285588702625035049557908347308074077121429136687 x^{38}+1965017074671039012137509923385541700528087397778 x^{36}-19541728026952993076302956332849201606467489085 x^{34}+174655070448622760717451364907732117373779501 x^{32}-1396693027798282481203639295252272911868400 x^{30}+9944245957019118072212562497355131720419 x^{28}-62685741380879830139772690160680833543 x^{26}+ $    }

      \noindent
   \hspace*{1.4cm}    \parbox[t]{5.4in}{      $ \ds  347644073978191573408399366999826089 x^{24}-  1683861009551491629496156905104469 x^{22}+7063345853347051702619070353120 x^{20}-25405318432214080934820546240 x^{18}+77423474731501072394563948 x^{16}-197027124290597542264447 x^{14}+411101003325202086287 x^{12}-686874946108416584 x^{10}+890292088288392 x^8-856005798303 x^6+570469581 x^4-234309 x^2+45)]/ $} \\
       \hspace*{1.4cm}   \parbox[t]{5.4in}{ $ \ds
      [(-1 + x) (1 + x) (1 - 2 x - x^2 + x^3) (-1 - 2 x + x^2 + x^3) (-1 +
     9 x - 27 x^2 + 28 x^3 - 9 x^5 + x^6) (1 - 8 x + 8 x^2 + 6 x^3 -
     6 x^4 - x^5 + x^6) (1 + 8 x + 8 x^2 - 6 x^3 - 6 x^4 + x^5 +
     x^6) (-1 - 9 x - 27 x^2 - 28 x^3 + 9 x^5 + x^6) (1 - 90 x +
     2133 x^2 + 19654 x^3 - 1407750 x^4 + 12020085 x^5 +
     211505759 x^6 - 4005889491 x^7 + 1140372558 x^8 +
     418020172543 x^9 - 2487724623906 x^{10}  - 15524197875990 x^{11} +
     207409855235855 x^{12}  - 168411001997175 x^{13}  -
     6989852039646672 x^{14}  + 29176347222301350 x^{15}  +
     85446200204468703 x^{16 } - 862651296424382517 x^{17 } +
     732228455321894508 x^{18 } + 11231294123644498821 x^{19}  -
     33803475249656479500 x^{20}  - 54921206512262415741 x^{21}  +
     413908203902580672042 x^{22 } - 246207028620567111678 x^{23 } -
     2478006706489040058135 x^{24 } + 4892059779725991163731 x^{25}  +
     6604769927708293281489 x^{26 } - 29615041041062939657101 x^{27}  +
     5692338640092901485984 x^{28}  + 95353352498492938925253 x^{29}  -
     102458115245643694623619 x^{30}  - 163321849837230400108482 x^{31}  +
     356486700262708176382602 x^{32}  + 75818151591482233031073 x^{33}  -
     667256343974500352911503 x^{34}  + 278039887830219509936100 x^{35}  +
     733454500237993350748973 x^{36}  - 685954133222650252610760 x^{37}  -
     427320993379027646418498 x^{38}  + 781904366023459571507369 x^{39}  +
     28153248400039056561681 x^{40}  - 531396998038886850489789 x^{41}  +
     154086139945066863345177 x^{42}  + 222672705102690029154612 x^{43}  -
     127277447297634414536733 x^{44}  - 53175680795563803057275 x^{45}  +
     53846433449462045531871 x^{46}  + 3955591274039798235417 x^{47}  -
     13978733150559004421783 x^{48}  + 1617114491250147902040 x^{49}  +
     2287879124568249125151 x^{50}  - 596315566352941519713 x^{51}  -
     225529529482003369278 x^{52}  + 97905185502923227626 x^{53}  +
     10648699061080306343 x^{54}  - 9487814076406871859 x^{55}  +
     206953141963128534 x^{56}  + 570333880393160836 x^{57}  -
     62350849780138683 x^{58}  - 20789244041051196 x^{59}  +
     4077900882295116 x^{60}  + 395015565737433 x^{61}  -
     144460876536018 x^{62}  - 252140561385 x^{63}  + 3043888623648 x^{64} -
     172278608445 x^{65}  - 36698018483 x^{66}  + 4110045927 x^{67}  +
     192261069 x^{68}  - 45656109 x^{69}  + 614187 x^{70}  + 243603 x^{71}  -
     12637 x^{72}  - 369 x^{73}  + 45 x^{74}  - x^{75} ) (1 + 90 x + 2133 x^2 -
     19654 x^3 - 1407750 x^4 - 12020085 x^5 + 211505759 x^6 +
     4005889491 x^7 + 1140372558 x^8 - 418020172543 x^9 -
     2487724623906 x^{10}  + 15524197875990 x^{11}  +
     207409855235855 x^{12}  + 168411001997175 x^{13}  -
     6989852039646672 x^{14}  - 29176347222301350 x^{15}  +
     85446200204468703 x^{16}  + 862651296424382517 x^{17}  +
     732228455321894508 x^{18}  - 11231294123644498821 x^{19}  -
     33803475249656479500 x^{20}  + 54921206512262415741 x^{21}  +
     413908203902580672042 x^{22}  + 246207028620567111678 x^{23 } -
     2478006706489040058135 x^{24}  - 4892059779725991163731 x^{25 } +
     6604769927708293281489 x^{26}  + 29615041041062939657101 x^{27}  +
     5692338640092901485984 x^{28}  - 95353352498492938925253 x^{29}  -
     102458115245643694623619 x^{30}  + 163321849837230400108482 x^{31}  +
     356486700262708176382602 x^{32 } - 75818151591482233031073 x^{33}  -
     667256343974500352911503 x^{34}  - 278039887830219509936100 x^{35}  +
     733454500237993350748973 x^{36 } + 685954133222650252610760 x^{37}  -
     427320993379027646418498 x^{38 } - 781904366023459571507369 x^{39}  +
     28153248400039056561681 x^{40}  + 531396998038886850489789 x^{41}  +
     154086139945066863345177 x^{42}  - 222672705102690029154612 x^{43}  -
     127277447297634414536733 x^{44}  + 53175680795563803057275 x^{45}  +
     53846433449462045531871 x^{46}  - 3955591274039798235417 x^{47}  -
     13978733150559004421783 x^{48}  - 1617114491250147902040 x^{49}  +
     2287879124568249125151 x^{50}  + 596315566352941519713 x^{51}  -
     225529529482003369278 x^{52}  - 97905185502923227626 x^{53}  +
     10648699061080306343 x^{54}  + 9487814076406871859 x^{55}  +
     206953141963128534 x^{56}  - 570333880393160836 x^{57}  -
     62350849780138683 x^{58}  + 20789244041051196 x^{59}  +
     4077900882295116 x^{60}  - 395015565737433 x^{61}  -
     144460876536018 x^{62}  + 252140561385 x^{63}  + 3043888623648 x^{64}  +
     172278608445 x^{65}  - 36698018483 x^{66} - 4110045927 x^{67} +
     192261069 x^{68}  + 45656109 x^{69}  + 614187 x^{70}  - 243603 x^{71}  -
     12637 x^{72}  + 369 x^{73}  + 45 x^{74}  + x^{75}) (-1 - 27 x + 1032 x^2 +
     25802 x^3 - 373186 x^4 - 9247650 x^5 + 68985132 x^6 +
     1756981328 x^7 - 7544737839 x^8 - 205344558763 x^9 +
     520362914282 x^{10}  + 16055543418770 x^{11}  - 22765506950292 x^{12}  -
     886185755306711 x^{13}  + 567341618795392 x^{14}  +
     35824949732566099 x^{15}  - 2114077619505885 x^{16}  -
     1089052544253505079 x^{17}  - 434310955869650180 x^{18}  +
     25382085183437405700 x^{19} + 19657336252849199189 x^{20}  -
     460196787380533985842 x^{21}  - 503282143127348655479 x^{22}  +
     6563545564790657680027 x^{23}  + 9019917524975714828718 x^{24}  -
     74279664653212096085631 x^{25}  - 121126151372353616341848 x^{26}  +
     671522546480445322781198 x^{27}  + $} \\
          \hspace*{1.4cm}   \parbox[t]{5.4in}{ $ \ds
                    1259864230974286367427691 x^{28}  -
     4874434339938184506685397 x^{29}  -
     10355985403728393543409067 x^{30}  +
     28508897879645580528825667 x^{31}  +
     68215067001652545430286192 x^{32}  -
     134568911549356278360128102 x^{33}  -
     363861169332415595759624659 x^{34}  +
     512191062359197441924007885 x^{35 } +
     1584783594082588087382278463 x^{36}  -
     1564210162255894419536702349 x^{37 } -
     5674861959676935021395050413 x^{38 } +
     3785132858090620875642211416 x^{39}  +
     16803139396061165672290060717 x^{40}  -
     7044093577535657561000115327 x^{41}  -
     41343132893232215469798457354 x^{42}  +
     9285232522254031569698694689 x^{43}  +
     84883083360723557875730426434 x^{44}  -
     5937121438225419731565543654 x^{45}  -
     145956035701218221797085712592 x^{46}  -
     7890101748188581051360660346 x^{47}  +
     210852803270320434959494868010 x^{48}  +
     32505194104448575284667024966 x^{49}  -
     256625689573728687673431393828 x^{50}  -
     60386088834617104154813509703 x^{51}  +
     263791426465741835712351964159 x^{52}  +
     79446568823858899041889805170 x^{53}  -
     229529669618761966378616574822 x^{54}  -
     81368503811164989012654074179 x^{55}  +
     169410836644260710451175862716 x^{56}  +
     67317485231746697845168652537 x^{57}  -
     106273241553496096920653558337 x^{58}  -
     45836707489100134208832885680 x^{59}  +
     56768242863723954140114918264 x^{60}  +
     25971874970291859366352369232 x^{61}  -
     25868414266252042008113790681 x^{62}  -
     12334499250260624842980447944 x^{63}  +
     10073049237434908783966012615 x^{64}  +
     4934518938825367378469146563 x^{65}  -
     3357135730239884258076120873 x^{66}  -
     1668961310040579145324436854 x^{67}  +
     958979388811076795805220259 x^{68}  +
     478484381627959740231307898 x^{69}  -
     235070462661933368244141645 x^{70}  -
     116494763716325583154471593 x^{71}  +
     49490588354774441565598327 x^{72}  +
     24113150837187023305271901 x^{73}  -
     8953841615972305424578920 x^{74}  -
     4245305143743032443594882 x^{75}  +
     1392157069027476262098948 x^{76}  + 635580648350932217926483 x^{77}  -
     185931065427671751697600 x^{78}  - 80838922982850596922843 x^{79}  +
     21306408936395504377734 x^{80}  + 8719476782236791891707 x^{81}  -
     2090911132691824012433 x^{82}  - 795459886846428765825 x^{83}  +
     175226009815304083621 x^{84}  + 61148524359498140722 x^{85}  -
     12490776292567568868 x^{86}  - 3941245311464514974 x^{87}  +
     753388475396092803 x^{88}  + 211610725442247167 x^{89}  -
     38184703986094031 x^{90}  - 9385073357203492 x^{91}  +
     1611821886653267 x^{92}  + 340074435384128 x^{93}  -
     56011183019777 x^{94}  - 9923940361375 x^{95}  + 1578324064638 x^{96}  +
     228740033731 x^{97 } - 35344978435 x^{98} - 4053271905 x^{99}  +
     611772610 x^{100}  + 53065630 x^{101}  - 7859404 x^{102}  -
     481716 x^{103}  + 70281 x^{104}  + 2698 x^{105}  - 389 x^{106}  - 7 x^{107}  +
     x^{108}) (-1 + 27 x + 1032 x^2 - 25802 x^3 - 373186 x^4 +
     9247650 x^5 + 68985132 x^6 - 1756981328 x^7 - 7544737839 x^8 +
     205344558763 x^9 + 520362914282 x^{10}  - 16055543418770 x^{11}  -
     22765506950292 x^{12}  + 886185755306711 x^{13}  +
     567341618795392 x^{14}  - 35824949732566099 x^{15}  -
     2114077619505885 x^{16}  + 1089052544253505079 x^{17}  -
     434310955869650180 x^{18}  - 25382085183437405700 x^{19}  +
     19657336252849199189 x^{20}  + 460196787380533985842 x^{21}  -
     503282143127348655479 x^{22}  - 6563545564790657680027 x^{23}  +
     9019917524975714828718 x^{24}  + 74279664653212096085631 x^{25}  -
     121126151372353616341848 x^{26}  - 671522546480445322781198 x^{27}  +
     1259864230974286367427691 x^{28}  +
     4874434339938184506685397 x^{29}  -
     10355985403728393543409067 x^{30}  -
     28508897879645580528825667 x^{31}  +
     68215067001652545430286192 x^{32}  +
     134568911549356278360128102 x^{33}  -
     363861169332415595759624659 x^{34}  -
     512191062359197441924007885 x^{35}  +
     1584783594082588087382278463 x^{36}  +
     1564210162255894419536702349 x^{37}  -
     5674861959676935021395050413 x^{38}  -
     3785132858090620875642211416 x^{39}  +
     16803139396061165672290060717 x^{40}  +
     7044093577535657561000115327 x^{41}  -
     41343132893232215469798457354 x^{42}  -
     9285232522254031569698694689 x^{43}  +
     84883083360723557875730426434 x^{44}  +
     5937121438225419731565543654 x^{45}  -
     145956035701218221797085712592 x^{46}  +
     7890101748188581051360660346 x^{47}  +
     210852803270320434959494868010 x^{48}  -
     32505194104448575284667024966 x^{49}  -
     256625689573728687673431393828 x^{50}  +
     60386088834617104154813509703 x^{51}  +
     263791426465741835712351964159 x^{52}  -
     79446568823858899041889805170 x^{53}  -
     229529669618761966378616574822 x^{54}  +
     81368503811164989012654074179 x^{55}  +
     169410836644260710451175862716 x^{56}  -
     67317485231746697845168652537 x^{57}  -
     106273241553496096920653558337 x^{58}  +
     45836707489100134208832885680 x^{59}  +
     56768242863723954140114918264 x^{60}  -
     25971874970291859366352369232 x^{61}  -
     25868414266252042008113790681 x^{62}  +
     12334499250260624842980447944 x^{63}  +
     10073049237434908783966012615 x^{64}  -
     4934518938825367378469146563 x^{65}  -
     3357135730239884258076120873 x^{66}  +
     1668961310040579145324436854 x^{67}  +
     958979388811076795805220259 x^{68}  -
     478484381627959740231307898 x^{69}  -
     235070462661933368244141645 x^{70}  +
     116494763716325583154471593 x^{71}  +
     49490588354774441565598327 x^{72}  -
     24113150837187023305271901 x^{73}  -  $}
\\
     \noindent
          \hspace*{1.4cm}   \parbox[t]{5.4in}{ $ \ds
     8953841615972305424578920 x^{74}  +
     4245305143743032443594882 x^{75}  +
     1392157069027476262098948 x^{76}  - 635580648350932217926483 x^{77}  -
     185931065427671751697600 x^{78}  + 80838922982850596922843 x^{79}  +
     21306408936395504377734 x^{80}  - 8719476782236791891707 x^{81}  -
     2090911132691824012433 x^{82}  + 795459886846428765825 x^{83}  +
     175226009815304083621 x^{84}  - 61148524359498140722 x^{85}  -
     12490776292567568868 x^{86}  + 3941245311464514974 x^{87}  +
     753388475396092803 x^{88}  - 211610725442247167 x^{89}  -
     38184703986094031 x^{90}  + 9385073357203492 x^{91}  +
     1611821886653267 x^{92}  - 340074435384128 x^{93}  -
     56011183019777 x^{94}  + 9923940361375 x^{95}  + 1578324064638 x^{96}  -
     228740033731 x^{97}  - 35344978435 x^{98}  + 4053271905 x^{99}  +
     611772610 x^{100}  - 53065630 x^{101}  - 7859404 x^{102}  +
     481716 x^{103}  + 70281 x^{104}  - 2698 x^{105}  - 389 x^{106}  + 7 x^{107} +
     x^{108})]$}

     \small
              \hspace*{1.2cm} $+$  \parbox[t]{5.4in}{ $ \ds
     [2 x (10 x^{234}-8036 x^{232}+3116826 x^{230}-777789012 x^{228}+140459010924 x^{226}-19575732635721 x^{224}+2192869442615256 x^{222}-203043261162959907 x^{220}+15860744145972836156 x^{218}-1061654259181284618043 x^{216}+61643963360547268352616 x^{214}-3135746457599683604296319 x^{212}+140886737267032070016601285 x^{210}-5629077615254845571116674365 x^{208}+201165226386163759654144405940 x^{206}-6462069799704532075577259175385 x^{204}+187395979857567378140002165289633 x^{202}-4924357277281107049152267722689820 x^{200}+117645105057404504579640726367876305 x^{198}- $ \\ $ 2562725454400765402055042431701019726 x^{196}+ $ \\ $ 51034293308071436523183488668662337292 x^{194}- $ \\ $ 931237399793732239312211966553659433797 x^{192}+ $ \\ $ 15602634512294314441189649313507554685928 x^{190}- $ \\ $ 240482275692786467550051625210456273912424 x^{188}+ $ \\ $ 3415406495414351081374726442578445252077521 x^{186}- $ \\ $ 44764077716630201301691478595694930358338780 x^{184}+ $ \\ $ 542167163788803636781807068603978297759503311 x^{182}- $ \\ $ 6075508334374411071716951457199106898239679742 x^{180}+ $ \\ $ 63059999170489825662163196447723262394592428187 x^{178}- $ \\ $ 606840600602383959229870119313176797183622474132 x^{176}+ $ \\ $ 5419103477412572684432052550619792181241997127177 x^{174}- $ \\ $ 44942189616982249825479198887395880821019406271228 x^{172}+ $ \\ $ 346384910969210484185096245002027540977940520462994 x^{170}- $ \\ $ 2482621892822958086693781775278061397074567069720187 x^{168}+ $ \\ $ 16555617927229667969570268291707720440146963633650249 x^{166}- $ \\ $ 102770695356854908740257647065530486718258489768992332 x^{164}+ $ \\ $ 594100129579170378823462201784480041831510692694632935 x^{162}- $ \\ $ 3199388466744790495249336717845125750552493347245965291 x^{160}+ 16055225676217329751374782258908786485192262905230619098 x^{158}-75094642941273403041378626430057438804098403158337719201 x^{156}+ 327433023820883125518963026189687529134828778493834700142 x^{154}  - 1331105763019379795158889731794522577786043586527135794045 x^{152}+ 5045603241557583344498373325746057722695150619521008567917 x^{150}-   17833501729304979369314345974854017695813408058914632682222 x^{148}+58772541749948122214845255572596255677570999051240568603899 x^{146}-180591175279909027202381528392720014578925584344179474598780 x^{144}+517309822480873275208546499457644455736211536636942389258663 x^{142}-1381217716362174367244465109373451473466519778619362217147942 x^{140}+3436620256209541301910548141600667078334862251467820856067292 x^{138}-7965873387600970360727235816502646296585416378278973138660594 x^{136}+17195429906745559114674426294600648856365845435957425008130114 x^{134}-34552768179460410278870600374548992624890589454868936659536238 x^{132}+64597698618267132971714047566998938497734146161550987028846418 x^{130}-112290906560270033525909617129427940442492862307172475566626062 x^{128}+$}

     \noindent
          \hspace*{1.4cm}   \parbox[t]{5.4in}{ $ \ds
          181359104629687426659463070872312704869894428155237945075996247 x^{126}-271898318078581701399519546480007008747267814166024259722802731 x^{124}+377974927704379425754725027462140876364714264658452318869284145 x^{122}-486529385203388835050670047855403070592021798362175485697578407 x^{120}+
     578876299482025002409870854742355528726570590974752914860915984 x^{118}-635196920773577151746829172992610796064860401241853526631383845 x^{116}+640853399908600047946243442098659657847650918630908073670094091 x^{114}-591970536295408179720784009946750875764734601095050210944010318 x^{112}+497552304482021966544519711981501695556695100326458943632090202 x^{110}-376829811003762607378523462595410795596058879978272454741419347 x^{108}+252885489196116744976703777574918146215411400467427417209472804 x^{106}-145444308948941353670145967242207235201116056709382634103361560 x^{104}+65922289867665461956372211013703274573676360090170744296361743 x^{102}-16351455065906411560931632531376233662360981621281204435243995 x^{100}-8282455627204234721382347440269370182428855188274320950926189 x^{98}+16157092683205400960967259838894201297133531158829269467299924 x^{96}-15162337515393528863412058489935141902708334435976632373512518 x^{94}+10983911987042882843642130407476877523724217213606259458751941 x^{92}-6753800026640791164083219552308510616837450604684139020116382 x^{90}+3652467566620375078278661793272252866306533153905993168820243 x^{88}-1767069023866030810306070239181495021499098994779664711520845 x^{86}+771910705055332588227251212064739232484103170875680271333516 x^{84}-306112934962277714939689511552558628833436895786183107683953 x^{82}+110568100844101824830000462024796573300956633775262712468350 x^{80}-36449095125171220366990521053546236550589130001838734318159 x^{78}+10979104367711095727964962115059691639867779216650239566486 x^{76}-3023692504376522986964265942767575067607877199471927807796 x^{74}+761531444496053896930418032650003442822068951831111847983 x^{72}-175376672359602751800785004346069432694044723279695070587 x^{70}+36917955673558441096097017818312627647034835230238649588 x^{68}-7099802080150917197861037360259253413399842069376205020 x^{66}+1246477413369752103302436347221637269353065635247650701 x^{64}-199606039192221659423578654275510349512033517894484627 x^{62}+29125696218087725632079815492782932740414455475687698 x^{60}- $ \\ $ 3868114755748319915924461553799098260037830123041124 x^{58}+ $ \\ $ 466978540963442736435050225990703945809331057800822 x^{56}- $ \\ $ 51176590728505655173335936214327809274458550085939 x^{54}+ $\\$  5083617961081068831334147588632573356326603235338 x^{52}- $ \\ $ 456981168946655750727146821584379959025920122677 x^{50}+ $ \\ $ 37109720124135832189690873664615237728496457257 x^{48}- $ \\ $ 2717188243122455840236355230205745359318748817 x^{46}+   $}

     \noindent
          \hspace*{1.4cm}   \parbox[t]{5.4in}{ $ \ds 179021253817725854826985158059969333509048275 x^{44}- $ \\ $ 10589463180942414699077114389568792852890513 x^{42}+ $ \\ $ 561006070800630203011718478612945685713873 x^{40}- $ \\ $ 26547020923145440962396550828554452727449 x^{38}+ $ \\ $ 1118708978378399496749381713021395650177 x^{36}- $ \\ $ 41841783660021985547383298357725316058 x^{34}+ $ \\ $ 1383692492853978164771857773349012424 x^{32}-40282351666207191521656215845757314 x^{30}+1027215481269619466417570326364485 x^{28}-22812022238110302125704020933731 x^{26}+438222110377238213619954952624 x^{24}-7224848816119891330863513086 x^{22}+101282735531518125865540357 x^{20}-1194059845570797424172949 x^{18}+11682778995500607127883 x^{16}-93339661961478848567 x^{14}+596780559030284809 x^{12}-2975417499924988 x^{10}+11178176933691 x^8-30180354255 x^6+54645594 x^4-59283 x^2+30)] / $ }  \\
      \hspace*{1.4cm}   \parbox[t]{5.4in}{ $ \ds
     [(-1 + x) (1 + x) (1 - 2 x - x^2 + x^3) (-1 - 2 x + x^2 + x^3) (1 -
      8 x + 8 x^2 + 6 x^3 - 6 x^4 - x^5 + x^6) (1 + 8 x + 8 x^2 -
      6 x^3 - 6 x^4 + x^5 + x^6) (-1 - 27 x + 1032 x^2 + 25802 x^3 -
      373186 x^4 - 9247650 x^5 + 68985132 x^6 + 1756981328 x^7 -
      7544737839 x^8 - 205344558763 x^9 + 520362914282 x^{10} +
      16055543418770 x^{11} - 22765506950292 x^{12} -
      886185755306711 x^{13} + 567341618795392 x^{14} +
      35824949732566099 x^{15} - 2114077619505885 x^{16} -
      1089052544253505079 x^{17} - 434310955869650180 x^{18} +
      25382085183437405700 x^{19} + 19657336252849199189 x^{20} -
      460196787380533985842 x^{21} - 503282143127348655479 x^{22} +
      6563545564790657680027 x^{23} + 9019917524975714828718 x^{24} -
      74279664653212096085631 x^{25} - 121126151372353616341848 x^{26} +
      671522546480445322781198 x^{27} +
      1259864230974286367427691 x^{28} -
      4874434339938184506685397 x^{29} -
      10355985403728393543409067 x^{30} +
      28508897879645580528825667 x^{31} +
      68215067001652545430286192 x^{32} -
      134568911549356278360128102 x^{33} -
      363861169332415595759624659 x^{34} +
      512191062359197441924007885 x^{35} +
      1584783594082588087382278463 x^{36} -
      1564210162255894419536702349 x^{37}-
      5674861959676935021395050413 x^{38 }+
      3785132858090620875642211416 x^{39} +
      16803139396061165672290060717 x^{40} -
      7044093577535657561000115327 x^{41} -
      41343132893232215469798457354 x^{42} +
      9285232522254031569698694689 x^{43} +
      84883083360723557875730426434 x^{44} -
      5937121438225419731565543654 x^{45} -
      145956035701218221797085712592 x^{46} -
      7890101748188581051360660346 x^{47} +
      210852803270320434959494868010 x^{48} +
      32505194104448575284667024966 x^{49} -
      256625689573728687673431393828 x^{50} -
      60386088834617104154813509703 x^{51} +
      263791426465741835712351964159 x^{52} +
      79446568823858899041889805170 x^{53} -
      229529669618761966378616574822 x^{54} -
      81368503811164989012654074179 x^{55} +
      169410836644260710451175862716 x^{56} +
      67317485231746697845168652537 x^{57} -
      106273241553496096920653558337 x^{58} -
      45836707489100134208832885680 x^{59} +
      56768242863723954140114918264 x^{60} +
      25971874970291859366352369232 x^{61} -
      25868414266252042008113790681 x^{62} -
      12334499250260624842980447944 x^{63} +
      10073049237434908783966012615 x^{64} +
      4934518938825367378469146563 x^{65} -
      3357135730239884258076120873 x^{66} -
      1668961310040579145324436854 x^{67} +
      958979388811076795805220259 x^{68} +
      478484381627959740231307898 x^{69} -
      235070462661933368244141645 x^{70} -
      116494763716325583154471593 x^{71} +
      49490588354774441565598327 x^{72} +
      24113150837187023305271901 x^{73} -$}

      \noindent
          \hspace*{1.4cm}   \parbox[t]{5.4in}{ $ \ds
      8953841615972305424578920 x^{74} -
      4245305143743032443594882 x^{75} +
      1392157069027476262098948 x^{76} +
      635580648350932217926483 x^{77} - 185931065427671751697600 x^{78} -
      80838922982850596922843 x^{79} + 21306408936395504377734 x^{80} +
      8719476782236791891707 x^{81} - 2090911132691824012433 x^{82} -
      795459886846428765825 x^{83} + 175226009815304083621 x^{84} +
      61148524359498140722 x^{85} - 12490776292567568868 x^{86} -
      3941245311464514974 x^{87} + 753388475396092803 x^{88} +
      211610725442247167 x^{89} - 38184703986094031 x^{90} -
      9385073357203492 x^{91 } + 1611821886653267 x^{92} +
      340074435384128 x^{93} - 56011183019777 x^{94} -
      9923940361375 x^{95} + 1578324064638 x^{96} + 228740033731 x^{97} -
      35344978435 x^{98 }- 4053271905 x^{99} + 611772610 x^{100} +
      53065630 x^{101} - 7859404 x^{102} - 481716 x^{103} + 70281 x^{104} +
      2698 x^{105} - 389 x^{106} - 7 x^{107} + x^{108}) (-1 + 27 x +
      1032 x^{2} - 25802 x^{3} - 373186 x^{4} + 9247650 x^{5} +
      68985132 x^{6} - 1756981328 x^{7} - 7544737839 x^{8} +
      205344558763 x^{9} + 520362914282 x^{10} - 16055543418770 x^{11} -
      22765506950292 x^{12} + 886185755306711 x^{13} +
      567341618795392 x^{14} - 35824949732566099 x^{15}-
      2114077619505885 x^{16 }+ 1089052544253505079 x^{17} -
      434310955869650180 x^{18} - 25382085183437405700 x^{19 }+
      19657336252849199189 x^{20} + 460196787380533985842 x^{21} -
      503282143127348655479 x^{22}- 6563545564790657680027 x^{23} +
      9019917524975714828718 x^{24} + 74279664653212096085631 x^{25} -
      121126151372353616341848 x^{26} - 671522546480445322781198 x^{27} +
      1259864230974286367427691 x^{28} +
      4874434339938184506685397 x^{29} -
      10355985403728393543409067 x^{30} -
      28508897879645580528825667 x^{31} +
      68215067001652545430286192 x^{32} +
      134568911549356278360128102 x^{33} -
      363861169332415595759624659 x^{34} -
      512191062359197441924007885 x^{35} +
      1584783594082588087382278463 x^{36} +
      1564210162255894419536702349 x^{37} -
      5674861959676935021395050413 x^{38} -
      3785132858090620875642211416 x^{39} +
      16803139396061165672290060717 x^{40} +
      7044093577535657561000115327 x^{41} -
      41343132893232215469798457354 x^{42} -
      9285232522254031569698694689 x^{43} +
      84883083360723557875730426434 x^{44} +
      5937121438225419731565543654 x^{45} -
      145956035701218221797085712592 x^{46} +
      7890101748188581051360660346 x^{47} +
      210852803270320434959494868010 x^{48} -
      32505194104448575284667024966 x^{49} -
      256625689573728687673431393828 x^{50 }+
      60386088834617104154813509703 x^{51 }+
      263791426465741835712351964159 x^{52 }-
      79446568823858899041889805170 x^{53}-
      229529669618761966378616574822 x^{54 }+
      81368503811164989012654074179 x^{55 }+
      169410836644260710451175862716 x^{56} -
      67317485231746697845168652537 x^{57} -
      106273241553496096920653558337 x^{58} +
      45836707489100134208832885680 x^{59 }+
      56768242863723954140114918264 x^{60} -
      25971874970291859366352369232 x^{61 }-
      25868414266252042008113790681 x^{62} +
      12334499250260624842980447944 x^{63} +
      10073049237434908783966012615 x^{64} -
      4934518938825367378469146563 x^{65} -
      3357135730239884258076120873 x^{66 }+
      1668961310040579145324436854 x^{67} +
      958979388811076795805220259 x^{68} -
      478484381627959740231307898 x^{69} -
      235070462661933368244141645 x^{70} +
      116494763716325583154471593 x^{71} +
      49490588354774441565598327 x^{72} -
      24113150837187023305271901 x^{73} -  $}

     \noindent
          \hspace*{1.4cm}   \parbox[t]{5.4in}{ $ \ds
      8953841615972305424578920 x^{74} +
      4245305143743032443594882 x^{75} +
      1392157069027476262098948 x^{76} -
      635580648350932217926483 x^{77} - 185931065427671751697600 x^{78} +
      80838922982850596922843 x^{79} + 21306408936395504377734 x^{80} -
      8719476782236791891707 x^{81} - 2090911132691824012433 x^{82} +
      795459886846428765825 x^{83} + 175226009815304083621 x^{84} -
      61148524359498140722 x^{85} - 12490776292567568868 x^{86} +
      3941245311464514974 x^{87} + 753388475396092803 x^{88} -
      211610725442247167 x^{89} - 38184703986094031 x^{90} +
      9385073357203492 x^{91} + 1611821886653267 x^{92} -
      340074435384128 x^{93} - 56011183019777 x^{94} +
      9923940361375 x^{95} + 1578324064638 x^{96} - 228740033731 x^{97} -
      35344978435 x^{98} + 4053271905 x^{99} + 611772610 x^{100} -
      53065630 x^{101} - 7859404 x^{102} + 481716 x^{103} + 70281 x^{104} -
      2698 x^{105} - 389 x^{106} + 7 x^{107} + x^{108})]$} \\ \\
 \hspace*{1.2cm}  $-$ \parbox[t]{5.4in}{ $ \ds
    [2 x (13 x^{88}-2014 x^{86}+146605 x^{84}-6684058 x^{82}+214607499 x^{80}-5171839244 x^{78}+97349637916 x^{76}-1470395416497 x^{74}+18168277920058 x^{72}-186276230702792 x^{70}+1601900115787790 x^{68}-11649743426644092 x^{66}+72099676276979782 x^{64}-381559382114709443 x^{62}+1732783510653519836 x^{60}-6769763225655869704 x^{58}+22790685693402830977 x^{56}-66170224706676225710 x^{54}+165709351928222533462 x^{52}-357753031124162384071 x^{50}+665082799548269105712 x^{48}-1062848147457689499528 x^{46}+1456688388847477693433 x^{44}-1707308854259086924372 x^{42}+1705341973211044143042 x^{40}-1445844652233010877402 x^{38}+1035740694908714460482 x^{36}-623678825534713664339 x^{34}+313886217478988861886 x^{32}-131214105820611286529 x^{30}+45256658711873679474 x^{28}-12788112761767770310 x^{26}+2938516612548843736 x^{24}-544791377615849863 x^{22}+80788217458925372 x^{20}-9485403181603063 x^{18}+870557118080240 x^{16}-61418979552296 x^{14}+3257801689798 x^{12}-126146925551 x^{10}+3429952026 x^8-62220967 x^6+703330 x^4-4503 x^2+15)]/ $} \\
        \\
 \hspace*{1.4cm}   \parbox[t]{5.4in}{ $ \ds
       [(-1 + 15 x + 195 x^2 - 2476 x^3 - 9408 x^4 +
      128774 x^5 + 151702 x^6 - 3080005 x^7 - 152040 x^8 +
      39805335 x^9 - 22147982 x^{10} - 300921194 x^{11} +
      284159318 x^{12} + 1383107908 x^{13} - 1722027429 x^{14} -
      3930756397 x^{15} + 6069754917 x^{16} + 6915053418 x^{17} -
      13500672554 x^{18} - 7214631815 x^{19} + 19878551923 x^{20} +
      3475881699 x^{21} - 19978574007 x^{22} + 1204754727 x^{23} +
      13960633114 x^{24} - 3214431392 x^{25} - 6832632284 x^{26} +
      2528557309 x^{27} + 2326963032 x^{28} - 1184186750 x^{29} -
      534678044 x^{30} + 369015343 x^{31} + 75261636 x^{32} -
      78835592 x^{33} - 4000296 x^{34} + 11506048 x^{35} - 640379 x^{36} -
      1110448 x^{37} + 155006 x^{38} + 65338 x^{39} - 14541 x^{40} -
      1860 x^{41} + 680 x^{42} - 2 x^{43} - 13 x^{44} + x^{45}) $    }

      \noindent
   \hspace*{1.4cm}    \parbox[t]{5.4in}{      $ \ds
    (1 + 15 x - 195 x^2 - 2476 x^3 + 9408 x^4 + 128774 x^5 -
      151702 x^6 - 3080005 x^7 + 152040 x^8 + 39805335 x^9 +
      22147982 x^{10} - 300921194 x^{11} - 284159318 x^{12} +
      1383107908 x^{13} + 1722027429 x^{14} - 3930756397 x^{15} -
      6069754917 x^{16} + 6915053418 x^{17} + 13500672554 x^{18} -
      7214631815 x^{19} - 19878551923 x^{20} + 3475881699 x^{21} +
      19978574007 x^{22} + 1204754727 x^{23} - 13960633114 x^{24} -
      3214431392 x^{25} + 6832632284 x^{26} + 2528557309 x^{27} -
      2326963032 x^{28} - 1184186750 x^{29} + 534678044 x^{30} +
      369015343 x^{31} - 75261636 x^{32} - 78835592 x^{33} + 4000296 x^{34} +
      11506048 x^{35} + 640379 x^{36}- 1110448 x^{37} - 155006 x^{38} +
      65338 x^{39} + 14541 x^{40} - 1860 x^{41} - 680 x^{42} - 2 x^{43} +
      13 x^{44} + x^{45})]
     $} \\ \\
 \hspace*{1.2cm}  $-$ \parbox[t]{5.4in}{ $ \ds
  \frac{2 x (5 + 15 x^2 - 120 x^4 + 399 x^6 - 651 x^8 + 588 x^{10} - 308 x^{12} +
   93 x^{14} - 15 x^{16} + x^{18})}{1 - 55 x^2 + 495 x^4 - 1716 x^6 + 3003 x^8 - 3003 x^{10} +
    1820 x^{12} - 680 x^{14} + 153 x^{16} - 19 x^{18} + x^{20}}$} \\ \\
 \hspace*{1.2cm}  $-$ \parbox[t]{5.4in}{ $ \ds
     \frac{2 x}{-1 + x^2} $}
    \bc $ \mbox{--------------------------------------    } $  \ec
  ${\cal F}^{MS}_{10}(x) = $  \parbox[t]{5.4in}{ $ \ds
 243 x + 4339 x^2 + 289335 x^3 + 7208583 x^4 + 557349138 x^5 +
 14813917645 x^6 + 1168310992742 x^7 + 32367098428015 x^8 +
 2508526689376836 x^9 + 72846770214308764 x^{10} +
 5438167175629746834 x^{11}  + 166660359775369207305 x^{12}  +
 11869288031191866302010 x^{13}  + 384826885001891958476402 x^{14}  +
 26078789127448573622443755 x^{15}  + 893194061641937315759395423 x^{16}  +
 57693521152843584444897446533 x^{17}  +
 2079093829987015269003038574526 x^{18}  +
 128506000981368823582428350454466 x^{19}  +
 4847193810363863131811720847193548 x^{20}  +
 288095571272380905777754568519795243 x^{21}  +
 11310564685538348242297080196540236850 x^{22}  +
 649746914806473054528002403760009055570 x^{23}  + $ \\$
 26404918611405460632931371941391131010049 x^{24}  + $ \\$
 1473264417463145000328946008245147035916813 x^{25}  + $ \\$
 61659254037383668830146365059477320165773930 x^{26}  + $ \\$
 3356346205891159553777527921259811729494274762 x^{27}  + $ \\$
 144003585822238461940162752420760425066611780882 x^{28}  + $ \\$
 7677667577792836984967162291240962210495794996708 x^{29}  + $ \\$
 336342655278688855580771811775700856962283447093445 x^{30}
 + \ldots $}
    \bc $ \mbox{--------------------------------------    } $  \\ $ \mbox{--------------------------------------    } $ \ec

     ${\cal F}^{MS}_{11}(x) = $  \parbox[t]{5.4in}{$ \ds 243 x + 18971 x^2 + 636294 x^3 + 72405251 x^4 + 2865394133 x^5 +
 346333232108 x^6 + 14339574691265 x^7 + 1744371456833747 x^8 +
 74911807866394146 x^9 + 8934977567219006281 x^{10} +
 400503177347846460273 x^{11} + 46117921231297910628644 x^{12} +
 2170356332443952851900321 x^{13} + 239381475714118359677156039 x^{14} +
 11855817324534234060867367874 x^{15} +
 1249406745153873113163122372739 x^{16} +
 65069810472645890709971971827801 x^{17} +
 6558154178811958578838905708965336 x^{18} +
 358117095279294730727085486653815393 x^{19} +
 34620484884340723505187348938965827981 x^{20} + $ \\$
 1974088696248431529714993418111975138862 x^{21} + $ \\$
 183772407334046331729385220523198276960855 x^{22} + $ \\$
 10892090032097848838272522573175877690407809 x^{23} + $ \\$
 980566565923098316193540422466719582419613988 x^{24} + $ \\$
 60129612469192604047955543919909722358677196033 x^{25} + $ \\$
 5256971509186005448868619622096039433034420962173 x^{26} + $ \\$
 332047065910499908196244384799618051187007013995426 x^{27} + $ \\$
 28304029701782012697664907918893836253432332171757663 x^{28} + $ \\$
 1833952426049529803818042423806158925395919116612080641 x^{29} + $ \\$
 152969068655860077581305837191878326271865416637527963968 x^{30} + \ldots $}

      \bc $ \mbox{--------------------------------------    } $  \\ $ \mbox{--------------------------------------    } $ \ec

     ${\cal F}^{MS}_{12}(x) = $  \parbox[t]{5.4in}{$ \ds  729 x + 27649 x^2 + 4197441 x^3 + 242671317 x^4 + 43990083094 x^5 +
 2735013840187 x^6 + 513036388961013 x^7 + 33001000562090573 x^8 +
 6183996119428136313 x^9 + 411195519550550348744 x^{10} +
 75467413924966323413178 x^{11}  + 5219400130513197754417911 x^{12}  +
 926745562357422442397189350 x^{13}  +
 67006443578726551251609005993 x^{14}  +
 11434772486683095926038655771716 x^{15}  +
 866266554823316718931320183103101 x^{16}  +
 141745036012898557606447200294393970 x^{17}  +
 11247461112679160220579360965613830833 x^{18}  + $ \\ $
 1765465808313085365375878315016015073557 x^{19} + $ \\ $
 146419370376251092476157379465701082724152 x^{20}  + $ \\ $
 22095541016866680201709536045728640113665686 x^{21}  + $ \\ $
 1909132066638450767859538197822152206685927422 x^{22}  + $ \\ $
 277845454863511211153308586587387527064330067408 x^{23}  + $ \\ $
 24916851849771684400096611711604540245837258911711 x^{24}  + $ \\ $
 3509619689490759317948988236099526343780347492931469 x^{25}  + $ \\ $
 325389713056839381650566556870026738398259489443944222 x^{26}  + $ \\ $
 44518655884104172253523812293782158179853048259529562763 x^{27}  + $ \\ $
 4250766430684311500575627409107339609794936847318889817965 x^{28}  + $ \\ $
 566884785403112218068443503536923499155376628229052966355855 x^{29}  + $ \\ $
 55542145722371139531190149175936554042571018550731891551107342 x^{30} + \ldots $}

\end{document}